\numberwithin{equation}{section}
\def\today{\number \day \space \ifcase \month \or January
 \or February \or March \or April \or May \or June \or
 July \or August \or September \or
 October \or November \or December \fi \   \number \year}
\newcounter{TmpEnumi}
\theoremstyle{definition}
\newtheorem{thm}{Theorem}[section]
\newtheorem{lem}[thm]{Lemma}
\newtheorem{prp}[thm]{Proposition}
\newtheorem{dfn}[thm]{Definition}
\newtheorem{cor}[thm]{Corollary}
\newtheorem{rmk}[thm]{Remark}
\newtheorem{ntn}[thm]{Notation}
\newtheorem{exa}[thm]{Example}
\newcommand{\beq}{\begin{equation}}
\newcommand{\eeq}{\end{equation}}
\newcommand{\beqr}{\begin{eqnarray*}}
\newcommand{\eeqr}{\end{eqnarray*}}
\newcommand{\bal}{\begin{align*}}
\newcommand{\eal}{\end{align*}}
\newcommand{\bei}{\begin{itemize}}
\newcommand{\eei}{\end{itemize}}
\newcommand{\limi}[1]{\lim_{{#1} \to \infty}}
\newcommand{\af}{\alpha}
\newcommand{\bt}{\beta}
\newcommand{\gm}{\gamma}
\newcommand{\ep}{\varepsilon}
\newcommand{\et}{\eta}
\newcommand{\ch}{\chi}
\newcommand{\io}{\iota}
\newcommand{\te}{\theta}
\newcommand{\ld}{\lambda}
\newcommand{\sm}{\sigma}
\newcommand{\kp}{\kappa}
\newcommand{\ph}{\varphi}
\newcommand{\ps}{\psi}
\newcommand{\rh}{\rho}
\newcommand{\om}{\omega}
\newcommand{\ta}{\tau}
\newcommand{\Gm}{\Gamma}
\newcommand{\Sm}{\Sigma}
\newcommand{\Z}{{\mathbb{Z}}}
\newcommand{\R}{{\mathbb{R}}}
\newcommand{\C}{{\mathbb{C}}}
\newcommand{\N}{{\mathbb{Z}}_{> 0}}
\newcommand{\Nz}{{\mathbb{Z}}_{\geq 0}}
\newcommand{\id}{{\operatorname{id}}}
\newcommand{\spec}{{\operatorname{sp}}}
\newcommand{\diag}{{\operatorname{diag}}}
\newcommand{\rank}{{\operatorname{rank}}}
\newcommand{\spn}{{\operatorname{span}}}
\newcommand{\card}{{\operatorname{card}}}
\newcommand{\dirlim}{\varinjlim}
\newcommand{\andeqn}{\, \, \, \, \, \, {\mbox{and}} \, \, \, \, \, \, }
\newcommand{\QED}{\rule{0.4em}{2ex}}
\newcommand{\wolog}{without loss of generality}
\newcommand{\tfae}{the following are equivalent}
\newcommand{\ifo}{if and only if}
\newcommand{\ca}{C*-algebra}
\newcommand{\hm}{homomorphism}
\newcommand{\pj}{projection}
\newcommand{\ct}{continuous}
\newcommand{\cfn}{continuous function}
\newcommand{\rpn}{representation}
\newcommand{\mb}{measurable}
\newcommand{\msp}{measure space}
\newcommand{\sft}{$\sm$-finite}
\newcommand{\sfm}{$\sm$-finite measure space}
\newcommand{\XBM}{ (X, {\mathcal{B}}, \mu)}
\newcommand{\YCN}{ (Y, {\mathcal{C}}, \nu)}
\newcommand{\LLp}{L (L^p (X, \mu))}
\newcommand{\LLpy}{L (L^p (Y, \nu))}
\newcommand{\MP}[2]{M_{#1}^{#2}}
\newcommand{\textdisp}{\displaystyle}
\renewcommand{\S}{\subset}
\renewcommand{\c}{\colon}
\newcommand{\I}{\infty}
\newcommand{\Lem}[1]{Lemma~\ref{#1}}
\title[Classification
 of $L^p$~AF algebras]{Classification of $L^p$~AF algebras}
\author{N.~Christopher Phillips}
\author{Maria Grazia Viola}
\address{Department of Mathematics, University of Oregon,
      Eugene OR 97403-1222, USA,
      and Department of Mathematics, University of Toronto,
      Room 6290, 40 St.\  George St., Toronto ON M5S 2E4, Canada.}
\email[]{ncp@darkwing.uoregon.edu}
\address{Lakehead University Orillia, Orillia ON L3V 0B9, Canada,
and Fields Institute, 222 College Street, Toronto, ON M5T 3J1, Canada}
\email[]{mviola@lakeheadu.ca}
\subjclass[2000]{Primary 47L10; Secondary 46L35.}
\thanks{The first author was partially supported by the
  US National Science Foundation under
  Grants DMS-1101742 and DMS-1501144.
The second author was supported by a Natural Sciences and
Engineering Research Council Discovery Grant.}
\date{31~July 2017}
\begin{document}

\begin{abstract}
We define spatial $L^p$~AF algebras
for $p \in [1, \infty) \setminus \{ 2 \}$,
and prove the following analog
of the Elliott AF algebra classification theorem.
If $A$ and $B$ are spatial $L^p$~AF algebras,
then the following are equivalent:
\begin{itemize}
\item
$A$ and $B$ have isomorphic scaled preordered $K_0$-groups.
\item
$A \cong B$ as rings.
\item
$A \cong B$ (not necessarily isometrically)
as Banach algebras.
\item
$A$ is isometrically isomorphic to $B$
as Banach algebras.
\item
$A$ is completely isometrically isomorphic to $B$
as matrix normed Banach algebras.
\end{itemize}
As background,
we develop the theory of matrix normed $L^p$~operator algebras,
and show that there is a unique way to make
a spatial $L^p$~AF algebra into a
matrix normed $L^p$~operator algebra.
We also show that any countable scaled
Riesz group can be realized as the scaled preordered
$K_0$-group of a spatial $L^p$~AF algebra.
\end{abstract}

\maketitle

\section{Introduction}\label{Sec_Intro}

In a well known paper~\cite{Ell} of 1976, Elliott gave a
complete classification of approximately finite
dimensional (AF) C*-algebras.
He showed that two AF C*-algebras
$A_1$ and $A_2$ are isomorphic if and only if
their scaled preordered $K_0$-groups
$\big( K_0 (A_1), K_0 (A_1)_{+}, \Sigma (A_1) \big)$
and $\big( K_0 (A_2), K_0 (A_2)_{+}, \Sigma (A_2) \big)$
are isomorphic.
Moreover, the work of
Effros, Handelman, and Shen showed
(see~\cite{EE} and~\cite{EHS}) that any
countable scaled Riesz group $(G, G_{+},\Sigma)$
can be realized as the scaled preordered $K_0$-group
of an AF C*-algebra.

In a series of papers (see~\cite{PhLp1},
\cite{PhLp2}, \cite{PhLp4}, and~\cite{PhLp3}),
the first author introduced and studied
$L^p$ analogs of the uniformly hyperfinite
(UHF) algebras and $L^p$ analogs of the
Cuntz algebras.
One result of~\cite{PhLp2} is that
two spatial $L^p$~UHF algebras are isomorphic
if and only if they have the same supernatural
number.
This result is analogous to the result
of Glimm~\cite{Gli},
that two UHF C*-algebras are
isomorphic if and only if they have the
same supernatural number.
(This is a special case, done earlier,
of Elliott's AF classification theorem.)

It is therefore natural to ask if there
are $L^p$ analogs of AF~algebras
which can be
classified by their scaled preordered $K_0$~groups.
In this paper,
we show that the algebras that we call the spatial $L^p$~AF algebras
provide a positive answer to this question.
In Theorem~\ref{T_6324_Classification_Uniq},
we show that two
spatial $L^p$~AF algebras are completely isometrically isomorphic
(as matricial $L^p$~operator algebras) if and only if
their scaled ordered $K_0$ groups are isomorphic.
We further show that, as in the C*-algebra case, given
any scaled countable Riesz group
$(G, G_{+}, \Sigma)$, there exists a spatial
$L^p$~AF algebra $A$ such that the scaled
preordered $K_0$ group
$\big( K_0 (A), \, K_0 (A)_{+}, \, \Sigma (A) \big)$
is isomorphic to $(G, G_{+}, \Sigma)$. We also show that
spatial $L^p$~AF algebras have unique $L^p$~matrix
norms.

We don't list examples.
Theorem~\ref{T_6324_Classification_Exist}
and Theorem~\ref{T_6417_FullClass}
show that for each $p \in [1, \infty)$
there is a one to one correspondence between
isomorphism classes of AF C*-algebras
and spatial $L^p$~AF algebras,
so the examples are ``the same''.
Although we don't address this issue here,
constructions like the \ca{}
of a locally finite discrete abelian group,
which give AF C*-algebras,
give $L^p$~operator algebras
which are AF in some sense but
are not spatial $L^p$~AF algebras.

In a forthcoming paper we will prove that
the ideal structure of a spatial $L^p$~AF algebra
is determined by K-theory in the same way as for an AF \ca.
We will prove that,
like a \ca,
a spatial $L^p$~AF algebra is incompressible
in the
sense that any contractive \hm{} to some other Banach
algebra can be factored as a quotient map followed by
an isometric \hm.
(In particular, contractive injective
\hm{s} from spatial $L^p$~AF algebras are isometric.)
We will also study
the isometries and automorphisms of a spatial $L^p$~AF algebra.
The results will be quite different from what
happens with AF \ca{s}.

A spatial $L^p$~AF algebra is the direct limit of a direct system
of semisimple finite dimensional $L^p$~operator
algebras in which the connecting maps are contractive
homomorphisms having the property that the image
of the identity is a spatial partial isometry in
the sense of Definition~6.4 of~\cite{PhLp1}.
In the context of $L^p$~operator algebras, where in
general we do not require the homomorphisms between
$L^p$~operator algebras to be unital, that is
the best possible form for our maps.

To make sense of uniqueness of $L^p$~matrix norms
and completely isometric isomorphism,
we develop the basics of
the theory of matrix normed Banach algebras
and matricial $L^p$~operator algebras.

The arguments used for the classification of spatial
$L^p$~AF algebras are similar to the ones used for
the classification of AF \ca{s}.
However, to be able to carry out these arguments,
background material needs
to be developed.
Much of it is fairly elementary, and for this part the
novelty is putting it together in the right way. There
are several somewhat more substantial ingredients,
including a structure theorem for contractive
representations of $C (X)$ on an $L^p$~space
(Theorem~\ref{T_4116_CXLp}), the recognition
that, in connection with nonunital maps between
unital algebras, idempotents must be required to
be hermitian (contractivity is not good enough;
see Section~\ref{Sec_HIdemp}), and what to
require of approximate identities of idempotents
in order to get a unique suitable norm on the unitization
(see Proposition~\ref{P_5209_NormOnUnit}).
We also have to prove that the direct limit of
$L^p$~operator algebras is again an $L^p$~operator
algebra.

The paper is organized as follows.
In Section~\ref{Sec_Pre} we recall
$L^p$~operator algebras and give some preliminary
results on their representations on $L^p$-spaces.
In Section~\ref{Sec_MatLp} we introduce
matricial (matrix normed) $L^p$~operator algebras
and discuss their representations on $L^p$~spaces.
This material
is needed to define $L^p$~operator algebras that have
unique $L^p$ matrix norms, which we examine in
Section~\ref{Sec_UniqM}.
Most of the $L^p$~operator algebras in this article
have unique $L^p$ matrix norms, including the matrix algebra
$M_n^p$ and the algebra $C (X)$ for a compact
metric space~$X$.

Sections~\ref{Sec_DSum} and~\ref{Sec_DLim} deal with
direct sums and direct limits of (matricial) $L^p$~operator
algebras, while Section~\ref{Sec_HIdemp} contains
material on hermitian idempotents, including a
characterization of hermitian idempotents in an
$L^p$~operator algebra in terms of multiplication
operators.

In Section~\ref{Sec_ssfd} we introduce our building
blocks (the spatial semisimple finite dimensional
$L^p$~operator algebras), and the appropriate \hm{s}
between them, the spatial homomorphisms. We
characterize
spatial homomorphisms in terms of block diagonal
homomorphisms.
In Section~\ref{Sec_Init} we define spatial $L^p$~AF
algebras, show that every spatial $L^p$~AF
algebra is an $L^p$~operator algebra as in~\cite{PhLp1},
and that it has unique $L^p$ matrix norms.

Section~\ref{Sec_ClassN} contains our main result.
We give a complete classification of spatial $L^p$~AF
algebras using the scaled preordered $K_0$ group,
and show that,
as in the C*-algebra case, any countable scaled Riesz
group can be realized as the scaled preordered $K_0$ group
of a spatial $L^p$~AF algebra.

Shortly after posting this paper on the airXiv,
E.~Gardella informed us of his work with Lupini on the uniqueness
of the matricial norm structure for
$L^p$~analogs of groupoid C*-algebras.
(See \cite{GL}.)
Spatial $L^p$~AF algebras are
examples of such algebras;
see Subsection 7.2 in~\cite{GL}.
We were unaware of the work of Gardella
and Lupini while preparing this manuscript and we
refer the reader to their paper for a different proof
of the uniqueness of the $L^p$ matrix norms.

We use the following standard notation
throughout the paper.

\begin{ntn}\label{algebraboundedoperators}
If $E$ is a Banach space, then $L (E)$ denotes
the Banach algebra of all bounded linear operators
on~$E$, with the operator norm.
\end{ntn}

\begin{ntn}\label{N_7627_ResMeas}
If $\XBM$ is a \msp,
and $E \S X$ is measurable,
then $\mu |_E$ denotes the measure on~$E$
gotten by restricting $\mu$ to the $\sm$-algebra
of measurable subsets of~$E$.
\end{ntn}

We also recall that an idempotent in a ring is an element~$e$
satisfying $e^2 = e$.

\section{$L^p$~operator algebras}\label{Sec_Pre}

\indent
In this section we define $L^p$~operator algebras, and
state some of the standard results about $L^p$~operator
algebras and their representations.
These results are basic for the rest of the paper.

The following definitions are based on Definition~1.1
and Definition~1.17 of~\cite{PhLp3}.

\begin{dfn}\label{D_4310_LpOpAlg}
Let $p \in[1, \infty)$.
An $L^p$~operator algebra is a
Banach algebra
such that there exists a measure space
$(X, \mathcal{B}, \mu)$ and an isometric
isomorphism from $A$ to a norm closed
subalgebra of $L (L^p (X, \mu))$.
\end{dfn}

\begin{dfn}\label{D_7627_LpOpAlgConds}
Let $p \in[1, \infty)$.
\begin{enumerate}
\item\label{D_3914_LpRep_Rep}
A {\emph{representation}} of an $L^p$~operator
algebra~$A$ (on $L^p (Y, \nu)$) is a \ct{}
\hm{}  $\pi \colon A \to  L(L^p (Y, \nu))$
for some measure space $(Y, \mathcal{C}, \nu)$.
\item\label{D_3914_LpRep_Contr}
The \rpn{} $\pi$ is {\emph{contractive}}
if $\| \pi (a) \| \leq \| a \|$ for all $a \in A$,
and {\emph{isometric}}
if $\| \pi (a) \| = \| a \|$ for all $a \in A$.
\item\label{D_3914_LpRep_Spb}
We say that
the \rpn{}
$\pi \colon A \to L(L^p (Y, \nu))$
is {\emph{separable}} if $L^p (Y, \nu)$ is separable,
and that $A$ is {\emph{separably representable}}
if it has a separable isometric representation.
\item\label{D_3914_LpRep_Sft}
We say that $\pi$ is {\emph{$\sigma$-finite}}
if $\nu$ is \sft,
and that $A$ is {\emph{$\sigma$-finitely representable}}
if it has a $\sigma$-finite isometric representation.
\item\label{D_3914_LpRep_Ndg}
We say that $\pi$ is {\emph{nondegenerate}}
if
\[
\pi (A) L^p (Y, \nu) = \spn \big( \big\{ \pi (a) \xi \colon
     {\mbox{$a \in A$ and $\xi \in L^p (Y, \nu)$}} \big\} \big)
\]
is dense in~$L^p (Y, \nu)$.
We say that $A$ is {\emph{nondegenerately (separably)
representable}} if it has a nondegenerate (separable)
isometric representation, and {\emph{nondegenerately
$\sm$-finitely representable}} if it has a nondegenerate
$\sm$-finite isometric representation.
\end{enumerate}
\end{dfn}

The following fact about the restriction of an operator
looks obvious (and the proof is easy),
but it is the sort of statement
that should not be taken for granted
outside of the context of \ca{s}. The
condition $\| f \| = 1$ is necessary;
see Example~\ref{E_6807_NotNorm1}.

\begin{lem}\label{L_5125_NormCut}
Let $E$ be a Banach space,
let $a \in L (E)$,
and let $f \in L (E)$ be an idempotent with $\| f \| = 1$
such that $a f = a$.
Then $\| a |_{f E} \| = \| a \|$.
\end{lem}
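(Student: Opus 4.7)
The plan is to prove the two inequalities $\|a|_{fE}\| \leq \|a\|$ and $\|a\| \leq \|a|_{fE}\|$ separately, using the hypothesis $\|f\| = 1$ only for the second one.

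The inequality $\|a|_{fE}\| \leq \|a\|$ is immediate: elements of $fE$ are elements of $E$, and $fE$ carries the restricted norm, so
\[
\|a|_{fE}\| = \sup \{ \|a \eta\| : \eta \in fE, \ \|\eta\| \leq 1 \} \leq \sup \{ \|a \xi\| : \xi \in E, \ \|\xi\| \leq 1 \} = \|a\|.
\]
This direction does not use $\|f\| = 1$.

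For the reverse inequality, I would take an arbitrary $\xi \in E$ with $\|\xi\| \leq 1$ and use the relation $a = af$ together with $\|f\| = 1$ to funnel the computation through $fE$. Set $\eta = f \xi$. Then $\eta \in fE$, and since $\|f\| = 1$,
\[
\|\eta\| = \|f \xi\| \leq \|f\| \cdot \|\xi\| \leq 1.
\]
Using $af = a$,
\[
\|a \xi\| = \|a f \xi\| = \|a \eta\| \leq \|a|_{fE}\|.
\]
Taking the supremum over such $\xi$ yields $\|a\| \leq \|a|_{fE}\|$, completing the proof.

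The whole argument is essentially one line once the hypothesis $\|f\| = 1$ is properly exploited; there is no real obstacle. The point of the lemma is precisely that without $\|f\| = 1$, the factor $\|f\|$ intrudes in the estimate $\|f\xi\| \leq \|f\| \cdot \|\xi\|$ and one can only conclude $\|a\| \leq \|f\| \cdot \|a|_{fE}\|$; the promised Example~\ref{E_6807_NotNorm1} presumably exhibits an idempotent $f$ of norm greater than~$1$ for which $\|a|_{fE}\|$ is strictly smaller than $\|a\|$.
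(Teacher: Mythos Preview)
Your proof is correct and follows essentially the same approach as the paper's: both establish the nontrivial inequality by setting $\eta = f\xi$ for $\xi$ in the unit ball, using $\|f\| = 1$ to get $\|\eta\| \leq 1$ and $af = a$ to get $a\xi = a\eta$. The only cosmetic difference is that the paper phrases the reverse inequality via an $\varepsilon$-argument while you take the supremum directly.
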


\begin{proof}
It is obvious that $\| a |_{f E} \| \leq \| a \|$.
For the reverse inequality,
let $\ep > 0$,
choose $\xi \in E$ such that $\| \xi \| \leq 1$
and $\| a \xi \| > \| a \| - \ep$,
and set $\et = f \xi$.
Then $\et \in f E$,
$\| \et \| \leq 1$,
and $\| a \et \| = \| a \xi \| > \| a \| - \ep$.
\end{proof}

The main application of Lemma~\ref{L_5125_NormCut}
is the next result, which we will use
repeatedly in the following sections.

\begin{cor}\label{C_5125_CutAlg}
Let $A$ be a unital Banach algebra in which $\| 1 \| = 1$.
Let $E$ be a Banach space,
and let $\pi \colon A \to L (E)$
be a nonzero \rpn.
Set $F = \pi (1) E$.
Then there is a unital \rpn{} $\pi_0 \colon A \to L (F)$
such that $\pi_0 (a) \xi = \pi (a) \xi$
for all $a \in A$ and $\xi \in F$.
If $\| \pi (1) \| = 1$,
then $\pi_0$ is contractive
\ifo{} $\pi$ is contractive
and $\pi_0$ is isometric \ifo{} $\pi$ is isometric.
\end{cor}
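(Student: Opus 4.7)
The plan is to set $\pi_0(a) = \pi(a)|_F$ and verify the required properties directly, with the norm comparison coming from \Lem{L_5125_NormCut}. The first step is to notice that $\pi(1) \in L(E)$ is an idempotent because $\pi(1)^2 = \pi(1 \cdot 1) = \pi(1)$, so $F = \pi(1) E$ is a closed subspace, and $\pi(1)$ restricts to the identity on~$F$ (every $\xi \in F$ has $\xi = \pi(1)\eta$ for some $\eta$, hence $\pi(1)\xi = \pi(1)^2 \eta = \pi(1)\eta = \xi$). For any $a \in A$, the relation $\pi(1) \pi(a) = \pi(1 \cdot a) = \pi(a)$ shows that $\pi(a)(F) \subseteq F$, so the restriction $\pi_0(a) = \pi(a)|_F$ is a well-defined bounded operator on~$F$.

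Next I would check that $\pi_0$ is a \ct{} \hm. Linearity, multiplicativity, and continuity descend from those of $\pi$, so this is routine. Unitality is immediate: $\pi_0(1) = \pi(1)|_F$ and $\pi(1) \xi = \xi$ for every $\xi \in F$ by idempotence, so $\pi_0(1) = \id_F$. That settles the first sentence of the corollary.

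For the norm statements, assume $\| \pi(1) \| = 1$. For any $a \in A$, the identity $\pi(a) \pi(1) = \pi(a \cdot 1) = \pi(a)$ verifies the hypothesis of \Lem{L_5125_NormCut} with the idempotent $f = \pi(1)$. The lemma then yields $\| \pi_0(a) \| = \| \pi(a)|_F \| = \| \pi(a) \|$ for every $a \in A$. Since contractivity and isometry are pointwise conditions on $a$ comparing $\|\pi(a)\|$ and $\|a\|$, each transfers between $\pi$ and $\pi_0$ without further work. There is no real obstacle; the only thing to keep track of is that the norm-preservation step uses the exact hypothesis $\| \pi(1) \| = 1$ (not merely $\leq 1$), which is baked into the assumption and matches the hypothesis of \Lem{L_5125_NormCut} exactly.
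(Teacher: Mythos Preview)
Your proof is correct and follows essentially the same approach as the paper's: both establish existence via the relation $\pi(1)\pi(a)\pi(1) = \pi(a)$ (you split this into $\pi(1)\pi(a) = \pi(a)$ for invariance of $F$ and $\pi(a)\pi(1) = \pi(a)$ for the norm step), and both obtain $\|\pi_0(a)\| = \|\pi(a)\|$ by applying \Lem{L_5125_NormCut} with $f = \pi(1)$. Your write-up simply fills in more of the routine details.
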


\begin{proof}
The existence of $\pi_0$ follows from the
equation $\pi (1) \pi (a) \pi (1) = \pi (a)$
for all $a \in A$.
If $\pi$ is contractive then
$\| \pi (1) \| \leq 1$.
If $\| \pi (1) \| = 1$,
taking $f = \pi (1)$ in Lemma~\ref{L_5125_NormCut}
gives $\| \pi_0 (a) \| = \| \pi (a) \|$ for all $a \in A$.
\end{proof}

\begin{exa}\label{E_6807_NotNorm1}
Lemma~\ref{L_5125_NormCut} fails without
$\| f \| = 1$ and Corollary~\ref{C_5125_CutAlg}
fails without $\| 1 \| = 1$.
Take $f \in L (E)$ to
be any idempotent with $\| f \| > 1$.
For example, take $p \in (1, \infty)$,
take $E$ to be $\C^2$ with $\| \cdot \|_p$,
and take
$f = \left( \begin{smallmatrix}
  1     &  1        \\
  0     &  0
\end{smallmatrix} \right)$.
Then $\| f \| = 2^{1 - 1/p}$.
For Lemma~\ref{L_5125_NormCut} take $a = f$.
Then $a |_{f E}$ is the identity operator,
so $\| a |_{f E} \| = 1 < \| a \|$.
For Corollary~\ref{C_5125_CutAlg} take $A = \C f \subset L (E)$
with the operator norm
and take $\pi$ to be the identity representation.
Then $\pi$ is isometric but $\pi_0$ is not.
\end{exa}

\begin{prp}[Proposition~1.25 of~\cite{PhLp3}]\label{P_4Y19_SepImpSepRep}
Let $p \in [1, \I)$,
and let $A$ be a separable $L^p$~operator algebra.
Then $A$ is separably representable.
If $A$ is nondegenerately representable,
then $A$ is separably nondegenerately representable.
\end{prp}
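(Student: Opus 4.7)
The plan is to begin with an arbitrary isometric representation
$\pi \c A \to L(L^p(X, \cB, \mu))$ and, using the separability of~$A$, cut it down to a representation on a separable $L^p$-space by passing to a suitable countably generated sub-$\sm$-algebra $\cB_\I \S \cB$ (on a $\sm$-finite piece of~$X$) such that $L^p(X, \cB_\I, \mu|_{\cB_\I})$ is $\pi(A)$-invariant as a subspace of $L^p(X, \cB, \mu)$ and the restriction of $\pi$ to it is still isometric (and, if $\pi$ is nondegenerate, still nondegenerate). A countably generated sub-$\sm$-algebra on a $\sm$-finite space yields a separable $L^p$-space, so this produces the desired separable isometric representation.

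First I would pick a countable dense subset $\{a_n\}_{n \geq 1} \S A$ and, for each~$n$, a sequence $\et_{n, k} \in L^p(X, \mu)$ with $\|\et_{n, k}\| \leq 1$ and $\|\pi(a_n) \et_{n, k}\| \to \|\pi(a_n)\|$ as $k \to \I$. I would then build $\cB_\I$ by a countable recursion. Let $\cB_0$ be the sub-$\sm$-algebra generated by the level sets, over rational thresholds, of the real and imaginary parts of the $\et_{n, k}$. Given a countably generated $\cB_j$ yielding a separable $L^p(X, \cB_j, \mu|_{\cB_j})$, fix a countable dense subset $\{\xi_m^{(j)}\}_m$ of that space, and let $\cB_{j+1}$ be generated by $\cB_j$ together with the level sets of the vectors $\pi(a_n) \xi_m^{(j)}$ for $n, m \in \N$. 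Set $\cB_\I = \sm \bigl( \bigcup_{j \geq 0} \cB_j \bigr)$, which is countably generated.

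By construction, for each $n$ the operator $\pi(a_n)$ sends the dense set $\{\xi_m^{(j)}\} \S L^p(X, \cB_j)$ into $L^p(X, \cB_{j+1}) \S L^p(X, \cB_\I)$, so by continuity $\pi(a_n) L^p(X, \cB_\I) \S L^p(X, \cB_\I)$; density of $\{a_n\}$ in~$A$ extends this to all of~$A$. Each $\et_{n, k}$ lies in $L^p(X, \cB_\I)$, so the restriction $\pi_0 \c A \to L(L^p(X, \cB_\I, \mu|_{\cB_\I}))$ satisfies $\|\pi_0(a_n)\| \geq \|\pi(a_n) \et_{n, k}\|$ for every~$k$; letting $k \to \I$ gives $\|\pi_0(a_n)\| \geq \|\pi(a_n)\|$, and combined with the reverse inequality and the density of $\{a_n\}$, this forces $\pi_0$ to be isometric on all of~$A$.

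For the nondegenerate statement I would augment the recursion at each stage~$j$ by using the nondegeneracy of~$\pi$ to write each $\xi_m^{(j)}$ as a limit of vectors of the form $\pi(b) \zeta$ with $b \in A$ and $\zeta \in L^p(X, \mu)$, and include the $\zeta$'s (and approximants to the $b$'s inside $\{a_n\}$) in the seed for $\cB_{j+1}$. Since $\bigcup_j L^p(X, \cB_j)$ is dense in $L^p(X, \cB_\I)$ and every $\xi_m^{(j)}$ then lies in the closure of $\pi_0(A) L^p(X, \cB_\I)$, nondegeneracy of $\pi_0$ follows. The main obstacle is the design of the recursion: $\cB_\I$ must simultaneously be countably generated, yield a $\pi(A)$-invariant subspace, witness the norms of all $\pi(a_n)$, and (in the nondegenerate case) have dense image under $\pi_0$; the alternating two-step construction, which at each stage first picks a dense set in $L^p(X, \cB_j)$ and then enlarges $\cB_j$ using the images under $\pi(a_n)$ (together with the preimage vectors in the nondegenerate case), is engineered to meet these competing requirements at once.
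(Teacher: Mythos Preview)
The paper does not give its own proof of this proposition --- it is quoted verbatim as Proposition~1.25 of~\cite{PhLp3}. However, the proof of Proposition~\ref{P_4Y19_CISepImpCISepRep} (the matricial analog) invokes that argument explicitly, and from it one can reconstruct the intended strategy: for each element of a countable dense set and each tolerance, one produces a separable $\rho(A)$-invariant closed \emph{sublattice} of $L^p(X,\mu)$ containing a near-norming vector (such a sublattice is automatically isometric to some $L^p(Y,\nu)$), and then one forms the $L^p$~direct sum of the resulting restricted representations to obtain a single separable isometric representation.

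Your approach is different in packaging but essentially correct: rather than building many separable invariant sublattices and taking a direct sum, you build a single countably generated sub-$\sm$-algebra $\cB_\I$ by recursion, arranged so that $L^p(X,\cB_\I,\mu)$ is simultaneously separable, $\pi(A)$-invariant, and large enough to witness all the norms. The sublattice language of~\cite{PhLp3} and your sub-$\sm$-algebra language are closely related (closed sublattices of $L^p$ correspond, under mild hypotheses, to sub-$\sm$-algebras), so the real difference is that the cited proof outputs a direct sum of countably many representations, while yours outputs a single subrepresentation of the original~$\pi$. Two points to tighten: first, the $\sm$-finiteness you allude to should be made explicit --- the union of the supports of the countably many vectors arising in the recursion is $\sm$-finite, and you should restrict to it. Second, in the nondegenerate step, nondegeneracy gives $\xi_m^{(j)}$ as a limit of \emph{finite linear combinations} $\sum_i \pi(b_i)\zt_i$, not single products $\pi(b)\zt$; the fix is the obvious one, but the sentence as written is not quite right.
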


Since we will only use separable $L^p$~operator
algebras in this paper, we need only deal with separable
$L^p$~spaces.
Lemma~\ref{L_6416_SepSFT} implies that we
can always assume that the measures are \sft.

\begin{lem}\label{L_6416_SepSFT}
Let $p \in [1, \infty)$.
Let $\XBM$ be a \msp{}
such that $L^p (X, \mu)$
is separable.
Then there exists a \sfm{} $\YCN$
such that $L^p (X, \mu)$
is isometrically isomorphic to $L^p (Y, \nu)$.
\end{lem}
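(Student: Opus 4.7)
The plan is to exhibit $Y$ as the union of the supports of a countable dense set in $L^p(X,\mu)$, and then verify that restriction to $Y$ is the desired isometric isomorphism.

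First, I would pick a countable dense subset $\{f_n\}_{n \in \N}$ of $L^p(X, \mu)$. For each $n$, the set $E_n = \{x \in X \colon f_n(x) \neq 0\}$ is $\sigma$-finite in the sense that $E_n = \bigcup_{k \in \N} E_{n,k}$ with $E_{n,k} = \{x \colon |f_n(x)| \geq 1/k\}$; Markov's inequality gives $\mu(E_{n,k}) \leq k^p \| f_n \|_p^p < \infty$. Then I would set $Y = \bigcup_{n \in \N} E_n \in \cB$ and let $\nu = \mu|_Y$. The countable union $Y = \bigcup_{n,k} E_{n,k}$ of sets of finite measure shows that $\YCN$ is $\sigma$-finite, where $\cC$ is the $\sigma$-algebra of measurable subsets of $Y$.

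Next I would construct the isomorphism. Extension by zero defines an isometric map $J \colon L^p(Y, \nu) \to L^p(X, \mu)$, sending $g$ to the function that equals $g$ on $Y$ and $0$ on $X \setminus Y$. By construction, each $f_n$ is in the image of~$J$. The image of~$J$ is a closed subspace (since $J$ is isometric), and it contains the dense set $\{f_n\}$, so $J$ is surjective. Hence $J$ is an isometric isomorphism from $L^p(Y, \nu)$ onto $L^p(X, \mu)$, which gives the conclusion.

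The only slightly delicate point is verifying that the range of $J$ is closed, but this is immediate from $J$ being isometric on the complete space $L^p(Y, \nu)$. Alternatively, one can argue directly by choosing a subsequence $f_{n_k} \to f$ in $L^p(X, \mu)$ that converges $\mu$-a.e.; since each $f_{n_k}$ vanishes off $Y$, so does $f$. I don't anticipate any real obstacle here, as the argument is a standard exhaustion by supports.
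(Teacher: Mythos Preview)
Your argument is correct and self-contained. The paper's own proof is simply a citation to the Corollary to Theorem~3 in Section~15 of Lacey's \emph{The Isometric Theory of Classical Banach Spaces}, so there is no detailed argument in the paper to compare against. Your route---take a countable dense set, form the union $Y$ of the supports of chosen representatives, and note that extension by zero from $L^p(Y,\mu|_Y)$ is an isometry with closed range containing a dense set---is the standard elementary exhaustion argument. It has the minor bonus of producing $Y$ as a measurable subset of~$X$ with $\nu=\mu|_Y$, rather than an abstract $\sigma$-finite space, which is slightly more concrete than what the lemma actually requires.
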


\begin{proof}
See the Corollary to Theorem~3
in Section~15 of~\cite{Lcy}.
\end{proof}

The following result will be used often enough that we restate it here.

\begin{prp}\label{P_5204_ContrPj}
Let $p \in [1, \infty) \setminus \{ 2 \}$.
Let $\XBM$ be a measure space and let $e \in \LLp$
be an idempotent.
Then $\| e \| \leq 1$
\ifo{} there exists a \msp{} $\YCN$
and an isometric bijection from $L^p (Y, \nu)$
to the range of~$e$.
Moreover,
if $L^p (X, \mu)$ is separable,
then $\nu$ can be chosen to be \sft.
\end{prp}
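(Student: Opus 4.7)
The plan is to invoke the classical structure theory for contractive idempotents on $L^p$-spaces when $p \neq 2$, going back to Douglas (for $p=1$) and Ando (for general $p \in [1, \infty) \setminus \{2\}$): a bounded idempotent $e \in \LLp$ is contractive \ifo{} there exist a sub-$\sm$-algebra $\cB_0 \subset \cB$, a measurable weight function~$h$, and a measurable set transformation such that $e$ acts as a weighted conditional expectation relative to~$\cB_0$. In that case the range of $e$ is canonically isometric to the $L^p$-space built from $\cB_0$ with the weighted measure $|h|^p \, d\mu|_{\cB_0}$.

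For the direction $(\Rightarrow)$, assuming $\|e\| \leq 1$, extract $(\cB_0, h)$ from the structure theorem and set $(Y, \nu) = (X, \cB_0, |h|^p \, d\mu|_{\cB_0})$. Let $U \colon L^p(Y, \nu) \to L^p(X, \mu)$ be the natural map (multiplication by~$h$ followed by inclusion of $\cB_0$-measurable functions into $L^p(X, \mu)$). A direct computation using the explicit weighted-conditional-expectation form of~$e$ shows that $U$ is a surjective linear isometry onto the range of~$e$, which is the required isometric bijection.

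For the direction $(\Leftarrow)$, given an isometric bijection $U \colon L^p(Y, \nu) \to F$ where $F$ denotes the range of~$e$, apply the converse half of the structure theory: any closed subspace of $L^p(X, \mu)$ that is isometric to an $L^p$-space is the range of a contractive projection, and for $p \in [1,\infty)\setminus\{2\}$ this contractive projection is uniquely determined by its range (through the weighted-conditional-expectation form). Identifying~$e$ with the canonical contractive projection onto~$F$ forces $\|e\| \leq 1$.

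For the separability refinement, observe that if $L^p(X, \mu)$ is separable then so is the subspace $F$, hence so is $L^p(Y, \nu)$ via~$U$, and \Lem{L_6416_SepSFT} then allows replacing $(Y, \nu)$ by a \sfm{} inducing the same $L^p$-space. The main obstacle is the careful invocation of the Douglas--Ando structure theorem in the forward direction; the hypothesis $p \neq 2$ enters because at $p=2$ every closed subspace is trivially isometric to an $\ell^2$-space, so the characterization of ranges of contractive projections via ``isometric to some $L^p$-space'' only has content when $p \neq 2$.
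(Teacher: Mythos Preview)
Your forward direction ($\|e\|\le 1 \Rightarrow$ the range of $e$ is isometric to an $L^p$-space) is correct and is exactly what the paper's cited reference (Theorem~3 in Section~17 of Lacey) provides; invoking the Douglas--Ando weighted-conditional-expectation description is a legitimate, more explicit, route to the same conclusion.  The separability refinement via \Lem{L_6416_SepSFT} is also fine.

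The backward direction, however, has a genuine gap, and the gap is fatal: the implication is simply false.  You correctly note that if $F=\operatorname{ran}(e)$ is isometric to an $L^p$-space then there exists \emph{some} contractive projection onto~$F$, and that for $p\neq 2$ this contractive projection is unique.  But nothing forces the given idempotent~$e$ to coincide with that contractive projection: two idempotents can share a range while having different kernels.  Concretely, for $p\in(1,\infty)\setminus\{2\}$ take
$e=\left(\begin{smallmatrix}1&1\\0&0\end{smallmatrix}\right)\in\MP{2}{p}$
as in \Exa{E_6807_NotNorm1}.  Its range is $\C\cdot(1,0)$, a one-dimensional subspace trivially isometric to an $L^p$-space, yet $\|e\|=2^{1-1/p}>1$; the unique contractive projection with that range is
$\left(\begin{smallmatrix}1&0\\0&0\end{smallmatrix}\right)$, not~$e$.
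Your argument breaks precisely at the phrase ``Identifying~$e$ with the canonical contractive projection onto~$F$,'' which is an assumption rather than a deduction.  (The paper only ever uses the ``only if'' direction, so this does not affect any downstream results.)
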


\begin{proof}
This is part of Theorem~3 in Section~17 of~\cite{Lcy}.
\end{proof}

\begin{prp}\label{P_4Y19_Unital}
Let $A$ be a unital $L^p$~operator algebra in
which $\| 1 \| = 1$.
Then $A$ has an isometric
unital representation on an $L^p$~space.
If $A$ is separable then the $L^p$~space can be
chosen to be separable
and to be the $L^p$~space of a \sfm.
\end{prp}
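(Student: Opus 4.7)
The plan is to start with any isometric representation of~$A$, restrict to the range of the image of the unit, and then identify that range with an $L^p$~space.

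By Definition~\ref{D_4310_LpOpAlg}, there exist a measure space $\XBM$ and an isometric isomorphism $\pi \colon A \to L (L^p (X, \mu))$. Since $\pi$ is isometric and $\| 1 \| = 1$, the operator $e = \pi(1)$ is an idempotent with $\| e \| = 1$. Setting $F = e L^p (X, \mu)$, Corollary~\ref{C_5125_CutAlg} produces an isometric unital representation $\pi_0 \colon A \to L(F)$ given by $\pi_0 (a) \xi = \pi (a) \xi$ for $\xi \in F$. For $p \neq 2$, Proposition~\ref{P_5204_ContrPj} supplies a measure space $\YCN$ and an isometric bijection $u \colon L^p (Y, \nu) \to F$; conjugating $\pi_0$ by~$u$ yields the desired unital isometric representation on $L^p (Y, \nu)$. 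For $p = 2$, a norm one idempotent on a Hilbert space is an orthogonal projection, so $F$ is itself a Hilbert space and is therefore isometrically $L^2$ on some index set with counting measure, and the same conjugation argument finishes this case.

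For the separable case, start instead with a separable isometric representation of $A$, which exists by Proposition~\ref{P_4Y19_SepImpSepRep}. Then $F$ is a closed subspace of a separable space, so $L^p (Y, \nu)$ is separable. When $p \neq 2$, the ``moreover'' clause of Proposition~\ref{P_5204_ContrPj} lets us arrange that $\nu$ is \sft{} directly; when $p = 2$, the index set corresponding to an orthonormal basis of~$F$ is automatically countable, so counting measure is \sft. As a safety net, once $L^p (Y, \nu)$ is known to be separable, Lemma~\ref{L_6416_SepSFT} lets us replace~$\nu$ by a \sft{} measure having the same $L^p$~space.

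I do not foresee a substantial obstacle: the heavy lifting is already packaged into Corollary~\ref{C_5125_CutAlg} and Proposition~\ref{P_5204_ContrPj}, and the rest is bookkeeping. The only mild subtlety is that Proposition~\ref{P_5204_ContrPj} excludes $p = 2$, but there the self-adjointness of norm one idempotents on a Hilbert space makes the corresponding identification of~$F$ with an $L^2$~space immediate.
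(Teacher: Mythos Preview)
Your proposal is correct and follows essentially the same route as the paper: restrict an isometric representation to the range of the image of the identity via Corollary~\ref{C_5125_CutAlg}, then invoke Proposition~\ref{P_5204_ContrPj} to identify that range with an $L^p$~space, and for the separable part start from Proposition~\ref{P_4Y19_SepImpSepRep}. One small point: in your first sentence you call $\pi$ an ``isometric isomorphism'' onto $L(L^p(X,\mu))$, but of course it is only an isometric representation (equivalently, an isometric isomorphism onto a closed subalgebra); the rest of your argument uses it correctly. Your explicit handling of the case $p=2$, which falls outside the hypothesis of Proposition~\ref{P_5204_ContrPj}, is a nice bit of extra care that the paper's proof leaves implicit.
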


\begin{proof}
Let $\XBM$ be a measure space such that
there is an isometric representation
$\rh \colon A \to \LLp$.
Then
$e = \rh (1)$ is an idempotent in $\LLp$
with $\| e \| = 1$.
Set $E = {\operatorname{ran}} (e)$.
Then $\rh$ induces an isometric unital \hm{}
$\rh_0 \colon A \to L (E)$ by Corollary~\ref{C_5125_CutAlg}.
By Proposition~\ref{P_5204_ContrPj}, there is a \msp{} $\YCN$
such that $E$ is isometrically isomorphic to $L^p (Y, \nu)$.
The first part of the conclusion follows.
For the second part,
if $A$ is separable then we may require that $L^p (X, \mu)$
be separable by Proposition~\ref{P_4Y19_SepImpSepRep}.
Then $E$ must be separable.
Proposition~\ref{P_5204_ContrPj}
implies that $E$ is isometrically isomorphic to
the $L^p$~space of a \sfm.
\end{proof}

\section{Matrix normed algebras and matricial $L^p$~operator
 algebras}\label{Sec_MatLp}

\indent
We will mostly work with ordinary $L^p$~operator algebras,
but for some results we will need the matrix normed version
introduced here (Definition~\ref{D_4Y19_MNLpOpAlg}).
We also need the analogs of
Proposition~\ref{P_4Y19_SepImpSepRep}
and Proposition~\ref{P_4Y19_Unital}
for matricial $L^p$~operator algebras;
see Proposition~\ref{P_4Y19_CISepImpCISepRep} and
Proposition~\ref{P_4Y19_CIUnital}.

Matrix normed spaces
(operator spaces of various kinds)
are well known,
but we have not seen a general definition of a matrix normed algebra.
We therefore give one here
(Definition~\ref{D_4Y19_MatNAlg}).
The conditions on the matrix norms
seem to be the minimal ``reasonable'' conditions.
Condition~(\ref{D_4Y19_MatNAlg_SbMat})
essentially says that submatrices have smaller norm.
We first describe our (fairly standard) notation for matrices.

\begin{ntn}\label{N_6808_Mn}
Let $n \in \N$.
Then $M_n$ denotes the algebra of $n \times n$ complex matrices
(without any specific norm being assumed).
For $j, k \in \{ 1, 2, \ldots, n \}$,
we let $e_{j, k}$ denote the corresponding standard
matrix unit of $M_n$.
For any complex algebra~$A$,
we identify the algebra $M_n (A)$ with $M_n \otimes A$
via
${\textdisp{(a_{j, k})_{1 \leq j, k \leq n}
 \mapsto \sum_{j, k = 1}^n e_{j, k} \otimes a_{j, k} }}$.
For $x \in M_n$ and $a \in M_n (A)$,
the products $x a$ and $a x$ are defined in the obvious way,
so that $x (y \otimes b) = x y \otimes b$
and $(y \otimes b) x = y x \otimes b$
for $y \in M_n$ and $b \in A$.
\end{ntn}

\begin{dfn}\label{D_4Y19_MatNAlg}
A {\emph{matrix normed algebra}}
is a complex algebra $A$ equipped with
algebra norms $\| \cdot \|_n$
on $M_n (A)$ for all $n \in \N$,
satisfying the following:
\begin{enumerate}
\item\label{D_4Y19_MatNAlg_SbMat}
For any $m, n \in \N$
with $m \leq n$,
any injective functions
\[
\sm, \ta \colon \{ 1, 2, \ldots, m \} \to \{ 1, 2, \ldots, n \},
\]
and any $a = (a_{j, k})_{1 \leq j, k \leq n} \in M_n (A)$,
we have
\[
\big\| (a_{\sm (j), \ta (k)})_{1 \leq j, k \leq m} \big\|_m
 \leq \| a \|_n.
\]
\item\label{D_4Y19_MatNAlg_PrdDiag}
For any $n \in \N$,
any $a \in M_n (A)$,
and any
$\ld_1, \ld_2, \ldots, \ld_n \in \C$,
if we set $s = \diag (\ld_1, \ld_2, \ldots, \ld_n) \in M_n$,
then
\[
\| a s \|_n, \,
\| s a \|_n
 \leq \max \big( |\ld_1|, |\ld_2|, \ldots, |\ld_n| \big) \| a \|_n.
\]
\item\label{D_4Y19_MatNAlg_DSum}
For any $m, n \in \N$,
$a \in M_m (A)$,
and $b \in M_n (A)$,
we have
\[
\| \diag (a, b) \|_{m + n} = \max (\| a \|_m, \, \| b \|_n ).
\]
\end{enumerate}
We abbreviate $\| \cdot \|_1$ to $\| \cdot \|$.
If $A$ is complete in $\| \cdot \|$, we call $A$ a
{\emph{matrix normed Banach algebra}}.
\end{dfn}

\begin{rmk}\label{R_5205_Redundant}
In Definition~\ref{D_4Y19_MatNAlg},
if $A$ is unital and $\| 1 \| = 1$, or even if $A$ has
an approximate identity which is bounded by~$1$,
condition~(\ref{D_4Y19_MatNAlg_PrdDiag})
follows from condition~(\ref{D_4Y19_MatNAlg_DSum})
and submultiplicativity of $\| \cdot \|_n$.
\end{rmk}

\begin{rmk}\label{R_6408_IneqRed}
In Definition~\ref{D_4Y19_MatNAlg},
the inequality
$\| \diag (a, b) \|_{m + n} \geq \max (\| a \|_m, \, \| b \|_n )$
in condition~(\ref{D_4Y19_MatNAlg_DSum})
follows from condition~(\ref{D_4Y19_MatNAlg_SbMat}).
\end{rmk}

\begin{lem}\label{L_6408_matrixnorms}
Let $A$ be a matrix normed algebra,
let $n \in \N$,
and let $a = (a_{j, k})_{1 \leq j, k \leq n} \in M_n (A)$.
Then
\[
\max_{1 \leq j, k \leq n} \| a_{j, k} \|
\leq \| a \|_n
\leq \sum_{j, k = 1}^n \| a_{j, k} \|.
\]
\end{lem}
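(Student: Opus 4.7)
The plan is to handle the two inequalities separately, with the lower bound essentially automatic and the upper bound following from a decomposition-of-$a$ strategy.

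For the lower bound, I would apply Definition~\ref{D_4Y19_MatNAlg}(\ref{D_4Y19_MatNAlg_SbMat}) with $m = 1$, $\sigma (1) = j$, and $\tau (1) = k$. This immediately gives $\| a_{j, k} \| = \| a_{j, k} \|_1 \leq \| a \|_n$ for every $j, k$, hence $\max_{j, k} \| a_{j, k} \| \leq \| a \|_n$.

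For the upper bound, the plan is to write $a = \sum_{j, k = 1}^n b^{(j, k)}$ where $b^{(j, k)} \in M_n (A)$ is the matrix with $a_{j, k}$ at position $(j, k)$ and zeros elsewhere, and then apply the triangle inequality for the algebra norm $\| \cdot \|_n$. The substantive step is to show that $\| b^{(j, k)} \|_n = \| a_{j, k} \|$ for all $j, k$. I would do this in two moves. First, the matrix $\diag (a_{j, k}, 0, 0, \ldots, 0) \in M_n (A)$ has norm $\| a_{j, k} \|$: this is obtained by $n - 1$ applications of condition~(\ref{D_4Y19_MatNAlg_DSum}) together with the fact that every algebra norm sends $0$ to~$0$. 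Second, row and column permutations preserve $\| \cdot \|_n$: apply condition~(\ref{D_4Y19_MatNAlg_SbMat}) with $m = n$ and bijections $\sigma, \tau \colon \{ 1, \ldots, n \} \to \{ 1, \ldots, n \}$, and then apply the same inequality with $\sigma^{-1}, \tau^{-1}$ to the already relabeled matrix to get the reverse inequality. Choosing $\sigma$ and $\tau$ to be any bijections with $\sigma (1) = j$ and $\tau (1) = k$, the permuted version of $\diag (a_{j, k}, 0, \ldots, 0)$ is exactly $b^{(j, k)}$, so $\| b^{(j, k)} \|_n = \| a_{j, k} \|$.

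Combining, the triangle inequality yields
\[
\| a \|_n \leq \sum_{j, k = 1}^n \| b^{(j, k)} \|_n = \sum_{j, k = 1}^n \| a_{j, k} \|,
\]
which is the upper bound. There is no real obstacle; the only point requiring a moment of care is the symmetric use of condition~(\ref{D_4Y19_MatNAlg_SbMat}) to promote the submatrix inequality to a permutation invariance statement, since condition~(\ref{D_4Y19_MatNAlg_SbMat}) is stated only as a one-sided bound.
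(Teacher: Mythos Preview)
Your proposal is correct and follows essentially the same route as the paper: the lower bound via condition~(\ref{D_4Y19_MatNAlg_SbMat}) with $m=1$, and the upper bound by showing permutation invariance from two applications of~(\ref{D_4Y19_MatNAlg_SbMat}), combining with~(\ref{D_4Y19_MatNAlg_DSum}) to get $\|b^{(j,k)}\|_n = \|a_{j,k}\|$, and summing. The paper writes your $b^{(j,k)}$ as $e_{j,j} a e_{k,k}$, but the argument is identical.
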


\begin{proof}
The first inequality follows from
Definition \ref{D_4Y19_MatNAlg}(\ref{D_4Y19_MatNAlg_SbMat}).
We prove the second inequality.
First, two applications of condition~(\ref{D_4Y19_MatNAlg_SbMat}),
taking $\sm$ and $\ta$ there to be permutations,
show that permuting the rows and also permuting the columns
of a matrix does not change its norm.
Using this fact at the second step
and condition~(\ref{D_4Y19_MatNAlg_DSum}) at the first step,
we get
\[
\|a_{j, k}\|
 = \|\diag (a_{j, k}, 0, \ldots, 0) \|_n
 = \| e_{j, j} a e_{k, k} \|_n.
\]
Apply this to the relation
$a = {\textdisp{ \sum_{j, k = 1}^m e_{j, j} a e_{k, k} }}$
to complete the proof.
\end{proof}

\begin{cor}\label{C_6808_MnComplete}
Let $A$ be a matrix normed Banach algebra.
Then $M_n (A)$ is complete for all $n \in \N$.
\end{cor}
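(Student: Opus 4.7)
The plan is to leverage Lemma~\ref{L_6408_matrixnorms} to reduce completeness of $M_n(A)$ in $\|\cdot\|_n$ to completeness of $A$ entry-by-entry. The double inequality
\[
\max_{1 \leq j, k \leq n} \| a_{j, k} \|
 \leq \| a \|_n
 \leq \sum_{j, k = 1}^n \| a_{j, k} \|
\]
says exactly that $\|\cdot\|_n$ is equivalent to the ``sum of entry norms'' norm on $M_n(A)$, under which $M_n(A)$ is visibly isometric to the $\ell^1$-direct sum of $n^2$ copies of $A$.

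Concretely, I would take a Cauchy sequence $(a^{(l)})_{l \in \N}$ in $(M_n(A), \|\cdot\|_n)$, write $a^{(l)} = (a_{j, k}^{(l)})_{1 \leq j, k \leq n}$, and apply the left inequality of Lemma~\ref{L_6408_matrixnorms} to $a^{(l)} - a^{(l')}$ to conclude that, for each fixed $j, k$, the sequence $(a_{j, k}^{(l)})_{l \in \N}$ is Cauchy in $A$. Since $A$ is complete in $\|\cdot\|$, we obtain limits $a_{j, k} \in A$, and I set $a = (a_{j, k})_{1 \leq j, k \leq n} \in M_n(A)$. Applying the right inequality of Lemma~\ref{L_6408_matrixnorms} to $a^{(l)} - a$ then yields
\[
\| a^{(l)} - a \|_n \leq \sum_{j, k = 1}^n \| a_{j, k}^{(l)} - a_{j, k} \| \longrightarrow 0
\]
as $l \to \infty$, so $a^{(l)} \to a$ in $\|\cdot\|_n$.

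There is no real obstacle here; Lemma~\ref{L_6408_matrixnorms} does all of the work, and the proof is essentially a bookkeeping exercise. The only thing to check is that the entrywise limits assemble into an element of $M_n(A)$, which is automatic since $M_n(A)$ is just the set of $n \times n$ arrays over~$A$.
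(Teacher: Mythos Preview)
Your proof is correct and follows exactly the approach the paper intends: the paper's proof simply says ``This is immediate from Lemma~\ref{L_6408_matrixnorms},'' and you have spelled out the standard equivalence-of-norms argument that this lemma enables.
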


\begin{proof}
This is immediate from Lemma~\ref{L_6408_matrixnorms}.
\end{proof}

For clarity, we state the standard definitions related to
completely bounded maps.

\begin{dfn}\label{D_4Y19_CB}
Let $A$ and $B$ be matrix normed algebras,
and let $\ph \colon A \to B$ be a linear map.
For $n \in \N$,
write $\ph^{(n)}$ or $\id_{M_n} \otimes \ph$
for the map $M_n (A) \to M_n (B)$
determined by
$(a_{j, k})_{1 \leq j, k \leq n} \mapsto
  ( \ph (a_{j, k}))_{1 \leq j, k \leq n}$.
Then:
\begin{enumerate}
\item\label{D_4Y19_CB_CB}
We set
${\textdisp{\| \ph \|_{\mathrm{cb}}
 = \sup_{n \in \N} \| \ph^{(n)} \| }}$.
If $\| \ph \|_{\mathrm{cb}} < \I$,
we say that $\ph$ is {\emph{completely bounded}}.
\item\label{D_4Y19_CB_CCntr}
We say that $\ph$ is {\emph{completely contractive}}
if $\| \ph \|_{\mathrm{cb}} \leq 1$.
\item\label{D_4Y19_CB_CIso}
We say that $\ph$ is {\emph{completely isometric}}
if $\ph^{(n)}$ is isometric (not necessarily surjective)
for all $n \in \N$.
\item\label{D_4Y19_CB_CII}
We say that $\ph$ is a {\emph{completely isometric isomorphism}}
if $\ph$ is completely isometric and bijective.
\end{enumerate}
\end{dfn}

\begin{dfn}\label{D_6408_Ops}
Let $A$ be a matrix normed algebra.
\begin{enumerate}
\item\label{D_6408_Ops_Sub}
Let $B$ be a subalgebra of~$A$.
For $n \in \N$,
we define the norm $\| \cdot \|_n$ on $M_n (B)$
to be the restriction to $M_n (B)$
of the given norm on $M_n (A)$.
\item\label{D_6408_Ops_Quot}
Let $J \subset A$ be a closed ideal.
For $n \in \N$,
we define the norm $\| \cdot \|_n$ on $M_n (A / J)$
to be the quotient norm coming from
the obvious identification of $M_n (A / J)$
with $M_n (A) / M_n (J)$.
\end{enumerate}
\end{dfn}

\begin{lem}\label{L_6408_OpsAre}
Let $A$ be a matrix normed algebra.
\begin{enumerate}
\item\label{L_6408_OpsAre_Sub}
Let $B \subset A$ be a subalgebra.
Then the norms in Definition \ref{D_6408_Ops}(\ref{D_6408_Ops_Sub})
make $B$ a matrix normed algebra,
and the inclusion map is completely isometric.
\item\label{L_6408_OpsAre_Quot}
Let $J \subset A$ be a closed ideal.
Then the norms in Definition \ref{D_6408_Ops}(\ref{D_6408_Ops_Quot})
make $A / J$ a matrix normed algebra,
and the quotient map is completely contractive.
\end{enumerate}
\end{lem}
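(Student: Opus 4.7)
Both parts reduce to straightforward verifications of conditions (\ref{D_4Y19_MatNAlg_SbMat})--(\ref{D_4Y19_MatNAlg_DSum}) of Definition~\ref{D_4Y19_MatNAlg}, together with the algebra-norm property and completeness of the constituent norms (or closedness of $J$, in the quotient case). Part~(\ref{L_6408_OpsAre_Sub}) is essentially formal; part~(\ref{L_6408_OpsAre_Quot}) requires a ``lift to representatives'' argument carried out separately for each condition.

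For~(\ref{L_6408_OpsAre_Sub}), each of the three conditions in Definition~\ref{D_4Y19_MatNAlg} is a universally quantified (in)equality on elements of $M_n(A)$, $M_m(A) \times M_n(A)$, etc. Since $M_n(B) \S M_n (A)$ for every $n$ and the norms on $M_n(B)$ are defined as the restrictions of those on $M_n(A)$, each condition descends automatically to $B$. The restricted $\| \cdot \|_n$ is an algebra norm on $M_n(B)$ because algebra norms restrict to algebra norms on subalgebras. Completeness of $B$ in $\| \cdot \|$ is not asserted (the lemma says ``matrix normed algebra'', not ``Banach''), so no further check is needed. That the inclusion $B \hookrightarrow A$ is completely isometric is immediate from the definition of the norms on $M_n(B)$.

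For~(\ref{L_6408_OpsAre_Quot}), I would first observe that $M_n(J)$ is a closed two-sided ideal in $M_n(A)$: closedness follows from the entrywise estimate $\|a_{j,k}\| \leq \|a\|_n$ in Lemma~\ref{L_6408_matrixnorms} (so that a norm-limit in $M_n(A)$ of matrices with entries in the closed set $J$ again has entries in $J$), and the ideal property holds because matrix multiplication produces entries that are sums of products in which at least one factor lies in~$J$. Hence the quotient norm makes $M_n(A)/M_n(J) \cong M_n(A/J)$ an algebra norm. To verify condition~(\ref{D_4Y19_MatNAlg_SbMat}), I lift $a \in M_n(A/J)$ to $\tilde a \in M_n(A)$, observe that the submatrix $(\tilde a_{\sm(j), \ta(k)})$ is a lift of the corresponding submatrix of~$a$, apply the condition in $M_n(A)$, and then take the infimum over lifts~$\tilde a$. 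Condition~(\ref{D_4Y19_MatNAlg_PrdDiag}) is handled identically, using that multiplication by the scalar matrix $s$ commutes with the quotient map. For condition~(\ref{D_4Y19_MatNAlg_DSum}), the inequality $\geq$ follows from~(\ref{D_4Y19_MatNAlg_SbMat}) (as in Remark~\ref{R_6408_IneqRed}).

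The most delicate step, and the main potential pitfall, is the reverse inequality $\| \diag (a, b) \|_{m + n} \leq \max ( \| a \|_m, \, \| b \|_n )$ in the quotient case. The point is that one must lift $a$ and $b$ \emph{separately} to near-optimal representatives $\tilde a \in M_m(A)$ and $\tilde b \in M_n(A)$ with $\| \tilde a \|_m < \| a \|_m + \ep$ and $\| \tilde b \|_n < \| b \|_n + \ep$, rather than attempting to lift $\diag(a, b)$ directly. Then $\diag (\tilde a, \tilde b)$ is a lift of $\diag (a, b)$, and the direct-sum equality in $M_{m+n}(A)$ yields $\| \diag (a, b) \|_{m + n} \leq \max ( \| \tilde a \|_m, \, \| \tilde b \|_n ) < \max ( \| a \|_m, \, \| b \|_n ) + \ep$; letting $\ep \to 0$ finishes the verification. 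Finally, complete contractivity of the quotient map $A \to A / J$ is immediate, since its $n$-th amplification is precisely the canonical quotient $M_n(A) \to M_n(A) / M_n(J)$, which has norm at most~$1$ by definition of the quotient norm.
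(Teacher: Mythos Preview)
Your proof is correct and follows essentially the same approach as the paper's: both argue part~(\ref{L_6408_OpsAre_Sub}) is immediate, and for part~(\ref{L_6408_OpsAre_Quot}) both verify each axiom by lifting to near-optimal representatives in $M_n(A)$, applying the axiom there, and using contractivity of the quotient map (with the $\geq$ half of~(\ref{D_4Y19_MatNAlg_DSum}) recovered from~(\ref{D_4Y19_MatNAlg_SbMat}) via Remark~\ref{R_6408_IneqRed}). You supply more detail than the paper does---in particular, you make explicit that $M_n(J)$ is a closed ideal (via Lemma~\ref{L_6408_matrixnorms}) and that the $\leq$ inequality in~(\ref{D_4Y19_MatNAlg_DSum}) requires lifting $a$ and $b$ separately rather than lifting $\diag(a,b)$---but the underlying argument is the same.
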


\begin{proof}
Part~(\ref{L_6408_OpsAre_Sub}) is immediate.

We prove part~(\ref{L_6408_OpsAre_Quot}).
Let $\pi \colon A \to A / J$ be the quotient map.
Complete contractivity of $\pi$ is immediate.
For Definition \ref{D_4Y19_MatNAlg}(\ref{D_4Y19_MatNAlg_SbMat}),
let $m, n \in \N$
with $m \leq n$,
let
\[
\sm, \ta \colon \{ 1, 2, \ldots, m \} \to \{ 1, 2, \ldots, n \}
\]
be injective functions,
and let $x = (x_{j, k})_{1 \leq j, k \leq n} \in M_n (A / J)$.
Let $\ep > 0$.
Choose $a = (a_{j, k})_{1 \leq j, k \leq n} \in M_n (A)$
such that $\pi^{(n)} (a) = x$ and $\| a \|_n < \| x \|_n + \ep$.
Since $\pi^{(m)}$ is contractive, we have
\[
\big\| (x_{\sm (j), \ta (k)})_{1 \leq j, k \leq m} \big\|_m
 \leq \big\| (a_{\sm (j), \ta (k)})_{1 \leq j, k \leq m} \big\|_m
 \leq \| a \|_n
 < \| x \|_n + \ep.
\]

The proofs of
Definition \ref{D_4Y19_MatNAlg}(\ref{D_4Y19_MatNAlg_PrdDiag})
and the inequality
\[
\| \diag (a, b) \|_{m + n} \leq \max (\| a \|_m, \, \| b \|_n )
\]
in Definition~\ref{D_4Y19_MatNAlg}(\ref{D_4Y19_MatNAlg_DSum})
are similar.
Equality in Definition \ref{D_4Y19_MatNAlg}(\ref{D_4Y19_MatNAlg_DSum})
now follows from Remark~\ref{R_6408_IneqRed}.
\end{proof}

\begin{dfn}\label{D_5205_CxPerm}
Let $n \in \N$.
A matrix $s \in M_n$ is a {\emph{permutation matrix}}
if there exists a bijection
$\sm \colon \{ 1, 2, \ldots, n \} \to \{ 1, 2, \ldots, n \}$
such that ${\textdisp{s = \sum_{j = 1}^n e_{ \sm (j), \, j} }}$.
The matrix $s$ is a {\emph{complex permutation matrix}}
if there exist a bijection
$\sm \colon \{ 1, 2, \ldots, n \} \to \{ 1, 2, \ldots, n \}$
and
$\ld_1, \ld_2, \ldots, \ld_n
 \in S^1 = \{z \in \C \colon |z| = 1 \}$
such that
${\textdisp{s = \sum_{j = 1}^n \ld_j e_{ \sm (j), \, j} }}$.
\end{dfn}

The complex permutation matrices form a group.

\begin{lem}\label{L_5205_MbyCxP}
Let $A$ be a matrix normed algebra and fix $n \in \N$.
Let $a \in M_n (A)$,
and let $s \in M_n$ be a complex permutation matrix.
Interpret
$a s$ and $s a$
as in Notation~\ref{N_6808_Mn}.
Then $\| a s \|_n = \| s a \|_n = \| a \|_n$.
\end{lem}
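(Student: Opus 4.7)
The plan is to factor $s$ as the product of an (uncolored) permutation matrix and a diagonal unitary matrix, handle each factor separately using the corresponding axiom from Definition~\ref{D_4Y19_MatNAlg}, and then use the fact that both classes of factors are groups to promote the resulting inequalities to equalities.

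More precisely, given $s = \sum_{j=1}^n \ld_j e_{\sm(j), j}$ with $|\ld_j| = 1$, I would set $p = \sum_{j=1}^n e_{\sm(j), j}$ and $d = \diag(\ld_1, \ld_2, \ldots, \ld_n)$, so that $s = p d$. A short calculation shows that, with respect to Notation~\ref{N_6808_Mn}, the product $p a$ permutes the rows of~$a$ by $\sm^{-1}$ and $a p$ permutes the columns by~$\sm$, while $d a$ and $a d$ rescale rows and columns by the $\ld_j$.

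For the diagonal part, Definition~\ref{D_4Y19_MatNAlg}(\ref{D_4Y19_MatNAlg_PrdDiag}) gives $\| a d \|_n \le \| a \|_n$, since $\max_j |\ld_j| = 1$. The same axiom applied to $a d$ and to $d^{-1} = \diag(\ov{\ld_1}, \ldots, \ov{\ld_n})$ yields $\| a \|_n = \| (a d) d^{-1} \|_n \le \| a d \|_n$, so $\| a d \|_n = \| a \|_n$, and symmetrically $\| d a \|_n = \| a \|_n$. For the permutation part, I would use exactly the argument in the proof of Lemma~\ref{L_6408_matrixnorms}: applying Definition~\ref{D_4Y19_MatNAlg}(\ref{D_4Y19_MatNAlg_SbMat}) with $m = n$ and with $\sm,\ta$ taken to be permutations of $\{1,\ldots,n\}$ shows that row and column permutations do not increase the norm, and applying the same inequality to the inverse permutations gives equality. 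Hence $\| p a \|_n = \| a p \|_n = \| a \|_n$.

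Combining these, $\| a s \|_n = \| (a p) d \|_n = \| a p \|_n = \| a \|_n$, and $\| s a \|_n = \| p (d a) \|_n = \| d a \|_n = \| a \|_n$. There is no real obstacle here; the only slightly delicate point is recognizing that Definition~\ref{D_4Y19_MatNAlg}(\ref{D_4Y19_MatNAlg_PrdDiag}) and Definition~\ref{D_4Y19_MatNAlg}(\ref{D_4Y19_MatNAlg_SbMat}) are stated as one-sided estimates but can be turned into equalities for invertible diagonal unitaries and for bijective $\sm, \ta$, since the inverses lie in the same classes and the axioms may be reapplied to the transformed matrices.
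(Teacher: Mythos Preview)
Your proof is correct and follows essentially the same approach as the paper: factor the complex permutation matrix as a permutation matrix times a diagonal unitary, handle the permutation part via Definition~\ref{D_4Y19_MatNAlg}(\ref{D_4Y19_MatNAlg_SbMat}) and the diagonal part via Definition~\ref{D_4Y19_MatNAlg}(\ref{D_4Y19_MatNAlg_PrdDiag}), and use invertibility within each class to upgrade the inequalities to equalities. The only cosmetic difference is that the paper first invokes the group property of complex permutation matrices to reduce everything to the one-sided inequalities $\| a s \|_n \le \| a \|_n$ and $\| s a \|_n \le \| a \|_n$, whereas you establish equality for each factor separately before combining.
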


\begin{proof}
Since $s^{-1}$ is also a complex permutation matrix,
it suffices to prove that
$\| a s \|_n \leq \| a \|_n$ and $\| s a \|_n \leq \| a \|_n$.
Since a complex permutation matrix is a product of a
permutation matrix and a diagonal matrix with diagonal
entries in~$S^1$, it suffices to prove these inequalities
for these two kinds of matrices separately.
For the first kind,
apply Definition~\ref{D_4Y19_MatNAlg}(\ref{D_4Y19_MatNAlg_SbMat}).
For the second kind,
apply Definition~\ref{D_4Y19_MatNAlg}(\ref{D_4Y19_MatNAlg_PrdDiag}).
\end{proof}

\begin{dfn}\label{D_5205_MmMnMmn}
Let $m, n \in \N$ and let
\[
\sm \colon \{ 1, 2, \ldots, m \} \times \{ 1, 2, \ldots, n \}
  \to \{ 1, 2, \ldots, m n \}
\]
be a bijection.
We let
$\te_{\sm} \colon M_m \otimes M_n \to M_{m n}$
be the unique algebra isomorphism such that for
$i, j \in  \{ 1, 2, \ldots, m \}$ and $k, l \in \{ 1, 2, \ldots, n \}$,
we have
$\te_{\sm} (e_{i, j} \otimes e_{k, l})= e_{\sm (i, k), \, \sm (j, l)}$.

The {\emph{standard choice}} of bijection
is the one given by
$\sm (j, l) = j + m (l - 1)$
for $j = 1, 2, \ldots, m$ and $l = 1, 2, \ldots, n$.
\end{dfn}

\begin{dfn}\label{D_5205_MatNormMm}
Let $A$ be a matrix normed algebra
and let $m \in \N$.
We define matrix norms on $M_m (A)$ as follows.
For $n \in \N$,
choose some bijection
\[
\sm_n \colon \{ 1, 2, \ldots, n \} \times \{ 1, 2, \ldots, m \}
  \to \{ 1, 2, \ldots, n m \},
\]
and use it (and Notation~\ref{N_6808_Mn}) to
get the isomorphism
$\te_{\sm_n} \otimes \id_A \colon M_n (M_m (A)) \to M_{n m} (A)$.
For $a \in M_n (M_m (A))$,
we then use the matrix norms on~$A$
to define
\[
\| a \|_n = \| (\te_{\sm_n} \otimes \id_A) (a) \|_{n m}.
\]
\end{dfn}

\begin{lem}\label{L_5205_MNormIndep}
In Definition~\ref{D_5205_MatNormMm},
the matrix norms are independent of the choice of
$(\sm_n)_{n \in \N}$, and make $M_m (A)$ a matrix
normed algebra.
\end{lem}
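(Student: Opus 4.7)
The plan is to first establish that the matrix norms are independent of the choice of bijections $(\sm_n)_{n \in \N}$, then verify the three axioms of Definition~\ref{D_4Y19_MatNAlg} for $M_m (A)$, and finally check that each $\| \cdot \|_n$ is submultiplicative on $M_n (M_m (A))$.

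For independence, I would fix $n \in \N$ and two bijections $\sm_n, \sm_n' \colon \{1, \ldots, n\} \times \{1, \ldots, m\} \to \{1, \ldots, nm\}$. The composition $\te_{\sm_n'} \circ \te_{\sm_n}^{-1}$ is an algebra automorphism of $M_{nm}$ sending each $e_{a, b}$ to $e_{\rh (a), \rh (b)}$ with $\rh = \sm_n' \circ \sm_n^{-1}$, hence is conjugation by the permutation matrix $s = \sum_a e_{\rh (a), \, a} \in M_{nm}$. Tensoring with $\id_A$ gives the map $X \mapsto s X s^{-1}$ on $M_{nm} (A)$, which preserves $\| \cdot \|_{nm}$ by two applications of Lemma~\ref{L_5205_MbyCxP}. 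Thus the two candidate norms on $M_n (M_m (A))$ coincide.

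Next, I would verify the three axioms, using the freedom of choice afforded by independence to pick a convenient $\sm_n$ for each one. For~(\ref{D_4Y19_MatNAlg_SbMat}), given injective $\sm, \ta \colon \{1, \ldots, n'\} \to \{1, \ldots, n\}$ and $x \in M_n (M_m (A))$, the image of the submatrix $(x_{\sm (j), \ta (k)})_{1 \leq j, k \leq n'}$ under $\te_{\sm_{n'}} \otimes \id_A$ appears as a submatrix of $(\te_{\sm_n} \otimes \id_A)(x)$ in $M_{nm} (A)$, with rows and columns indexed by injective functions into $\{1, \ldots, nm\}$, so the bound follows from axiom~(\ref{D_4Y19_MatNAlg_SbMat}) for $A$. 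For~(\ref{D_4Y19_MatNAlg_PrdDiag}), right multiplication of $x$ by $\diag (\ld_1, \ldots, \ld_n) \in M_n$ corresponds, under $\te_{\sm_n} \otimes \id_A$, to right multiplication in $M_{nm} (A)$ by a diagonal matrix each of whose entries is one of the $\ld_k$'s, so the bound follows from axiom~(\ref{D_4Y19_MatNAlg_PrdDiag}) for $A$; the left case is symmetric. For~(\ref{D_4Y19_MatNAlg_DSum}), given $a \in M_p (M_m (A))$ and $b \in M_q (M_m (A))$, I would choose $\sm_{p + q}$ so that its restriction to $\{1, \ldots, p\} \times \{1, \ldots, m\}$ is $\sm_p$ and its restriction to $\{p + 1, \ldots, p + q\} \times \{1, \ldots, m\}$ is $\sm_q$ shifted by $pm$; then $\te_{\sm_{p + q}} \otimes \id_A$ carries $\diag (a, b)$ to the block diagonal of the images of $a$ and $b$ inside $M_{(p + q) m} (A)$, and axiom~(\ref{D_4Y19_MatNAlg_DSum}) for $A$ closes the argument.

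Since $\te_{\sm_n}$ is an algebra isomorphism, so is $\te_{\sm_n} \otimes \id_A$, and submultiplicativity of $\| \cdot \|_n$ on $M_n (M_m (A))$ is immediate from submultiplicativity of $\| \cdot \|_{nm}$ on $M_{nm} (A)$. The only real obstacle is careful index bookkeeping when translating the submatrix and direct-sum axioms between the $M_n (M_m (A))$ and $M_{nm} (A)$ pictures; the freedom, granted by the independence step, to choose $\sm_n$ separately for each axiom is what keeps this bookkeeping routine.
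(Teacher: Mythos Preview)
Your argument is correct and is precisely the detailed unfolding of the paper's own proof, which merely says that independence follows from Lemma~\ref{L_5205_MbyCxP} and that the matrix normed algebra axioms follow easily from Definition~\ref{D_4Y19_MatNAlg}. You have supplied exactly the bookkeeping the paper omits, using the same key lemma and the same strategy of transporting each axiom through $\te_{\sm_n} \otimes \id_A$.
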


\begin{proof}
Independence of $(\sm_n)_{n \in \N}$ follows from
Lemma~\ref{L_5205_MbyCxP},
and the fact that one gets a matrix normed algebra
follows easily from Definition~\ref{D_4Y19_MatNAlg}.
\end{proof}

\begin{dfn}\label{D_4Y19_LpMN}
Let $\XBM$ be a \msp,
and let $A \subset \LLp$ be a closed subalgebra. We
equip $A$ with the matrix norms coming from the
identification of $M_n (A)$ with a closed subalgebra of
$L \big( L^p ( \{ 1, 2, \ldots, n \} \times X,
 \, \nu \times \mu) \big)$,
in which $\nu$ is counting measure on $\{ 1, 2, \ldots, n \}$.
\end{dfn}

\begin{ntn}\label{N_5124_FDP}
For any set~$S$ and any $p\in [1,\infty]$, we give $l^p (S)$ the usual
meaning (using counting measure on~$S$),
and we set (as usual) $l^p = l^p (\N)$.
For $d \in \N$ we let $l^p_d = l^p \big( \{1, 2, \ldots, d \} \big)$.
We further let $\MP{d}{p} = L \big( l_d^p \big)$
with the usual operator norm,
and we algebraically identify $\MP{d}{p}$ with $M_d$
in the standard way.
For $a \in M_d^p$, we write the norm as $\| a \|_p$.
We equip $\MP{d}{p}$ with the matrix norms
as in Definition~\ref{D_4Y19_LpMN}.
\end{ntn}

\begin{lem}\label{L_5205_MatNLp}
Let $\XBM$ be a \msp,
and let $A \subset \LLp$ be a closed subalgebra.
Then $A$ is a matrix normed algebra with the matrix
norms of Definition~\ref{D_4Y19_LpMN}.
Moreover for $n \in \N$,
$a \in M_n (A)$, and $x \in M_n$,
with products as in Notation~\ref{N_6808_Mn},
we have $\| a x \|_n, \, \| x a \|_n \leq \| x \|_p \| a \|_n$.
Furthermore,
for $m \in \N$
the matrix norms on $M_m (A)$ from
Definition~\ref{D_5205_MatNormMm}
agree with those gotten from
the obvious inclusion
\[
M_m (A)
\to L \big( L^p ( \{ 1, 2, \ldots, m \} \times X,
 \, \nu \times \mu) \big),
\]
in which $\nu$ is counting measure on $\{ 1, 2, \ldots, m \}$.
\end{lem}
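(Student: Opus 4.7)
The plan is to exploit the isometric identification $L^p(\{1,\ldots,n\} \times X, \nu \times \mu) \cong L^p(X,\mu)^n$ (the $L^p$-direct sum, with $\|\xi\|^p = \sum_j \|\xi_j\|^p$), under which an element $a = (a_{j,k}) \in M_n(A) \subset \LLp$ acts by $(a\xi)_j = \sum_k a_{j,k} \xi_k$. Submultiplicativity of each $\| \cdot \|_n$ is then automatic from the operator norm, so it remains to verify the three axioms of Definition~\ref{D_4Y19_MatNAlg}, the scalar-matrix bound, and the agreement of the two matrix-norm structures on $M_m(A)$.

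For axiom~(\ref{D_4Y19_MatNAlg_SbMat}), given injections $\sm, \ta \colon \{1,\ldots,m\} \to \{1,\ldots,n\}$, I would define $V \colon L^p(\{1,\ldots,m\} \times X) \to L^p(\{1,\ldots,n\} \times X)$ by $(V\xi)_{\ta(k)} = \xi_k$ and zero elsewhere, and $W \colon L^p(\{1,\ldots,n\} \times X) \to L^p(\{1,\ldots,m\} \times X)$ by $(W\et)_j = \et_{\sm(j)}$. Then $V$ is isometric, $\|W\| \leq 1$, and direct computation shows $WaV$ is exactly the submatrix $(a_{\sm(j), \ta(k)})$, giving the norm bound. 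For axiom~(\ref{D_4Y19_MatNAlg_PrdDiag}), I would view $\diag(\ld_1,\ldots,\ld_n)$ as an operator on $L^p(\{1,\ldots,n\} \times X)$ acting componentwise with norm $\max_j |\ld_j|$, and invoke submultiplicativity. For axiom~(\ref{D_4Y19_MatNAlg_DSum}), decomposing $L^p(\{1,\ldots,m+n\} \times X)$ as the $L^p$-sum of $L^p(\{1,\ldots,m\} \times X)$ and $L^p(\{m+1,\ldots,m+n\} \times X)$, the operator $\diag(a,b)$ acts as $a \oplus b$; the direct $L^p$-sum computation yields $\|a \oplus b\| \leq \max(\|a\|_m, \|b\|_n)$, and the reverse inequality follows from Remark~\ref{R_6408_IneqRed}.

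For the second claim, the key point is that under the tensor identification, an element $x \in M_n$ acts on $L^p(\{1,\ldots,n\} \times X)$ as $x \otimes I_{L^p(X)}$, and $\|x \otimes I\| = \|x\|_p$. The inequality $\geq$ comes from inserting vectors of the form $(v_1 \et, \ldots, v_n \et)$ for fixed $\et \in L^p(X)$ of norm~$1$. For $\leq$, I would compute
\[
\|(x \otimes I) \xi\|_p^p
 = \int_X \sum_j \Bigl| \sum_k x_{j,k} \xi_k(t) \Bigr|^p \, d\mu(t)
 \leq \|x\|_p^p \int_X \sum_k |\xi_k(t)|^p \, d\mu(t)
 = \|x\|_p^p \| \xi \|_p^p,
\]
using at the middle step the definition of $\|x\|_p$ as the operator norm on $l^p_n$ applied pointwise to $(\xi_1(t), \ldots, \xi_n(t))$. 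Since $ax$ and $xa$ correspond to the compositions $a \circ (x \otimes I)$ and $(x \otimes I) \circ a$, the desired bounds follow.

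Finally, for the comparison of matrix norms on $M_m(A)$, any bijection $\sm_n$ as in Definition~\ref{D_5205_MatNormMm} induces an isometric reindexing $L^p(\{1,\ldots,n\} \times \{1,\ldots,m\} \times X) \cong L^p(\{1,\ldots,nm\} \times X)$ that intertwines the block action of $b \in M_n(M_m(A))$ on the left with the action of $(\te_{\sm_n} \otimes \id_A)(b) \in M_{nm}(A)$ on the right, so the two constructions produce the same operator norm on $M_n(M_m(A))$. The main obstacle I expect is keeping the bookkeeping straight across the various index bijections and direct-sum decompositions; no single step is deep, but the tensor-norm equality $\|x \otimes I\| = \|x\|_p$ (which underlies the second claim) and the careful setup of $V$ and $W$ in the submatrix axiom both require genuine, if standard, $L^p$-specific computations that would fail for more general Banach spaces.
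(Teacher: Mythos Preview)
Your proposal is correct and is precisely the standard verification the paper has in mind; the paper's own proof reads in its entirety ``All parts are easy,'' so you have simply written out the routine details it omits.
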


\begin{proof}
All parts are easy.
\end{proof}

\begin{dfn}\label{D_4Y19_MNLpOpAlg}
Let $p \in [1, \infty)$.
A matricial $L^p$~operator algebra is a matrix normed
Banach algebra~$A$ such that there exists a measure
space $(X, \mathcal{B}, \mu)$ and a completely isometric
isomorphism from $A$ to a norm closed subalgebra of
$L (L^p (X, \mu))$.

Using the terminology from Definition~\ref{D_7627_LpOpAlgConds},
we say that a matricial $L^p$~operator algebra $A$ is
{\emph{separably representable} }if it has a separable
completely isometric representation.
We say that $A$ is
{\emph{$\sigma$-finitely representable}} if it has a
$\sigma$-finite completely isometric representation.
We say that $A$ is {\emph{nondegenerately (separably)
representable}} if it has a nondegenerate (separable)
completely isometric representation, and
{\emph{nondegenerately $\sm$-finitely representable}}
if it has a nondegenerate $\sm$-finite completely
isometric representation.
\end{dfn}

\begin{prp}\label{P_4Y19_CISepImpCISepRep}
Let $p \in [1, \I)$, and let $A$ be a separable
matricial $L^p$~operator algebra.
Then $A$ is
separably representable.
If $A$ is nondegenerately
representable, then $A$ is separably nondegenerately
representable.
\end{prp}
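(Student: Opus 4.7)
The plan is to imitate the proof of Proposition~\ref{P_4Y19_SepImpSepRep} (Proposition~1.25 of~\cite{PhLp3}), with the enhancement that we must simultaneously control the norms at every matrix level, not just on~$A$ itself. The overall strategy is: start from a completely isometric representation $\pi \colon A \to L(L^p(X,\mu))$, select enough witness vectors so that every $\pi^{(n)}$ is well-approximated on a dense subset, and cut down to a separable $L^p$-subspace containing all these witnesses and invariant under~$\pi(A)$.

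First, for each $n \in \N$, Lemma~\ref{L_6408_matrixnorms} shows that $M_n (A)$ is separable (its norm is squeezed between the entrywise max and the entrywise sum). Choose countable dense subsets $D_n \S M_n (A)$. For each $n \in \N$, each $a \in D_n$, and each rational $\ep > 0$, the hypothesis that $\pi^{(n)}$ is isometric lets us pick a vector $\xi_{n, a, \ep} \in L^p (\{ 1, 2, \ldots, n \} \times X, \, \nu_n \times \mu)$ (where $\nu_n$ is counting measure on $\{ 1, 2, \ldots, n \}$) with $\| \xi_{n, a, \ep} \| \leq 1$ and $\| \pi^{(n)} (a) \xi_{n, a, \ep} \| > \| a \|_n - \ep$. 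Writing $\xi_{n, a, \ep}$ as an $n$-tuple of elements of $L^p (X, \mu)$ and collecting all the components over all $n$, $a$, $\ep$, we obtain a countable subset $S \S L^p (X, \mu)$.

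Now apply the method of proof of Proposition~\ref{P_4Y19_SepImpSepRep}. That proof produces, from the countable set $S$ together with the action of $\pi$ restricted to a countable dense subset of~$A$, a separable $L^p$~space $L^p (Y, \nu)$ (realized as the range of a contractive idempotent on $L^p (X, \mu)$, which by Proposition~\ref{P_5204_ContrPj} is indeed an $L^p$-space) containing (the image of)~$S$ and invariant under $\pi (A)$, together with a representation $\pi_0 \c A \to L (L^p (Y, \nu))$ obtained by restriction. For each $n$, the corresponding amplification $\pi_0^{(n)}$ acts on $L^p (\{ 1, 2, \ldots, n \} \times Y, \, \nu_n \times \nu)$, which contains every witness $\xi_{n, a, \ep}$. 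Hence $\| \pi_0^{(n)} (a) \| \geq \| a \|_n - \ep$ for all $a \in D_n$ and rational $\ep > 0$, and density of $D_n$ with continuity of $\pi_0^{(n)}$ gives $\| \pi_0^{(n)} (a) \| = \| a \|_n$ on all of $M_n (A)$. Thus $\pi_0$ is completely isometric. For the nondegenerate statement, enlarge $S$ to include a countable subset approximating $\pi (d) \et$ for $d$ in a countable dense subset of $A$ and $\et$ in a countable approximate identity–like family arising from the nondegenerate rep in the hypothesis; the same construction then yields a nondegenerate separable completely isometric representation.

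The main obstacle is the invocation of the proof of the non-matricial Proposition~\ref{P_4Y19_SepImpSepRep}: producing a separable $L^p$-subspace of $L^p (X, \mu)$ that both contains a prescribed countable set and is invariant under $\pi (A)$, while remaining an $L^p$-space (equivalently, the range of a contractive idempotent). This is essentially a measure-theoretic reduction (to a countably generated, $\sm$-finite sub-$\sm$-algebra on which $\pi (A)$ closes up), and is the technical heart of the argument. Once that step is available, the passage from the isometric to the completely isometric statement is purely a bookkeeping matter of choosing witnesses at every matrix level, as described above.
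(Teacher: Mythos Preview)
Your approach is essentially the same as the paper's: pick witness vectors at every matrix level from a given completely isometric representation, then invoke the sublattice construction in the proof of Proposition~1.25 of~\cite{PhLp3} to cut down to a separable invariant $L^p$-subspace. The only packaging difference is that the paper builds a separate separable closed sublattice $F_{n,b,m}$ for each witness triple and then takes the $L^p$~direct sum of the resulting restricted representations, whereas you build a single sublattice containing all the witness components at once; both versions work.

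One minor correction: the separable invariant subspace produced by that argument is a closed \emph{sublattice} of $L^p(X,\mu)$, not in general the range of a contractive idempotent, so Proposition~\ref{P_5204_ContrPj} is not the right justification for why it is an $L^p$~space. The reason is the standard fact (used in~\cite{PhLp3}) that a closed sublattice of an $L^p$~space is itself an $L^p$~space. Also, your handling of the nondegenerate case is a bit vague; the clean way (as in the paper) is to start from a nondegenerate completely isometric representation and note that the sublattice construction in~\cite{PhLp3} can be arranged to yield $\overline{\spn}(\rho(A)F) = F$, so the restricted representation is automatically nondegenerate.
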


\begin{proof}
For $n \in \N$
let $\nu_n$ be counting measure on $\{ 1, 2, \ldots, n \}$.
Let $S$ be a countable dense subset of~$A$,
and for $n \in \N$ define
\[
S_n = \big\{ b \in M_n (A) \colon
 {\mbox{$b_{j, k} \in S$ for $j, k = 1, 2, \ldots, n$}} \big\},
\]
which is a countable dense subset of~$M_n (A)$.

By hypothesis, there exist a measure space
$(X, \mathcal{B}, \mu)$ and a completely isometric
representation $\rho \colon A \to L(L^p (X, \mu))$, which
we can take to be
nondegenerate when $A$ is nondegenerately representable.
For any $m, n \in \N$ and $b \in S_n$,
choose
\[
\xi_{n, b, m}
 = \big( \xi_{n, b, m}^{(j)} \big)_{1 \leq j \leq n}
 \in L^p \big( \{ 1, 2, \ldots, n \} \times X, \, \nu_n \times \mu \big)
\]
such that
\[
\| \xi_{n, b, m} \|_p = 1
\andeqn
\big\| (\id_{M_n} \otimes \rho) (b) \xi_{n, b, m} \big\|
 > \| b \| - \frac{1}{m}.
\]
By the
argument used in the proof of Proposition~1.25 of~\cite{PhLp3}
there exists a separable closed sublattice $F_{n, b, m}$ of
$L^p (X, \mu)$ containing
$\xi_{n, b, m}^{(1)}, \, \xi_{n, b, m}^{(2)}, \, \ldots,
  \, \xi_{n, b, m}^{(2)}$
and such that
$\rho (A) F_{n, b, m} \subset F_{n, b, m}$.
Moreover, $F_{n, b, m}$ is
isomorphic to $L^p (Y_{n, b, m}, \nu_{n, b, m})$ for some measure
space $(Y_{n, b, m}, \nu_{n, b, m})$.
Furthermore, if $\rho$ is
nondegenerate then $F_{n, b, m}$ can be chosen to satisfy
$\overline{\spn} (\rho (A) F_{n, b, m}) = F_{n, b, m}$.
The map defined by $\pi_{n, b, m} (a) = \rho (a)|_{F_{n, b, m}}$ is a
completely contractive representation of $A$ on a separable
$L^p$-space, which is nondegenerate if $\rho$
is nondegenerate.
Since $F_{n, b, m}$ contains
$\xi_{n, b, m}^{(1)}, \, \xi_{n, b, m}^{(2)}, \, \ldots,
  \, \xi_{n, b, m}^{(2)}$,
we get
$\| (\id_{M_n} \otimes \pi_{n, b, m}) (b) \| > \| b \| - \frac{1}{m}$
for every $m \in \N$.
Now let $\pi$ be the $L^p$ direct sum
of the representations
$\pi_{n, b, m}$ for $m, n \in \N$ and $b \in S_n$,
as in Definition 1.23 of~\cite{PhLp3}.
Then $\pi$ is a completely
contractive representation on a separable $L^p$~space.
We have $\| (\id_{M_n} \otimes \pi) (b) \| = \| b \|$ for all
$n \in \N$ and $b \in S_n$,
so density of $S_n$ in $M_n (A)$
implies that
$\pi$ is completely isometric.
Moreover, by Lemma 1.24
in~\cite{PhLp3}, $\pi$ is nondegenerate if
$\rho$ is nondegenerate.
\end{proof}

\begin{prp}\label{P_4Y19_CIUnital}
Let $p \in [1, \I)$,
and let $A$ be a unital matricial $L^p$~operator algebra
in which $\| 1 \| = 1$.
Then $A$ has a completely isometric unital representation
on an $L^p$~space.
If $A$ is separable then the $L^p$~space can be chosen to
be separable and to come from a \sfm.
\end{prp}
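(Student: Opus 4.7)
The strategy is to mimic the argument for the non-matricial case (Proposition~\ref{P_4Y19_Unital}), but to perform the cutdown at every matrix level simultaneously. Start by choosing a completely isometric representation $\rho \colon A \to L(L^p(X, \mu))$ supplied by the definition of a matricial $L^p$~operator algebra, and use Proposition~\ref{P_4Y19_CISepImpCISepRep} to arrange that $L^p(X, \mu)$ is separable whenever $A$ is separable. Set $e = \rho(1)$ and $E = {\operatorname{ran}}(e)$; since $\|1\|_A = 1$ and $\rho$ is isometric, $\|e\| = 1$. Corollary~\ref{C_5125_CutAlg} produces a unital isometric representation $\rho_0 \colon A \to L(E)$ by restriction, and Proposition~\ref{P_5204_ContrPj} gives a measure space $(Y, \mathcal{C}, \nu)$ with an isometric isomorphism $U \colon E \to L^p(Y, \nu)$; conjugation by $U$ yields a unital representation $\pi \colon A \to L(L^p(Y, \nu))$. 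Proposition~\ref{P_5204_ContrPj} moreover delivers a \sft{} $\nu$ whenever $E$ is separable, which it is when $L^p(X, \mu)$ is.

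The substance of the proof is the verification that $\pi$ is completely isometric. Fix $n \in \N$ and $b \in M_n(A)$. Identify $L^p(\{1, 2, \ldots, n\} \times X, \, \nu_n \times \mu)$ (where $\nu_n$ is counting measure) with the $L^p$~direct sum of $n$ copies of $L^p(X, \mu)$. The operator $f = \id_{M_n} \otimes e = \diag(e, e, \ldots, e)$ on that space is an idempotent whose norm, computed directly from the $L^p$~direct sum formula, equals $\|e\| = 1$; its range is the corresponding $L^p$~direct sum of $n$ copies of~$E$, which via $U$ is isometrically isomorphic to $L^p(\{1, 2, \ldots, n\} \times Y, \, \nu_n \times \nu)$. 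Because $1_{M_n(A)}$ is the diagonal matrix with $1_A$ on the diagonal and $b \cdot 1_{M_n(A)} = b$, we have $(\id_{M_n} \otimes \rho)(b) \cdot f = (\id_{M_n} \otimes \rho)(b)$. Applying Lemma~\ref{L_5125_NormCut} to $a = (\id_{M_n} \otimes \rho)(b)$ and $f$ shows that the restriction to the range of~$f$ has the same norm as $(\id_{M_n} \otimes \rho)(b)$. Under the identifications above this restriction is exactly $(\id_{M_n} \otimes \pi)(b)$, so by Definition~\ref{D_4Y19_LpMN} and Lemma~\ref{L_5205_MatNLp} we obtain $\|(\id_{M_n} \otimes \pi)(b)\| = \|(\id_{M_n} \otimes \rho)(b)\| = \|b\|_n$, the last equality because $\rho$ was completely isometric. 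Hence $\pi$ is completely isometric.

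There is no serious obstacle here; the only point that requires care is the bookkeeping identifying the range of $\id_{M_n} \otimes e$ inside $L^p(\{1, 2, \ldots, n\} \times X)$ with a genuine $L^p$~space of the form $L^p(\{1, 2, \ldots, n\} \times Y)$, so that the matrix norm inherited by $\pi(A) \subset L(L^p(Y, \nu))$ (via Definition~\ref{D_4Y19_LpMN}) actually coincides with the norm one reads off from the cutdown. This is what lets Lemma~\ref{L_5125_NormCut} be applied uniformly in~$n$ to give complete isometry rather than merely isometry. The separable/\sft{} refinement is then automatic: $E$ inherits separability from $L^p(X, \mu)$, and the moreover clause of Proposition~\ref{P_5204_ContrPj} provides a \sft{} $\nu$.
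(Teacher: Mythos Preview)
Your proof is correct and follows the same route as the paper: start from a completely isometric representation, cut down by the idempotent $\rho(1)$, and identify the range with an $L^p$~space via Proposition~\ref{P_5204_ContrPj}. The paper's proof is terser and simply asserts that the cutdown remains completely isometric, whereas you spell out the verification at each matrix level using Lemma~\ref{L_5125_NormCut} applied to $f = \id_{M_n} \otimes e$; this added detail is exactly the content the paper leaves implicit.
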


\begin{proof}
Let $(X, \mathcal{B}, \mu)$ be a measure space such that
there is completely isometric representation
$\rho_0 \colon A \to L(L^p (X, \mu))$.
Then $e = \rho_0 (1)$ is an
idempotent in $L(L^p (X, \mu))$, and $\|e \| = 1$.
By Proposition~\ref{P_5204_ContrPj} there exists a measure
space $(Y, \mathcal{C}, \nu)$ such that $\operatorname{ran} (e)$ is
isometrically isomorphic to $L^p (Y, \nu)$.
Thus, $\rho_0$
gives a completely isometric unital homomorphism
$\rho \colon A \to L(L^p (Y, \nu))$.
Moreover, if $A$ is
separable, then $L^p (X, \mu)$ can be chosen to be
separable, which implies that $\operatorname{ran} (e)$
is also separable.
To get $\sm$-finiteness,
use Lemma~\ref{L_6416_SepSFT}.
\end{proof}

\section{Unique matrix norms}\label{Sec_UniqM}

\indent
We consider uniqueness of matrix norms
on $L^p$~operator algebras.
Most of the $L^p$~operator algebras we deal with will
have unique $L^p$~operator matrix norms,
in the sense of Definition~\ref{D_4Y23_Uniq} below.
The basic examples are $M_d^p$ and $C (X)$.
We will show in Corollary~\ref{C_6416_GenUniqMN} below
that all spatial $L^p$~AF
algebras have unique $L^p$~operator matrix norms.
The proof that $C (X)$
has unique $L^p$~operator matrix norms uses
a structure theorem
(Theorem~\ref{T_4116_CXLp})
for contractive unital representations of
$C (X)$ on $L^p$~spaces,
which also plays a key role later.
To avoid technical issues,
we restrict our discussion to the separable case.

\begin{dfn}\label{D_4Y23_Uniq}
Let $p \in [1, \infty) \setminus \{ 2 \}$.
Let $A$ be a separable $L^p$~operator algebra.
We say that $A$ has
{\emph{unique $L^p$~operator matrix norms}}
if whenever $\XBM$ and $\YCN$
are \sfm{s}
such that $L^p (X, \mu)$ and $L^p (Y, \nu)$ are
separable, $\pi \colon A \to \LLp$
and $\sm \colon A \to L (L^p (Y, \nu))$
are isometric representations,
and $\pi (A)$ and $\sm (A)$
are given the matrix normed structures of
Definition~\ref{D_4Y19_LpMN},
then $\sm \circ \pi^{-1} \colon \pi (A) \to \sm (A)$
is completely isometric.
\end{dfn}

When $A$ is unital and $\| 1 \| = 1$,
in Definition~\ref{D_4Y23_Uniq} we need only
consider unital isometric representations.

\begin{lem}\label{L_5125_UnitUniq}
Let $p \in [1, \infty) \setminus \{ 2 \}$.
Let $A$ be a unital separable $L^p$~operator algebra
in which $\| 1 \| = 1$.
Assume that whenever $\XBM$ and $\YCN$
are \sfm{s}
such that $L^p (X, \mu)$ and $L^p (Y, \nu)$ are separable,
$\pi \colon A \to \LLp$
and $\sm \colon A \to L (L^p (Y, \nu))$
are unital isometric representations,
and $\pi (A)$ and $\sm (A)$
are given the matrix normed structures of
Definition~\ref{D_4Y19_LpMN},
then $\sm \circ \pi^{-1} \colon \pi (A) \to \sm (A)$
is completely isometric.
It follows that $A$ has unique $L^p$~operator matrix norms.
\end{lem}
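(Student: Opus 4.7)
The plan is to reduce the arbitrary isometric representations appearing in Definition~\ref{D_4Y23_Uniq} to the unital isometric representations assumed in the hypothesis, by cutting down to the range of the identity and transporting to an $L^p$-space of a \sfm.

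First, let $\pi \colon A \to \LLp$ and $\sm \colon A \to L(L^p(Y, \nu))$ be isometric representations as in Definition~\ref{D_4Y23_Uniq}. Since $A$ is unital with $\| 1 \| = 1$ and both representations are isometric, $e = \pi(1)$ and $f = \sm(1)$ are idempotents of norm one. Setting $E = e L^p(X, \mu)$ and $F = f L^p(Y, \nu)$, Corollary~\ref{C_5125_CutAlg} produces unital isometric representations $\pi_0 \colon A \to L(E)$ and $\sm_0 \colon A \to L(F)$ obtained by restriction. Separability of $L^p(X, \mu)$ forces $E$ to be separable, so by Proposition~\ref{P_5204_ContrPj} there is an isometric bijection from $L^p(Y_0, \nu_0)$ onto $E$ for some \sfm{} $(Y_0, \nu_0)$, and similarly for $F$. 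Conjugating by these isomorphisms turns $\pi_0$ and $\sm_0$ into unital isometric representations on separable \sfm{} $L^p$-spaces, call them $\tilde\pi_0$ and $\tilde\sm_0$.

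The main technical step, and the one I expect to be the principal obstacle, is to check that the matrix norm on $\pi(A) \subset \LLp$ (from Definition~\ref{D_4Y19_LpMN}) agrees with the matrix norm on $\pi_0(A) \subset L(E)$, and then with that on $\tilde\pi_0(A)$. For this, identify $M_n(\LLp)$ with $L\big(L^p(\{1, \ldots, n\} \times X, \nu_n \times \mu)\big)$, where $\nu_n$ is counting measure. Given $a \in M_n(\pi(A))$, each entry $a_{j, k}$ satisfies $a_{j, k} e = a_{j, k}$ because $\pi$ is unital on its image, so $a(1 \otimes e) = a$. The idempotent $1 \otimes e$ has norm equal to $\|e\| = 1$, either by direct computation or by Definition~\ref{D_4Y19_MatNAlg}(\ref{D_4Y19_MatNAlg_DSum}) applied inside $L(l^p_n \otimes L^p(X, \mu))$. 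Hence Lemma~\ref{L_5125_NormCut} applies and gives
\[
\|a\|_n = \big\|a|_{l^p_n \otimes E}\big\|,
\]
which is precisely the matrix norm of $a$ regarded as an element of $M_n(\pi_0(A)) \subset L(l^p_n \otimes E)$. The isometric bijection $L^p(Y_0, \nu_0) \to E$ extends to an isometric identification $l^p_n \otimes L^p(Y_0, \nu_0) \cong l^p_n \otimes E$ for every $n$, so the matrix norms on $\tilde\pi_0(A)$ coincide with those on $\pi_0(A)$. The same reasoning applies to $\sm$, $\sm_0$ and $\tilde\sm_0$.

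Finally, the hypothesis of the lemma applied to the unital isometric representations $\tilde\pi_0$ and $\tilde\sm_0$ shows that $\tilde\sm_0 \circ \tilde\pi_0^{-1}$ is completely isometric. Composing with the matrix-norm preserving identifications $\pi(A) \cong \pi_0(A) \cong \tilde\pi_0(A)$ and $\sm(A) \cong \sm_0(A) \cong \tilde\sm_0(A)$ produced above, we conclude that $\sm \circ \pi^{-1} \colon \pi(A) \to \sm(A)$ is completely isometric, as required.
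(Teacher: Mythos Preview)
Your proof is correct and follows essentially the same approach as the paper. The only difference is cosmetic: the paper packages the key amplified cut-down step by applying Corollary~\ref{C_5125_CutAlg} directly to the composite $\rho = \sm \circ \pi_0^{-1}$ (and to $\pi \circ \pi_0^{-1}$) at the level of $\id_{M_n} \otimes \rho$, whereas you unpack this by invoking Lemma~\ref{L_5125_NormCut} on $a(1 \otimes e) = a$ with $\|1 \otimes e\| = 1$; these are the same argument.
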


\begin{proof}
Let $\XBM$ and $\YCN$
be \sfm{s}
such that $L^p (X, \mu)$ and $L^p (Y, \nu)$ are separable,
and let $\pi \colon A \to \LLp$
and $\sm \colon A \to L (L^p (Y, \nu))$
be isometric representations.

The operator $e = \pi (1)$ is an idempotent in $\LLp$
with $\| e \| = 1$.
Set $E = {\operatorname{ran}} (e)$.
Then $\pi$ induces a unital \hm{}
$\pi_0 \colon A \to L (E)$,
which is isometric by Corollary~\ref{C_5125_CutAlg}.
By Proposition~\ref{P_5204_ContrPj},
there is a \msp{} $(X_0, {\mathcal{B}}_0, \mu_0)$
such that $E$ is isometrically isomorphic to $L^p (X_0, \mu_0)$.
Since $E$ is separable,
we may require that $\mu_0$ be \sft.
Similarly,
${\operatorname{ran}} ( \sm (1) )$ is
isometrically isomorphic to a separable $L^p$~space
$L^p (Y_0, \nu_0)$ in which $\nu_0$ is \sft,
and $\sm$ induces an isometric unital \hm{}
$\sm_0 \colon A \to L ( L^p (Y_0, \nu_0) )$.
In particular,
$\sm_0 \circ \pi_0^{-1} \colon \pi_0 (A) \to \sm_0 (A)$
is isometric.

Let $n \in \N$.
We take the norms on
$M_n (\pi (A))$, $M_n (\pi_0 (A))$, $M_n (\sm (A))$,
and $M_n (\sm_0 (A))$
to be as in Definition~\ref{D_4Y19_LpMN}.
Define
\[
\rh_0 = \sm_0 \circ \pi_0^{-1} \colon
 \pi_0 (A) \to \sm_0 (A) \subset L ( L^p (Y_0, \nu_0) )
\]
and
\[
\rh = \sm \circ \pi_0^{-1} \colon
 \pi_0 (A) \to \sm (A) \subset L ( L^p (Y, \nu) ).
\]
Then
\[
(\id_{M_n} \otimes \rh) (1)
 = 1_{M_n} \otimes \sm (1)
 \in L \big( l_n^p \otimes_p L^p (Y, \nu) \big),
\]
so $\| (\id_{M_n} \otimes \rh) (1) \| = 1$.
The hypothesis implies that
$\id_{M_n} \otimes \rh_0$ is isometric.
So Corollary~\ref{C_5125_CutAlg} implies that
$\id_{M_n} \otimes \rh$ is isometric.

Similarly,
the map
$\id_{M_n} \otimes (\pi \circ \pi_0^{-1}) \colon
  M_n (\pi_0 (A)) \to M_n (\pi (A))$
is isometric.
Therefore
$\id_{M_n} \otimes (\sm \circ \pi^{-1}) \colon
 M_n (\pi (A)) \to M_n (\sm (A))$ is isometric.
This completes the proof.
\end{proof}

\begin{prp}\label{P_5124_MnIncmp}
Let $p \in [1, \infty) \setminus \{ 2 \}$.
Let $\MP{d}{p}$ be as in Notation~\ref{N_5124_FDP}.
Then every nonzero contractive
unital representation of $\MP{d}{p}$
on a separable $L^p$~space is completely isometric.
\end{prp}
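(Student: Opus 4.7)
The plan is to show that $\pi$ is spatially implemented by an $L^p$-tensor factorization $L^p (Y, \nu) \cong l_d^p \otimes_p F$ that intertwines $\pi$ with the standard representation $a \mapsto a \otimes 1_F$, and then read off complete isometry from this spatial form.

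First I would analyze the diagonal matrix units. For each $j \in \{1, 2, \ldots, d\}$, the idempotent $e_{j, j} \in \MP{d}{p}$ has $\| e_{j, j} \|_p = 1$, so contractivity of $\pi$ together with the fact that every nonzero idempotent has norm at least~$1$ gives $\| \pi (e_{j, j}) \| = 1$ (nonzero because $\pi$ is unital and nonzero). These form mutually orthogonal contractive idempotents summing to $\pi (1) = 1$, and for every $S \subset \{1, 2, \ldots, d\}$ the sum $\sum_{j \in S} \pi (e_{j, j})$ is again a contractive idempotent because $\big\| \sum_{j \in S} e_{j, j} \big\|_p = 1$. Combining this observation with Proposition~\ref{P_5204_ContrPj} and the standard Ando-type structure theorem for contractive idempotents on $L^p$-spaces with $p \neq 2$ (two complementary contractive idempotents produce an $L^p$-direct sum decomposition of the ambient space), I would obtain an isometric identification of $L^p (Y, \nu)$ with the $L^p$-direct sum of the subspaces $F_j := \pi (e_{j, j}) L^p (Y, \nu)$, each of which is itself isometric to an $L^p$-space.

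Next I would transport between the $F_j$'s using the off-diagonal matrix units. For $j \neq k$, $\| e_{j, k} \|_p = 1$ forces $\| \pi (e_{j, k}) \| \leq 1$, and since $\pi (e_{j, k}) = \pi (e_{j, k}) \pi (e_{k, k})$, Lemma~\ref{L_5125_NormCut} applied with $f = \pi (e_{k, k})$ shows that the restriction $u_{j, k} := \pi (e_{j, k}) |_{F_k} \colon F_k \to F_j$ has norm at most~$1$. The matrix unit relations $e_{j, k} e_{k, j} = e_{j, j}$ and $e_{k, j} e_{j, k} = e_{k, k}$ yield $u_{j, k} u_{k, j} = \id_{F_j}$ and $u_{k, j} u_{j, k} = \id_{F_k}$, so each $u_{j, k}$ is an isometric bijection. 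Setting $F = F_1$ and $u_j = u_{j, 1}$, I would define $U \colon l_d^p \otimes_p F \to L^p (Y, \nu)$ by $U (\delta_j \otimes \xi) = u_j \xi$; the $L^p$-direct sum decomposition from the previous step makes $U$ an isometric bijection, and the relation $e_{j, k} e_{k, 1} = e_{j, 1}$ gives $\pi (e_{j, k}) u_k = u_j$ on $F$, so $U$ intertwines $\pi$ with the canonical representation $\rho \colon \MP{d}{p} \to L \big( l_d^p \otimes_p F \big)$ defined by $\rho (a) = a \otimes 1_F$.

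Finally, for complete isometry, I would fix $n \in \N$ and note that the amplification $\id_{l_n^p} \otimes U$ is an isometric bijection from $l_n^p \otimes_p l_d^p \otimes_p F$ onto $l_n^p \otimes_p L^p (Y, \nu)$ that intertwines $\id_{M_n} \otimes \pi$ with the map $x \mapsto x \otimes 1_F$. By Lemma~\ref{L_5205_MatNLp} the matrix norm on $M_n (\MP{d}{p})$ coincides with the operator norm on $L \big( l_n^p \otimes_p l_d^p \big)$, and amplification by $1_F$ preserves operator norms on $L^p$-tensor products, so $\| (\id_{M_n} \otimes \pi) (x) \| = \| x \|_{M_n (\MP{d}{p})}$ for every~$x$, which gives complete isometry. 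The main obstacle would be the first step: extracting a genuine $L^p$-direct sum decomposition from mutually orthogonal contractive idempotents, which depends essentially on $p \neq 2$ and requires a Lamperti--Ando-type result for contractive idempotents on $L^p$-spaces beyond what Proposition~\ref{P_5204_ContrPj} alone provides.
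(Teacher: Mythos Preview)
Your overall strategy---reducing to a spatial implementation $\pi(a) = U(a \otimes 1_F)U^{-1}$ and then reading off complete isometry from the amplified tensor form---is exactly what the paper does. The difference is that the paper obtains the spatial form by citing Theorem~7.2 of~\cite{PhLp1} as a black box, whereas you attempt to reconstruct it by hand from the matrix units.

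The gap is precisely where you flagged it, but it is worse than you indicate: the ``Ando-type structure theorem'' you invoke (two complementary contractive idempotents on $L^p$, $p \neq 2$, yield an $L^p$-direct sum decomposition) is simply false. The paper's own Lemma~\ref{L_5224_NmIdemp} exhibits an idempotent $e \in \MP{2}{p}$ with $\|e\|_p = \|1-e\|_p = 1$ that is not hermitian; one checks directly that for $\xi = (1,0)$ the identity $\|\xi\|_p^p = \|e\xi\|_p^p + \|(1-e)\xi\|_p^p$ fails whenever $p \neq 2$. So contractivity of all the partial sums $\sum_{j \in S} \pi(e_{j,j})$ is not by itself enough to force an $L^p$-direct sum of the ranges.

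What actually makes your argument work is that each $e_{j,j}$ is a \emph{hermitian} idempotent in $\MP{d}{p}$, so a contractive unital $\pi$ sends it to a hermitian idempotent (Lemma~\ref{L_5128_UnitalHm}), hence to multiplication by a characteristic function (Lemma~\ref{L_4326_CharSpI}); the $L^p$-direct sum decomposition then follows from Corollary~\ref{C_4Y17_Orth}. Equivalently, one can apply Theorem~\ref{T_4116_CXLp} to the diagonal subalgebra. Both of these tools appear later in the paper, which is why the authors instead invoke the external Theorem~7.2 of~\cite{PhLp1}; that theorem packages exactly the spatial form you are after, proved via Lamperti's theorem applied to the invertible isometries $\pi(\exp(i\lambda e_{j,j}))$ rather than via a direct analysis of contractive projections.
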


\begin{proof}
Let $\XBM$ be a measure space such that
$L^p (X, \mu)$ is separable,
and let $\rh \colon \MP{d}{p} \to \LLp$
be a contractive unital representation.
By Lemma~\ref{L_6416_SepSFT},
we can assume that $\XBM$ is \sft.
Theorem~7.2 of~\cite{PhLp1}
provides a \sfm\  $(Z, {\mathcal{C}}, \ld)$
and a bijective isometry
\[
u \colon l_d^p \otimes L^p (Z, \ld) \to L^p (X, \mu)
\]
such that
for all $a \in M_d^p$ we have $\rh (a) = u (a \otimes 1) u^{-1}$.
For $n \in \N$,
it is easy to see that
\[
1_{M_n} \otimes u \colon l_n^p \otimes l_d^p \otimes L^p (Z, \ld)
 \to l_n^p \otimes L^p (X, \mu)
\]
is a bijective isometry such that
\[
(1_{M_n} \otimes \rh) (b)
 = (1_{M_n} \otimes u) (b \otimes 1) (1_{M_n} \otimes u)^{-1}
\]
for all $b \in M_n (\MP{d}{p})$.
It is now immediate that $\rh_n$ is isometric.
\end{proof}

\begin{cor}\label{P_5124_MnUniq}
Let $p \in [1, \infty) \setminus \{ 2 \}$.
The algebra $\MP{d}{p}$ of Notation~\ref{N_5124_FDP}
has unique $L^p$~operator matrix norms.
\end{cor}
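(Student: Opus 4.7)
The plan is to derive the corollary by combining the two results immediately preceding it, namely Lemma~\ref{L_5125_UnitUniq} and Proposition~\ref{P_5124_MnIncmp}. First I would note that $\MP{d}{p}$ is a unital separable $L^p$~operator algebra with $\| 1 \| = 1$ (the identity matrix acts on $l^p_d$ as the identity operator, which has norm~$1$). This puts us exactly in the setting of Lemma~\ref{L_5125_UnitUniq}, so it suffices to verify the hypothesis of that lemma: that any two unital isometric representations of $\MP{d}{p}$ on separable $L^p$~spaces of \sfm{s} give rise to matrix normed structures related by a completely isometric isomorphism.

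Let $\pi \colon \MP{d}{p} \to \LLp$ and $\sm \colon \MP{d}{p} \to \LLpy$ be two such unital isometric representations, where $L^p (X, \mu)$ and $L^p (Y, \nu)$ are separable. Since both $\pi$ and $\sm$ are in particular contractive unital representations on separable $L^p$~spaces, Proposition~\ref{P_5124_MnIncmp} applies, and I conclude that both $\pi$ and $\sm$ are completely isometric onto their images (where the images are equipped with the matrix norms from Definition~\ref{D_4Y19_LpMN}). The composition $\sm \circ \pi^{-1} \colon \pi (\MP{d}{p}) \to \sm (\MP{d}{p})$ is then a composition of a completely isometric isomorphism with the inverse of one, hence completely isometric.

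Thus the hypothesis of Lemma~\ref{L_5125_UnitUniq} is satisfied, and the lemma gives that $\MP{d}{p}$ has unique $L^p$~operator matrix norms. I do not anticipate any real obstacle here: essentially all the work has been done in Proposition~\ref{P_5124_MnIncmp} (which invokes the structure theorem from Theorem~7.2 of~\cite{PhLp1} to reduce every contractive unital representation to the ``standard'' one $a \mapsto a \otimes 1$, and then tensoring with $1_{M_n}$ preserves the isometric property) and in Lemma~\ref{L_5125_UnitUniq} (which lets us restrict attention to unital representations). The corollary is then a short formal consequence.
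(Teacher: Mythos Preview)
Your proposal is correct and follows exactly the same approach as the paper, which simply says ``Combine Lemma~\ref{L_5125_UnitUniq} and Proposition~\ref{P_5124_MnIncmp}.'' You have merely spelled out the details of how the two results fit together.
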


\begin{proof}
Combine Lemma~\ref{L_5125_UnitUniq}
and Proposition~\ref{P_5124_MnIncmp}.
\end{proof}

Next, we give a structure theorem for any contractive unital
representation of $C (X)$ on an $L^p$~space.

\begin{thm}\label{T_4116_CXLp}
Let $p \in [1, \I) \setminus \{ 2 \}$.
Let $X$ be a compact metrizable space,
let $\YCN$ be a \sfm,
and let $\pi \colon C (X) \to \LLpy$
be a contractive unital \hm.
Let $\mu \colon L^{\infty} (Y, \nu) \to \LLpy$
be the representation of $L^{\infty} (Y, \nu)$
on $L^p (Y, \nu)$ by multiplication operators.
Then there exists a unital \ca{} \hm{}
$\ph \colon C (X) \to L^{\infty} (Y, \nu)$
such that $\pi = \mu \circ \ph$.
\end{thm}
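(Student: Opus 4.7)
My plan is to use Lamperti's rigidity theorem for surjective isometries of $\Lpy$ with $p \neq 2$ to force $\pi (C (X))$ into the multiplication algebra $\mu (L^\infty (Y, \nu))$.

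First I would reduce to real-valued generators. Every $f \in C (X)$ is a complex combination of two real-valued elements, so by linearity of the eventual $\phi$ it suffices to construct $\phi(f)$ for real $f$ with $\|f\|_\infty \leq 1$. For such $f$ and each $t \in \R$, the element $g_t = e^{itf} \in C (X)$ has modulus~$1$ pointwise and $g_t g_{-t} = 1$, so contractivity and unitality of $\pi$ make both $\pi(g_t)$ and its inverse $\pi(g_{-t})$ contractive. Hence each $\pi(g_t)$ is a surjective isometry of $\Lpy$, and $t \mapsto \pi(g_t)$ is norm-continuous since $t \mapsto g_t$ is continuous in $\| \cdot \|_\infty$.

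Next I would invoke Lamperti's theorem: for $p \in [1, \infty) \setminus \{2\}$, every surjective isometry of $\Lpy$ is a weighted composition operator, of the form $\xi \mapsto h \cdot (\xi \circ T^{-1})$ for a measurable set transformation $T$ on $Y$ and a weight $h$ whose modulus is determined by the relevant Radon--Nikodym derivative. Writing $\pi(g_t) = M_{h_t} C_{T_t}$, the group identity forces $T_{s+t} = T_s \circ T_t$ modulo null sets and forces the weight data to be multiplicative. Norm continuity of $t \mapsto \pi(g_t)$ at $t = 0$ then forces each $T_t$ to be the identity set transformation (otherwise, the characteristic function of a finite-measure set moved by $T_{t_0}$ produces, via dyadic subdivision of~$t_0$, values of~$s$ arbitrarily near~$0$ with $\pi(g_s) \chi_E$ far from $\chi_E$) and forces $|h_t| \equiv 1$.

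With each $T_t$ trivial, $\pi(g_t)$ is multiplication by a unimodular $h_t \in L^\infty (Y, \nu)$, and $(h_t)_{t \in \R}$ is a strongly continuous multiplicative one-parameter group. A measurable logarithm argument then supplies a real $g \in L^\infty (Y, \nu)$, with $\|g\|_\infty \leq \|f\|_\infty$, such that $h_t = e^{itg}$ almost everywhere for every $t$. Expanding in $t$ near $0$ (using the uniform Taylor estimate $|e^{iu} - 1 - iu| \leq u^2 / 2$) and applying $\pi$ on one side and $\mu$ on the other gives $\pi(f) = \mu(g)$. Setting $\phi(f) = g$ for real $f$, extending complex linearly, and checking multiplicativity from the already verified exponential identities yields the required unital C*-algebra homomorphism $\phi \colon C (X) \to L^\infty (Y, \nu)$ with $\pi = \mu \circ \phi$. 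The main obstacle is the rigidity step that eliminates the composition part~$T_t$: translating norm continuity of a one-parameter family of weighted composition operators into triviality of the associated family of set transformations on $(Y, \mathcal{C}, \nu)$ is subtle since $Y$ carries no topological structure, and the $p = 1$ case of Lamperti's theorem, which takes a slightly different form, must be handled separately.
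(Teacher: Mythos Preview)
Your approach is essentially the paper's: exponentiate a real $f$, use Lamperti-type rigidity to identify $\pi(e^{itf})$ as a weighted composition operator, kill the composition part, and then invert to recover $\pi(f)$ as a multiplication operator. The paper executes the two delicate steps more cleanly than your sketch, and in ways that dissolve exactly the obstacles you flag.

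First, for the elimination of the set-transformation part $T_t$, the paper does not argue directly from norm continuity via dyadic subdivision. Instead it quotes a lemma (Lemma~6.22 of~\cite{PhLp1}) asserting that in a norm-continuous one-parameter family of spatial isometries the underlying set transformation is constant; since $\pi(e^{i \cdot 0 \cdot f}) = 1$ has trivial set transformation, all $T_t$ are trivial. This is precisely the content of your ``main obstacle,'' packaged as a black box, and it covers $p = 1$ uniformly.

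Second, for recovering $\pi(f)$, the paper avoids Stone-type measurable logarithms and Taylor expansions entirely. It restricts at the outset to real $f$ with $\|f\|_\infty < \pi$, so that $\spec(e^{if})$, and hence $\spec(\pi(e^{if}))$, misses $(-\infty,0]$. Then the holomorphic functional calculus gives in one line
\[
\pi(f) = \pi\bigl(-i\log(e^{if})\bigr) = -i\log\bigl(\pi(e^{if})\bigr) = -i\log\bigl(\mu(g_1)\bigr) = \mu\bigl(-i\log g_1\bigr),
\]
placing $\pi(f)$ in the range of~$\mu$. This replaces your one-parameter-group analysis by a single evaluation at $t = 1$.

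Finally, rather than verifying by hand that the resulting $\phi$ is a $*$-homomorphism, the paper observes that any contractive unital homomorphism between commutative C*-algebras is automatically a $*$-homomorphism (Proposition~A.5.8 of~\cite{BM}).
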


\begin{proof}
We claim that the range of $\pi$ is contained in the range
of~$\mu$.
It suffices to prove that if $f \in C (X)$ is real
valued and satisfies $\| f \| < \pi$,
then $\pi (f)$ is in the range of~$\mu$.
Let $f$ be such a function.
For $\ld \in \R$,
the function $w_{\ld} = \exp (i \ld f)$ is invertible in $C (X)$
and satisfies $\| w_{\ld} \| = \| w_{\ld}^{- 1} \| = 1$.
Therefore $\pi (w_{\ld})$ is a bijective isometry
in $\LLpy$.
By Lemma 6.16 of~\cite{PhLp1},
the operator $\pi (w_{\ld})$ is a spatial isometry
in the sense of Definition~6.4 of~\cite{PhLp1}.
In particular,
it has a spatial system $(E_{\ld}, F_{\ld}, S_{\ld}, g_{\ld})$
as there.
By Lemma 6.22 of~\cite{PhLp1},
we have $S_{\ld} = S_0$ for all $\ld \in \R$.
Now $\pi (w_0) = 1$,
so, in the notation of
Definition~6.3 of~\cite{PhLp1} and Definition~5.4 of~\cite{PhLp1},
the operator $\pi (w_0)$ has the spatial system
$(Y, Y, \id_{ {\mathcal{C}} / {\mathcal{N}} ( \nu)}, 1)$.
The uniqueness statement in Lemma 6.6 of~\cite{PhLp1}
now implies that $S_{\ld} = \id_{ {\mathcal{C}} / {\mathcal{N}} ( \nu)}$
for all $\ld \in \R$.
Therefore $\pi (w_{\ld})$ is a multiplication operator;
in fact, $\pi (w_{\ld}) = \mu (g_{\ld})$ for all $\ld \in \R$.
Now let $\log$ be the holomorphic branch which is real on $(0, \I)$
and defined on $\C \setminus ( - \I, \, 0]$.
We have
\[
\spec (g_1)
 = \spec ( \pi (w_1))
 \subset \spec (w_1)
 \subset \C \setminus ( - \I, \, 0]
\]
and
\[
\pi (f)
 = \pi ( - i \log (w_1))
 = - i \log ( \pi (w_1))
 = - i \log ( \mu (g_1))
 = \mu ( - i \log (g_1)).
\]
Thus $\pi (f)$ is in the range of~$\mu$,
as claimed.

It follows that there is a contractive \hm{}
$\ph \colon C (X) \to L^{\infty} (Y, \nu)$
such that $\pi = \mu \circ \ph$.
Obviously $\ph$ is unital.
It follows from Proposition A.5.8 of~\cite{BM}
that $\ph$ is a \ca{} \hm.
\end{proof}

We don't need the following proposition,
but it is an interesting result
which follows from the machinery we have developed.

\begin{prp}\label{C_5124_CXUniq}
Let $X$ be a compact metrizable space,
and let $p \in [1, \I) \setminus \{ 2 \}$.
Then $C (X)$ has unique $L^p$~operator matrix norms.
They are given as follows.
Let $a \in M_n (C (X))$.
Interpret $a$ as a \cfn{} $a \colon X \to M_n$.
Equip $M_n = M_n^p$
with the norm $\| \cdot \|_p$ from Notation~\ref{N_5124_FDP}.
Then ${\textdisp{\| a \|_n = \sup_{x \in X} \| a (x) \|_p }}$.
\end{prp}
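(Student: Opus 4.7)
The plan is to reduce to unital isometric representations, invoke the structure theorem (\Thm{T_4116_CXLp}), and show that the induced matrix norm in any such representation is the concrete supremum formula on the right-hand side.

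First, since $C(X)$ is unital with $\| 1 \| = 1$, \Lem{L_5125_UnitUniq} lets me restrict attention to unital isometric representations $\pi \c C (X) \to \LLpy$ with $\YCN$ a \sfm{} and $L^p (Y, \nu)$ separable. By \Thm{T_4116_CXLp}, any such $\pi$ factors as $\pi = \mu \circ \ph$, where $\mu$ is the multiplication representation of $L^{\I} (Y, \nu)$ and $\ph \c C (X) \to L^{\I} (Y, \nu)$ is a unital \ca{} \hm. Isometry of $\pi$ and contractivity of $\mu$ force $\ph$ to be isometric, hence injective.

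Second, I would compute $\|a\|_n$ in the representation $\pi$ for $a \in M_n (C (X))$. Under the standard identification
$l_n^p \otimes_p L^p (Y, \nu) \cong L^p \big( Y, \, l_n^p \big)$,
the operator $(\id_{M_n} \otimes \pi) (a) = (\id_{M_n} \otimes \mu) (\ph^{(n)} (a))$ is pointwise multiplication by the $M_n^p$-valued function $\ph^{(n)} (a) \c Y \to M_n^p$. A routine argument (H\"older's inequality gives the upper bound; restricting to a positive-measure set where the norm is nearly maximal, together with a measurable selection of near-maximizing vectors, gives the lower bound) shows that this multiplication operator has norm $\esssup_{y \in Y} \big\| \ph^{(n)} (a) (y) \big\|_p$.

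Third, I would realize $\ph$ concretely. Pick a countable dense sequence $(f_k)_{k \in \N}$ in $C (X)$; the map $x \mapsto (f_k (x))_{k \in \N}$ embeds $X$ homeomorphically into a compact subset of $\prod_k [- \| f_k \|, \| f_k \|]$. Since $\ph (f_k) \in L^{\I} (Y, \nu)$ is a (class of a) measurable function on $Y$, a standard measurable-selection / null-set-removal argument produces a measurable map $\ps \c Y \to X$ such that $\ph (f) (y) = f (\ps (y))$ almost everywhere for every $f \in C (X)$, and hence $\ph^{(n)} (a) (y) = a (\ps (y))$ a.e. Because $\ph$ is isometric on $C (X)$, the essential range of $\ps$ must be dense in~$X$, and being closed it must equal $X$; otherwise a nonzero $f \in C (X)$ vanishing on the essential range would contradict $\| \ph (f) \|_{\I} = \| f \|_{\I}$.

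Finally, since the function $x \mapsto \| a (x) \|_p$ is continuous on the \cms{} $X$, the essential supremum over $\ps (Y)$ equals the supremum over $X$:
\[
\| a \|_n = \esssup_{y \in Y} \| a (\ps (y)) \|_p = \sup_{x \in X} \| a (x) \|_p.
\]
The right-hand side is independent of $(Y, \cC, \nu)$ and~$\pi$, which gives both the explicit formula and, by \Lem{L_5125_UnitUniq}, uniqueness of the $L^p$~operator matrix norms. I expect the main obstacle to be the clean construction of the measurable map $\ps$ and the verification that its essential range is all of~$X$; once that is in hand, everything else is calculation.
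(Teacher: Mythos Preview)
Your proof is correct. The paper's argument is organized a little differently and, at the key step, passes through Gelfand duality rather than a direct measurable realization of~$\ph$.

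The paper first exhibits one concrete representation realizing the supremum formula (multiplication on $L^p(X,\mu)$ for a full-support Borel measure~$\mu$), and then, for an arbitrary unital isometric~$\pi$, factors $\pi = \rh \circ \ph$ via \Thm{T_4116_CXLp} exactly as you do. But instead of building a measurable $\ps \colon Y \to X$, it invokes the Gelfand transform $\gm \colon L^{\I}(Y,\nu) \to C(Z)$, checks that $\gm$ is completely isometric for the analogous supremum matrix norms on $C(Z)$, and then observes that the injective unital \ca{} \hm{} $\gm \circ \ph \colon C(X) \to C(Z)$ is induced by a continuous $h \colon Z \to X$, which is surjective because $\gm \circ \ph$ is injective; surjectivity of~$h$ makes $\gm \circ \ph$ completely isometric at once. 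Your approach trades this abstract step for a concrete measurable $\ps \colon Y \to X$ with essential range equal to~$X$; the existence of such a~$\ps$ is standard but, as you correctly anticipate, needs some care with null sets and the joint essential range of the generators. The Gelfand-duality route sidesteps that measurable-selection work entirely and makes the surjectivity-from-injectivity step immediate, at the cost of introducing the (large, nonmetrizable) maximal ideal space of $L^{\I}(Y,\nu)$. Both arguments reach the same conclusion with comparable effort.
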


\begin{proof}
Choose a \sft{} Borel measure $\mu$ on $X$
such that $\mu (U) > 0$ for every nonempty
open set~$U \subset X$.
Represent $C (X)$ on
$L^p (X, \mu)$ as multiplication operators.
It is then easy to check that the matrix norms
from Definition~\ref{D_4Y19_LpMN}
are equal to the matrix norms in the statement
of the proposition.

In view of Lemma~\ref{L_5125_UnitUniq},
it remains to show that if $\YCN$ is a \sfm{}
such that $\LLpy$ is separable,
and $\pi \colon C (X) \to \LLpy$
is an isometric unital \hm,
then $\pi$ is completely isometric.
Let $\rh \colon L^{\infty} (Y, \nu) \to \LLpy$
be the representation given by multiplication operators.
Then $\rh$ is isometric,
so we can identify $L^{\infty} (Y, \nu)$
with its image under~$\rh$,
and thus make $L^{\infty} (Y, \nu)$
a matricial $L^p$~operator algebra
using the matrix norms of Definition~\ref{D_4Y19_LpMN}.
For $n \in \N$,
identify $M_n ( L^{\infty} (Y, \nu) )$ with
the algebra of $L^{\infty}$ functions from $Y$ to~$M_n$.
It is easy to check that the norms on $M_n ( L^{\infty} (Y, \nu) )$
are given by
${\textdisp{ \| a \|_n
 = {\operatorname{ess}} \, \sup_{y \in Y} \| a (y) \|_p}}$.

Now let $Z$ be the maximal ideal space of $L^{\infty} (Y, \nu)$,
and let $\gm \colon L^{\infty} (Y, \nu) \to C (Z)$
be the Gelfand transform,
which is an isomorphism.
Define matrix norms on $C (Z)$ in the same way
as on $C (X)$ in the statement of the proposition.
For every $f \in L^{\infty} (Y, \nu)$,
the essential range of $f$ is the range of $\gm (f)$.
It follows that for every $a \in M_n ( L^{\infty} (Y, \nu) )$,
the essential range of $a$
is equal to the range of
$(\id_{M_n} \otimes \gm) (a) \in M_n (C (Z)) = C (Z, M_n)$.
Therefore $\gm$ is completely isometric.

Apply Theorem~\ref{T_4116_CXLp} to~$\pi$.
We get a unital \ca{} \hm{}
$\ph \colon C (X) \to L^{\infty} (Y, \nu)$
such that $\pi = \rh \circ \ph$.
Moreover, $\ph$ is injective.
There is a \cfn{} $h \colon Z \to X$ such that
$(\gm \circ \ph) (f) = f \circ h$ for all $f \in C (X)$.
Injectivity of $\gm \circ \ph$ implies surjectivity of~$h$.
It is now immediate that $\gm \circ \ph$ is completely
isometric.
Since $\gm$ and $\rh$ are completely
isometric, we conclude that $\pi$ is completely isometric.
\end{proof}

\section{Direct sums}\label{Sec_DSum}

In this section we show that direct sum of a family of
(matricial) $L^p$~operator algebras is also a (matricial)
$L^p$~operator algebra.

\begin{dfn}\label{Notdisjointunion}
If $\big( (X_i, {\mathcal{B}}_i, \mu_i) \big)_{i \in I}$ is a family
of \msp{s},
then the \msp{}
${\textdisp{
 \XBM = \coprod_{i \in I} (X_i, {\mathcal{B}}_i, \mu_i) }}$
is determined by taking
${\textdisp{ X = \coprod_{i \in I} X_i }}$,
\[
{\mathcal{B}}
 = \big\{ E \S X \colon
    {\mbox{$E \cap X_i \in {\mathcal{B}}_i$ for all $i \in I$}} \big\},
\]
and
${\textdisp{ \mu (E) = \sum_{i \in I} \mu_i (E \cap X_i) }}$
for $E \in {\mathcal{B}}$.
\end{dfn}

\begin{dfn}\label{D_4326_DSumNorm}
Whenever $N \in \N$ and $A_1, A_2, \ldots, A_N$
are Banach algebras,
we make ${\textdisp{\bigoplus_{k = 1}^N A_k }}$
a Banach algebra by giving it the obvious
algebra structure
and the norm
\[
\| (a_1, a_2, \ldots, a_N) \|
 = \max \big( \| a_1 \|, \, \| a_2 \|, \, \ldots, \, \| a_N \| \big)
\]
for $a_1 \in A_1, \, a_2 \in A_2, \, \ldots, \, a_N \in A_N$.
If $A_1, A_2, \ldots, A_N$
are matrix normed Banach algebras,
we define matrix norms on
${\textdisp{ \bigoplus_{k = 1}^N A_k }}$ by
\[
\| (a_1, a_2, \ldots, a_N) \|_n
 = \max
  \big( \| a_1 \|_n, \, \| a_2 \|_n, \, \ldots, \, \| a_N \|_n \big)
\]
for $n \in \N$
and
$a_1 \in M_n (A_1), \, a_2 \in M_n (A_2), \, \ldots,
  \, a_N \in M_n (A_N)$.
\end{dfn}

\begin{lem}\label{L_5128_DSumMat}
Let $N \in \N$.
Let $A_1, A_2, \ldots, A_N$
be matrix normed Banach algebras.
Then ${\textdisp{\bigoplus_{k = 1}^N A_k }}$,
as in Definition~\ref{D_4326_DSumNorm},
is a matrix normed Banach algebra.
\end{lem}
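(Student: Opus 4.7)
The plan is to verify directly that $B = \bigoplus_{k=1}^N A_k$, with the norms from Definition~\ref{D_4326_DSumNorm}, satisfies each of the conditions of Definition~\ref{D_4Y19_MatNAlg}, together with submultiplicativity of $\|\cdot\|_n$ and completeness of $\|\cdot\|_1$. The key observation is the natural algebraic identification $M_n(B) \cong \bigoplus_{k=1}^N M_n(A_k)$ under which an element $a \in M_n(B)$ is a tuple $(a^{(1)}, a^{(2)}, \ldots, a^{(N)})$ with $a^{(k)} \in M_n(A_k)$, and the norm is $\|a\|_n = \max_k \|a^{(k)}\|_n$. Every condition to be checked holds for each $A_k$ and involves only max's and sup's, so it will pass through the max over $k$ without difficulty.

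First I would verify submultiplicativity of $\|\cdot\|_n$: the product in $M_n(B)$ decomposes coordinatewise as $(ab)^{(k)} = a^{(k)} b^{(k)}$, so applying submultiplicativity in each $M_n(A_k)$ and taking the max over $k$ gives $\|ab\|_n \leq \|a\|_n \|b\|_n$. Next I would check condition~(\ref{D_4Y19_MatNAlg_SbMat}) by extracting the submatrix coordinatewise and applying the corresponding inequality in each $A_k$ before taking the max. Condition~(\ref{D_4Y19_MatNAlg_PrdDiag}) follows identically, since left and right multiplication by a diagonal matrix $s \in M_n$ is coordinatewise on $B$ and the bound $\max_j |\lambda_j|$ is independent of $k$. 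For condition~(\ref{D_4Y19_MatNAlg_DSum}), I would compute
\[
\|\diag(a, b)\|_{m+n}
 = \max_k \|\diag(a^{(k)}, b^{(k)})\|_{m+n}
 = \max_k \max\!\bigl(\|a^{(k)}\|_m, \|b^{(k)}\|_n\bigr),
\]
and then interchange the two maxes to get $\max(\|a\|_m, \|b\|_n)$.

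Finally, completeness of $B$ in $\|\cdot\|_1$ is immediate since a finite direct sum of Banach spaces with the max norm is complete. The whole argument is a routine coordinatewise verification; there is no substantive obstacle, the only point requiring mild attention being to keep straight the identification $M_n(B) \cong \bigoplus_k M_n(A_k)$ and the fact that $\max$ commutes with itself, so the author's one-line ``All parts are easy'' style proof is appropriate here.
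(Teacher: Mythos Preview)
Your proposal is correct and is precisely the routine coordinatewise verification the paper has in mind; the paper's own proof reads in its entirety ``The proof is easy, and is omitted.'' Your write-up simply spells out those easy details, so there is nothing to add or compare.
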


\begin{proof}
The proof is easy, and is omitted.
\end{proof}

\begin{lem}\label{L_5128_SupIn}
Let the notation be as in Definition~\ref{D_4326_DSumNorm}.
Let $B$ be a Banach algebra, and
for $k = 1, 2, \ldots, N$
let $\ph_k \colon B \to A_k$
be a \hm.
Define ${\textdisp{ \ph \colon B \to \bigoplus_{k = 1}^N A_k }}$
by $\ph (b) = \big( \ph_1 (b), \, \ph_2 (b), \, \ldots, \ph_N (b) \big)$
for $b \in B$.
Then $\ph$ is contractive \ifo{}
$\ph_k$ is contractive for $k = 1, 2, \ldots, N$.
If $A_1, A_2, \ldots, A_N$ are
matrix normed Banach algebras,
then $\ph$ is completely contractive \ifo{}
$\ph_k$ is completely contractive for
$k = 1, 2, \ldots, N$.
\end{lem}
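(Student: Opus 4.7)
The statement is straightforward and reduces to unpacking the definitions. I will handle the scalar case first, and then bootstrap the matrix-normed case by identifying $M_n\big(\bigoplus_{k=1}^N A_k\big)$ with $\bigoplus_{k=1}^N M_n(A_k)$ so that the same argument applies at each matrix level.

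For the first part, observe that by Definition~\ref{D_4326_DSumNorm} we have
\[
\| \ph (b) \| = \max \bigl( \| \ph_1 (b) \|, \, \| \ph_2 (b) \|, \, \ldots, \, \| \ph_N (b) \| \bigr)
\]
for every $b \in B$. If $\ph$ is contractive, then for each fixed~$k$ the inequality $\| \ph_k (b) \| \leq \| \ph (b) \| \leq \| b \|$ shows that $\ph_k$ is contractive. Conversely, if each $\ph_k$ is contractive, then the same displayed equation gives $\| \ph (b) \| \leq \| b \|$.

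For the matrix-normed case, the key observation is the natural algebra isomorphism
\[
\kp_n \colon M_n \left( \bigoplus_{k=1}^N A_k \right) \longrightarrow \bigoplus_{k=1}^N M_n (A_k),
\qquad
(a_{i,j})_{i,j} \longmapsto \bigl( (a_{i,j}^{(1)})_{i,j}, \, \ldots, \, (a_{i,j}^{(N)})_{i,j} \bigr),
\]
where $a_{i,j} = (a_{i,j}^{(1)}, \ldots, a_{i,j}^{(N)})$. A direct inspection of Definition~\ref{D_4326_DSumNorm}, applied once inside each entry and once outside, shows that $\kp_n$ is isometric: both norms of an element equal $\max_k \| (a_{i,j}^{(k)})_{i,j} \|_n$ (the outside max is over the coordinates of the direct sum, the inside max already appears inside each $\| \cdot \|_n$ on $M_n(A_k)$). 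Moreover, under this identification the map $\id_{M_n} \otimes \ph$ corresponds entrywise to $b \mapsto \bigl( (\id_{M_n} \otimes \ph_1)(b), \ldots, (\id_{M_n} \otimes \ph_N)(b) \bigr)$.

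Once this identification is in hand, the argument for the scalar case applies verbatim at level~$n$: $\| (\id_{M_n} \otimes \ph)(b) \|_n = \max_k \| (\id_{M_n} \otimes \ph_k)(b) \|_n$, so $\id_{M_n} \otimes \ph$ is contractive if and only if every $\id_{M_n} \otimes \ph_k$ is contractive. Taking the supremum over $n \in \N$ yields the equivalence of complete contractivity for $\ph$ and complete contractivity for each $\ph_k$. There is no real obstacle here; the only thing to check carefully is the isometric identification $\kp_n$, which is immediate from writing out the matrix norms on both sides.
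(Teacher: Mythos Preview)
Your proof is correct and is exactly the natural elaboration of the definitions; the paper itself simply says ``The proof is immediate'' and gives no further detail. In fact, the isometric identification $\kp_n$ you spell out is precisely how Definition~\ref{D_4326_DSumNorm} specifies the matrix norms on the direct sum in the first place, so your matrix-level argument reduces directly to the scalar case as you describe.
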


\begin{proof}
The proof is immediate.
\end{proof}

\begin{lem}\label{L_6408_QtOfSum}
Let the notation be as in Definition~\ref{D_4326_DSumNorm}.
Let $S \subset \{ 1, 2, \ldots, N \}$.
Then ${\textdisp{ \bigoplus_{k \in S} A_k }}$
is an ideal in ${\textdisp{ \bigoplus_{k = 1}^N A_k }}$,
and the obvious map
\[
\bigoplus_{k = 1}^N A_k \Big/ \bigoplus_{k \in S} A_k
  \to \bigoplus_{k \not\in S} A_k
\]
is completely isometric when the quotient
is given the matrix norms of
Definition \ref{D_6408_Ops}(\ref{D_6408_Ops_Quot}).
\end{lem}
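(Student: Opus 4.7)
The ideal claim is immediate: multiplication in $\bigoplus_{k = 1}^N A_k$ is coordinatewise, so setting any collection of coordinates to zero defines a (two-sided) ideal. The content is the norm assertion.

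The plan is to verify isometry at each matrix level $n$ and then invoke the definition of complete isometry. First, I would identify $M_n \big( \bigoplus_{k = 1}^N A_k \big)$ with $\bigoplus_{k = 1}^N M_n (A_k)$ via the obvious coordinatewise map, and observe that under this identification the norm described in Definition~\ref{D_4326_DSumNorm} is still $\| (a_1, \ldots, a_N) \|_n = \max_k \| a_k \|_n$. Under the same identification, $M_n \big( \bigoplus_{k \in S} A_k \big)$ corresponds to the subspace of tuples supported on~$S$, and the quotient $M_n \big( \bigoplus_{k = 1}^N A_k \big) / M_n \big( \bigoplus_{k \in S} A_k \big)$ gets the quotient norm by Definition~\ref{D_6408_Ops}(\ref{D_6408_Ops_Quot}).

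Fix $a = (a_1, \ldots, a_N) \in M_n \big( \bigoplus_{k = 1}^N A_k \big)$. An arbitrary element of the ideal is $b = (b_1, \ldots, b_N)$ with $b_k = 0$ for $k \notin S$, so
\[
\| a - b \|_n
 = \max \Bigl( \max_{k \in S} \| a_k - b_k \|_n, \; \max_{k \notin S} \| a_k \|_n \Bigr).
\]
The second term is independent of~$b$, and the first can be made zero by choosing $b_k = a_k$ for $k \in S$. Hence the infimum is attained and equals $\max_{k \notin S} \| a_k \|_n$, which is precisely the norm of the image $(a_k)_{k \notin S}$ in $M_n \big( \bigoplus_{k \notin S} A_k \big)$. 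Therefore the induced map is isometric on $M_n$; since this holds for every $n \in \N$, it is completely isometric.

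There is no real obstacle here; the only thing that could conceivably go wrong is if the quotient norm on $M_n$ of the sum were strictly smaller than the componentwise maximum, but this is ruled out by the fact that the direct sum norm is already a maximum, so the infimum in the quotient norm is attained by cancelling the coordinates in~$S$ exactly.
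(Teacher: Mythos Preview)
Your proof is correct and is exactly the direct computation the paper has in mind; the paper itself simply writes ``The proof is easy, and is omitted.''
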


\begin{proof}
The proof is easy, and is omitted.
\end{proof}

\begin{lem}\label{L_5128_ToMat}
Let $A$ be a matrix normed Banach algebra.
Let $n \in \N$, and let
${\textdisp{ \ph \colon \bigoplus_{k = 1}^n A \to M_n (A) }}$
be the map
$\ph (a_1, a_2, \ldots, a_n) = \diag (a_1, a_2, \ldots, a_n)$
for $a_1, a_2,  \ldots, a_n \in A$.
Then $\ph$ is completely isometric.
\end{lem}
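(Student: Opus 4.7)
The plan is to fix $m \in \N$ and show that the amplification
\[
\ph^{(m)} \colon M_m\Bigl( \bigoplus_{k=1}^n A \Bigr) \to M_m(M_n(A))
\]
is isometric. Under the standard identification of the domain with $\bigoplus_{k=1}^n M_m(A)$, Definition~\ref{D_4326_DSumNorm} gives $\|(b_1,\ldots,b_n)\|_m = \max_k \|b_k\|_m$ for $b_k \in M_m(A)$, so the task reduces to showing that $\|\ph^{(m)}(b_1,\ldots,b_n)\|_{M_m(M_n(A))}$ equals this same maximum.

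First I would unpack the target norm via Definition~\ref{D_5205_MatNormMm}: choose a bijection $\sm \colon \{1,\ldots,m\}\times\{1,\ldots,n\} \to \{1,\ldots,mn\}$ and use $\te_{\sm} \otimes \id_A$ to identify $M_m(M_n(A))$ with $M_{mn}(A)$. The $(i,j)$-block of $\ph^{(m)}(b_1,\ldots,b_n)$ is the diagonal matrix $\diag\bigl((b_1)_{i,j},\ldots,(b_n)_{i,j}\bigr)\in M_n(A)$, so it corresponds to the element $c \in M_{mn}(A)$ whose entries are $c_{\sm(i,k),\,\sm(j,l)} = \dt_{k,l}(b_k)_{i,j}$. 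Separately, by Lemma~\ref{L_5205_MNormIndep} the algebra $M_m(A)$ is itself a matrix normed algebra, and applying Definition~\ref{D_5205_MatNormMm} to it with the bijection $\ta(k,i) = \sm(i,k)$ identifies $\diag(b_1,\ldots,b_n) \in M_n(M_m(A))$ with precisely the same element $c$ of $M_{mn}(A)$. Consequently
\[
\|\ph^{(m)}(b_1,\ldots,b_n)\|_{M_m(M_n(A))}
 = \|c\|_{mn}
 = \|\diag(b_1,\ldots,b_n)\|_{M_n(M_m(A))}.
\]

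Finally, iterating Definition~\ref{D_4Y19_MatNAlg}(\ref{D_4Y19_MatNAlg_DSum}) $(n-1)$ times inside the matrix normed algebra $M_m(A)$ yields $\|\diag(b_1,\ldots,b_n)\|_{M_n(M_m(A))} = \max_k \|b_k\|_m$, which is precisely $\|(b_1,\ldots,b_n)\|_m$ and finishes the proof. The main obstacle is index bookkeeping in the step that identifies $\ph^{(m)}(b_1,\ldots,b_n)$ and $\diag(b_1,\ldots,b_n)$ as the same element of $M_{mn}(A)$; choosing the two bijections compatibly (as above) makes this transparent, but if arbitrary bijections are used the two elements differ by conjugation by a permutation matrix in $M_{mn}$, and equality of norms then follows from Lemma~\ref{L_5205_MbyCxP}.
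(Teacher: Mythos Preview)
Your proof is correct and follows essentially the same approach as the paper's: both pass to $M_{mn}(A)$ via the identification $\te_{\sm}\otimes\id_A$, recognize that the image of $\ph^{(m)}(b_1,\ldots,b_n)$ there is the block diagonal matrix $\diag(b_1,\ldots,b_n)$, and then iterate Definition~\ref{D_4Y19_MatNAlg}(\ref{D_4Y19_MatNAlg_DSum}) to get $\max_k\|b_k\|_m$. The paper simply asserts that with the standard bijection the resulting matrix is block diagonal, while you make this explicit by routing through $M_n(M_m(A))$ with the compatible bijection $\ta(k,i)=\sm(i,k)$; this extra step is just a more detailed justification of the same observation.
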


\begin{proof}
Let $r \in \N$.
Let $\sm$ be the standard bijection of Definition~\ref{D_5205_MmMnMmn},
with $r$ in place of~$n$,
and let $\te_{\sm}$ be as there.
For ${\textdisp{ a \in \bigoplus_{k = 1}^n A }}$ the matrix
$\big[ (\te_{\sm} \otimes \id_A)
  \circ (\id_{M_r} \otimes \ph) \big] (a)$
is block diagonal.
So iteration of condition~(\ref{D_4Y19_MatNAlg_DSum})
in Definition~\ref{D_4Y19_MatNAlg}
shows that $(\te_{\sm} \otimes \id_A) \circ (\id_{M_r} \otimes \ph)$
is isometric.
Lemma~\ref{L_5205_MNormIndep}
implies that $\te_{\sm} \otimes \id_A$ is isometric.
So $\id_{M_r} \otimes \ph$ is isometric.
\end{proof}

\begin{lem}\label{L_4Y17_SumIsLp}
Let $p \in [1, \I)$.
In Definition~\ref{D_4326_DSumNorm},
if $A_1, A_2, \ldots, A_N$ are $L^p$~operator algebras,
then so is ${\textdisp{ A = \bigoplus_{k = 1}^N A_k }}$.
If $A_1, A_2, \ldots, A_N$ are matricial $L^p$~operator algebras,
then $A$ is a matricial $L^p$~operator algebra.
\end{lem}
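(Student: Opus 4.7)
The plan is to build an isometric representation of $A = \bigoplus_{k=1}^N A_k$ by taking given isometric representations of the summands on individual $L^p$-spaces and assembling them into a block diagonal representation on the disjoint union measure space of Definition~\ref{Notdisjointunion}. The key technical fact that drives everything is that for $p \in [1, \infty)$, if we write $L^p(X, \mu) = \bigoplus_{k=1}^N L^p(X_k, \mu_k)$ as an $\ell^p$-direct sum (which is exactly what Definition~\ref{Notdisjointunion} produces), then a block diagonal operator $T_1 \oplus \cdots \oplus T_N$ on this sum satisfies $\|T_1 \oplus \cdots \oplus T_N\| = \max_k \|T_k\|$. The $\leq$ direction follows from $\sum_k \|T_k \xi_k\|^p \leq (\max_k \|T_k\|)^p \sum_k \|\xi_k\|^p$, and the $\geq$ direction by testing on vectors supported in a single summand.

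First, for each $k$ I would invoke Definition~\ref{D_4310_LpOpAlg} to pick an isometric representation $\pi_k \colon A_k \to L(L^p(X_k, \mu_k))$. Second, I would form $\XBM = \coprod_{k=1}^N (X_k, \mathcal{B}_k, \mu_k)$ and identify $L^p(X, \mu)$ with the $\ell^p$-direct sum of the $L^p(X_k, \mu_k)$ in the canonical way. Third, I would define $\pi \colon A \to \LLp$ by the block diagonal rule $\pi(a_1, \ldots, a_N)(\xi_1, \ldots, \xi_N) = (\pi_1(a_1) \xi_1, \ldots, \pi_N(a_N) \xi_N)$; this is manifestly an algebra \hm. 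Fourth, the norm computation above gives
\[
\| \pi(a_1, \ldots, a_N) \|
 = \max_{1 \leq k \leq N} \| \pi_k(a_k) \|
 = \max_{1 \leq k \leq N} \| a_k \|
 = \| (a_1, \ldots, a_N) \|,
\]
so $\pi$ is isometric, proving the first assertion.

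For the matricial assertion, I would start instead from completely isometric representations $\pi_k$ and construct the same $\pi$. For $n \in \N$, letting $\nu_n$ be counting measure on $\{ 1, \ldots, n \}$, the space $L^p(\{1, \ldots, n\} \times X, \nu_n \times \mu)$ is naturally the $\ell^p$-direct sum of the spaces $L^p(\{1, \ldots, n\} \times X_k, \nu_n \times \mu_k)$, and the operator $(\id_{M_n} \otimes \pi)(a)$ for $a = (a_1, \ldots, a_N) \in M_n(A)$ is block diagonal with blocks $(\id_{M_n} \otimes \pi_k)(a_k)$. Applying the same norm-of-block-diagonal computation and then using the definition of the matrix norm on $A$ from Definition~\ref{D_4326_DSumNorm} yields
\[
\| (\id_{M_n} \otimes \pi)(a) \|
 = \max_{1 \leq k \leq N} \| (\id_{M_n} \otimes \pi_k)(a_k) \|
 = \max_{1 \leq k \leq N} \| a_k \|_n
 = \| a \|_n,
\]
so $\pi$ is completely isometric.

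There is no real obstacle here; the only thing to be careful about is matching the prescribed max matrix norm on $A$ from Definition~\ref{D_4326_DSumNorm} with the matrix norm coming from Definition~\ref{D_4Y19_LpMN} applied to $\pi(A) \subset \LLp$, and this matching is exactly what the block diagonal computation on the $\ell^p$-sum delivers. The whole argument hinges on the special feature that $\ell^p$-sums of operators have norm equal to the supremum of the block norms, which is what makes the bookkeeping in Definition~\ref{D_4326_DSumNorm} compatible with the ambient $L^p$-operator structure.
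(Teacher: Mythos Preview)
Your proof is correct and follows exactly the same approach as the paper: both build a block diagonal representation on the disjoint union measure space of Definition~\ref{Notdisjointunion} from given (completely) isometric representations of the summands. The paper's version is terser --- it simply asserts that the resulting representation is isometric and that the matricial case is ``essentially the same'' --- whereas you spell out the norm-of-block-diagonal computation and the $M_n$-amplification explicitly.
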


\begin{proof}
We give the proof for $L^p$~operator algebras;
the matricial case is essentially the same.
Suppose that $\rh_k \colon A_k \to L (L^p (X_k, \mu_k))$
is an isometric representation for $k = 1, 2, \ldots, N$.
Let ${\textdisp{ X = \coprod_{k = 1}^N X_k }}$
and $\mu$ be as in Definition~\ref{Notdisjointunion}.
Then $L^p (X, \mu)$ is the $L^p$~direct sum
${\textdisp{\bigoplus_{k = 1}^N L^p (X_k, \mu_k) }}$.
Define
${\textdisp{ \rh \colon \bigoplus_{k = 1}^N A_k \to \LLp }}$
by
\[
\rh (a_1, a_2, \ldots, a_N)
 = \rh_1 (a_1) \oplus \rh_2 (a_2) \oplus \cdots \oplus \rh_N (a_N)
\]
for
$a_1 \in A_1, \, \, a_2 \in A_2, \, \, \ldots, \, \, a_N \in A_N$.
Clearly $\rho$ is an isometric representation
of ${\textdisp{ \bigoplus_{k = 1}^N A_k }}$.
\end{proof}

\section{Hermitian idempotents}\label{Sec_HIdemp}

\indent
The right kind of idempotent to consider in an $L^p$~operator algebra
for $p \neq 2$
is what might be called a ``spatial idempotent'',
that is, one which is a spatial partial isometry
in the sense of Definition~6.4 of~\cite{PhLp1}.
We develop some of the basic theory in this section.
Such idempotents can be characterized as those which
are hermitian in the sense of Definition~\ref{D_6608_Herm} below,
a much older notion (see \cite{Vid}).
Although we will not need the general theory of
hermitian elements of a Banach algebra,
it seems appropriate to make the connection with the older concept.

We formalize the following terminology for idempotents.

\begin{dfn}[Definition 4.1.1 of~\cite{Bl3}]\label{D_4Y17_Idemp}
Let $A$ be a ring (not necessarily unital),
and let $e, f \in A$ be idempotents.
We say that
{\emph{$f$ dominates~$e$}},
written $f \geq e$ or $e \leq f$,
if $f e = e f = e$.
We say that $e$ and $f$ are
{\emph{orthogonal}}
if $e f = f e = 0$.
\end{dfn}

Even if $A$ is a \ca,
the notation $e \leq f$ need not agree
with the usual C*-algebraic order.
Among other things, $e$ and $f$ need not be selfadjoint.
If $e$ and $f$ happen to be \pj{s} in a \ca,
then our notation does agree with the usual
C*-algebraic order.
Orthogonality need not be
the same as the version of orthogonality for \ca{s}
implicit in the remark after Definition 4.1.1
of~\cite{Bl3}.

\begin{dfn}[Definition 2.6.1 of~\cite{Plm2}]\label{D_6608_NumRan}
Let $A$ be a unital Banach algebra in which $\| 1 \| = 1$.
Let $a \in A$.
Then the {\emph{numerical range}} $W (a)$
is the set of all numbers $\om (a) \in \C$ for linear
functionals $\om$ on $A$ such that $\| \om \| = \om (1) = 1$.
\end{dfn}

\begin{thm}[Theorem 2.6.7 of~\cite{Plm2}]\label{T_6608_HCnd}
Let $A$ be a unital Banach algebra in which $\| 1 \| = 1$,
and let $a \in A$.
Then \tfae:
\begin{enumerate}
\item\label{T_6608_HCnd_W}
$W (a) \S \R$.
\item\label{T_6608_HCnd_Exp}
$\| \exp (i \ld a) \| = 1$ for all $\ld \in \R$.
\item\label{T_6608_HCnd_SubExp}
$\| \exp (i \ld a) \| \leq 1$ for all $\ld \in \R$.
\item\label{T_6608_HCnd_Lim}
With the limit being taken over $\ld \in \R$,
${\textdisp{
\lim_{\ld \to 0} | \ld |^{-1} \big( \| 1 - i \ld a \| - 1 \big) = 0 }}$.
\end{enumerate}
\end{thm}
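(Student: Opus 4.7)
The plan is to establish the cycle (1) $\Rightarrow$ (3) $\Rightarrow$ (4) $\Rightarrow$ (1), together with the easy equivalence (2) $\Leftrightarrow$ (3). Of these, (2) $\Rightarrow$ (3) is trivial; for (3) $\Rightarrow$ (2), use $\exp(i \ld a) \exp(-i \ld a) = 1$ and $\|1\| = 1$ to get $1 \le \|\exp(i \ld a)\| \cdot \|\exp(-i \ld a)\|$, and apply (3) to $\pm \ld$ to force both factors to equal~$1$.

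For (3) $\Rightarrow$ (4), I would expand
\[
\exp(-i \ld a) - (1 - i \ld a) = \sum_{k \ge 2} \frac{(-i \ld a)^k}{k!},
\]
whose norm is bounded by $\tfrac{1}{2}|\ld|^2 \|a\|^2 \exp(|\ld| \|a\|) = O(\ld^2)$. The reverse triangle inequality, together with $\|\exp(-i \ld a)\| = 1$ from the equivalence (2) $\Leftrightarrow$ (3), then yields $\big| \|1 - i \ld a\| - 1 \big| = O(\ld^2)$, which implies (4).

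For (4) $\Rightarrow$ (1), fix a state $\om$ on $A$ (meaning $\|\om\| = \om(1) = 1$) and write $\om(a) = \af + i \bt$ with $\af, \bt \in \R$. Since $|\om(1 - i \ld a)|^2 \le \|1 - i \ld a\|^2$ and $|1 - i \ld (\af + i \bt)|^2 = 1 + 2 \ld \bt + \ld^2 (\af^2 + \bt^2)$, hypothesis (4) gives $\|1 - i \ld a\|^2 = 1 + o(\ld)$, so $2 \ld \bt + \ld^2 (\af^2 + \bt^2) \le o(\ld)$. Dividing by $\ld$ and letting $\ld \to 0^{\pm}$ yields $\bt = 0$, whence $\om(a) \in \R$.

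The hard implication is (1) $\Rightarrow$ (3). For this I would invoke the standard numerical-range formula
\[
\lim_{t \to 0^+} \frac{\|1 + t b\| - 1}{t}
 = \sup \{ \operatorname{Re} \zt : \zt \in W(b) \}
\]
in a unital Banach algebra with $\|1\| = 1$, together with its exponential consequence $\|\exp(t b)\| \le \exp \big( t \sup \operatorname{Re} W(b) \big)$ for $t \ge 0$, which one obtains by computing the right derivative of $t \mapsto \|\exp(t b)\|$ via $\|\exp((t + h) b)\| \le \|\exp(h b)\| \cdot \|\exp(t b)\|$ and applying a Gronwall argument. Since every state is linear, $W(i \ld a) = i \ld \, W(a) \subset i \R$ whenever $W(a) \subset \R$ and $\ld \in \R$, so $\sup \operatorname{Re} W(i \ld a) = 0$, giving $\|\exp(i \ld a)\| \le 1$. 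These numerical-range tools are the substantive ingredients, and I would cite them from Bonsall--Duncan or Palmer rather than reproving them; the rest of the argument is Taylor expansion and elementary manipulation.
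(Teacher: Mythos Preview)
Your argument is correct. The paper itself does not prove this result: it simply cites Theorem~2.6.7 of Palmer for the equivalence of (\ref{T_6608_HCnd_W}), (\ref{T_6608_HCnd_Exp}), and~(\ref{T_6608_HCnd_Lim}), and adds exactly your observation for (\ref{T_6608_HCnd_Exp}) $\Leftrightarrow$ (\ref{T_6608_HCnd_SubExp}). So your proposal is strictly more detailed than the paper's proof. Your direct arguments for (\ref{T_6608_HCnd_SubExp}) $\Rightarrow$ (\ref{T_6608_HCnd_Lim}) and (\ref{T_6608_HCnd_Lim}) $\Rightarrow$ (\ref{T_6608_HCnd_W}) are standard and fine, and for (\ref{T_6608_HCnd_W}) $\Rightarrow$ (\ref{T_6608_HCnd_SubExp}) you end up, like the paper, deferring to the literature (the formula $\sup \operatorname{Re} W(b) = \lim_{t \to 0^+} t^{-1}(\|1 + t b\| - 1)$ and the resulting exponential bound). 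In effect both proofs reduce to the same body of numerical-range results; you just unpack more of the easy steps.
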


\begin{proof}
The equivalence of conditions
(\ref{T_6608_HCnd_W}),
(\ref{T_6608_HCnd_Exp}),
and~(\ref{T_6608_HCnd_Lim})
is in Theorem 2.6.7 of~\cite{Plm2}.
That~(\ref{T_6608_HCnd_Exp})
implies~(\ref{T_6608_HCnd_SubExp})
is trivial.
That~(\ref{T_6608_HCnd_SubExp})
implies~(\ref{T_6608_HCnd_Exp})
follows from $\| 1 \| = 1$
and $\exp (i \ld a)^{-1} = \exp (- i \ld a)$.
\end{proof}

\begin{dfn}[see Definition 2.6.5 of~\cite{Plm2} and the
  preceding discussion]\label{D_6608_Herm}\label{Hermitianidempotent}
Let $A$ be a unital Banach algebra in which $\| 1 \| = 1$,
and let $a \in A$.
We say that $a$ is {\emph{hermitian}}
if $a$ satisfies the equivalent conditions of Theorem~\ref{T_6608_HCnd}.
If $a$ is also an idempotent,
we call it a {\emph{hermitian idempotent}}.
\end{dfn}

\begin{rmk}\label{R_4Y18_01}
Let $A$ be a unital Banach algebra in
which $\| 1 \| = 1$.
Then clearly $0$ and $1$ are
hermitian idempotents.
Also, in any unital
Banach algebra, if $e$ is a hermitian idempotent,
then so is $1 - e$.
Indeed, if $\ld \in \R$ then
\[
\| \exp (i \ld (1 - e)) \|
 = \| e + \exp (i \ld) (1 - e) \|
 = \| \exp (i \ld) \exp (- i \ld e) \|
 = \| \exp (- i \ld e) \|
 = 1,
\]
as desired.
\end{rmk}

A hermitian idempotent in a \ca{} is simply a projection.
(See Proposition 3.3.3 in~\cite{JC}, observing that a
nonzero hermitian idempotent has norm 1
by \Lem{L_6612_HI} below.)

The following result gives the characterization we use most often.

\begin{lem}\label{L_6612_HI}
Let $A$ be a unital Banach algebra in which $\| 1 \| = 1$.
Let $e \in A$ be an idempotent.
Define a \hm{} $\bt_e \colon \C \oplus \C \to A$
by $\bt_e (\ld_1, \ld_2) = \ld_1 e + \ld_2 (1 - e)$
for $\ld_1, \ld_2 \in \C$.
Then $e$ is hermitian in the sense of Definition~\ref{D_6608_Herm}
\ifo,
when $\C \oplus \C$ is normed as in Definition~\ref{D_4326_DSumNorm},
the \hm{} $\bt_e$ is contractive.
\end{lem}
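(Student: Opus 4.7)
The plan is to prove both directions by exploiting the single key identity
\[
\exp(i\lambda e) \;=\; 1 + (e^{i\lambda} - 1)e \;=\; e^{i\lambda}e + (1-e),
\]
which holds for any idempotent $e$ and any $\lambda\in\C$ because $e^n = e$ for $n\geq 1$. First, observe that because $e$ and $1-e$ are orthogonal idempotents summing to $1$, the map $\beta_e$ is a unital algebra \hm, so contractivity is the only issue.

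For the $(\Leftarrow)$ direction I would take any $\lambda \in \R$ and apply $\beta_e$ to the pair $(e^{i\lambda},1)$, which has sup-norm $1$ in $\C\oplus\C$. By the identity above,
\[
\beta_e(e^{i\lambda},1) \;=\; e^{i\lambda}e + (1-e) \;=\; \exp(i\lambda e),
\]
so contractivity of $\beta_e$ gives $\|\exp(i\lambda e)\| \le 1$ for all $\lambda\in\R$. Condition~(\ref{T_6608_HCnd_SubExp}) of Theorem~\ref{T_6608_HCnd} then yields that $e$ is hermitian.

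For the $(\Rightarrow)$ direction, assume $e$ is hermitian and proceed in two steps. First check contractivity on the distinguished boundary $S^1\times S^1$: writing $\mu_j=e^{i\alpha_j}$, factor $\mu_2$ out of $\mu_1 e+\mu_2(1-e)$ and use the identity to get
\[
\mu_1 e + \mu_2(1-e) \;=\; \mu_2\bigl(e^{i(\alpha_1-\alpha_2)}e + (1-e)\bigr) \;=\; \mu_2\exp\!\bigl(i(\alpha_1-\alpha_2)e\bigr),
\]
which has norm $1$ by Theorem~\ref{T_6608_HCnd}(\ref{T_6608_HCnd_Exp}) together with $\|1\|=1$. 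Second, extend to the closed polydisc by an explicit convex combination: for $z = re^{i\alpha}$ with $r\in[0,1]$, set $u^{\pm} = e^{i\alpha}(r\pm i\sqrt{1-r^2}) \in S^1$, so that $z = \tfrac{1}{2}(u^+ + u^-)$. Applying this to both coordinates expresses any $(\lambda_1,\lambda_2)$ with $\max(|\lambda_1|,|\lambda_2|)\le 1$ as the convex combination
\[
(\lambda_1,\lambda_2) \;=\; \tfrac{1}{4}\sum_{\epsilon_1,\epsilon_2\in\{+,-\}}\bigl(u_1^{\epsilon_1},\,u_2^{\epsilon_2}\bigr),
\]
with each $(u_1^{\epsilon_1},u_2^{\epsilon_2})\in S^1\times S^1$. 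By the first step and the triangle inequality, $\|\beta_e(\lambda_1,\lambda_2)\|\le 1$, so by homogeneity $\beta_e$ is contractive.

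This is essentially a routine argument; the only step requiring a moment of thought is the passage from the torus $S^1\times S^1$ to the full closed unit ball of $\C\oplus\C$. The explicit two-point convex decomposition of a disc element avoids any appeal to Krein--Milman or plurisubharmonicity, keeping the argument self-contained.
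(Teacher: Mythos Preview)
Your proof is correct and uses the same core ideas as the paper's: the identity $\exp(i\lambda e)=e^{i\lambda}e+(1-e)$ for the easy direction, and the two-point decomposition of a disc element as an average of circle elements for the harder one. The organization differs slightly. The paper first divides by the coordinate of larger modulus, reducing to the case $\lambda_2=1$ (so only one coordinate needs decomposing, giving a two-term average), and then handles the case $|\lambda_2|\le|\lambda_1|$ by invoking Remark~\ref{R_4Y18_01} (that $1-e$ is also hermitian) to swap the roles of $e$ and $1-e$. Your version instead treats the full torus $S^1\times S^1$ in one stroke via the factorization $\mu_1 e+\mu_2(1-e)=\mu_2\exp(i(\alpha_1-\alpha_2)e)$, and then decomposes both coordinates simultaneously, giving a four-term average. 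Your route trades a slightly longer final sum for avoiding the case split and the auxiliary remark; the paper's route is a little more economical in the number of terms. Either is fine.
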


\begin{proof}
We use the characterization~(\ref{T_6608_HCnd_SubExp})
of Theorem~\ref{T_6608_HCnd}.

First suppose that $\bt_e$ is contractive.
Then for $\ld \in \R$ we have
\[
\| \exp (i \ld e) \|
 = \| \bt ( (\exp (i \ld), \, 1) ) \|
 \leq \| (\exp (i \ld), \, 1) \|
 = 1.
\]

For the converse,
suppose $e$ is hermitian,
and let $\ld_1, \ld_2 \in \C$.
We need to prove
\begin{equation}\label{Eq_6807_HEst}
\| \bt_e ( (\ld_1, \ld_2) ) \| \leq \max ( |\ld_1|, |\ld_2|).
\end{equation}

This relation is trivial if $\ld_1 = \ld_2 = 0$.

Next, suppose $| \ld_1 | \leq | \ld_2 |$ and $\ld_2 \neq 0$.
Multiplying by $\ld_2^{-1}$,
we reduce to the case $\ld_2 = 1$.
Write $\ld_1 = \rh \exp (i \te)$
with $\te \in \R$
and $0 \leq \rh \leq 1$.
Define
\[
\af_1 = \te + \arccos (\rh)
\andeqn
\af_2 = \te - \arccos (\rh).
\]
Then one checks that
$(\ld_1, 1)
 = \frac{1}{2}
    \big[ (\exp (i \af_1), \, 1) + (\exp (i \af_2), \, 1) \big]$.
So
\[
\| \bt_e ((\ld_1, 1)) \|
 = \left\| \frac{1}{2}
    \big[ \exp (i \af_1 e ) + \exp (i \af_2 e ) \big]\right\|
 \leq \frac{1}{2}
    \big( \| \exp (i \af_1 e ) \| + \| \exp (i \af_2 e ) \| \big)
 \leq 1,
\]
which is~(\ref{Eq_6807_HEst}).

Finally, suppose $| \ld_2 | \leq | \ld_1 |$ and $\ld_1 \neq 0$.
Using Remark~\ref{R_4Y18_01},
we can apply the case of~(\ref{Eq_6807_HEst}) already done
to $1 - e$,
with $(\ld_2, \ld_1)$ in place of $(\ld_1, \ld_2)$.
This gives~(\ref{Eq_6807_HEst}) for $e$ and $(\ld_1, \ld_2)$.
\end{proof}

\begin{lem}\label{L_5128_UnitalHm}
Let $A$ and $B$ be unital Banach algebras such that
$\|1_A\| = 1$ and $\|1_B\| = 1$.
Let $\ph \colon A \to B$ be
a contractive unital \hm, and let $e \in A$ be a hermitian
idempotent. Then $\ph (e) \in B$ is a hermitian idempotent.
\end{lem}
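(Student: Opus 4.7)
The plan is to use the equivalent characterization \ref{T_6608_HCnd}(\ref{T_6608_HCnd_SubExp}) of hermiticity from Theorem~\ref{T_6608_HCnd}: an idempotent $f$ in a unital Banach algebra with $\|1\| = 1$ is hermitian if and only if $\|\exp(i\lambda f)\| \leq 1$ for every $\lambda \in \R$. First, note that $\varphi(e)$ is clearly an idempotent in $B$ since $\varphi$ is a homomorphism, so only the norm condition needs verification.

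The key computation is that since $\varphi$ is a unital homomorphism, it preserves the holomorphic functional calculus for entire functions; in particular,
\[
\exp(i\lambda \varphi(e)) = \varphi(\exp(i\lambda e))
\]
for every $\lambda \in \R$. (This follows directly from convergence of the exponential power series and continuity of~$\varphi$, using $\varphi(1_A) = 1_B$.) Then, using contractivity of $\varphi$ and the fact that $e$ is hermitian in~$A$,
\[
\|\exp(i\lambda \varphi(e))\| = \|\varphi(\exp(i\lambda e))\| \leq \|\exp(i\lambda e)\| = 1.
\]
By Theorem~\ref{T_6608_HCnd}, $\varphi(e)$ is hermitian.

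There is no substantial obstacle: the only thing one needs to be careful about is the appeal to unitality, which is used in two places, namely to get $\|1_B\| = 1$ so that Definition~\ref{D_6608_Herm} applies to $\varphi(e)$, and to ensure $\varphi$ commutes with the exponential series (the constant term $1_A$ maps to $1_B$). As an alternative, one could invoke Lemma~\ref{L_6612_HI}: contractivity and unitality of $\varphi$ give $\varphi \circ \beta_e = \beta_{\varphi(e)}$ (because $\varphi(1_A - e) = 1_B - \varphi(e)$), and a composition of contractions is contractive, so Lemma~\ref{L_6612_HI} applied to $\varphi(e)$ finishes the proof.
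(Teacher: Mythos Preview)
Your proof is correct. Your primary argument uses the exponential characterization from Theorem~\ref{T_6608_HCnd}(\ref{T_6608_HCnd_SubExp}), while the paper's proof simply says ``immediate from Lemma~\ref{L_6612_HI}'', which is exactly the alternative you mention at the end: since $\ph$ is unital, $\ph \circ \bt_e = \bt_{\ph(e)}$, and contractivity is preserved under composition. Both arguments are one-liners and rest on the same circle of equivalences, so there is no meaningful difference in depth or generality; your exponential route just unpacks one step further back toward Theorem~\ref{T_6608_HCnd} rather than quoting the $\bt_e$ characterization.
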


\begin{proof}
The proof is immediate from Lemma~\ref{L_6612_HI}.
\end{proof}

\begin{lem}\label{L_5128_SIDSum}
Let $N \in \N$, and
let $A_1, A_2, \ldots, A_N$ be unital Banach algebras whose
identities have norm one.
Set ${\textdisp{ A = \bigoplus_{k = 1}^N A_k }}$, equipped with
the norm in Definition~\ref{D_4326_DSumNorm},
and for $k = 1, 2, \ldots, N$ let $e_k$
be a hermitian idempotent in $A_k$.
Then $(e_1, e_2, \ldots, e_N)$ is a hermitian idempotent in~$A$.
\end{lem}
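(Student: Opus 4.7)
The plan is to reduce the claim directly to Lemma~\ref{L_6612_HI}, which characterizes hermitian idempotents as exactly those idempotents $e$ for which the homomorphism $\beta_e \colon \C \oplus \C \to A$ given by $\beta_e(\ld_1, \ld_2) = \ld_1 e + \ld_2 (1 - e)$ is contractive when $\C \oplus \C$ carries the sup norm of Definition~\ref{D_4326_DSumNorm}.

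First I would verify the setup: the algebra $A$ is unital with $1_A = (1_{A_1}, \ldots, 1_{A_N})$, which has norm $1$ by Definition~\ref{D_4326_DSumNorm} since each $\|1_{A_k}\| = 1$. The element $e = (e_1, e_2, \ldots, e_N)$ is an idempotent in $A$ because each $e_k$ is idempotent in $A_k$, and hence the map $\beta_e$ is well defined. Coordinate-wise, for each $k$ the $k$-th component of $\beta_e(\ld_1, \ld_2)$ equals $\ld_1 e_k + \ld_2 (1_{A_k} - e_k) = \beta_{e_k}(\ld_1, \ld_2)$, so $\beta_e$ is exactly the map into the direct sum whose coordinates are the homomorphisms $\beta_{e_k}$.

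Now I would invoke Lemma~\ref{L_5128_SupIn}, which tells us that a homomorphism into ${\textdisp{ \bigoplus_{k = 1}^N A_k }}$ is contractive if and only if each of its coordinate homomorphisms is contractive. Since each $e_k$ is hermitian, Lemma~\ref{L_6612_HI} makes each $\beta_{e_k}$ contractive, whence $\beta_e$ is contractive. Applying Lemma~\ref{L_6612_HI} in the reverse direction then yields that $e$ is a hermitian idempotent in~$A$.

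There is essentially no obstacle here: the proof is a direct chase through the characterization of Lemma~\ref{L_6612_HI} combined with the universal property of the sup-norm direct sum recorded in Lemma~\ref{L_5128_SupIn}. The only minor point to be careful about is confirming that $1_A$ has norm one (so that the hypothesis of Lemma~\ref{L_6612_HI} applies to~$A$), which is immediate from the definition of the sum norm.
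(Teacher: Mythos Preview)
Your proof is correct and follows essentially the same approach as the paper, which simply notes that the result is immediate from Lemma~\ref{L_6612_HI}. You have merely spelled out the details more explicitly, including the invocation of Lemma~\ref{L_5128_SupIn} to pass from coordinatewise contractivity to contractivity into the direct sum.
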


\begin{proof}
The proof is immediate from Lemma~\ref{L_6612_HI}.
\end{proof}

We are interested in hermitian idempotents in
$L^p$~operator algebras.
Given $p \in [1, \infty) \setminus \{ 2 \}$,
the following lemma gives a characterization
of hermitian idempotents in $\LLp$, for
a \sfm{} $\XBM$.

\begin{lem}\label{L_4326_CharSpI}
Let $p \in [1, \infty) \setminus \{ 2 \}$.
Let $\XBM$ be a \sfm,
and let $e \in \LLp$ be an idempotent.
Then \tfae:
\begin{enumerate}
\item\label{L_4326_SpIdem}
$e$ is a hermitian idempotent.
\item\label{L_4326_SpPI}
$e$ is a spatial partial isometry
in the sense of Definition~6.4 of~\cite{PhLp1}.
\item\label{L_4326_Mult}
There is a \mb{} subset $E \subset X$
such that $e$ is multiplication by~$\ch_E$.
\end{enumerate}
\end{lem}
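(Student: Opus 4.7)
The plan is to prove the cycle $(\ref{L_4326_Mult}) \Rightarrow (\ref{L_4326_SpPI}) \Rightarrow (\ref{L_4326_SpIdem}) \Rightarrow (\ref{L_4326_Mult})$. The first two implications are essentially by inspection, while the last is the main content and will closely parallel the argument already used in the proof of Theorem~\ref{T_4116_CXLp}.

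For $(\ref{L_4326_Mult}) \Rightarrow (\ref{L_4326_SpPI})$, if $e$ is multiplication by $\ch_E$, then $e$ has the spatial system $(E, E, \, \id_{{\mathcal{B}}/{\mathcal{N}}(\mu)|_E}, \, 1)$ directly from Definition~6.4 of~\cite{PhLp1}. For $(\ref{L_4326_SpPI}) \Rightarrow (\ref{L_4326_SpIdem})$, the easiest route is to first observe that a spatial partial isometry which is idempotent must in fact be multiplication by a characteristic function: its support and range sets coincide, the set transformation is forced to be the identity on them, and the unimodular function involved must be identically~$1$ (since squaring must reproduce $e$). Once $e = M_{\ch_E}$, the \hm{} $\bt_e \colon \C \oplus \C \to \LLp$ of Lemma~\ref{L_6612_HI} sends $(\ld_1, \ld_2)$ to multiplication by $\ld_1 \ch_E + \ld_2 \ch_{X \SM E}$, whose operator norm is exactly $\max (|\ld_1|, |\ld_2|)$; so $\bt_e$ is contractive and $e$ is hermitian.

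The substantive implication is $(\ref{L_4326_SpIdem}) \Rightarrow (\ref{L_4326_Mult})$. Assume $e$ is a hermitian idempotent and set
\[
w_{\ld} = \exp (i \ld e) = 1 + (e^{i \ld} - 1) e
\]
for $\ld \in \R$. By hermiticity, $\| w_\ld \| = 1$ for all $\ld$, and $w_{\ld}^{-1} = w_{- \ld}$ is also of norm~$1$, so each $w_\ld$ is a surjective isometry of $L^p (X, \mu)$. Since $p \neq 2$, Lemma~6.16 of~\cite{PhLp1} gives a spatial system $(E_\ld, F_\ld, S_\ld, g_\ld)$ for $w_\ld$, and Lemma~6.22 of~\cite{PhLp1} forces $S_\ld = S_0$ for all~$\ld$. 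As $w_0 = 1$ admits the spatial system $(X, X, \id_{{\mathcal{B}}/{\mathcal{N}}(\mu)}, 1)$, the uniqueness in Lemma~6.6 of~\cite{PhLp1} makes $S_\ld$ the identity set transformation for every $\ld \in \R$. Thus each $w_\ld$ is a multiplication operator, namely $M_{g_\ld}$, and therefore so is
\[
e = (e^{i} - 1)^{-1} (w_1 - 1) = M_f
\]
for some $f \in L^\infty (X, \mu)$. Idempotency $e^2 = e$ translates to $f^2 = f$ a.e., so $f = \ch_E$ for $E = \{ f = 1 \}$, giving~(\ref{L_4326_Mult}).

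The main obstacle is the inference that the set transformations $S_\ld$ are all trivial. This is not automatic from just having norm one isometries; it uses crucially that $p \neq 2$ (so Lamperti-type rigidity from~\cite{PhLp1} applies) together with the continuity/uniqueness of the spatial system as $\ld$ varies and the value $w_0 = 1$. Everything else is either a direct computation with multiplication operators or follows from Lemma~\ref{L_6612_HI}.
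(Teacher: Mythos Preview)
Your proof is correct and follows essentially the same approach as the paper: the paper cites Lemma~6.18 of~\cite{PhLp1} for the equivalence of~(\ref{L_4326_SpPI}) and~(\ref{L_4326_Mult}), and for $(\ref{L_4326_SpIdem}) \Rightarrow (\ref{L_4326_Mult})$ it applies Theorem~\ref{T_4116_CXLp} to the contractive unital \hm{} $\bt_e \colon C(\{0,1\}) \to \LLp$ rather than reproducing its proof inline as you do. The only cosmetic point is that your ``cycle'' $(\ref{L_4326_SpPI}) \Rightarrow (\ref{L_4326_SpIdem})$ actually passes through~(\ref{L_4326_Mult}), so you are really proving $(\ref{L_4326_SpPI}) \Leftrightarrow (\ref{L_4326_Mult})$ and then $(\ref{L_4326_SpIdem}) \Leftrightarrow (\ref{L_4326_Mult})$, just as the paper does.
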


\begin{proof}
Lemma~6.18 in~\cite{PhLp1}
shows that
(\ref{L_4326_SpPI}) and~(\ref{L_4326_Mult})
are equivalent.

It is obvious that
(\ref{L_4326_Mult}) implies~(\ref{L_4326_SpIdem}).
For the
converse, assume that $e$ is a hermitian idempotent.
Thus the homomorphism
$\bt_e \colon \mathbb{C} \oplus \mathbb{C} \to \LLp$
of Lemma~\ref{L_6612_HI}
is unital and contractive.
Set $Y = \{0, 1 \}$,
and define $f \in C (Y)$ by
$f (0) = 1$ and $f (1) = 0$.
Let $\rh$ be the representation of $L^{\infty} (X, \mu)$
on $L^{p} (X, \mu)$ by multiplication operators.
By Theorem~\ref{T_4116_CXLp},
there exists a unital *-homomorphism
$\ph \colon C (Y) \to L^{\infty} (X, \mu)$
such that $\bt_e = \rh \circ \ph$.
Since $\ph (f)$ is an idempotent in $L^{\infty} (X, \mu)$,
there is a measurable set $E \subset X$
such that $\ph (f) = \chi_E$.
\end{proof}

\begin{cor}\label{C_5128_SpIsDiag}
Let $p \in [1, \infty) \setminus \{ 2 \}$,
let $d \in \N$,
and let $e \in \MP{d}{p}$ be an idempotent.
Then $e$ is hermitian \ifo{} $e$
is a diagonal matrix with entries in $\{ 0, 1 \}$.
\end{cor}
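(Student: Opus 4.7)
The plan is to derive this as an immediate specialization of \Lem{L_4326_CharSpI}. First I would identify $\MP{d}{p}$ with $\LLp$ for the measure space $\XBM$ in which $X = \{1, 2, \ldots, d\}$ and $\mu$ is counting measure; this measure space is manifestly \sft{} (in fact finite), so \Lem{L_4326_CharSpI} applies, with $L^p(X, \mu) = l_d^p$ and $\LLp = \MP{d}{p}$ in the algebraic identification of Notation~\ref{N_5124_FDP}.

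Next I would translate the characterization from the lemma: an idempotent $e \in \MP{d}{p}$ is hermitian \ifo{} there exists a \mb{} subset $E \S \{1, 2, \ldots, d\}$ such that $e$ is the multiplication operator by~$\ch_E$. Since every subset of $\{1, 2, \ldots, d\}$ is measurable, and the multiplication operator by $\ch_E$ on $l_d^p$ is, under the identification with $M_d$, precisely the diagonal matrix whose $(j, j)$ entry equals $1$ when $j \in E$ and $0$ otherwise, this is exactly the statement that $e$ is a diagonal matrix with entries in $\{0, 1\}$.

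There is no real obstacle here; the only thing to verify explicitly is the correspondence between multiplication operators on $l_d^p$ and diagonal $d \times d$ matrices under the standard identification, which is routine. The substantive content is entirely in the previously proven \Lem{L_4326_CharSpI}, and this corollary is just its concrete manifestation in the finite-dimensional case.
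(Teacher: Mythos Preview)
Your proposal is correct and matches the paper's own proof essentially verbatim: the paper simply identifies $\MP{d}{p} = L(l_d^p)$ and invokes the equivalence of conditions (\ref{L_4326_SpIdem}) and (\ref{L_4326_Mult}) in \Lem{L_4326_CharSpI}. Your write-up merely spells out the routine details of that identification.
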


\begin{proof}
Identify $\MP{d}{p} = L (l_d^p)$.
Then the statement is immediate from
the equivalence of (\ref{L_4326_SpIdem})
and~(\ref{L_4326_Mult})
in Lemma~\ref{L_4326_CharSpI}.
\end{proof}

We now give several counterexamples.
It isn't enough
to require that $\| e \| \leq 1$ and $\| 1 - e \| \leq 1$ to
get a hermitian idempotent, even if $A$ is
a $\sm$-finitely representable unital $L^p$~operator
algebra.
There is an example in~$\MP{2}{p}$.

\begin{lem}\label{L_5224_NmIdemp}
Let $p \in [1, \I)$.
Define $e \in \MP{2}{p}$
by
$e = \frac{1}{2} \left( \begin{smallmatrix}
  1     &  1        \\
  1     &  1
\end{smallmatrix} \right)$.
Then $\| e \|_p = \| 1 - e \|_p = 1$,
but if $p \neq 2$ then $e$ is not a hermitian idempotent.
\end{lem}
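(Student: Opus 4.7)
The plan is to check the idempotent property, then bound the two norms using H\"older's inequality with equality witnessed by an explicit vector, and finally invoke the already established classification of hermitian idempotents in $M_d^p$.

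First I would verify directly that $e^2 = e$ (since $\left(\begin{smallmatrix} 1 & 1 \\ 1 & 1 \end{smallmatrix}\right)^2 = 2\left(\begin{smallmatrix} 1 & 1 \\ 1 & 1 \end{smallmatrix}\right)$), and note that $1 - e = \tfrac{1}{2}\left(\begin{smallmatrix} 1 & -1 \\ -1 & 1 \end{smallmatrix}\right)$ is therefore also idempotent. For any $\xi = (x_1, x_2) \in l_2^p$, one computes $e \xi = \tfrac{x_1 + x_2}{2}(1,1)$, so $\|e \xi\|_p = 2^{1/p - 1} |x_1 + x_2|$. H\"older's inequality with conjugate exponent $q$ gives $|x_1 + x_2| \leq 2^{1/q} \|\xi\|_p$, hence $\|e \xi\|_p \leq 2^{1/p + 1/q - 1}\|\xi\|_p = \|\xi\|_p$. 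Taking $\xi = 2^{-1/p}(1,1)$, which has norm~$1$, saturates the bound, so $\|e\|_p = 1$. An identical computation with $(1-e)\xi = \tfrac{x_1 - x_2}{2}(1, -1)$ (using $\xi = 2^{-1/p}(1, -1)$) shows $\|1 - e\|_p = 1$.

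For the non-hermitian assertion, I would simply apply Corollary~\ref{C_5128_SpIsDiag}: for $p \in [1, \infty) \setminus \{2\}$, every hermitian idempotent in $M_2^p$ is a diagonal matrix with entries in $\{0, 1\}$. Since $e$ is visibly not diagonal, it cannot be hermitian.

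There is no real obstacle here; the only mild technicality is making sure the H\"older estimate is sharp, which it is because the extremal vector $(1,1)$ makes both $|x_1 + x_2|$ and $\|\xi\|_p$ attain their H\"older-ratio optimum simultaneously. The hermitian step rests entirely on the previously proved Corollary~\ref{C_5128_SpIsDiag}, so no additional work is needed.
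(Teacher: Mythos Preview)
Your proof is correct and follows essentially the same route as the paper: H\"older's inequality for the upper bound and Corollary~\ref{C_5128_SpIsDiag} for the non-hermitian claim. The only cosmetic difference is that the paper gets $\|e\|_p \geq 1$ from the general fact that a nonzero idempotent has norm at least~$1$, whereas you exhibit an explicit unit vector; and for $1-e$ the paper notes one can alternatively conjugate by $s = \diag(1,-1)$ rather than repeat the computation.
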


\begin{proof}
Let $\af, \bt \in \C$.
Let $q > 1$ be such that
$\frac{1}{p} + \frac{1}{q} = 1$.
Applying H\"{o}lder's inequality
to $(1, 1) \in l_2^q$ and $(\af, \bt) \in l_2^p$,
we get
\[
| \af + \bt | \leq 2^{1 - 1 / p} (| \af |^p + | \bt |^p )^{1/p}.
\]
Use this inequality at the third step, to get
\[
\| e (\af, \bt) \|_p
 = \left\| \left( \tfrac{1}{2} (\af + \bt),
    \, \tfrac{1}{2} (\af + \bt) \right) \right\|_p
 = 2^{1/p} \left| \frac{\af + \bt}{2} \right|
    \leq (| \af |^p + | \bt |^p )^{1/p}
 = \| (\af, \bt) \|_p.
\]
Since $\af, \bt \in \C$ are arbitrary,
this shows that $\| e \|_p \leq 1$.
Obviously,
we have $\| e \|_p \geq 1$ because $e^2 = e$ and $e \neq 0$.

The same argument applies to $1 - e$.
(Or else take
$s = \left( \begin{smallmatrix}
  1     &  \; 0        \\
  0     &  -1
\end{smallmatrix} \right)$
and use the fact that $s$ is an invertible isometry with
$s e s^{-1} = 1 - e$.)

It follows from Corollary~\ref{C_5128_SpIsDiag}
that $e$ is not hermitian.
\end{proof}

One can also explicitly show that $e$ is not hermitian.
For example, suppose $p < 2$.
Letting $\bt_e$ be as in Lemma~\ref{L_6612_HI},
one can explicitly show that
$\| \bt_e ((1, i)) (1, 0) \|_p = 2^{1/p - 1/2} > 1 = \| (1, 0) \|_p$.

We don't define hermitian idempotents in a nonunital
Banach algebra, since whether an idempotent
is hermitian depends on the norm used on the unitization,
even for $L^p$~operator algebras,
as is shown by the following example.

\begin{exa}\label{E_unitization}
Let $p \in [1, \infty) \setminus \{ 2 \}$.
Let $X$ be a second countable locally compact
Hausdorff space with a nontrivial compact open
set $K$.
Let $\mu$ a measure on $X$ with full
support.
Denote by $\ps$ the representation of
$C_0 (X)$ on $L^p (X, \mu)$ given by multiplication operators.
Let $e$ be the idempotent in
Lemma~\ref{L_5224_NmIdemp}.
Define
$\rho \colon C_0 (X) \to L \big( l^p_2 \otimes_p L^p (X, \mu) \big)$
by $\rho (f) = e \otimes \ps (f)$.
Then $\rho$ is isometric.
Take as unitization the subagebra
$\rho (C_0 (X)) \oplus \mathbb{C}1$ of
$L \big( l^p_2 \otimes_p L^p (X, \mu) \big)$.
Let $\chi_{K}\in C_0 (X)$
be the characteristic function of $K$.
Then $\ps (\chi_{K})$ is
a hermitian idempotent in $L(L^p (X, \mu))$, but $\rho (\chi_{K})$
is not a hermitian idempotent because $e$ is not a
hermitian idempotent.
\end{exa}

\begin{lem}\label{L_4Y18_Funct}
Let $p \in [1, \infty) \setminus \{ 2 \}$.
Let $A$ and $B$ be unital
$\sm$-finitely representable $L^p$~operator algebras
with $\| 1_A \| = 1$ and $\| 1_B \| = 1$,
and let $\ps \colon A \to B$ be a contractive \hm{}
such that $\ps (1)$ is a hermitian idempotent in~$B$.
Let $e \in A$ be a hermitian idempotent.
Then $\ps (e)$ is a hermitian idempotent in~$B$.
\end{lem}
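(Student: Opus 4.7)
The plan is to realize $B$ concretely on an $L^p$-space and reduce everything to the classification of hermitian idempotents in $L(L^p)$ as multiplication by characteristic functions (\Lem{L_4326_CharSpI}).

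First, the case $\psi(1_A) = 0$ is trivial, since then $\psi(a) = \psi(1_A a 1_A) = 0$ for every $a \in A$, so $\psi(e) = 0$ is hermitian. Assume henceforth $\psi(1_A) \neq 0$; being a nonzero hermitian idempotent, it has norm one. I would next produce an isometric unital representation $\rho \colon B \to L(L^p(Y,\nu))$ with $(Y,\nu)$ a \sfm, by starting from any \sft{} isometric representation of $B$, cutting down to the range of the image of $1_B$ via \Cor{C_5125_CutAlg}, and using \Prp{P_5204_ContrPj} to recognize this range as an $L^p$-space of a \sfm. Write $\pi = \rho \circ \psi$, a contractive (generally nonunital) representation of $A$ on $L^p(Y,\nu)$.

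Since $\rho$ is a unital contractive \hm, \Lem{L_5128_UnitalHm} tells us that $\pi(1_A) = \rho(\psi(1_A))$ is a hermitian idempotent in $L(L^p(Y,\nu))$. By \Lem{L_4326_CharSpI}, there is a measurable $E \subset Y$ such that $\pi(1_A)$ is multiplication by $\chi_E$, whose range is $L^p(E, \nu|_E)$. Since $\|\pi(1_A)\| = 1$, \Cor{C_5125_CutAlg} yields a unital contractive \hm{} $\pi_0 \colon A \to L(L^p(E, \nu|_E))$ with $\pi_0(a)\xi = \pi(a)\xi$ for $a \in A$ and $\xi \in L^p(E, \nu|_E)$. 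Applying \Lem{L_5128_UnitalHm} to the hermitian idempotent $e \in A$ and to $\pi_0$, we conclude that $\pi_0(e)$ is hermitian in $L(L^p(E, \nu|_E))$, whence by a second invocation of \Lem{L_4326_CharSpI} it equals multiplication by $\chi_{E'}$ for some measurable $E' \subset E$.

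Finally, the identity $\pi(e) = \pi(1_A)\pi(e)\pi(1_A)$ forces $\pi(e)$ to vanish on $L^p(Y \setminus E, \nu|_{Y \setminus E})$ and to coincide with $\pi_0(e)$ on $L^p(E, \nu|_E)$, so $\pi(e)$ is multiplication by $\chi_{E'}$ on $L^p(Y,\nu)$. A third application of \Lem{L_4326_CharSpI} then shows $\pi(e) = \rho(\psi(e))$ is hermitian in $L(L^p(Y,\nu))$; since $\rho \colon B \to \rho(B) \subset L(L^p(Y,\nu))$ is an isometric unital isomorphism onto a unital subalgebra, hermiticity (characterized, via \Lem{L_6612_HI}, by contractivity of the induced \hm{} $\C \oplus \C \to B$) transfers back to $\psi(e)$ in $B$. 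The main technical obstacle is the delicate interplay of isometric, unital, and \sft{} when choosing $\rho$, since \Prp{P_5204_ContrPj} gives \sft{ness} of the cut-down space only under a separability hypothesis on the ambient $L^p$-space; in the applications here this is handled by working throughout with separable algebras and invoking \Prp{P_4Y19_SepImpSepRep}.
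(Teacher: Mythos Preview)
Your proof is correct and follows essentially the same route as the paper's: represent $B$ unitally and isometrically on $L^p(Y,\nu)$ with $\nu$ $\sigma$-finite, use \Lem{L_4326_CharSpI} to identify $\psi(1_A)$ as multiplication by $\chi_E$, cut down via \Cor{C_5125_CutAlg} to obtain a unital contractive map into $L(L^p(E,\nu|_E))$, apply \Lem{L_5128_UnitalHm} there, identify $\psi(e)$ as multiplication by a characteristic function, and pull back to $B$ using \Lem{L_6612_HI}. The paper's proof differs only cosmetically: it simply declares ``we may assume $B$ is a unital subalgebra of $L(L^p(Y,\nu))$'' rather than spelling out the construction of $\rho$, and it applies \Lem{L_5128_UnitalHm} directly to the cut-down of $\psi$ rather than introducing the intermediate $\pi=\rho\circ\psi$.

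Your closing remark about the $\sigma$-finite/separable tension is well observed, and the paper is equally brief on this point: it too passes from a $\sigma$-finite isometric representation of $B$ to a unital one without comment, which strictly speaking uses that cutting down by a norm-one idempotent preserves the $L^p$-structure with a $\sigma$-finite measure. In the paper's applications everything is separable, so this is harmless; your acknowledgment of the issue is appropriate rather than a defect in your argument.
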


We don't know to what extent the hypotheses can be
weakened.
But some hypothesis is necessary.
Let $p \in [1, \infty) \setminus \{ 2 \}$.
By Lemma~\ref{L_5224_NmIdemp} there is a
nonhermitian idempotent $e \in \MP{2}{p}$ such that
$\| e \|_p = 1$.
The homomorphism $\C \to \MP{2}{p}$ defined by
$\ld \mapsto \ld e$ is contractive
but sends the hermitian idempotent~$1$
to the nonhermitian idempotent~$e$.

\begin{proof}[Proof of Lemma~\ref{L_4Y18_Funct}.]
We may assume that there is a \sfm{} $\YCN$
such that $B$ is a unital subalgebra of $L (L^p (Y, \nu))$.
Lemma~\ref{L_4326_CharSpI} provides
a \mb{} subset $E \subset X$ such that
$\ps (1)$ is multiplication by~$\ch_E$.
By Corollary~\ref{C_5125_CutAlg}, we may
view $\ps$ as a unital contractive \hm{}
from $A$ to $L (L^p (E, \nu |_{E}))$.

Let $\bt_e \colon \mathbb{C} \oplus \mathbb{C} \to A$
be as in Lemma~\ref{L_6612_HI}.
Then $\bt_e$ is contractive, so $\ps \circ \bt_e$ is contractive.
By Lemma~\ref{L_5128_UnitalHm}, it follows that
$\ps (e)$ is a hermitian idempotent
in $L (L^p (E, \nu |_{E}))$.
Lemma~\ref{L_4326_CharSpI}
provides a \mb{} subset $F \subset E$ such that
$\ps (e)$ is multiplication by~$\ch_F$.
Another application of Lemma~\ref{L_4326_CharSpI}
implies that $\ps (e)$ is a hermitian idempotent
in $L (L^p (Y, \nu))$.
So $\ps (e)$ is hermitian in~$B$ by Lemma~\ref{L_6612_HI}.
\end{proof}

\begin{cor}\label{C_4Y17_Orth}
Let $p \in [1, \infty) \setminus \{ 2 \}$.
Let $\XBM$ be a \sfm,
let $N \in \N$,
and let $e_1, e_2, \ldots, e_N \in \LLp$ be
orthogonal hermitian idempotents.
Then:
\begin{enumerate}
\item\label{C_4Y17_Orth_Disj}
There exist disjoint measurable sets
$E_1, E_2, \ldots, E_N \subset X$
such that $e_k$ is multiplication by $\ch_{E_k}$
for $k = 1, 2, \ldots, N$.
\item\label{C_4Y17_Orth_Sum}
${\textdisp{ \sum_{k = 1}^N e_k }}$ is a hermitian
idempotent.
\item\label{C_4Y17_Orth_LpNorm}
For every $\xi \in L^p (X, \mu)$,
we have
${\textdisp{ \| \xi \|_p^p = \sum_{k = 1}^N \| e_k \xi \|_p^p }}$.
\item\label{C_4Y17_Orth_Norm}
The map $\bt \colon \C^N \to \LLp$,
given by
${\textdisp{ \bt ( \ld_1, \ld_2, \ldots, \ld_N )
  = \sum_{k = 1}^N \ld_k e_k }}$
for $\ld_1, \ld_2, \ldots, \ld_N \in \C$,
is a contractive \hm.
\end{enumerate}
\end{cor}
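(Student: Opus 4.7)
The plan is to reduce every part to Lemma~\ref{L_4326_CharSpI}, which identifies hermitian idempotents on an $L^p$~space with multiplication operators by characteristic functions. I would first apply that lemma to each $e_k$ to obtain a measurable set $F_k \subset X$ such that $e_k$ acts as multiplication by $\chi_{F_k}$. The orthogonality condition $e_j e_k = 0$ for $j \neq k$ translates to $\chi_{F_j} \chi_{F_k} = \chi_{F_j \cap F_k} = 0$ as an element of $L^\infty(X,\mu)$, i.e.\ $\mu(F_j \cap F_k) = 0$. Setting $E_k = F_k \setminus \bigcup_{j < k} F_j$ gives honestly disjoint measurable sets whose characteristic functions agree with $\chi_{F_k}$ almost everywhere, and since multiplication operators only see $L^\infty$ classes, $e_k$ is still multiplication by $\chi_{E_k}$. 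This proves part~(1).

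For~(2), disjointness of the $E_k$ yields $\sum_{k=1}^N \chi_{E_k} = \chi_E$ where $E = \bigcup_{k=1}^N E_k$, so $\sum_{k=1}^N e_k$ is multiplication by $\chi_E$, which is hermitian again by Lemma~\ref{L_4326_CharSpI}. For~(3), the disjointness of the supports gives
\[
\sum_{k=1}^N \|e_k \xi\|_p^p
 = \sum_{k=1}^N \int_{E_k} |\xi|^p\,d\mu
 = \int_E |\xi|^p\,d\mu,
\]
which is bounded by $\|\xi\|_p^p$ (and equals it precisely when $\xi$ is supported on~$E$ almost everywhere). Finally, for~(4), $\beta(\lambda_1, \ldots, \lambda_N)$ is multiplication by the bounded function $\sum_k \lambda_k \chi_{E_k}$, whose $L^\infty$ norm equals $\max_k |\lambda_k|$; since multiplication operators on $L^p$ satisfy $\|M_f\| \leq \|f\|_\infty$, this gives $\|\beta(\lambda)\| \leq \max_k |\lambda_k|$, which is exactly $\|(\lambda_1, \ldots, \lambda_N)\|$ in the norm of Definition~\ref{D_4326_DSumNorm}. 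That $\beta$ is a homomorphism is immediate from the pointwise multiplication of disjoint characteristic functions.

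There is no substantive obstacle: everything reduces to the identification in Lemma~\ref{L_4326_CharSpI} combined with the elementary calculus of disjoint characteristic functions. The only interpretive subtlety is that statement~(3), taken literally with the equality sign, would require the extra assumption that $\xi$ be supported on $E$ (equivalently that $\xi = \sum_k e_k \xi$); the natural way to read it is either as the inequality ``$\leq$'', or with $\|\xi\|_p^p$ on the left replaced by $\bigl\|\sum_k e_k \xi\bigr\|_p^p$. Either reading is what the subsequent use in part~(4) requires, and both follow from the displayed computation above.
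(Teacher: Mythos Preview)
Your argument is correct and follows the paper's proof essentially verbatim: apply Lemma~\ref{L_4326_CharSpI} to each $e_k$, disjointify the resulting sets up to null sets, and read off all four parts from the explicit description as multiplication by disjoint characteristic functions. Your observation about part~(\ref{C_4Y17_Orth_LpNorm}) is also well taken: the equality as stated requires $\xi$ to lie in the range of $\sum_k e_k$, and indeed every use of this part in the paper (Lemma~\ref{L_5209_CommuteSp}, Lemma~\ref{L_6808_IncIdem}) is in a context where that holds; the paper simply writes ``immediate'' and does not flag the issue.
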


\begin{proof}
Let $\rh \colon L^{\infty} (X, \mu) \to \LLp$
be the representation by multiplication operators.
Lemma~\ref{L_4326_CharSpI}
provides measurable sets
$F_1, F_2, \ldots, F_N \subset X$
such that $e_k = \rh (\ch_{F_k})$
for $k = 1, 2, \ldots, N$.
Since $e_j e_k = 0$ for $j \neq k$,
we have $\mu (F_j \cap F_k) = 0$ for $j \neq k$.
So there exist disjoint measurable sets
$E_k \subset F_k$ for $k = 1, 2, \ldots, N$
such that $\mu (F_k \setminus E_k) = 0$.
Then $\rh (\ch_{E_k}) = \rh (\ch_{F_k})$.
This proves~(\ref{C_4Y17_Orth_Disj}).
Part~(\ref{C_4Y17_Orth_Sum})
follows from Lemma~\ref{L_4326_CharSpI}
because setting ${\textdisp{ E = \bigcup_{k = 1}^N E_k }}$
gives ${\textdisp{ \sum_{k = 1}^N e_k = \rh (\ch_{E}) }}$.
Part~(\ref{C_4Y17_Orth_LpNorm}) is immediate.
For~(\ref{C_4Y17_Orth_Norm}),
define $\bt_0 \colon \C^N \to L^{\infty} (X, \mu)$
by
${\textdisp{ \bt_0 ( \ld_1, \ld_2, \ldots, \ld_N ) =
 \sum_{k = 1}^N \ld_k \ch_{E_k} }}$
for $\ld_1, \ld_2, \ldots, \ld_N \in \C$.
Then $\bt_0$ and $\rh$ are contractive \hm{s},
and $\bt = \rh \circ \bt_0$.
\end{proof}

\begin{lem}\label{L_5209_CommuteSp}
Let $p \in [1, \infty) \setminus \{ 2 \}$.
Let $A$ be a separable unital $L^p$~operator algebra,
let $N \in \N$,
and let $e_1, e_2, \ldots, e_N \in A$
be orthogonal hermitian idempotents
such that ${\textdisp{ \sum_{k = 1}^N e_k = 1 }}$.
Assume $a \in A$ satisfies $a e_k = e_k a$
for $k = 1, 2, \ldots, N$.
Then
\[
\| a \| = \max \big( \| e_1 a e_1 \|, \, \| e_2 a e_2 \|, \,
          \ldots, \, \| e_N a e_N \| \big).
\]
\end{lem}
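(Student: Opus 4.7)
The plan is to represent $A$ concretely on an $L^p$-space and reduce the statement to a norm formula for block-diagonal operators. Since the existence of hermitian idempotents in $A$ already forces $\|1_A\|=1$, \Prp{P_4Y19_Unital} supplies an isometric unital \rpn{} $\pi \colon A \to \LLp$ for some \sfm{} \XBM{} with $L^p(X,\mu)$ separable. Identifying $A$ with $\pi(A)$ costs nothing, so we may assume $A \S \LLp$, with $a$ and the $e_k$ acting directly on $L^p(X,\mu)$.

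Each $e_k$ is then a hermitian idempotent in~\LLp, so by \Lem{L_4326_CharSpI} it is multiplication by some characteristic function. Since the $e_k$ are pairwise orthogonal and sum to $1$, Corollary~\ref{C_4Y17_Orth}(\ref{C_4Y17_Orth_Disj}) yields pairwise disjoint measurable sets $E_1, \ldots, E_N \S X$, whose union exhausts $X$ up to a null set, such that $e_k$ is multiplication by $\ch_{E_k}$. The natural decomposition
\[
L^p(X,\mu) \;=\; \bigoplus_{k=1}^N L^p(E_k, \mu|_{E_k})
\]
is isometric by Corollary~\ref{C_4Y17_Orth}(\ref{C_4Y17_Orth_LpNorm}), and the hypothesis $a e_k = e_k a$ forces $a$ to preserve every summand. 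Thus $a$ is block-diagonal, with blocks $T_k = a|_{L^p(E_k, \mu|_{E_k})}$.

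For any $\xi \in L^p(X,\mu)$, applying Corollary~\ref{C_4Y17_Orth}(\ref{C_4Y17_Orth_LpNorm}) twice gives
\[
\| a \xi \|_p^p
 = \sum_{k=1}^N \| T_k (e_k \xi) \|_p^p
 \leq \max_{1\leq k\leq N} \| T_k \|^p \cdot \sum_{k=1}^N \| e_k \xi \|_p^p
 = \max_{1\leq k\leq N} \| T_k \|^p \cdot \| \xi \|_p^p,
\]
and the reverse inequality $\|a\| \geq \|T_k\|$ for each $k$ is immediate. Hence $\|a\| = \max_k \|T_k\|$. To finish, apply \Lem{L_5125_NormCut} to the operator $e_k a e_k$ and the norm-one idempotent $f = e_k$: since $(e_k a e_k)\, e_k = e_k a e_k$ and the range of $e_k$ is $L^p(E_k,\mu|_{E_k})$, we obtain $\|e_k a e_k\| = \|(e_k a e_k)|_{L^p(E_k,\mu|_{E_k})}\|$, and because $e_k$ acts as the identity on that subspace, the right-hand side equals $\|T_k\|$. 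Combining these observations yields the claimed equality.

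No single step is deep; the main point requiring care is that the norm of a block-diagonal operator on an $L^p$-direct sum is the maximum, rather than some $\ell^p$-combination, of block norms, which rests precisely on the orthogonality identity in Corollary~\ref{C_4Y17_Orth}(\ref{C_4Y17_Orth_LpNorm}). The only bookkeeping obstacle is distinguishing the block $T_k$ acting on $L^p(E_k,\mu|_{E_k})$ from the element $e_k a e_k$ acting on all of $L^p(X,\mu)$, and \Lem{L_5125_NormCut} is exactly the bridge between these two norms.
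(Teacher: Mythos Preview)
Your proof is correct and follows essentially the same route as the paper: represent $A$ unitally and isometrically on an $L^p$-space, identify the hermitian idempotents with multiplication by characteristic functions of disjoint sets, and read off the norm from the resulting block-diagonal decomposition. The only differences are cosmetic: the paper reduces to $N=2$ and inducts, whereas you handle general $N$ directly via Corollary~\ref{C_4Y17_Orth}, and you are more explicit than the paper in invoking \Lem{L_5125_NormCut} to identify $\|e_k a e_k\|$ with the norm of the restricted block. One small citation you might add: the passage ``each $e_k$ is then a hermitian idempotent in $\LLp$'' is exactly \Lem{L_5128_UnitalHm} applied to the unital isometric inclusion $A\hookrightarrow\LLp$.
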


\begin{proof}
We prove this when $N = 2$,
$e_1 = e$,
and $e_2 = 1 - e$.
The general case follows by induction using
Corollary \ref{C_4Y17_Orth}.
We may assume that $e \neq 0$.

It is immediate from
Lemma~\ref{L_6612_HI} that $\| e \| = 1$.
Proposition~\ref{P_4Y19_Unital}
provides an isometric unital representation $\pi$ of $A$
on a separable $L^p$~space $L^p (X, \mu)$.
By Lemma
\ref{L_6416_SepSFT} we may assume that $\mu$ is \sft.
Lemma~\ref{L_5128_UnitalHm} implies that $\pi (e)$ is
a hermitian idempotent,
so Lemma~\ref{L_4326_CharSpI} provides a measurable set $E \subset X$
such that $\pi (e)$ is multiplication by $\ch_E$.
Thus $\pi (a)$ commutes with multiplication by $\ch_E$.
With respect to the $L^p$~direct sum decomposition
$L^p (X, \mu) = L^p (E, \mu) \oplus_p L^p (X \setminus E, \mu)$,
we get $\pi (a) = \pi (e a e) \oplus \pi \big( (1 - e) a (1 - e) \big)$.
So
\begin{align*}
\| a \|
& = \| \pi (a) \|
\\
& = \max \big( \| \pi (e a e) \|, \,
                \| \pi ( (1 - e) a (1 - e) ) \| \big)
  = \max \big( \| e a e \|, \, \| (1 - e) a (1 - e) \| \big).
\end{align*}
This completes the proof.
\end{proof}

It follows that if $A$ is a separable unital $L^p$~operator
algebra and $e \in A$ is a central hermitian idempotent,
then $A = e A e \oplus (1 - e) A (1 - e)$,
normed as in Definition~\ref{D_4326_DSumNorm}.
We don't know whether this is true for more general
Banach algebras.

\begin{lem}\label{P_5124_UniqMatDSum}
In Definition~\ref{D_4326_DSumNorm},
suppose that $A_1, A_2, \ldots, A_N$ are
separable unital $L^p$~operator algebras
whose identities have norm~$1$,
and that they have unique $L^p$~operator matrix norms.
Then ${\textdisp{ A = \bigoplus_{k = 1}^N A_k }}$,
normed as in Definition~\ref{D_4326_DSumNorm},
has unique $L^p$~operator matrix norms.
\end{lem}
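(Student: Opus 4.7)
The plan is to apply \Lem{L_5125_UnitUniq} to reduce to the comparison of two unital isometric representations, use the central hermitian idempotents coming from the summands to decompose each such representation as a direct sum of representations of the~$A_k$, and finally invoke the uniqueness hypothesis on each factor together with the fact that the norm of a block diagonal operator on an $L^p$~direct sum equals the maximum of the norms of its blocks.

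First note that $\|1_A\| = 1$ from the definition of the direct sum norm, and that $A$ is separable and is an $L^p$~operator algebra by \Lem{L_4Y17_SumIsLp}. For $k = 1, 2, \ldots, N$, the element $f_k = (0, \ldots, 1_{A_k}, \ldots, 0) \in A$ is a hermitian idempotent by \Lem{L_5128_SIDSum}, the $f_k$ are pairwise orthogonal, and ${\textdisp{\sum_{k=1}^N f_k = 1_A}}$. By \Lem{L_5125_UnitUniq} applied to~$A$, it suffices to show that whenever $\XBM$ and $\YCN$ are \sfm{s} with $L^p(X,\mu)$ and $L^p(Y,\nu)$ separable, and $\pi \colon A \to \LLp$ and $\sm \colon A \to L(L^p(Y,\nu))$ are unital isometric representations, the map $\sm \circ \pi^{-1}$ is completely isometric.

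For such a~$\pi$, \Lem{L_5128_UnitalHm} shows that $\pi(f_k)$ is a hermitian idempotent; the $\pi(f_k)$ are orthogonal and sum to~$1$, so \Cor{C_4Y17_Orth} produces disjoint measurable sets $E_1, E_2, \ldots, E_N \subset X$ whose union is $X$ (up to null sets) such that $\pi(f_k)$ is multiplication by~$\ch_{E_k}$. This yields an isometric identification
\[
L^p(X, \mu) = \bigoplus_{k=1}^N L^p(E_k, \, \mu|_{E_k})
\]
as an $L^p$~direct sum. Identifying each $A_k$ with the subalgebra $f_k A f_k \subset A$ and using \Cor{C_5125_CutAlg} (the hypothesis $\|\pi(f_k)\| = 1$ holds since $\pi(f_k)$ is a hermitian idempotent), I obtain a unital isometric representation $\pi_k \colon A_k \to L(L^p(E_k, \mu|_{E_k}))$ such that $\pi(a_1, \ldots, a_N) = \bigoplus_k \pi_k(a_k)$ blockwise. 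The same construction applied to~$\sm$ produces disjoint sets $F_k \subset Y$ and unital isometric representations $\sm_k \colon A_k \to L(L^p(F_k, \nu|_{F_k}))$ with the analogous decomposition.

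The core computation is now to evaluate $M_n$ norms blockwise. For $b = (b_1, \ldots, b_N) \in M_n(A) = \bigoplus_k M_n(A_k)$, the operator $(\id_{M_n} \otimes \pi)(b)$ is block diagonal on $l^p_n \otimes_p L^p(X, \mu) = \bigoplus_k l^p_n \otimes_p L^p(E_k, \mu|_{E_k})$ with blocks $(\id_{M_n} \otimes \pi_k)(b_k)$; a short $L^p$~calculation (using ${\textdisp{\|T\xi\|_p^p = \sum_k \|T_k \xi_k\|_p^p}}$ for a block diagonal operator $T = \bigoplus_k T_k$) gives
\[
\big\| (\id_{M_n} \otimes \pi)(b) \big\|
 = \max_{1 \leq k \leq N} \big\| (\id_{M_n} \otimes \pi_k)(b_k) \big\|,
\]
and the same formula holds with $\sm$ and $\sm_k$ in place of $\pi$ and $\pi_k$. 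The hypothesis that each $A_k$ has unique $L^p$~operator matrix norms gives the equality of each blockwise term, so the matrix norms induced on~$A$ by $\pi$ and by~$\sm$ coincide, completing the proof.

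The argument is essentially a routine reduction; the one place requiring care is to check that $\pi(f_k)$ really is hermitian in the sense of \Def{D_6608_Herm} (so that \Cor{C_4Y17_Orth} applies to decompose $L^p(X, \mu)$) and that \Cor{C_5125_CutAlg} produces \emph{isometric} (not merely contractive) unital representations~$\pi_k$ of the summands, so that the uniqueness hypothesis on each~$A_k$ can be invoked.
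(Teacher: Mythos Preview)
Your proof is correct and follows essentially the same approach as the paper's: reduce via \Lem{L_5125_UnitUniq} to unital isometric representations, decompose the space using the hermitian idempotents $\pi(f_k)$ (obtaining $\pi_k$ isometric via \Cor{C_5125_CutAlg}), and compute the $M_n$~norm blockwise as a maximum. The only cosmetic differences are that the paper fixes a reference representation and shows an arbitrary unital isometric $\pi$ is completely isometric relative to it, and that the paper deduces hermiticity of $\pi(f_k)$ by routing through the isometric map $C(\{1,\ldots,N\}) \to A$, whereas you invoke \Lem{L_5128_SIDSum} and \Lem{L_5128_UnitalHm} directly.
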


\begin{proof}
For $k = 1, 2, \ldots, N$ choose some unital isometric \rpn{}
of $A_k$ on a separable $L^p$~space of a \sft{} measure,
and equip $A_k$ with the matrix
norms on its image under this \rpn{}
as in Definition~\ref{D_4Y19_LpMN}.
Then make $A$ a matrix normed algebra as
in Definition~\ref{D_4326_DSumNorm}. In view
of Lemma~\ref{L_5125_UnitUniq},
it suffices to prove that if
$\XBM$ is a \sfm{}
with $L^p (X, \mu)$ separable,
$n \in \N$,
and $\pi \colon A \to \LLp$
is a unital isometric \rpn,
then
$\id_{M_n} \otimes \pi \colon
M_n (A) \to L \big( l_n^p \otimes_p L^p (X, \mu) \big)$
is  isometric.

For $k = 1, 2, \ldots, N$,
we identify $A_k$ with its image in $A$,
and we let $f_k$ be the identity of $A_k$.
Set $Z = \{ 1, 2, \ldots, N \}$.
Let $\ph \colon C (Z) \to A$
be the unital \hm{}
determined by $\ph ({\ch}_{ \{ k \} }) = f_k$
for $k = 1, 2, \ldots, N$.
It is obvious from the definition of the norm on~$A$
that $\ph$ is isometric.
Then $\pi \circ \ph$
is isometric,
from which it easily follows that
$(\pi \circ \ph) ({{\ch}}_{ \{ k \} })$
is a hermitian idempotent for $k = 1, 2, \ldots, N$.
Let $E_1, E_2, \ldots, E_N \subset X$
be the disjoint sets corresponding to the idempotents
$\big ((\pi \circ \ph) ({{\ch}}_{ \{ k \} }) \big )_{k = 1}^N$
as in Corollary \ref{C_4Y17_Orth}(\ref{C_4Y17_Orth_Disj}).
Since $\pi \circ \ph$ is unital, we can assume
\wolog{} that ${\textdisp{ \bigcup_{k = 1}^N E_k = X }}$.
For $k = 1, 2, \ldots, N$,
let $\pi_k \colon A_k \to L(L^p (E_k, \mu))$
be the unital \rpn{}
gotten as in Corollary~\ref{C_5125_CutAlg}
from $\pi |_{A_k}$.
Corollary~\ref{C_5125_CutAlg} and the definition
of the norm on~$A$
imply that $\pi_k$ is isometric.

It is immediate that
\mbox{$l_n^p \otimes_p L^p (X, \mu)$}
is the $L^p$~direct sum of the spaces
\mbox{$l_n^p \otimes_p L^p (E_k, \mu)$} for $k = 1, 2, \ldots, N$.
It follows that if
${\textdisp{ a = (a_1, a_2, \ldots, a_N)
  \in \bigoplus_{k = 1}^N M_n (A_k) = M_n (A) }}$,
then
\begin{align*}
& \| (\id_{M_n} \otimes \pi) (a) \|
\\
& \hspace*{3em} \mbox{}
= \max \big( \| (\id_{M_n} \otimes \pi_1) (a_1) \|, \,
     \| (\id_{M_n} \otimes \pi_2) (a_2) \|, \,
      \ldots, \, \| (\id_{M_n} \otimes \pi_N) (a_N) \| \big).
\end{align*}
The hypotheses imply that $\id_{M_n} \otimes \pi_k$
is isometric for $k = 1, 2, \ldots, N$.
Definition~\ref{D_4326_DSumNorm}
therefore implies that $\id_{M_n} \otimes \pi$ is isometric.
\end{proof}

The following lemma will be used in connection with
representations of
nonunital spatial $L^p$~AF algebras.

\begin{lem}\label{L_6808_IncIdem}
Let $p \in [1, \infty) \setminus \{ 2 \}$.
Let $\XBM$ be a \sfm{}
such that $L^p (X, \mu)$ is separable.
Let $e_1, e_2, \ldots \in \LLp$
be idempotents,
and take $e_0 = 0$.
Assume that,
for all $n \in \N$,
$\| e_n \| \leq 1$
and
$e_{n - 1}$ is a hermitian idempotent in
$e_n \LLp e_n$.
Then there are an idempotent $e \in \LLp$,
a \sfm{} $\YCN$
such that $L^p (Y, \nu)$ is separable,
an isometric linear map $s \colon L^p (Y, \nu) \to L^p (X, \mu)$,
and measurable subsets $Y_1, Y_2, \ldots \S Y$,
such that:
\begin{enumerate}
\item\label{L_6808_IncIdem_Norm}
$\| e \| \leq 1$.
\item\label{L_6808_IncIdem_Union}
${\textdisp{ e L^p (X, \mu)
 = {\overline{\bigcup_{n = 1}^{\I} e_n L^p (X, \mu)}} }}$.
\item\label{L_6808_IncIdem_Herm}
For every $n \in \N$,
$e_n$ is a hermitian idempotent in
$e \LLp e$.
\item\label{L_6808_IncIdem_Ran}
${\operatorname{ran}} (s) = e L^p (X, \mu)$.
\item\label{L_6808_IncIdem_Disj}
${\textdisp{ Y = \coprod_{n = 1}^{\I} Y_n }}$.
\item\label{L_6808_IncIdem_s}
For every $n \in \N$,
$s L^p (Y_n, \nu |_{Y_n}) = (e_n-e_{n-1}) L^p (X, \mu)$.
\end{enumerate}
\end{lem}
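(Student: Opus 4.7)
The plan is to first analyze the order structure of the~$e_n$, then construct $(Y,\nu)$ and~$s$, and finally build~$e$ as a strong limit of the~$e_N$. The hypothesis that $e_{n-1}$ lies in $e_n\LLp e_n$ immediately yields $e_n e_{n-1}=e_{n-1}e_n=e_{n-1}$, so $e_m\le e_n$ for $m\le n$. The operators $f_n=e_n-e_{n-1}$ (with $e_0=0$) are then pairwise orthogonal idempotents summing to $e_N$. By Remark~\ref{R_4Y18_01}, $f_n$ is hermitian in $e_n\LLp e_n$; iterating Lemma~\ref{L_4Y18_Funct} along the isometric inclusions $e_k\LLp e_k\hookrightarrow e_{k+1}\LLp e_{k+1}$, each of which sends its identity~$e_k$ to a hermitian idempotent in the target by hypothesis, shows that $f_1,\ldots,f_N$ are orthogonal hermitian idempotents in $e_N\LLp e_N$ summing to the identity of that algebra, and each $f_n$ is therefore contractive in~$\LLp$. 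Proposition~\ref{P_5204_ContrPj} then supplies a $\sigma$-finite $(Y_n,\nu_n)$ with $L^p(Y_n,\nu_n)$ separable and an isometric bijection $t_n\colon L^p(Y_n,\nu_n)\to f_n L^p(X,\mu)$; set $Y=\coprod_n Y_n$ with the induced measure.

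The central ingredient is the identity $\|\eta\|^p=\sum_{k=1}^N\|f_k\eta\|^p$ for $\eta\in e_NL^p(X,\mu)$, obtained by identifying $e_N L^p(X,\mu)$ with some $L^p(Z_N,\omega_N)$ via Proposition~\ref{P_5204_ContrPj} and applying Corollary~\ref{C_4Y17_Orth}(\ref{C_4Y17_Orth_LpNorm}) to the orthogonal hermitian idempotents $f_1,\ldots,f_N$ there. Applied to $\eta=(e_N-e_M)\xi$, with $f_k(e_N-e_M)=f_k$ for $M<k\le N$ and~$0$ otherwise, it gives $\|(e_N-e_M)\xi\|^p=\sum_{k=M+1}^N\|f_k\xi\|^p$; applied to $\eta=e_N\xi$ it gives $\sum_{k=1}^N\|f_k\xi\|^p=\|e_N\xi\|^p\le\|\xi\|^p$. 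Thus $\sum_k\|f_k\xi\|^p$ converges, $(e_N\xi)_N$ is Cauchy, and $e\xi:=\lim_N e_N\xi$ defines a bounded operator on $L^p(X,\mu)$ with $\|e\|\le 1$. The same identity, applied to finitely supported elements of $L^p(Y,\nu)$, shows that $(\xi_n)\mapsto\sum_n t_n(\xi_n)$ extends by continuity to an isometric linear map $s\colon L^p(Y,\nu)\to L^p(X,\mu)$ with $s(L^p(Y_n,\nu_n))=f_nL^p(X,\mu)$.

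The remaining claims are verifications. Passing to the limit in $e_M e_N=e_{\min(M,N)}$ gives $e^2=e$ and $e_n\le e$ for every~$n$; the image $eL^p(X,\mu)$ is closed and sandwiched between $\bigcup_N e_NL^p(X,\mu)$ and its closure, so it equals that closure and also equals the image of~$s$. To see that each $e_n$ is hermitian in $e\LLp e$, define $\Phi\colon e\LLp e\to L(L^p(Y,\nu))$ by $\Phi(a)=s^{-1}\circ a|_{eL^p(X,\mu)}\circ s$; Lemma~\ref{L_5125_NormCut} (with $f=e$ and $\|e\|=1$) together with $s$ being an isometric bijection onto $eL^p(X,\mu)$ makes $\Phi$ an isometric unital algebra homomorphism. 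A direct computation using $e_n f_k=f_k$ for $k\le n$ and $0$ otherwise gives $\Phi(e_n)\xi=\chi_{Y_1\sqcup\cdots\sqcup Y_n}\xi$, which is a hermitian idempotent in $L(L^p(Y,\nu))$ by Lemma~\ref{L_4326_CharSpI}, and hermiticity pulls back to $e\LLp e$ through~$\Phi$ via the exponential characterization in Theorem~\ref{T_6608_HCnd}. The main technical obstacle is the Cauchy property of $(e_N\xi)$ for arbitrary~$\xi$, which requires precisely the $p$-th power additivity from Corollary~\ref{C_4Y17_Orth} coming from the orthogonal hermitian idempotent structure of the~$f_k$.
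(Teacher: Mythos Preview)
Your proof is correct and follows essentially the same route as the paper's: both establish that the $f_n=e_n-e_{n-1}$ are orthogonal hermitian idempotents in $e_N\LLp e_N$ via Remark~\ref{R_4Y18_01} and iterated Lemma~\ref{L_4Y18_Funct}, derive the $p$-th power identity from Corollary~\ref{C_4Y17_Orth}(\ref{C_4Y17_Orth_LpNorm}), build $s$ from the pieces $t_n$, and obtain $e$ as the strong limit of the $e_N$. The only cosmetic difference is in part~(\ref{L_6808_IncIdem_Herm}): the paper verifies the contractivity criterion of Lemma~\ref{L_6612_HI} directly from the limiting identity $\|e_n\xi\|^p+\|(e-e_n)\xi\|^p=\|\xi\|^p$, whereas you transport to $L(L^p(Y,\nu))$ via $\Phi$ and invoke Lemma~\ref{L_4326_CharSpI}; these are equivalent.
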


We do not assume that $e_n$ is a hermitian idempotent in $\LLp$,
and the conclusion does not claim that
$e$ is a hermitian idempotent in $\LLp$.

\begin{proof}[Proof of Lemma~\ref{L_6808_IncIdem}]
For $n \in \N$ define $E_n = e_n L^p (X, \mu) \S  L^p (X, \mu)$.
Set ${\textdisp{ E = {\overline{\bigcup_{n = 1}^{\I} E_n}} }}$.
For $n \in \N$, use Proposition~\ref{P_5204_ContrPj}
to find a \sfm{} $(Z_n, {\mathcal{D}}_n, \ld_n)$
such that $L^p (Z_n, \ld_n)$ is isometrically isomorphic to~$E_n$.
Also,
define $\pi_n \c L (E_n) \to \LLp$
by $\pi_n (a) \xi = a e_n \xi$
for $a \in L (E_n)$ and $\xi \in L^p (X, \mu)$.
Since $e_n a e_n \xi = a e_n \xi$ for $a\in L(E_n)$,
one checks easily that $\pi_n$ is a (nonunital) \hm.
An application of Corollary~\ref{C_5125_CutAlg}
shows that $\pi_n$ is isometric.
Thus,
$L \big( L^p (Z_n, \ld_n) \big)$
is isometrically isomorphic to $e_n \LLp e_n$.

For $m, n \in \N$ with $m \leq n$,
a similar argument shows that
the analogous map $\pi_{n, m} \c L (E_m) \to L (E_n)$
is an isometric \hm.
It follows from Remark~\ref{R_4Y18_01}
that $e_m - e_{m - 1}$ is a hermitian idempotent in
$e_m \LLp e_m$,
and then it follows from Lemma~\ref{L_4Y18_Funct}
and induction on~$n$
that $e_m - e_{m - 1}$ is a hermitian idempotent in
$e_n \LLp e_n$.

For $n \in \N$,
Corollary \ref{C_4Y17_Orth}(\ref{C_4Y17_Orth_LpNorm}),
applied to $e_1 - e_0, \, e_2 - e_1, \, \ldots, \, e_n - e_{n - 1}$,
shows that for every $\xi \in E_n$
we have
\begin{equation}\label{Eq_6808_pNormSum}
\| \xi \|_p^p = \sum_{k = 1}^n \| (e_k - e_{k - 1}) \xi \|_p^p.
\end{equation}

For $n \in \N$,
use Proposition~\ref{P_5204_ContrPj}
to find a \sfm{} $(Y_n, {\mathcal{C}}_n, \nu_n)$
and an isometric isomorphism
$s_n \c L^p (Y_n, \ld_n) \to (e_n - e_{n - 1}) L^p (X, \mu)$.
Following the notation of Definition~\ref{Notdisjointunion},
set
$\YCN
 = {\textdisp{ \coprod_{n = 1}^{\I} (Y_n, {\mathcal{C}}_n, \nu_n) }}$.
By~(\ref{Eq_6808_pNormSum}),
the map from
${\textdisp{
L^p \left( \coprod_{k = 1}^{n} (Y_k, {\mathcal{C}}_k, \nu_k) \right) }}$
to $e_nL^p(X,\mu)$, given by
\[
(\et_1, \et_2, \ldots, \et_n)
  \mapsto \sum_{k = 1}^n s_k (\et_k),
\]
is an isometric isomorphism.
Combining these for $n \in \N$ and extending by continuity,
we get an isometric linear map
$s \colon L^p (Y, \nu) \to L^p (X, \mu)$,
whose range must be equal to~$E$.

We now have the objects $s$, $\YCN$, and $Y_1, Y_2, \ldots \S Y$
of the conclusion,
as well as parts (\ref{L_6808_IncIdem_Disj})
and~(\ref{L_6808_IncIdem_s}).
If we use $E$ in place of $e L^p (X, \mu)$,
we also have parts (\ref{L_6808_IncIdem_Union})
and~(\ref{L_6808_IncIdem_Ran}) of the conclusion.

Now let $\xi \in L^p (X, \mu)$.
For $n \in \N$ we use $\| e_n \| \leq 1$ and~(\ref{Eq_6808_pNormSum})
to get
\[
\| \xi \|_p^p
 \geq \| e_n \xi \|_p^p
 = \sum_{k = 1}^n \| (e_k - e_{k - 1}) \xi \|_p^p.
\]
Therefore
${\textdisp{ \sum_{k = 1}^{\I} \| (e_k - e_{k - 1}) \xi \|_p^p }}$
converges.
For $m, n \in \N$ with $m \leq n$,
we get
${\textdisp{ \| (e_n - e_m) \xi \|_p^p
  = \sum_{k = m + 1}^{n} \| (e_k - e_{k - 1}) \xi \|_p^p }}$,
so $(e_n \xi )_{n \in \N}$ is a Cauchy sequence in $L^p (X, \mu)$.
Thus ${\textdisp{\lim_{n \to \I} e_n \xi}}$ exists.
Call this limit $e \xi$.
By taking suitable limits,
one checks that $e$ is linear,
$\| e \| \leq 1$,
$e^2 = e$,
and ${\operatorname{ran}} (e) = E$.
We now have all the required objects for the conclusion,
and all the conditions except~(\ref{L_6808_IncIdem_Herm}).

For~(\ref{L_6808_IncIdem_Herm}),
use continuity to get
$\| e_n \xi \|_p^p + \| (e - e_n) \xi \|_p^p = \| \xi \|_p^p$
for all $\xi \in E$ and $n \in \N$.
{}From this,
it is easy to see that
the map $(\ld_1, \ld_2) \mapsto \ld_1 e_n + \ld_2 (e - e_n)$
is a contraction from $\C \oplus \C$ to $e \LLp e$.
Apply Lemma~\ref{L_6612_HI}.
\end{proof}

\section{Direct limits}\label{Sec_DLim}

The main result in this section is that the direct limit
of matricial $L^p$~operator algebras is also a matricial
$L^p$~operator algebra.
Moreover, if each algebra
in the system has unique $L^p$~operator matrix norms,
and the connecting maps of the direct system are isometric,
then the direct limit also has unique $L^p$ operator matrix norms.

\begin{dfn}\label{D_4310_DirSys}
Let $I$ be an infinite directed set.
A {\emph{(completely) contractive direct system of Banach
algebras indexed by~$I$}} is a pair
$\big( (A_i)_{i \in I}, \, ( \ph_{j, i} )_{i \leq j} \big)$
consisting of a family $(A_i)_{i \in I}$
of (matrix normed) Banach algebras
and a family $( \ph_{j, i} )_{i \leq j}$ of (completely) contractive
\hm{s} $\ph_{j, i} \colon A_i \to A_j$
for $i, j \in I$ with $i \leq j$,
such that $\ph_{i, i} = \id_{A_i}$ for all $i \in I$
and $\ph_{k, j} \circ \ph_{j, i} = \ph_{k, i}$
whenever $i, j, k \in I$ satisfy $i \leq j \leq k$.
We say that the system is {\emph{unital}}
if $A_i$ is unital for all $i \in I$
and $\ph_{j, i}$ is unital for all $i, j \in I$ with $i \leq j$.

In the contractive case,
the {\emph{direct limit}} ${\textdisp{ \dirlim_i A_i }}$ of this direct
system is the Banach algebra direct (``inductive'')
limit, as constructed in Section~3.3 of~\cite{Bl3}.

In the completely contractive case,
for $n \in \N$
we use Lemma~\ref{L_6408_matrixnorms}
to identify ${\textdisp{ M_n \Big( \dirlim_i A_i \Big) }}$
with ${\textdisp{ \dirlim_i M_n (A_i) }}$
up to isomorphism of topological algebras.
Then we equip ${\textdisp{ M_n \Big( \dirlim_i A_i \Big) }}$
with the norm obtained by applying
the contractive case to ${\textdisp{ \dirlim_i M_n (A_i) }}$.
Lemma~\ref{L_5125_LimIsMatN} below
shows that we do indeed get a matrix normed Banach algebra
this way.
\end{dfn}

\begin{lem}\label{L_5125_LimIsMatN}
Let $\big( (A_i)_{i \in I}, \, ( \ph_{j, i} )_{i \leq j} \big)$
be a
completely contractive direct system of matrix normed
Banach algebras.
Then ${\textdisp{ \dirlim_i A_i }}$
is a matrix normed Banach algebra,
and for every $j \in I$
the standard \hm{}
${\textdisp{ \ph_j \colon A_j \to \dirlim_i A_i }}$
is completely contractive.
\end{lem}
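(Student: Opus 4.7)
The plan is to verify the three conditions of Definition~\ref{D_4Y19_MatNAlg} for ${\textdisp{A = \dirlim_i A_i}}$ equipped with the matrix norms introduced in Definition~\ref{D_4310_DirSys}, and separately to check complete contractivity of each $\ph_j$. The key preparatory step is to pin down the topological identification of $M_n(A)$ with $\dirlim_i M_n(A_i)$. Algebraically this is automatic, since both sides are the algebraic colimit of the $M_n(A_i)$; the content is to see that the Banach direct limit norm on $\dirlim_i M_n(A_i)$ really does produce a norm on $M_n$ applied to the Banach direct limit $A$. For this, I would invoke Lemma~\ref{L_6408_matrixnorms} in each $A_i$: it gives a two-sided equivalence between $\|\cdot\|_n$ on $M_n(A_i)$ and the entrywise norm, with constants depending only on~$n$. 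Complete contractivity of each $\ph_{j,i}$ then yields a contractive direct system $(M_n(A_i))_{i \in I}$ whose completed Banach direct limit is, as a topological algebra, the completion of the algebraic $M_n(A)$.

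Once that identification is in hand, I would verify the three matrix norm axioms by the same approximation template. Given $x \in M_n(A)$ and $\ep > 0$, pick $i \in I$ and $y \in M_n(A_i)$ with $\ph_i^{(n)}(y) = x$ and $\|y\|_n < \|x\|_n + \ep$, where $\ph_i^{(n)} = \id_{M_n} \otimes \ph_i$ is the canonical map into the direct limit. For condition~(\ref{D_4Y19_MatNAlg_SbMat}), the submatrix operation commutes with $\ph_i^{(m)}$, so the inequality in $M_n(A_i)$ passes to $M_n(A)$ after letting $\ep \to 0$. Condition~(\ref{D_4Y19_MatNAlg_PrdDiag}) is analogous, since the diagonal matrix $s \in M_n$ comes from~$\C$ and thus pulls back to $M_n(A_i)$ unchanged. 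For condition~(\ref{D_4Y19_MatNAlg_DSum}), the inequality $\| \diag(a,b) \|_{m+n} \leq \max(\|a\|_m, \|b\|_n)$ follows by the same approximation method (pick representatives of $a$ and $b$ in a common~$A_i$, which one can do because $I$ is directed), and the reverse inequality is automatic from condition~(\ref{D_4Y19_MatNAlg_SbMat}) via Remark~\ref{R_6408_IneqRed}.

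Complete contractivity of $\ph_j$ is built into the construction: for $a \in M_n(A_j)$ the relation $\|\ph_j^{(n)}(a)\|_n \leq \|a\|_n$ is the defining property of the Banach direct limit norm on $\dirlim_i M_n(A_i)$, since all the connecting maps $\ph_{k,j}^{(n)}$ are contractive.

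The main obstacle, as I see it, is not any single axiom but rather the bookkeeping around the identification $M_n(\dirlim_i A_i) \cong \dirlim_i M_n(A_i)$: one must check that the universal property of the Banach direct limit, applied at level~$n$, is compatible with the entrywise description of $M_n(A)$ that comes from the direct limit at level~$1$. Lemma~\ref{L_6408_matrixnorms} is exactly what bridges these two viewpoints, and once it is used to pass between them with uniform constants, the verification of the three axioms and of complete contractivity becomes a routine approximation argument.
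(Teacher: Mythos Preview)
Your proposal is correct and follows essentially the same route as the paper: verify the three axioms of Definition~\ref{D_4Y19_MatNAlg} on the dense image of the algebraic direct limit by lifting to some $A_i$, then extend by continuity, with complete contractivity of $\ph_j$ falling out of the definition of the direct limit norm. One small wording issue: your statement ``pick $i$ and $y \in M_n(A_i)$ with $\ph_i^{(n)}(y) = x$ and $\|y\|_n < \|x\|_n + \ep$'' is only literally valid for $x$ in the dense image (not for arbitrary $x \in M_n(A)$), so you should either restrict to that dense subalgebra and then pass to the completion by continuity---which is exactly what the paper does---or else replace the equality $\ph_i^{(n)}(y) = x$ by an approximate equality.
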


\begin{proof}
The statement about complete contractivity follows from
the identification of the matrix norms.

For every
$n \in \mathbb{Z}_{>0}$ and $i \in I$ identify
$B_i^{(n)} = M_n (A_i)$ with $M_n \otimes A_i$, and let
$\ph_{j, i}^{(n)}
 = \id_{M_n} \otimes \ph_{j, i} \colon B_i^{(n)} \to B_j^{(n)}$
and
$\ph_i^{(n)}
 = \id_{M_n} \otimes \ph_i \colon B_i^{(n)} \to M_n (A)$
be the maps induced by $\ph_{j, i}$ and $\ph_i$.
Set
${\textdisp{ B^{(n)}
 = \bigcup_{i \in I} \ph_i^{(n)} \big( B_i^{(n)} \big) }}$.
We claim that the norms on $B^{(n)}$, for $n \in \N$,
obtained by viewing $B^{(n)}$ as a subalgebra of the
Banach algebra direct limit ${\textdisp{ \dirlim_i B_i^{(n)} }}$,
are a system of matrix norms
as in Definition~\ref{D_4Y19_MatNAlg}.

Let $b \in B^{(n)}$,
and choose $i_0 \in I$
and $a \in B_{i_0}^{(n)}$
such that $\ph_{i_0}^{(n)} (a) = b$.
Let $\sigma$ and $\tau$ be injective functions as in
Definition \ref{D_4Y19_MatNAlg}(\ref{D_4Y19_MatNAlg_SbMat}).
Since
${\textdisp{ M_m \Big( \dirlim_i A_i \Big) = \dirlim_i B_i^{(m)} }}$,
using Definition \ref{D_4Y19_MatNAlg}(\ref{D_4Y19_MatNAlg_SbMat})
in $M_n (A_i) = B_i^{(n)}$ we have
\begin{align*}
\big\| ( b_{\sigma (j), \tau (k)} )_{1 \leq j, k \leq m} \big\|_m
 & = \lim_{i}
    \big\| \ph_{i, i_0}^{(m)}
       \big( (a_{\sigma (j), \tau (k)})_{1 \leq j, k \leq m} \big)
                       \big\|_m
  \\
 & \leq \lim_{i}
     \big\| \ph_{i, i_0}^{(n)}
              \big( (a_{j, k})_{1 \leq j, k \leq n} \big)
             \big\|_n
   = \| b \|_n.
\end{align*}

Moreover, given
$\lambda_1, \lambda_2, \ldots, \lambda_n \in \mathbb{C}$,
if we set $s = \diag (\lambda_1, \lambda_2, \ldots, \lambda_n)$,
we have
$\ph_{i, i_0} (s a) = s \ph_{i, i_0} (a)$, so,
using Definition \ref{D_4Y19_MatNAlg}(\ref{D_4Y19_MatNAlg_PrdDiag})
on $M_n (A_i)$,
\begin{align*}
\| s b \|_n
& = \lim_{i} \| \ph_{i, i_0} (s a) \|_n
  = \lim_{i} \| s \ph_{i, i_0} (a) \|_n
\\
& \leq  \max \big( |\lambda_1|, |\lambda_2|, \ldots, |\lambda_n| \big)
        \lim_{i} \|\ph_{i, i_0} (a) \|_n
 = \max \big(| \lambda_1|, |\lambda_2|, \ldots, |\lambda_n| \big)
   \| b \|_n.
\end{align*}
Similarly
$\| b s \|_n
 \leq
 \max \big( |\lambda_1|, |\lambda_2|, \ldots, |\lambda_n| \big)
   \| b \|_n$.

Lastly, given $b_1 \in B^{(m)}$ and $b_2 \in B^{(n)}$,
there exist $i_0 \in I$, $a_1 \in B_{i_0}^{(m)}$, and
$a_2 \in B_{i_0}^{(n)}$ such that $b_1 = \ph_{i_0}^{(m)} (a_1)$
and $b_2 = \ph_{i_0}^{(n)} (a_2)$.
Therefore,
$\diag ( b_1, b_2 ) = \ph_{i_0}^{(m + n)} (\diag (a_1, a_2))$ and
\begin{align*}
\big\| \diag (b_1, b_2) \big\|_{m + n}
& = \lim_{i}
  \big\| \ph_{i, i_0}^{(m + n)} (\diag (a_1, a_2)) \big\|_{m + n}
\\
& = \lim_{i} \big\| \diag \big( \ph_{i, i_0}^{(m)} (a_1), \,
   \ph_{i, i_0}^{(n)} (a_2) \big) \big\|_{m + n}
\\
& = \lim_{i} \max \big( \big\| \ph_{i, i_0}^{(m)} (a_1) \big\|_m,
        \, \big\| \ph_{i, i_0}^{(n)} (a_2) \big\|_n \big)
  = \max ( \|b_1 \|_m, \|b_2 \|_n).
\end{align*}

This completes the proof of the claim.

Since $B^{(n)}$ is dense in
${\textdisp{ M_n \Big( \dirlim_i A_i \Big) }}$
for all $n \in \N$,
the conditions of Definition~\ref{D_4Y19_MatNAlg}
for ${\textdisp{ \dirlim_i A_i }}$ follow by continuity.
\end{proof}

\begin{thm}\label{T_4310_LpDLim}
Let $p \in [1, \infty)$.
Let $\big( (A_i)_{i \in I}, \, ( \ph_{j, i} )_{i \leq j} \big)$
be a contractive direct system of $L^p$~operator algebras.
Then ${\textdisp{ \dirlim_i A_i }}$ is an $L^p$~operator algebra.
\end{thm}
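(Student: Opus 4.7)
The plan is to realize $A = \dirlim_i A_i$ as an isometric subalgebra of $L (L^p (Y, \nu))$ for some \msp{} $\YCN$, via an ultraproduct of the given representations of the~$A_i$. For each $i \in I$, fix an isometric representation $\rh_i \colon A_i \to L (L^p (X_i, \mu_i))$ supplied by the hypothesis. Since $I$ is directed, the tails $T_i = \{ j \in I \colon j \geq i \}$ have the finite intersection property, and Zorn's lemma produces an ultrafilter $\mathcal{U}$ on $I$ with $T_i \in \mathcal{U}$ for every $i \in I$.

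Let $E = \prod_{i \in I} L^p (X_i, \mu_i) / \mathcal{U}$ be the Banach space ultraproduct. By a classical theorem of Dacunha-Castelle and Krivine, $E$ is isometrically isomorphic to $L^p (Y, \nu)$ for some \msp{} $\YCN$. Any uniformly bounded family $(T_j)_{j \in I}$ with $T_j \in L (L^p (X_j, \mu_j))$ induces an operator $[(T_j)]_{\mathcal{U}} \in L (E)$ acting componentwise on representatives, and a standard argument (test vectors $\xi_j$ of norm one nearly attaining $\| T_j \|$) shows that $\| [(T_j)]_{\mathcal{U}} \| = \lim_{\mathcal{U}} \| T_j \|$. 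Using this, I would define a \hm{} $\tilde{\rh}$ on the dense subalgebra $\bigcup_i \ph_i (A_i)$ of $A$ as follows: for $a = \ph_i (a_i)$, put $S_j (a) = \rh_j (\ph_{j, i} (a_i))$ when $j \geq i$ and $S_j (a) = 0$ otherwise, and set $\tilde{\rh} (a) = [(S_j (a))]_{\mathcal{U}}$. The family $(S_j(a))_j$ is uniformly bounded by $\| a_i \|$ thanks to contractivity of $\ph_{j, i}$ and isometry of $\rh_j$.

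One then checks that $\tilde{\rh} (a)$ is independent of the representative $(i, a_i)$ (any two choices of representative agree on a tail, hence on a set in $\mathcal{U}$), and that $\tilde{\rh}$ is multiplicative (since for $a = \ph_i(a_i)$, $b = \ph_i(b_i)$ one has $S_j(ab) = S_j(a) S_j(b)$ on the tail $T_i \in \mathcal{U}$, so the ultraproduct operators multiply). For isometry,
\[
\| \tilde{\rh} (a) \|
 = \lim_{\mathcal{U}} \| \rh_j (\ph_{j, i} (a_i)) \|
 = \lim_{\mathcal{U}} \| \ph_{j, i} (a_i) \|
 = \| \ph_i (a_i) \|
 = \| a \|,
\]
using, in order, the norm formula for operator ultraproducts (applied on the set $T_i \in \mathcal{U}$), isometry of each $\rh_j$, and convergence of the nonincreasing net $\bigl( \| \ph_{j, i} (a_i) \| \bigr)_{j \geq i}$ to $\| \ph_i (a_i) \|$ (the definition of the direct-limit norm; monotonicity comes from contractivity of the $\ph_{k, j}$). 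Extending $\tilde{\rh}$ by continuity gives the required isometric representation of $A$ on $L^p (Y, \nu)$.

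The principal obstacle is the appeal to the Dacunha-Castelle and Krivine theorem that Banach space ultraproducts of $L^p$~spaces are isometrically $L^p$~spaces: this is classical but far from trivial, and is what allows the abstract ultraproduct Banach space to serve as the representation space of an $L^p$~operator algebra. Once that theorem is granted, all the remaining ingredients (existence of the cofinal ultrafilter, well-definedness and multiplicativity of $\tilde{\rh}$, the norm identity, and continuous extension from $\bigcup_i \ph_i (A_i)$ to its completion $A$) are routine.
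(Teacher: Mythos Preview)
Your proof is correct and follows essentially the same approach as the paper's own argument: both form the Banach space ultraproduct of the representation spaces along an ultrafilter containing the cofinal tails of~$I$, invoke the theorem that an ultraproduct of $L^p$~spaces is again an $L^p$~space (the paper cites Heinrich rather than Dacunha-Castelle--Krivine), and then verify that the induced ultraproduct representation is well defined and isometric on the algebraic direct limit, hence extends to~$A$. The only cosmetic difference is that the paper first disposes of the trivial case where $I$ has a largest element before choosing a free ultrafilter, whereas your argument works uniformly.
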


\begin{thm}\label{T_5125_LpMatDLim}
Let $p \in [1, \infty)$.
Let $\big( (A_i)_{i \in I}, \, ( \ph_{j, i} )_{i \leq j} \big)$
be a completely contractive direct system of matricial $L^p$~operator
algebras.
Then ${\textdisp{ \dirlim_i A_i }}$ is a matricial $L^p$~operator
algebra.
\end{thm}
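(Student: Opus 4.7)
The plan is to mirror the construction used in Theorem~\ref{T_4310_LpDLim} for the non-matricial case, but starting from completely isometric representations of each $A_i$ and verifying that the passage to the limit respects all the matrix norms simultaneously. First, by Lemma~\ref{L_5125_LimIsMatN}, the direct limit $A = \dirlim_i A_i$ is already a matrix normed Banach algebra, and each canonical map $\ph_i \colon A_i \to A$ is completely contractive. So what remains is to produce a single completely isometric representation of $A$ on some $L^p$~space.

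Next, using that each $A_i$ is a matricial $L^p$~operator algebra, for every $i \in I$ choose a completely isometric representation $\rh_i \colon A_i \to L (L^p (X_i, \mu_i))$ for some measure space $(X_i, \mathcal{B}_i, \mu_i)$. I would then invoke the construction from the proof of Theorem~\ref{T_4310_LpDLim} to build, out of the data $\big( (\rh_i)_{i \in I}, (\ph_{j,i})_{i \leq j} \big)$, an isometric representation $\rh \colon A \to L (L^p (X, \mu))$; here $(X, \mathcal{B}, \mu)$ is assembled from the $(X_i, \mathcal{B}_i, \mu_i)$ by a suitable ultraproduct or $L^p$~direct sum construction over a cofinal subset of~$I$, so that for $a \in A$ approximated by $\ph_i (a_i)$ with $a_i \in A_i$, one recovers $\| \rh (a) \| = \lim_i \| \rh_i (a_i) \| = \| a \|$.

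The core new step is to check complete isometry. Fix $n \in \N$. The key observation is that the construction in Theorem~\ref{T_4310_LpDLim} is natural in the sense that it commutes with tensoring by $M_n^p$: applying the identical assembly to the contractive direct system $\big( M_n (A_i), \id_{M_n} \otimes \ph_{j,i} \big)$ with the isometric representations $\id_{M_n} \otimes \rh_i \colon M_n (A_i) \to L \big( l_n^p \otimes_p L^p (X_i, \mu_i) \big)$ produces an isometric representation of $M_n (A) = \dirlim_i M_n (A_i)$ on $l_n^p \otimes_p L^p (X, \mu)$. Since $\id_{M_n} \otimes \rh_i$ is isometric (each $\rh_i$ being completely isometric) and the assembly is functorial under $L^p$~direct sums, the resulting representation equals $\id_{M_n} \otimes \rh$. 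Hence $\id_{M_n} \otimes \rh$ is isometric for every $n \in \N$, so $\rh$ is completely isometric as required by Definition~\ref{D_4Y19_MNLpOpAlg}.

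The main obstacle I anticipate is verifying this naturality of the construction in Theorem~\ref{T_4310_LpDLim}, i.e., that the $L^p$~space built from the $(X_i, \mu_i)$ genuinely satisfies $l_n^p \otimes_p L^p (X, \mu) \cong$ (the analogous space built from $l_n^p \otimes_p L^p (X_i, \mu_i)$) in a way compatible with $\id_{M_n} \otimes \rh$. This is essentially a bookkeeping exercise, since $L^p$~direct sums and ultraproducts of $L^p$~spaces both commute with tensoring against the finite-dimensional $l_n^p$, but the details of this compatibility are what make this theorem a nontrivial companion to Theorem~\ref{T_4310_LpDLim} rather than a formal corollary.
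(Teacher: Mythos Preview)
Your proposal is correct and follows essentially the same route as the paper: choose the $\rh_i$ completely isometric, run the ultraproduct construction from Theorem~\ref{T_4310_LpDLim}, and then verify that the ultraproduct of the $L^p (X_i, \mu_i)$ commutes with tensoring by $l_n^p$ so that $\id_{M_n} \otimes \rh$ is isometric for every~$n$. The paper singles out exactly the same compatibility check you flag as the ``main obstacle,'' phrasing it as the isometric isomorphism between the ultraproduct of the spaces $L^p ( \{1, \ldots, n\} \times X_i, \nu \times \mu_i )$ and $L^p ( \{1, \ldots, n\} \times X, \nu \times \mu )$, and likewise treats it as a direct computation from the definitions.
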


The proofs are essentially the same.
We prove Theorem~\ref{T_4310_LpDLim} here.
We describe the modifications for
the proof of Theorem~\ref{T_5125_LpMatDLim} afterwards.

\begin{proof}[Proof of Theorem~\ref{T_4310_LpDLim}]
The statement is trivial if $I$ has a largest element.
Otherwise,
let ${\mathcal{U}}_0$ be the collection of all subsets
of~$I$ of the form
$\big\{ i \in I \colon i \geq i_0 \big\}$ for $i_0 \in I$.
These sets are nonempty.
Since $I$ is directed,
the intersection of any finite collection of them
contains another one.
Since $I$ has no largest element,
for every $i \in I$ there is $S \in {\mathcal{U}}_0$
such that $i \not \in S$.
Therefore there is a free ultrafilter ${\mathcal{U}}$ on~$I$
which contains ${\mathcal{U}}_0$.

By definition,
for every $i \in I$
there exists a measure space
$(X_i, {\mathcal{B}}_i, \mu_i)$
and an isometric representation
$\rh_i \colon A_i \to L(L^p (X_i, \mu_i))$.
Let $M$ be the Banach space ultraproduct
${\textdisp{
 \Big( \prod_{i \in I} L^p (X_i, \mu_i) \Big) \Big/ {\mathcal{U}} }}$
(Definition~2.1 of~\cite{Hn}).
By Theorem 3.3(ii) of~\cite{Hn},
there exists a measure space
$(X, {\mathcal{B}}, \mu)$
such that $M$ is isometrically isomorphic to $L^p (X, \mu)$.
So it suffices to find an isometric representation
of ${\textdisp{ \dirlim_i A_i }}$ on~$M$.

Let $B$ be the algebraic direct limit of the algebras~$A_i$,
and for $i \in I$ let $\ph_i \colon A_i \to B$
be the \hm{} associated to the direct system.
Equip $B$ with the direct limit seminorm,
and let ${\textdisp{ A = \dirlim_i A_i }}$ be the completion
of $B / \{ b \in B \colon \| b \| = 0 \}$,
with the obvious isometric map $\kp \colon B \to A$.

We will construct an isometric representation $\gm$
of $B$ on~$M$.
(It will not be injective;
rather, its kernel will be $\{ b \in B \colon \| b \| = 0 \}$.)
Let $x \in B$.
Choose $i \in I$ and
$a \in A_i$ such that $\ph_i (a) = x$.
We give an associated operator $y_l \in L (L^p (X_l, \mu_l))$
for each $l \in I$.
If $l \geq i$,
set $y_l = \rh_l ( \ph_{l, i} (a))$.
Otherwise,
set $y_l = 0$.
Clearly $\| y_l \| \leq \| a \|$ for all $l \in I$,
so the ultraproduct of operators
(Definition~2.2 of~\cite{Hn})
gives an operator
$y = ( y_l )_{\mathcal{U}} \in L (M)$
such that
${\textdisp{\| y \| = \lim_{\mathcal{U}} \| y_l \|}}$.
Since $J = \{ l \in I \colon l \geq i \}$
is cofinal in~$I$,
we have
\[
 \lim_{l \in I} \| y_l \|
 = \lim_{l \in J} \| \rh_l ( \ph_{l, i} (a)) \|
 = \lim_{l \in J} \| \ph_{l, i} (a) \|
 = \| x \|.
\]
The choice of ${\mathcal{U}}$ ensures that
${\textdisp{ \lim_{\mathcal{U}} \| y_l \|
  = \lim_{l \in I} \| y_l \| }}$.
Therefore $\| y \| = \| x \|$.

We claim that $y$ does not depend on the choices of
$i$ and $a \in A_i$.
To prove this,
suppose that $j \in I$ and $b \in A_j$
also satisfy $\ph_j (b) = x$.
Let $z_l \in L (L^p (X_l, \mu_i))$ for $l \in I$,
and $z = \big( z_l \big)_{\mathcal{U}} \in L (M)$,
be defined in the same way as $y_l$ above,
but using $j$ and $b$ in place of $i$ and~$a$.
Choose $k \in I$ such that $k \geq i$, $k \geq j$,
and $\ph_{k, i} (a) = \ph_{k, j} (b)$.
Then $z_l = y_l$ for all $l \in I$ with $l \geq k$,
and $\{ l \in I \colon l \geq k \} \in {\mathcal{U}}$,
so $z = y$.
The claim is proved.

It follows that there is a well defined isometric map
$\gm \colon B \to L (M)$
such that if $x \in B$ and $i \in I$ and $a \in A_i$
satisfy $\ph_i (a) = x$,
then $\gm (x)$ is the element~$y$ constructed above.
Using directedness of~$I$,
it is easy to prove that $\gm$ is a \hm.
Since $\gm$ is isometric,
we have
$\gm (x) = 0$ whenever $\| x \| = 0$,
and
there exists a unique isometric \hm{}
$\rh \colon A \to L (M)$
such that $\rh ( \kp (x)) = \gm (x)$
for all $x \in B$.
The existence of $\rh$ shows that $A$ is an
$L^p$~operator algebra.
\end{proof}

\begin{proof}[Proof of Theorem~\ref{T_5125_LpMatDLim}]
We describe the differences from
the proof of Theorem~\ref{T_4310_LpDLim}.
We choose the maps $\rh_i$ in the proof of Theorem~\ref{T_4310_LpDLim}
to be completely isometric,
not just isometric.
Let $m \in \N$ and let
$\nu$ be counting measure on $\{ 1, 2, \ldots, m \}$.
One can check that the obvious map
gives an isometric isomorphism
\[
 \left( \prod_{i \in I} L^p ( \{ 1, 2, \ldots, m \} \times X_i,
 \, \nu \times \mu_i) \big) \right) \Big/ {\mathcal{U}}
\to L^p ( \{ 1, 2, \ldots, m \} \times X, \, \nu \times \mu) \big).
\]
(This is a direct computation from the definitions.)
Using the standard isomorphism
${\textdisp{ M_m \Big( \dirlim_i A_i \Big)
  \cong \dirlim_i M_m (A_i) }}$,
the argument used in the proof of Theorem~\ref{T_4310_LpDLim}
to show that $\gm$ is isometric
now shows that $\id_{M_m} \otimes \gm$ is isometric.
Since this is true for all $m \in \N$,
we conclude that~$\gm$,
hence also~$\rh$,
is completely isometric.
\end{proof}

\begin{prp}\label{P_5125_LimHasUniq}
Let $p \in [1, \infty)$, and let
$\big( (A_i)_{i \in I}, \, ( \ph_{j, i} )_{i \leq j} \big)$
be as in Theorem~\ref{T_4310_LpDLim}.
Suppose further that $I$ is countable, that for all $i \in I$
the algebra $A_i$ is separable and has unique
$L^p$~operator matrix norms, and that for all
$i, j \in I$ with $i \leq j$, the map $\ph_{j, i}$ is
isometric.
Then ${\textdisp{ A = \dirlim_i A_i }}$
has unique $L^p$~operator matrix norms.
They are obtained by equipping $A_i$
with its unique $L^p$~operator matrix norms
for $i \in I$ and,
for each $n \in \N$,
giving $M_n (A)$
the norm coming from the contractive case of
Definition~\ref{D_4310_DirSys}
applied to ${\textdisp{ \dirlim_i  M_n (A_i) }}$.
\end{prp}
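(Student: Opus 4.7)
My plan is to first upgrade the hypothesis that the connecting maps are isometric to the stronger conclusion that they are automatically completely isometric (a consequence of uniqueness of $L^p$-operator matrix norms on each $A_i$), then to deduce the same for the canonical maps $\ph_i \colon A_i \to A$, and finally to transfer uniqueness from the $A_i$ to $A$ by a density argument. Throughout, I would equip each $A_i$ with its unique $L^p$-operator matrix norms, and give $M_n(A)$ the norm coming from Definition~\ref{D_4310_DirSys} applied to ${\textdisp{\dirlim_i M_n(A_i)}}$; Lemma~\ref{L_5125_LimIsMatN} makes this a matrix normed Banach algebra, and Theorem~\ref{T_5125_LpMatDLim} says it is a matricial $L^p$~operator algebra.

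For the first step, fix $i \leq j$ in $I$ and fix isometric \rpn{s} $\pi_i \colon A_i \to L(L^p(X_i,\mu_i))$ and $\pi_j \colon A_j \to L(L^p(X_j,\mu_j))$ realizing the unique $L^p$~operator matrix norms on $A_i$ and $A_j$ (using Proposition~\ref{P_4Y19_SepImpSepRep} and Lemma~\ref{L_6416_SepSFT} to arrange separability and $\sm$-finiteness). Then $\pi_j \circ \ph_{j,i}$ is an isometric \rpn{} of $A_i$ on a separable $L^p$~space of a \sfm, so uniqueness of $L^p$~operator matrix norms on $A_i$ forces it to induce the same matrix norms on $A_i$ as $\pi_i$ does. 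For $a \in M_n(A_i)$, the identity
\[
\|\ph_{j,i}^{(n)}(a)\|_n
 = \|(\id_{M_n}\otimes \pi_j)(\ph_{j,i}^{(n)}(a))\|_n
 = \|(\id_{M_n}\otimes (\pi_j\circ\ph_{j,i}))(a)\|_n
 = \|a\|_n
\]
then shows $\ph_{j,i}$ is completely isometric. Passing to the direct limit, for $a \in M_n(A_i)$ we obtain $\|\ph_i^{(n)}(a)\|_n = \lim_{j\geq i}\|\ph_{j,i}^{(n)}(a)\|_n = \|a\|_n$, so each $\ph_i \colon A_i \to A$ is completely isometric as well.

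For the main statement, let $\XBM$ and $\YCN$ be \sfm{s} with $L^p(X,\mu)$ and $L^p(Y,\nu)$ separable, and let $\pi \colon A \to \LLp$ and $\sm \colon A \to L(L^p(Y,\nu))$ be isometric \rpn{s}. For each $i \in I$ the maps $\pi \circ \ph_i$ and $\sm \circ \ph_i$ are isometric \rpn{s} of the separable algebra $A_i$ on separable $L^p$~spaces (again apply Lemma~\ref{L_6416_SepSFT} if necessary). Uniqueness of $L^p$~operator matrix norms on $A_i$ yields, for all $n \in \N$ and all $a \in M_n(A_i)$,
\[
\|(\id_{M_n}\otimes \pi)(\ph_i^{(n)}(a))\|_n
 = \|(\id_{M_n}\otimes(\pi\circ\ph_i))(a)\|_n
 = \|a\|_n
 = \|\ph_i^{(n)}(a)\|_n,
\]
where the last equality uses the complete isometry of $\ph_i$ established above, and the same identity holds with $\sm$ in place of $\pi$. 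Hence $\id_{M_n}\otimes \pi$ and $\id_{M_n}\otimes \sm$ agree with the given matrix norm of $M_n(A)$ on the subset ${\textdisp{\bigcup_{i \in I} \ph_i^{(n)}(M_n(A_i))}}$; since this subset is dense in $M_n(A)$ by construction of the direct limit, continuity finishes the proof that $\sm \circ \pi^{-1}$ is completely isometric.

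The only genuine obstacle is the opening bootstrap: an a priori merely isometric connecting map $\ph_{j,i}$ must be promoted to a completely isometric one, and this promotion uses the full strength of the uniqueness hypothesis applied to~$A_i$. Once this is in hand, both the compatibility of the direct-limit matrix norms with an isometric realization (Theorem~\ref{T_5125_LpMatDLim}) and the transfer of uniqueness from the $A_i$ to $A$ reduce to routine density and continuity arguments.
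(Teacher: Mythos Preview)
Your argument is correct and follows essentially the same route as the paper's own proof: first use uniqueness of $L^p$~operator matrix norms on each $A_i$ to upgrade the isometric connecting maps $\ph_{j,i}$ to completely isometric ones, then show that any isometric representation $\pi$ of $A$ is completely isometric by applying the uniqueness hypothesis to $\pi\circ\ph_i$ and passing to the dense union via continuity. The paper is terser (it asserts the bootstrap in one sentence and works with a single isometric representation rather than two), but the logical content is the same.
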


\begin{proof}
The hypotheses imply that $A$ is separable.
For $i \in I$, equip $A_i$ with its unique $L^p$~operator matrix norms.
The hypotheses imply that if $j \in I$ and $j \geq i$,
then $\ph_{j, i}$ is completely isometric.
Equip $A$ with the matrix norms in the statement.
Then $A$ is an
$L^p$~operator algebra by Theorem~\ref{T_5125_LpMatDLim}.

Let $\XBM$ be a \sfm{} with $L^p (X, \mu)$ separable,
and let $\pi \colon A \to \LLp$ be isometric.
We show that $\pi$ is completely isometric.
Let $n \in \N$.
For $i \in I$, let $\ph_i \colon A_i \to A$
be the \hm{} coming from the direct system.
Then $\ph_i$ is isometric,
so $\pi \circ \ph_i$ is isometric.
The hypothesis on $A_i$ implies that
$\id_{M_n} \otimes (\pi \circ \ph_i)$
is isometric.
Since ${\textdisp{ \bigcup_{i \in I} \ph_i (A_i) }}$ is
dense in~$A$, it follows that $\id_{M_n} \otimes \pi$
is isometric.
\end{proof}

\section{Spatial semisimple finite dimensional $L^p$~operator
 algebras}\label{Sec_ssfd}

In this section we introduce our setup by giving the definitions
of spatial semisimple finite dimensional $L^p$~operator algebras
and spatial homomorphisms.
We also give a characterization of spatial homomorphisms
between spatial semisimple finite dimensional $L^p$~operator algebras
in terms of block diagonal homomorphisms
(Lemma~\ref{L_4326_BlkDiagHm}), and show that any spatial
semisimple finite dimensional $L^p$~operator algebra
has unique $L^p$~operator matrix norms.

\begin{dfn}\label{D_5916_IS}
Let $B$ be a unital Banach algebra,
and let $b, c \in B$.
We say that $b$ and $c$ are {\emph{isometrically similar}}
if there is an invertible isometry $s \in B$
such that $c = s b s^{-1}$.

If $A$ is also a Banach algebra,
and $\ph, \ps \colon A \to B$ are linear maps,
we say that $\ph$ and $\ps$ are {\emph{isometrically similar}}
if there is an invertible isometry $s \in B$
such that $\ps (a) = s \ph (a) s^{-1}$ for all $a \in A$.
\end{dfn}

We state some immediate properties.

\begin{prp}\label{P_5916_PropOfISHm}
Let $A$ and $B$ be Banach algebras,
with $B$ unital,
and let $\ph, \ps \colon A \to B$
be isometrically similar linear maps.
\begin{enumerate}
\item\label{P_5916_PropOfISHm_Ct}
If $\ph$ is contractive then so is $\ps$.
\item\label{P_5916_PropOfISHm_Iso}
If $\ph$ is isometric then so is $\ps$.
\item\label{P_5916_PropOfISHm_Mn}
If $A$ and $B$ are matrix normed
(Definition~\ref{D_4Y19_MatNAlg})
then $\id_{M_n} \otimes \ph$ and $\id_{M_n} \otimes \ps$
are isometrically similar.
\item\label{P_5916_PropOfISHm_CmpBd}
If $A$ and $B$ are matrix normed
and $\ph$ is completely bounded,
then so is~$\ps$.
\item\label{P_5916_PropOfISHm_Cmp}
If $A$ and $B$ are matrix normed
and $\ph$ is completely contractive,
then so is~$\ps$.
\item\label{P_5916_PropOfISHm_CmpIs}
If $A$ and $B$ are matrix normed
and $\ph$ is completely isometric,
then so is~$\ps$.
\end{enumerate}
\end{prp}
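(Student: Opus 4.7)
The proof is essentially a matter of unwinding definitions, so I expect it to be short. The plan is to establish (\ref{P_5916_PropOfISHm_Ct}) and (\ref{P_5916_PropOfISHm_Iso}) directly from submultiplicativity, then promote similarity from $B$ to $M_n(B)$ to obtain (\ref{P_5916_PropOfISHm_Mn}), and finally derive (\ref{P_5916_PropOfISHm_CmpBd})--(\ref{P_5916_PropOfISHm_CmpIs}) by applying the first two parts level by level.

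For (\ref{P_5916_PropOfISHm_Ct}) and (\ref{P_5916_PropOfISHm_Iso}), pick the invertible isometry $s\in B$ with $\ps(a)=s\ph(a)s^{-1}$. Since $\|s\|=\|s^{-1}\|=1$, submultiplicativity of $\|\cdot\|$ gives $\|\ps(a)\|\leq\|\ph(a)\|$ for every $a\in A$; applying the same estimate with $\ph$ and $\ps$ interchanged (using $\ph(a)=s^{-1}\ps(a)s$) gives the reverse inequality, so $\|\ps(a)\|=\|\ph(a)\|$ for all $a$. Parts (\ref{P_5916_PropOfISHm_Ct}) and (\ref{P_5916_PropOfISHm_Iso}) follow at once.

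The key step is (\ref{P_5916_PropOfISHm_Mn}). I plan to consider the element $t=1_{M_n}\otimes s=\diag(s,s,\ldots,s)\in M_n(B)$. Iterated application of Definition~\ref{D_4Y19_MatNAlg}(\ref{D_4Y19_MatNAlg_DSum}) gives $\|t\|_n=\|s\|=1$, and the same computation applied to $1_{M_n}\otimes s^{-1}$ shows that $t$ is invertible in $M_n(B)$ with inverse of norm $1$, hence an invertible isometry. A direct calculation from the definition of $\id_{M_n}\otimes\ph$ (Notation~\ref{N_6808_Mn} and Definition~\ref{D_4Y19_CB}) gives
\[
(\id_{M_n}\otimes\ps)(a)=t\,(\id_{M_n}\otimes\ph)(a)\,t^{-1}
\]
for every $a\in M_n(A)$, so $\id_{M_n}\otimes\ph$ and $\id_{M_n}\otimes\ps$ are isometrically similar in the sense of Definition~\ref{D_5916_IS}. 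The mild obstacle here is only verifying $\|t\|_n=1$; everything else is automatic.

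Parts (\ref{P_5916_PropOfISHm_CmpBd})--(\ref{P_5916_PropOfISHm_CmpIs}) then follow by applying (\ref{P_5916_PropOfISHm_Ct}) or (\ref{P_5916_PropOfISHm_Iso}) to the isometric similarity produced in (\ref{P_5916_PropOfISHm_Mn}). Indeed, (\ref{P_5916_PropOfISHm_Ct}) applied to $\id_{M_n}\otimes\ph$ and $\id_{M_n}\otimes\ps$ yields $\|\id_{M_n}\otimes\ph\|=\|\id_{M_n}\otimes\ps\|$ for every $n\in\N$, which taking suprema gives $\|\ph\|_{\mathrm{cb}}=\|\ps\|_{\mathrm{cb}}$ and immediately produces (\ref{P_5916_PropOfISHm_CmpBd}) and (\ref{P_5916_PropOfISHm_Cmp}). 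For (\ref{P_5916_PropOfISHm_CmpIs}), apply (\ref{P_5916_PropOfISHm_Iso}) at each level $n$ to conclude that $\id_{M_n}\otimes\ps$ is isometric whenever $\id_{M_n}\otimes\ph$ is.
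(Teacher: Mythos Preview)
Your proof is correct and follows essentially the same approach as the paper: the paper declares that the only part requiring proof is~(\ref{P_5916_PropOfISHm_Mn}), and there it uses exactly your argument, namely Definition~\ref{D_4Y19_MatNAlg}(\ref{D_4Y19_MatNAlg_DSum}) to see that $1_{M_n}\otimes s$ is an invertible isometry in $M_n(B)$. Your treatment of the remaining parts is just a more explicit version of what the paper leaves implicit.
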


\begin{proof}
The only part requiring proof is~(\ref{P_5916_PropOfISHm_Mn}).
For this part,
we use Definition~\ref{D_4Y19_MatNAlg}(\ref{D_4Y19_MatNAlg_DSum})
to see that if $s \in B$ is an invertible isometry,
then so is $1 \otimes s \in M_n \otimes B$ for any $n \in \N$.
\end{proof}

\begin{prp}\label{P_5916_PropOfIS}
Let $B$ be a unital Banach algebra,
and let $e, f \in B$ be isometrically similar idempotents.
If $e$ is hermitian (Definition~\ref{D_6608_Herm}),
so is~$f$.
\end{prp}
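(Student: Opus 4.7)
The plan is to use the exponential characterization of hermitian elements from Theorem~\ref{T_6608_HCnd}. Write $f = s e s^{-1}$ for some invertible isometry $s \in B$. I will show that $\|\exp(i\lambda f)\| \leq 1$ for every $\lambda \in \R$, which by the equivalence of (\ref{T_6608_HCnd_Exp}) and (\ref{T_6608_HCnd_SubExp}) in Theorem~\ref{T_6608_HCnd} implies that $f$ is hermitian.

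The key observation is that conjugation by $s$ commutes with the holomorphic functional calculus. Using the power series definition of the exponential, together with $s e^n s^{-1} = (s e s^{-1})^n = f^n$ for all $n \in \Nz$, I would verify the identity
\[
\exp(i\lambda f) = s \exp(i\lambda e) s^{-1}
\]
for all $\lambda \in \R$. An invertible isometry in a unital Banach algebra satisfies $\|s\| = \|s^{-1}\| = 1$ (the latter because $1 = \|s s^{-1}\| \leq \|s\| \|s^{-1}\|$ together with $\|s^{-1}\| \leq \| \cdot \|$ estimates; in any case, by definition of an invertible isometry we have both norms equal to one). Submultiplicativity then yields
\[
\|\exp(i\lambda f)\| \leq \|s\| \cdot \|\exp(i\lambda e)\| \cdot \|s^{-1}\| = \|\exp(i\lambda e)\| \leq 1,
\]
where the last inequality uses that $e$ is hermitian.

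There is essentially no obstacle here: the argument is a routine consequence of the exponential characterization of hermiticity combined with the fact that inner automorphisms by invertible isometries are isometric automorphisms of~$B$ that commute with the holomorphic functional calculus. The only point worth being careful about is confirming that $f^2 = f$ (immediate from $f = s e s^{-1}$ and $e^2 = e$), so that the statement of the proposition makes sense, and that we are indeed free to invoke Theorem~\ref{T_6608_HCnd}(\ref{T_6608_HCnd_SubExp}) as a sufficient condition for hermiticity, which is the content of the equivalence proved in that theorem.
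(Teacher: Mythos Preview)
Your proof is correct. The paper's own proof is also a one-liner but routes through a different characterization of hermiticity: it observes that the \hm{s} $\bt_e, \bt_f \colon \C \oplus \C \to B$ of Lemma~\ref{L_6612_HI} are isometrically similar (since $\bt_f = s \bt_e(\cdot) s^{-1}$), and then invokes Proposition~\ref{P_5916_PropOfISHm}(\ref{P_5916_PropOfISHm_Ct}) to conclude that $\bt_f$ is contractive. Your argument instead uses the exponential criterion of Theorem~\ref{T_6608_HCnd} directly. Both approaches rest on the same underlying fact---that conjugation by an invertible isometry preserves norms---so the difference is purely a matter of which equivalent description of ``hermitian'' one prefers to verify.

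One small remark: your parenthetical justification for $\|s^{-1}\| = 1$ is a bit garbled (the inequality $1 = \|s s^{-1}\| \leq \|s\|\,\|s^{-1}\|$ only gives $\|s^{-1}\| \geq 1$). In this paper ``invertible isometry'' is used to mean an invertible element with $\|s\| = \|s^{-1}\| = 1$, so you may simply take both equalities as part of the hypothesis.
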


\begin{proof}
The corresponding \hm{s} in Lemma~\ref{L_6612_HI}
are isometrically similar.
Apply Proposition~\ref{P_5916_PropOfISHm}(\ref{P_5916_PropOfISHm_Ct}).
\end{proof}

\begin{dfn}\label{D_4326_SpHmMd}
Let $p \in [1, \infty) \setminus \{ 2 \}$.
Let $A$ be a unital $\sm$-finitely representable
$L^p$~operator algebra, let $d \in \N$,
and let $\ph \colon \MP{d}{p} \to A$ be a \hm{}
(not necessarily unital).
We say that $\ph$ is {\emph{spatial}}
if $\ph (1)$ is a hermitian idempotent
(Definition~\ref{D_6608_Herm})
and $\ph$ is contractive.
The zero \hm{} is allowed
as a choice of $\ph$.
\end{dfn}

It isn't enough to merely require that $\ph$
be contractive.

\begin{exa}\label{E_4402_BadIdem}
Let $p \in [1, \infty) \setminus \{ 2 \}$.
Then there is a contractive \hm{}
$\ph \colon \C \to \MP{2}{p}$ which is not spatial.
To construct one,
let $e$ be the nonhermitian idempotent from
Lemma~\ref{L_5224_NmIdemp}.
Define
$\ph (\lambda) = \lambda e$ for $\lambda \in \C$.
Then $\ph$
is clearly contractive, but not spatial because $e$ is not hermitian.
\end{exa}

\begin{lem}\label{L_5917_MnSpIs}
Let $p \in [1, \infty) \setminus \{ 2 \}$,
let $d \in \N$,
and let $s \in \MP{d}{p}$.
Then $s$ is an invertible isometry \ifo{}
$s$ is a complex permutation matrix
(Definition~\ref{D_5205_CxPerm}).
\end{lem}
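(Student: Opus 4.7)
The easy direction is a direct computation: if $s = \sum_{j = 1}^d \lambda_j e_{\sigma (j), j}$ is a complex permutation matrix, then $s \delta_j = \lambda_j \delta_{\sigma (j)}$ (with $\delta_1, \delta_2, \ldots, \delta_d$ denoting the standard basis vectors of~$l^p_d$), so $\| s \xi \|_p^p = \sum_j | \lambda_j |^p | \xi_j |^p = \| \xi \|_p^p$, and $s$ is invertible with inverse $\sum_j \lambda_j^{-1} e_{j, \sigma (j)}$.

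For the converse, my plan is to leverage the rigidity of hermitian idempotents already established in the paper. Set $p_j = e_{j, j}$ for $j = 1, 2, \ldots, d$; by Corollary~\ref{C_5128_SpIsDiag} each $p_j$ is a hermitian idempotent in $\MP{d}{p}$. Now let $s \in \MP{d}{p}$ be an invertible isometry and define $q_j = s p_j s^{-1}$. These are pairwise orthogonal idempotents summing to~$1$ (since the $p_j$ are), and by Proposition~\ref{P_5916_PropOfIS} each $q_j$ is hermitian, because conjugation by the invertible isometry $s$ is an isometric similarity in the sense of Definition~\ref{D_5916_IS}. Applying Corollary~\ref{C_5128_SpIsDiag} a second time, each $q_j$ is a diagonal matrix with entries in $\{ 0, 1 \}$. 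Since conjugation by an invertible element of $M_d$ preserves the rank of an idempotent, and $p_j$ has rank one, each $q_j$ has rank one, so there is a unique index $\tau (j) \in \{ 1, 2, \ldots, d \}$ with $q_j = e_{\tau (j), \tau (j)}$. The pairwise orthogonality and the condition $\sum_j q_j = 1$ force $\tau$ to be a bijection of $\{ 1, 2, \ldots, d \}$.

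The identity $s p_j = q_j s$ yields $s \delta_j = q_j (s \delta_j) \in \operatorname{ran} (q_j) = \C \delta_{\tau (j)}$, so $s \delta_j = \lambda_j \delta_{\tau (j)}$ for some $\lambda_j \in \C$. Since $s$ is isometric, $| \lambda_j | = \| s \delta_j \|_p = \| \delta_j \|_p = 1$, and therefore $s = \sum_j \lambda_j e_{\tau (j), j}$, which is a complex permutation matrix. I do not anticipate a substantive obstacle: the two key ingredients, namely Corollary~\ref{C_5128_SpIsDiag} identifying hermitian idempotents in $\MP{d}{p}$ with diagonal $0/1$ matrices, and Proposition~\ref{P_5916_PropOfIS} preserving the hermitian property under isometric similarity, are both established in the preceding sections, and everything else is linear algebra in $M_d$.
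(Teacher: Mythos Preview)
Your proof is correct, but it takes a different route from the paper's. The paper argues directly from Lemma~6.16 of~\cite{PhLp1} (the Lamperti-type result that every invertible isometry of a $\sigma$-finite $L^p$~space with $p \neq 2$ is spatial) and then reads off from the definition of a spatial system on $l^p_d$ that $s$ must be a complex permutation matrix. Your argument instead conjugates the standard rank-one diagonal idempotents by~$s$, invokes Proposition~\ref{P_5916_PropOfIS} to see the conjugates remain hermitian, and applies Corollary~\ref{C_5128_SpIsDiag} to force them back to diagonal $0/1$ matrices, whence the permutation structure follows by elementary linear algebra. This is a nice illustration of the hermitian-idempotent machinery from Section~\ref{Sec_HIdemp}, and it keeps the argument internal to the paper. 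That said, it is not genuinely independent of Lamperti: Corollary~\ref{C_5128_SpIsDiag} rests on Lemma~\ref{L_4326_CharSpI}, whose proof passes through Theorem~\ref{T_4116_CXLp}, which itself invokes Lemma~6.16 of~\cite{PhLp1}. So both proofs ultimately rely on the same rigidity result; the paper's version is shorter because it cites it directly, while yours demonstrates how the same content is encoded in the hermitian-idempotent characterization.
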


\begin{proof}
It is obvious that complex permutation matrices
are invertible isometries.
Conversely, assume that $s$ is an invertible isometry.
It follows from Lemma 6.16 of~\cite{PhLp1}
that $s$ is spatial,
and it is easily seen from the definitions
(Definition 6.3 and Definition 6.4 of~\cite{PhLp1})
that $s$ is a complex permutation matrix.
\end{proof}

The next lemma shows that every spatial homomorphism
between two matrix algebras is isometrically similar
to a block diagonal \hm.

\begin{lem}\label{L_4326_WhenSp}
Let $p \in [1, \infty) \setminus \{ 2 \}$.
Let $d, m \in \N$,
and let $\psi \colon \MP{d}{p} \to \MP{m}{p}$ be a \hm{}
(not necessarily unital).
Then \tfae:
\begin{enumerate}
\item\label{L_4326_WhenSp_Sp}
$\psi$ is spatial.
\item\label{L_4326_WhenSp_BlockD}
There exist $k \in \N$ with $0 \leq k d \leq m$
such that $\ps$ is isometrically similar to
the \hm{} $a \mapsto \diag (a, a, \ldots, a, 0)$,
the block diagonal matrix
in which $a$ occurs $k$ times and $0$ is the zero
element of $\MP{m - k d}{p}$.
\end{enumerate}
\end{lem}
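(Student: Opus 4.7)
The plan is to prove both implications, with (\ref{L_4326_WhenSp_BlockD})$\Rightarrow$(\ref{L_4326_WhenSp_Sp}) being nearly formal and (\ref{L_4326_WhenSp_Sp})$\Rightarrow$(\ref{L_4326_WhenSp_BlockD}) relying on the structure theorem (Theorem~7.2 of~\cite{PhLp1}) for unital contractive representations of $\MP{d}{p}$, exactly as invoked in the proof of Proposition~\ref{P_5124_MnIncmp}. For (\ref{L_4326_WhenSp_BlockD})$\Rightarrow$(\ref{L_4326_WhenSp_Sp}), I first verify that the block diagonal map $\delta \colon a \mapsto \diag(a, \ldots, a, 0)$ is itself spatial: its image of $1$ is a diagonal $0/1$ matrix, hence a hermitian idempotent by Corollary~\ref{C_5128_SpIsDiag}, and decomposing $l_m^p$ as an $L^p$~direct sum of $k$ copies of $l_d^p$ and one copy of $l_{m - kd}^p$ gives $\| \delta(a) \|_p = \| a \|_p$, so $\delta$ is contractive. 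Then Proposition~\ref{P_5916_PropOfISHm}(\ref{P_5916_PropOfISHm_Ct}) and Proposition~\ref{P_5916_PropOfIS} transport contractivity and the hermitian property of $\delta(1)$ through any isometric similarity, yielding~(\ref{L_4326_WhenSp_Sp}).

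For (\ref{L_4326_WhenSp_Sp})$\Rightarrow$(\ref{L_4326_WhenSp_BlockD}), Corollary~\ref{C_5128_SpIsDiag} shows that $\psi(1)$ is a diagonal $0/1$ matrix; let $r$ be the number of ones. A permutation matrix $s_0 \in \MP{m}{p}$, which is an invertible isometry by Lemma~\ref{L_5917_MnSpIs}, conjugates $\psi(1)$ to the diagonal form with its $r$ ones occurring first, so after replacing $\psi$ by $s_0^{-1} \psi(\cdot) s_0$ we may assume $\psi(1)$ has this form. The range of $\psi$ then lies in the upper-left copy of $\MP{r}{p}$ inside $\MP{m}{p}$; Corollary~\ref{C_5125_CutAlg} produces from this a unital contractive homomorphism $\psi_0 \colon \MP{d}{p} \to \MP{r}{p}$. (If $r = 0$ then $\psi = 0$, covered by $k = 0$.) Applying Theorem~7.2 of~\cite{PhLp1} yields a $\sigma$-finite measure space $(Z, \mathcal{C}, \lambda)$ and a bijective isometry $u \colon l_d^p \otimes L^p(Z, \lambda) \to l_r^p$ with $\psi_0(a) = u (a \otimes 1) u^{-1}$. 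Since $l_r^p$ is finite dimensional, so is $L^p(Z, \lambda)$, and dimension counting plus Proposition~\ref{P_5204_ContrPj} give an isometric identification $L^p(Z, \lambda) \cong l_k^p$ with $kd = r$. Under the identification of $l_d^p \otimes l_k^p$ with $l_r^p$ via $(j, l) \mapsto (l-1)d + j$, the operator $a \otimes 1_{l_k^p}$ is exactly $\diag(a, \ldots, a) \in \MP{r}{p}$ with $k$ blocks. Composing $u$ with this identification produces an invertible isometry $v \in \MP{r}{p}$ such that $\psi_0(a) = v \diag(a, \ldots, a) v^{-1}$, and $v$ is a complex permutation matrix by Lemma~\ref{L_5917_MnSpIs}. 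Padding $v$ with the identity on the last $m - r$ coordinates gives an invertible isometry $\tilde v \in \MP{m}{p}$, and reassembling with $s_0$ expresses $\psi$ as isometrically similar to $a \mapsto \diag(a, \ldots, a, 0)$.

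The main bookkeeping obstacle is tracking the two successive conjugations — a permutation in $\MP{m}{p}$ to normalize $\psi(1)$, followed by a further conjugation inside $\MP{r}{p}$ implementing the block diagonal structure — and verifying that they combine into a single invertible isometry of $\MP{m}{p}$. Both implementing elements are complex permutation matrices or direct sums of such with identities, so by Lemma~\ref{L_5917_MnSpIs} the composition is again an invertible isometry, and the equivalence follows.
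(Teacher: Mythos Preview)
Your argument is correct and follows essentially the same route as the paper's proof: the backward implication is dispatched by observing that block diagonal maps are spatial and that spatiality is preserved under isometric similarity, while the forward implication reduces to the unital case via Corollary~\ref{C_5125_CutAlg} after conjugating $\psi(1)$ into standard diagonal form, and then invokes Theorem~7.2 of~\cite{PhLp1} to obtain the tensor decomposition. The only minor quibble is that Proposition~\ref{P_5204_ContrPj} is not quite the right citation for the fact that a $k$-dimensional $L^p$~space is isometrically isomorphic to $l_k^p$; this follows instead from the elementary observation that a $\sigma$-finite measure space whose $L^p$~space is $k$-dimensional is (up to null sets) purely atomic with $k$ atoms, and a rescaling identifies it with $l_k^p$ --- the paper simply asserts this without citation.
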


\begin{proof}
It is easy to check that (\ref{L_4326_WhenSp_BlockD})
implies~(\ref{L_4326_WhenSp_Sp}).
So assume~(\ref{L_4326_WhenSp_Sp}).

First assume that $\psi$ is unital.
Then $m = k d$
for some $k \in \mathbb{Z}_{>0}$.
The implication from (4) to~(8) in Theorem~7.2 in~\cite{PhLp1}
provides
a \sfm{} $\YCN$ and a bijective isometry
\[
u \colon l^p_d \otimes_p L^p (Y, \nu) \to l^p_{k d}
\]
such that
for all $a \in M_d^p$ we have $\psi (a) = u (a \otimes 1) u^{-1}$.
The space $L^p (Y, \nu)$ must have dimension~$k$,
so it is isometrically isomorphic to $l^p_k$.
There is a bijection
\[
\{ 1, 2, \ldots, d \} \times \{ 1, 2, \ldots, k \}
\to \{ 1, 2, \ldots, m \}
\]
such that the corresponding
isomorphism of $L(l^p_d \otimes_p l^p_k)$ with $L(l^p_m)$
sends $a \otimes 1$ to $\diag (a, a, \ldots, a)$.
This allows us to identify $s = u^{-1}$
with an invertible isometry in $L (l^p_m) \cong \MP{m}{p}$
such that
$s \psi (a) s^{- 1} = \diag (a, a, \ldots, a)$
for all $a \in \MP{d}{p}$.

Now consider the general case.
Assume that $\psi$ is spatial.
So $\ps$ is contractive and $\psi (1)$ is spatial.
By Lemma~\ref{L_4326_CharSpI},
there exists a measurable subset $E \subset \{1, 2, \ldots, m \}$
such that $\psi (1)$ is multiplication by $\chi_E$ on $l_m^p$.
By conjugating by a permutation matrix,
we can assume that
$E = \{1, 2, \ldots, n \}$ for some $n \in \{ 1, 2, \ldots, m \}$.
Identify $l^p_n$ with $l^p (E) \subset l^p_m$,
and let $\io \colon L (l^p_n) \to L (l^p_m)$
be $\io (b) = b \oplus 0$ for $b \in L (l^p_n)$.
(In matrix form, this is $\io (b) = \diag (b, 0)$.)
There is a homomorphism $\ph \colon L (l_d^p) \to L (l_n^p)$
such that $\ps (a) = \io ( \ph (a)) = \ph (a) \oplus 0$
for all $a \in L (l_d^p)$,
and $\ph$ is a unital homomorphism from
$M_d^p$ to $M_n^p$ which is contractive
by Corollary~\ref{C_5125_CutAlg}.
By the case done above, there exist
$k \in \mathbb{Z}_{>0}$ such that $n = k d$
and an invertible isometry
$s_0 \in M_n^p$ such that
$s_0 \ph (a) s_0^{- 1} = \diag (a, a, \ldots, a)$
for all $a \in L (l_d^p)$.
Then $s = \diag (s_0, 1)$,
with $1$ being the identity of $M_{m - n}^p$,
is an invertible isometry
such that for all $a \in L (l_d^p)$ we have
$s \psi (a) s^{- 1} = \diag (a, a, \ldots, a, 0)$.
\end{proof}

To define a spatial $L^p$~AF algebra, we
need to first define its building blocks, the
spatial semisimple finite dimensional
$L^p$~operator algebras.

\begin{dfn}\label{D_4326_SFDLp}
Let $p \in [1, \infty) \setminus \{ 2 \}$.
A Banach algebra $A$ is called a
{\emph{spatial semisimple finite dimensional
$L^p$~operator algebra}}
if there are $N \in \N$ and
$d_1, d_2, \ldots, d_N \in \N$ such
that $A$ is isometrically isomorphic to
${\textdisp{ \bigoplus_{k = 1}^N \MP{d_k}{p} }}$, endowed
with the norm as in Definition~\ref{D_4326_DSumNorm}.
\end{dfn}

\begin{rmk}\label{R_4326_SSFD}
Let $p \in [1, \infty) \setminus \{ 2 \}$.
To simplify the notation in our proofs,
if $A$ is a
spatial semisimple finite dimensional $L^p$~operator algebra,
we will omit the isometric isomorphism and simply write
${\textdisp{A = \bigoplus_{k = 1}^N \MP{d_k}{p}}}$
with $N, d_1, d_2, \ldots, d_N \in \mathbb{Z}_{>0}$.
\end{rmk}

\begin{lem}\label{L_7629_Quot}
Let $p \in [1, \infty) \setminus \{ 2 \}$.
Let $A$ be
a spatial semisimple finite dimensional $L^p$~operator algebra,
and let $J \S A$ be an ideal.
Then $A / J$,
with the quotient norm,
is a spatial semisimple finite dimensional $L^p$~operator algebra.
\end{lem}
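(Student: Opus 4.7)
The plan is to describe the ideal structure of $A$ explicitly and then invoke Lemma~\ref{L_6408_QtOfSum} to identify the quotient norm with the direct sum norm.

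First I would write $A = \bigoplus_{k=1}^N M_{d_k}^p$ as in Remark~\ref{R_4326_SSFD}. Since $A$ is finite dimensional, $J$ is automatically closed, so no topological subtlety arises. Algebraically, each summand $M_{d_k}^p$ equals $M_{d_k}$, which is a simple unital algebra with a unit that is central in $A$. A standard argument shows that every two-sided ideal $J$ of $\bigoplus_{k=1}^N M_{d_k}$ has the form $J = \bigoplus_{k \in S} M_{d_k}^p$ for some $S \subseteq \{1, 2, \ldots, N\}$: one checks that $J$ contains $f_k J f_k$ for the minimal central idempotent $f_k$ corresponding to the $k$-th summand, and the simplicity of $M_{d_k}$ forces each such piece to be either $\{0\}$ or all of $M_{d_k}^p$.

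Next, I would apply Lemma~\ref{L_6408_QtOfSum} to the decomposition $A = \bigoplus_{k=1}^N M_{d_k}^p$ and the ideal $J = \bigoplus_{k \in S} M_{d_k}^p$. That lemma gives a completely isometric identification
\[
A / J \;\longrightarrow\; \bigoplus_{k \notin S} M_{d_k}^p,
\]
where the target is equipped with the norm from Definition~\ref{D_4326_DSumNorm}. In particular, as Banach algebras, $A/J$ is isometrically isomorphic to a direct sum of algebras of the form $M_{d_k}^p$ with the max norm, which is exactly the definition (Definition~\ref{D_4326_SFDLp}) of a spatial semisimple finite dimensional $L^p$~operator algebra.

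There is no real obstacle here: the content is entirely the identification of ideals of a finite direct sum of simple algebras, after which the norm assertion is handed to us by Lemma~\ref{L_6408_QtOfSum}. The only mild point worth checking carefully is that the quotient norm that $A/J$ inherits from the Banach algebra $A$ coincides with the direct sum norm on $\bigoplus_{k \notin S} M_{d_k}^p$, and this is precisely the statement of Lemma~\ref{L_6408_QtOfSum} at the level of $1 \times 1$ matrices.
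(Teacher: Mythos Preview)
Your proposal is correct and is essentially the paper's own proof: write $A = \bigoplus_{k=1}^N M_{d_k}^p$, identify $J$ with a direct sum of some of the summands, and invoke Lemma~\ref{L_6408_QtOfSum} to see that the quotient norm is the direct sum norm on the remaining summands. The only difference is cosmetic (the paper uses $S$ to index the surviving summands rather than the ones in~$J$), and you supply a little more detail on why every ideal has this form.
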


\begin{proof}
Use the notation in Remark~\ref{R_4326_SSFD}.
Then there is a subset $S \S \{ 1, 2, \ldots, N \}$
such that,
as an algebra,
${\textdisp{A / J = \bigoplus_{k \in S} \MP{d_k}{p}}}$.
The quotient norm
agrees with the norm on ${\textdisp{ \bigoplus_{k \in S} \MP{d_k}{p}}}$
by Lemma~\ref{L_6408_QtOfSum}.
\end{proof}

\begin{lem}\label{L_5917_SSFDSpIs}
Let $p \in [1, \infty) \setminus \{ 2 \}$,
and let ${\textdisp{A = \bigoplus_{k = 1}^N \MP{d_k}{p}}}$
be a spatial semisimple finite dimensional $L^p$~operator algebra.
Let $s = (s_1, s_2, \ldots, s_N) \in A$.
Then $s$ is an invertible isometry \ifo{}
$s_k$ is a complex permutation matrix
for $k = 1, 2, \ldots, N$.
\end{lem}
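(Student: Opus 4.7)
The plan is to reduce the lemma to its one‐block counterpart, Lemma~\ref{L_5917_MnSpIs}, by exploiting the fact that the direct sum norm from Definition~\ref{D_4326_DSumNorm} computes coordinatewise and that multiplication (and inversion) in $A$ is coordinatewise.

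First I would handle the easy direction. Suppose each $s_k$ is a complex permutation matrix. Lemma~\ref{L_5917_MnSpIs} gives that each $s_k$ is an invertible isometry in $\MP{d_k}{p}$, so in particular $\| s_k \|_p = \| s_k^{-1} \|_p = 1$. Set $t = (s_1^{-1}, s_2^{-1}, \ldots, s_N^{-1}) \in A$; coordinatewise multiplication shows $st = ts = 1_A$, so $s$ is invertible. Finally, by Definition~\ref{D_4326_DSumNorm},
\[
\| s \|
 = \max_k \| s_k \|_p = 1
\andeqn
\| s^{-1} \|
 = \max_k \| s_k^{-1} \|_p = 1,
\]
and as $A$ sits isometrically in $L(E)$ for $E = \bigoplus_{k = 1}^N l_{d_k}^p$ (the $L^p$~direct sum), an element with $\| s \| = \| s^{-1} \| = 1$ is automatically an invertible isometry: the left–inverse bound forces $\| \xi \| = \| s^{-1} s \xi \| \leq \| s \xi \| \leq \| \xi \|$ for every $\xi \in E$.

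For the converse, suppose $s = (s_1, \ldots, s_N)$ is an invertible isometry in~$A$. Since inversion is coordinatewise, each $s_k$ is invertible in $\MP{d_k}{p}$, and $s^{-1} = (s_1^{-1}, \ldots, s_N^{-1})$. From $\| s \| = \max_k \| s_k \|_p \leq 1$ and $\| s^{-1} \| = \max_k \| s_k^{-1} \|_p \leq 1$, combined with the submultiplicative lower bound $1 = \| 1_{M_{d_k}^p} \|_p \leq \| s_k \|_p \| s_k^{-1} \|_p$, I conclude $\| s_k \|_p = \| s_k^{-1} \|_p = 1$ for every~$k$. Thus each $s_k$ is an invertible isometry in $\MP{d_k}{p}$, so Lemma~\ref{L_5917_MnSpIs} identifies it as a complex permutation matrix.

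There is no real obstacle here: the content is all in the one‐block statement Lemma~\ref{L_5917_MnSpIs}, and the only point that needs a moment of attention is the passage from the coordinatewise norm bounds to each $\| s_k \|_p$ and $\| s_k^{-1} \|_p$ actually being equal to $1$, which is handled by the $\| 1 \| \leq \| s_k \|_p \| s_k^{-1} \|_p$ estimate above.
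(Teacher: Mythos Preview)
Your proof is correct and follows essentially the same approach as the paper, which simply says the result is immediate from Lemma~\ref{L_5917_MnSpIs} and the definition of the norm on~$A$. You have written out in full the details the paper leaves implicit: that invertibility and the direct-sum norm are coordinatewise, and that $\| s_k \|_p, \| s_k^{-1} \|_p \leq 1$ together with $1 \leq \| s_k \|_p \| s_k^{-1} \|_p$ force both to equal~$1$.
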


\begin{proof}
This is immediate from Lemma~\ref{L_5917_MnSpIs}
and the definition of the norm on~$A$.
\end{proof}

\begin{lem}\label{L_4326_FDIsLp}
Let $p \in [1, \infty) \setminus \{ 2 \}$,
and let $A$ be a spatial semisimple finite
dimensional $L^p$~operator algebra.
Then $A$ is an $L^p$~operator algebra
with unique $L^p$~operator matrix norms,
obtained by combining
Definition~\ref{D_4326_DSumNorm} and Definition~\ref{D_4Y19_LpMN}.
\end{lem}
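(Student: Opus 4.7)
The plan is to assemble the lemma directly from results already established in the preceding sections, since a spatial semisimple finite dimensional $L^p$~operator algebra is by Definition~\ref{D_4326_SFDLp} nothing more than a finite direct sum of matrix blocks $M_{d_k}^p$, and we have uniqueness for each block plus a uniqueness theorem for direct sums. The only substantive issue is to check that the matrix norms that one obtains from combining Definition~\ref{D_4326_DSumNorm} and Definition~\ref{D_4Y19_LpMN} agree, via the representation on the coproduct measure space of Definition~\ref{Notdisjointunion}, with the matrix norms of any isometric representation of $A$ on an $L^p$~space.

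Concretely, I would proceed as follows. Write $A = \bigoplus_{k=1}^N M_{d_k}^p$ as in Remark~\ref{R_4326_SSFD}. Each block $M_{d_k}^p = L(l^p_{d_k})$ is by definition an $L^p$~operator algebra, and its matrix norms coming from Definition~\ref{D_4Y19_LpMN} (applied to its tautological representation on $l^p_{d_k}$) make it a matricial $L^p$~operator algebra. Moreover $M_{d_k}^p$ is unital with $\|1\| = 1$, and by Corollary~\ref{P_5124_MnUniq} it has unique $L^p$~operator matrix norms. The fact that $A$ is an $L^p$~operator algebra is then immediate from Lemma~\ref{L_4Y17_SumIsLp}, since the direct sum of the tautological representations on $l^p_{d_1}, l^p_{d_2}, \ldots, l^p_{d_N}$ gives an isometric representation of $A$ on $L^p(X,\mu)$, where $(X,\mu) = \coprod_{k=1}^N \{1,2,\ldots,d_k\}$ equipped with counting measure, as in Definition~\ref{Notdisjointunion}.

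For uniqueness of the $L^p$~operator matrix norms, I would invoke Lemma~\ref{P_5124_UniqMatDSum}. Its hypotheses require that each summand be a separable unital $L^p$~operator algebra whose identity has norm~$1$ and which has unique $L^p$~operator matrix norms: all of these are satisfied by $M_{d_k}^p$ (separability is automatic since $M_{d_k}^p$ is finite dimensional, $\|1\|=1$ is standard for the norm on $L(l^p_{d_k})$, and uniqueness is Corollary~\ref{P_5124_MnUniq}). The conclusion of that lemma is precisely that $A$, normed via Definition~\ref{D_4326_DSumNorm}, has unique $L^p$~operator matrix norms; and the proof of that lemma shows that these norms are the ones gotten by equipping each summand with its matrix norms from Definition~\ref{D_4Y19_LpMN} and then taking the maximum over blocks as in Definition~\ref{D_4326_DSumNorm}. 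This is exactly the statement of the lemma.

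No step is truly an obstacle, since the ingredients have all been set up; the only thing to verify carefully is that the $L^p$~operator algebra norm on $A$ from Definition~\ref{D_4326_DSumNorm} actually coincides with the operator norm of its isometric representation on the coproduct $L^p$~space, but this follows immediately from the block-diagonal decomposition of $L^p(X,\mu)$ with respect to the disjoint union structure and the corresponding identification of $M_n(A)$-norms in Lemma~\ref{L_5205_MatNLp}.
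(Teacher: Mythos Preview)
Your proof is correct and follows essentially the same approach as the paper's own proof: Lemma~\ref{L_4Y17_SumIsLp} for the $L^p$~operator algebra statement, and Corollary~\ref{P_5124_MnUniq} combined with Lemma~\ref{P_5124_UniqMatDSum} for uniqueness of the matrix norms. Your additional verification that the hypotheses of Lemma~\ref{P_5124_UniqMatDSum} are met and that the concrete matrix norms agree is helpful elaboration but not a different route.
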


\begin{proof}
That $A$ is an $L^p$~operator algebra
follows from Lemma~\ref{L_4Y17_SumIsLp}.
That $A$ has unique $L^p$~operator matrix norms
follows from Corollary~\ref{P_5124_MnUniq}
and Lemma~\ref{P_5124_UniqMatDSum}.
\end{proof}

The maps we will consider between
spatial semisimple finite dimensional
$L^p$~operator algebras are the
spatial homomorphisms.

\begin{dfn}\label{D_4326_SpHmFD}
Let $p \in [1, \infty) \setminus \{ 2 \}$,
and let
${\textdisp{ A = \bigoplus_{k = 1}^N \MP{d_k}{p} }}$
be a spatial semisimple finite dimensional
$L^p$~operator algebra.
Let $B$ be a $\sm$-finitely representable unital $L^p$~operator
algebra, and let $\ph \colon A \to B$ be a \hm.
We say that $\ph$ is
{\emph{spatial}}
if for $k = 1, 2, \ldots, N$,
the restriction of $\ph$ to the summand ${\MP{d_k}{p}}$
is spatial in the sense of Definition~\ref{D_4326_SpHmMd}.
\end{dfn}

\begin{lem}\label{L_4326_CharSpHmFDalg}
Let $p \in [1, \infty) \setminus \{ 2 \}$,
and let $A$ be a spatial semisimple finite
dimensional $L^p$~operator algebra.
Let $B$ be a $\sm$-finitely representable
unital $L^p$~operator algebra,
and let $\ph \colon A \to B$ be a \hm.
Then $\ph$ is spatial \ifo{}
$\ph (1)$ is a hermitian idempotent
(Definition~\ref{D_6608_Herm})
and $\ph$ is contractive.
\end{lem}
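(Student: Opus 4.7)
The plan is to handle the two implications separately, using an isometric representation of $B$ on an $L^p$~space to translate questions about hermitian idempotents into questions about multiplication operators. Since $B$ is a unital $\sm$-finitely representable $L^p$~operator algebra with $\|1_B\|=1$, Proposition~\ref{P_4Y19_Unital} provides a unital isometric representation $\rh \colon B \to L(L^p(Y,\nu))$ with $\nu$ \sft; I use this throughout. Write $A = \bigoplus_{k=1}^N \MP{d_k}{p}$ and let $f_k \in A$ be the identity of the $k$-th summand.

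For the forward direction, suppose $\ph$ is spatial. Each $\ph|_{\MP{d_k}{p}}$ is contractive with $\ph(f_k)$ hermitian, so by Lemma~\ref{L_6612_HI} composed with the unital isometric $\rh$, the operators $e_k = \rh(\ph(f_k))$ are orthogonal hermitian idempotents in $L(L^p(Y,\nu))$ (orthogonality is automatic from $f_jf_k=0$). Corollary~\ref{C_4Y17_Orth} then yields disjoint measurable $E_k \S Y$ with $e_k = \ch_{E_k}$, and $\rh(\ph(1)) = \ch_E$ where $E = \bigsqcup_k E_k$; this is a hermitian idempotent in $L(L^p(Y,\nu))$, hence $\ph(1)$ is hermitian in $B$ by running Lemma~\ref{L_6612_HI} backwards through the unital isometry $\rh$. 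For contractivity, given $a=(a_1,\ldots,a_N)\in A$, I decompose $L^p(Y,\nu) = L^p(E_1) \oplus_p \cdots \oplus_p L^p(E_N) \oplus_p L^p(Y\SM E)$; since $a_k = f_k a f_k$, the operator $\rh(\ph(a)) = \sum_k \ch_{E_k} \rh(\ph(a_k)) \ch_{E_k}$ is block diagonal with zero block on $L^p(Y\SM E)$. Lemma~\ref{L_5125_NormCut}, applied to each $\rh(\ph(a_k))$ with idempotent $\ch_{E_k}$ (valid since $\rh(\ph(a_k))\ch_{E_k}=\rh(\ph(a_k))$), gives $\|\rh(\ph(a_k))|_{L^p(E_k)}\| = \|\ph(a_k)\| \le \|a_k\|$, so $\|\ph(a)\| = \max_k \|\ph(a_k)\| \le \|a\|$.

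For the converse, assume $\ph(1)$ is hermitian in $B$ and $\ph$ is contractive. Contractivity of each $\ph|_{\MP{d_k}{p}}$ is immediate because the inclusion $\MP{d_k}{p}\hookrightarrow A$ is isometric by Definition~\ref{D_4326_DSumNorm}. The essential step is to show $\ph(f_k)$ is hermitian in $B$. The strategy is to first prove hermiticity inside the corner $C = \ph(1) B \ph(1)$. Since $\ph(1)$ is a nonzero hermitian idempotent (the zero case being trivial), $\|\ph(1)\| = 1$, so $C$ is a unital subalgebra of $B$ with $\|1_C\|=1$, and the corestricted map $\ph \colon A \to C$ is unital and contractive. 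Then $1_C - \ph(f_k) = \ph\bigl(\sum_{j\neq k} f_j\bigr)$, so the homomorphism $\bt \colon \C \oplus \C \to C$ given by $\bt(\ld_1,\ld_2) = \ld_1 \ph(f_k) + \ld_2 (1_C - \ph(f_k))$ equals $\ph$ applied to $\ld_1 f_k + \ld_2 \sum_{j\neq k} f_j \in A$, which has norm $\max(|\ld_1|,|\ld_2|)$ in the direct sum norm. Hence $\bt$ is contractive, and Lemma~\ref{L_6612_HI} gives $\ph(f_k)$ hermitian in $C$.

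It remains to lift hermiticity from $C$ to $B$. Via the unital isometric representation $\rh$, Lemma~\ref{L_6612_HI} applied to $\rh(\ph(1))$ shows it is hermitian in $L(L^p(Y,\nu))$, so Lemma~\ref{L_4326_CharSpI} gives $\rh(\ph(1)) = \ch_E$ for some measurable $E\S Y$. The map $\rh$ restricts to a unital isometric homomorphism from $C$ into $\ch_E L(L^p(Y,\nu))\ch_E$, which by Proposition~\ref{P_5204_ContrPj} and Corollary~\ref{C_5125_CutAlg} is identified with $L(L^p(E,\nu|_E))$. Lemma~\ref{L_5128_UnitalHm} then transfers hermiticity of $\ph(f_k)$ from $C$ to $L(L^p(E,\nu|_E))$, and Lemma~\ref{L_4326_CharSpI} produces $F \S E$ with $\rh(\ph(f_k)) = \ch_F$ on $L^p(E)$, hence also on $L^p(Y,\nu)$. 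A third application of Lemma~\ref{L_4326_CharSpI} makes $\rh(\ph(f_k))$ hermitian in $L(L^p(Y,\nu))$, and reversing through the unital isometry $\rh$ (via Lemma~\ref{L_6612_HI}) finishes the proof. The main technical obstacle is this last lifting step, precisely because hermiticity depends on the ambient unital algebra; the resolution mimics the argument of Lemma~\ref{L_4Y18_Funct} by moving to an $L^p$-representation where hermitian idempotents are rigidly characterized as multiplication by indicator functions.
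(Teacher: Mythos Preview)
Your proof is correct and follows essentially the same approach as the paper. The only difference is organizational: in the converse direction the paper simply observes that each $f_k$ is hermitian in $A$ and then invokes Lemma~\ref{L_4Y18_Funct} directly to conclude $\ph(f_k)$ is hermitian in $B$, whereas you reprove that lemma's lifting argument inline (as you yourself acknowledge at the end).
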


\begin{proof}
We can assume without
loss of generality that $B$
is a unital subalgebra of $L (L^p (Y, \nu))$
for some \sfm{} $\YCN$.
Assume also that
${\textdisp{ A = \bigoplus_{l = 1}^N \MP{d_l}{p} }}$.
For $k = 1, 2, \ldots, N$, let
${\textdisp{ \io_k
 \colon M_{d_k}^p \to \bigoplus_{l = 1}^N \MP{d_l}{p} }}$
be the inclusion of the $k$-th summand into $A$. Let
$\rh \colon L^{\infty} (Y, \nu) \to L (L^p (Y, \nu))$
be the representation by multiplication operators.

Suppose that $\ph$ is spatial.
For $k = 1, 2, \ldots, N$, the
\hm{} $\ph|_{M_{d_k}}$ is spatial, so
$e_k = \ph (\io_k (1_{M_{d_k}}))$ is a hermitian
idempotent.
The idempotents $e_1, e_2, \ldots, e_N$
are clearly orthogonal, so
Corollary \ref{C_4Y17_Orth}(\ref{C_4Y17_Orth_Sum})
implies that ${\textdisp{ \ph (1) = \sum_{k = 1}^N e_k }}$
is a hermitian idempotent.

Corollary \ref{C_4Y17_Orth}(\ref{C_4Y17_Orth_Disj})
provides disjoint measurable sets
$E_1, E_2, \ldots, E_N \subset Y$
such that $e_k = \rh ({{\ch}}_{{E_k}})$
for $k = 1, 2, \ldots, N$.
Set ${\textdisp{ E = \bigcup_{k = 1}^N E_k }}$.
We can identify $L^p (Y, \nu)$
with the $L^p$~direct sum
${\textdisp{ L^p (Y \setminus E, \, \nu) \oplus_p
 \bigoplus_{k = 1}^N L^p (E_k, \nu) }}$.
For $l = 1, 2, \ldots, N$,
let $\ph_l \colon M_{d_l}^p \to L (L^p (Y, \nu))$
be
$\ph_l (a) = \rh ( {{\ch}}_{{E_l}}) \ph ( \io_l (a))
           \rh ( {{\ch}}_{{E_l}})$
for $a \in M_{d_l}^p$.
If $a_k \in M_{d_k}^p$ for $k = 1, 2, \ldots, N$,
then
\begin{align*}
\ph (a_1, a_2, \ldots, a_N)
& = \sum_{k = 1}^N \ph ( \io_k (a_k))
  = \sum_{k = 1}^N \ph ( \io_k (1_{M_{d_k}})) \ph ( \io_k (a_k))
     \ph ( \io_k (1_{M_{d_k}}))
\\
& = \sum_{k = 1}^N \rh ( {{\ch}}_{{E_k}}) \ph ( \io_k (a_k))
           \rh ( {{\ch}}_{{E_k}})
  = \sum_{k = 1}^N \ph_k (a_k).
\end{align*}
Since the sets $E_j$ are disjoint,
\[
\| \ph (a_1, a_2, \ldots, a_N) \|
 = \max_{1 \leq k \leq N} \| \ph_k (a_k) \|
 \leq \max_{1 \leq k \leq N} \|a_k \|
 = \| (a_1, a_2, \ldots, a_N) \|,
\]
so $\ph \colon A \to B$ is contractive.

Conversely assume that $\ph$ is contractive
and $\ph (1)$ is a hermitian idempotent.
For $k = 1, 2, \ldots, N$, it is obvious from
Definition~\ref{D_4326_DSumNorm}
and Lemma~\ref{L_6612_HI} that
$\io_k (1_{M_{d_k}})$ is a hermitian idempotent
in ${\textdisp{ \bigoplus_{k = 1}^N \MP{d_k}{p} }}$.
Therefore $\ph ( \io_k (1_{M_{d_k}}))$ is a
hermitian idempotent in~$B$ by
Lemma~\ref{L_4Y18_Funct}.
Also, $\ph \circ \io_k$ is
contractive because $\io_k$ and $\ph$ are.
So $\ph|_{M_{d_k}}$ is spatial.
\end{proof}

\begin{cor}\label{C_5208_CompSpatial}
Let $p \in [1, \infty) \setminus \{ 2 \}$.
Let $A$
be a spatial semisimple finite dimensional
$L^p$~operator algebra, let $B$ and $C$ be unital
$\sm$-finitely representable $L^p$~operator algebras,
let $\ph \colon A \to B$ be a spatial \hm,
and let $\ps \colon B \to C$ be a contractive \hm{}
such that $\ps (1)$ is a hermitian idempotent in~$C$.
Then $\ps \circ \ph$ is spatial.
\end{cor}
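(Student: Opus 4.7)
The plan is to verify the two conditions from the characterization of spatial homomorphisms given in Lemma~\ref{L_4326_CharSpHmFDalg}, namely that $\ps \circ \ph$ is contractive and that $(\ps \circ \ph)(1)$ is a hermitian idempotent in~$C$. Once both conditions are checked, Lemma~\ref{L_4326_CharSpHmFDalg} immediately gives the conclusion.

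First I would verify contractivity. Since $\ph$ is spatial, Lemma~\ref{L_4326_CharSpHmFDalg} (the easier ``only if'' direction) tells us that $\ph$ is contractive. The map $\ps$ is contractive by hypothesis, so the composition $\ps \circ \ph$ is contractive.

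Next I would show that $(\ps \circ \ph)(1) = \ps(\ph(1))$ is a hermitian idempotent in~$C$. Applying Lemma~\ref{L_4326_CharSpHmFDalg} to the spatial map $\ph$ again gives that $\ph(1)$ is a hermitian idempotent in~$B$. Now the hypotheses on $\ps$ match exactly the hypotheses of Lemma~\ref{L_4Y18_Funct}: the map $\ps \colon B \to C$ is a contractive homomorphism between unital $\sigma$-finitely representable $L^p$~operator algebras (each with identity of norm one, since the identities of $B$ and $C$ are hermitian idempotents of norm one, and in any case these algebras sit inside an $L(L^p)$ unitally), and $\ps(1)$ is a hermitian idempotent in~$C$. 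Applying Lemma~\ref{L_4Y18_Funct} to the hermitian idempotent $\ph(1) \in B$ yields that $\ps(\ph(1))$ is a hermitian idempotent in~$C$, which is what is needed.

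There is essentially no obstacle here; the result is a direct consequence of Lemma~\ref{L_4326_CharSpHmFDalg} together with the functoriality of hermitian idempotents under contractive homomorphisms whose image of the identity is hermitian (Lemma~\ref{L_4Y18_Funct}). The only minor point worth flagging is that Lemma~\ref{L_4Y18_Funct} was stated for maps between unital algebras with identities of norm~$1$, and one should observe that this hypothesis is automatically satisfied in the present setting: $A$, $B$, $C$ all have identities of norm one, since in the cases of $B$ and $C$ their identities are realized as hermitian idempotents in an $L(L^p)$ representation, and for $A$ it follows from Lemma~\ref{L_4326_FDIsLp}.
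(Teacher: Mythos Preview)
Your proof is correct and follows essentially the same approach as the paper: verify contractivity and the hermitian idempotent condition via Lemma~\ref{L_4326_CharSpHmFDalg}, using Lemma~\ref{L_4Y18_Funct} to push the hermitian idempotent $\ph(1)$ forward through~$\ps$. The only remark is that your justification for $\|1_B\| = \|1_C\| = 1$ is slightly roundabout; the cleaner observation is that the notion of ``hermitian idempotent'' in Definition~\ref{D_6608_Herm} is only defined in unital Banach algebras with $\|1\| = 1$, so the very hypotheses of the corollary (that $\ph$ is spatial, hence $\ph(1)$ is hermitian in~$B$, and that $\ps(1)$ is hermitian in~$C$) already force $\|1_B\| = \|1_C\| = 1$.
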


\begin{proof}
Lemma~\ref{L_4326_CharSpHmFDalg}
implies that $\ps \circ \ph$ is contractive.
It follows from
Lemma~\ref{L_4326_CharSpHmFDalg}
and Lemma~\ref{L_4Y18_Funct} that
$(\ps \circ \ph) (1)$ is a hermitian
idempotent in~$C$.
So $\ps \circ \ph$ is spatial by
Lemma~\ref{L_4326_CharSpHmFDalg}.
\end{proof}

\begin{cor}\label{C_5917_SpIsoSim}
Let $p \in [1, \infty) \setminus \{ 2 \}$.
Let $A$
be a spatial semisimple finite dimensional
$L^p$~operator algebra, let $B$ be a unital
$\sm$-finitely representable $L^p$~operator
algebra, and let $\ph, \ps \colon A \to B$
be isometrically similar \hm{s}.
Then $\ph$ is spatial \ifo{} $\ps$ is spatial.
\end{cor}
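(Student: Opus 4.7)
The plan is to reduce the statement to two facts already in the paper: the characterization of spatial homomorphisms from a spatial semisimple finite dimensional $L^p$~operator algebra via Lemma~\ref{L_4326_CharSpHmFDalg}, and the fact that both of the relevant properties (contractivity and having hermitian image at $1$) are preserved under isometric similarity.

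First I would note that isometric similarity is symmetric: if $s$ is an invertible isometry witnessing that $\ps(a) = s \ph(a) s^{-1}$ for all $a \in A$, then $s^{-1}$ is also an invertible isometry and witnesses the reverse similarity. So it suffices to prove one direction, say that if $\ph$ is spatial then $\ps$ is spatial.

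Assume $\ph$ is spatial. By Lemma~\ref{L_4326_CharSpHmFDalg}, this means precisely that $\ph$ is contractive and $\ph(1)$ is a hermitian idempotent in~$B$. Proposition~\ref{P_5916_PropOfISHm}(\ref{P_5916_PropOfISHm_Ct}) gives that $\ps$ is contractive. For the second condition, observe that $\ps(1) = s \ph(1) s^{-1}$, so $\ph(1)$ and $\ps(1)$ are isometrically similar idempotents in~$B$; Proposition~\ref{P_5916_PropOfIS} then yields that $\ps(1)$ is a hermitian idempotent. Applying Lemma~\ref{L_4326_CharSpHmFDalg} in the reverse direction to $\ps$ concludes that $\ps$ is spatial.

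There is essentially no obstacle: the result is a direct assembly of Lemma~\ref{L_4326_CharSpHmFDalg}, Proposition~\ref{P_5916_PropOfISHm}(\ref{P_5916_PropOfISHm_Ct}), and Proposition~\ref{P_5916_PropOfIS}, together with the symmetry of the isometric similarity relation. The only thing that might warrant a line of comment is verifying that $B$ satisfies the hypotheses needed to invoke Lemma~\ref{L_4326_CharSpHmFDalg} and Proposition~\ref{P_5916_PropOfIS} (namely, being a unital Banach algebra with $\|1\|=1$ in which the notion of hermitian idempotent is defined); this is immediate since $B$ is a unital $\sm$-finitely representable $L^p$~operator algebra and hence a unital Banach algebra, and since any unital $L^p$~operator algebra admitting an isometric representation with unital image has $\|1\|=1$ — but in fact we only need $\|1\|=1$ to apply these results, and this is part of the standing assumptions in the relevant definitions (Definition~\ref{D_6608_Herm}) as used throughout Section~\ref{Sec_HIdemp}.
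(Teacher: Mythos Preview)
Your proof is correct and follows essentially the same route as the paper's own proof, which simply cites Lemma~\ref{L_4326_CharSpHmFDalg}, Proposition~\ref{P_5916_PropOfISHm}(\ref{P_5916_PropOfISHm_Ct}), and Proposition~\ref{P_5916_PropOfIS}; you have spelled out the assembly in full and added the (harmless) observation about symmetry of isometric similarity. Your closing remark about $\|1_B\|=1$ is a reasonable sanity check, though the paper treats this as implicit in the very notion of a hermitian idempotent in~$B$ (Definition~\ref{D_6608_Herm}), so no additional verification is needed.
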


\begin{proof}
Use Lemma~\ref{L_4326_CharSpHmFDalg},
Proposition~\ref{P_5916_PropOfISHm}(\ref{P_5916_PropOfISHm_Ct}),
and
Lemma~\ref{P_5916_PropOfIS}.
\end{proof}

The following definition is standard,
but is given here for reference.

\begin{dfn}\label{D_5208_Mult}
Let ${\textdisp{A = \bigoplus_{j = 1}^M M_{c_j} }}$
be a finite direct sum of full matrix algebras.
\begin{enumerate}
\item\label{D_5208_Mult_1Sum}
Let $d \in \N$,
and let $\ph \colon A \to M_d$ be a \hm.
Then for $j = 1, 2, \ldots, M$
the {\emph{$j$-th partial multiplicity}} of $\ph$
is defined to be
\[
m_j (\ph) = \rank (\ph (1_{M_{c_j}} ) ) / c_j.
\]
\item\label{D_5208_Mult_Gen}
Let ${\textdisp{ B = \bigoplus_{k = 1}^N M_{d_k} }}$ be
another finite direct sum of full matrix algebras,
and let $\ph \colon A \to B$ be a \hm.
For $j = 1, 2, \ldots, M$ and $k = 1, 2, \ldots, N$,
we denote by $m_{k, j} (\ph)$
the $j$-th partial multiplicity of the composition of $\ph$
with the projection map $B \to M_{d_k}$.
We call $m (\ph) = ( m_{k, j} (\ph) )_{k, j}$
the {\emph{partial multiplicity matrix}} of~$\ph$.
We use analogous notation for direct sums indexed by finite sets
not of the form $\{ 1, 2, \ldots, M \}$.
\end{enumerate}
\end{dfn}

Next we define block diagonal homomorphisms
between finite direct sums of full matrix algebras.

\begin{dfn}\label{D_4326_BlockDiag}
Let $p \in [1, \infty) \setminus \{ 2 \}$.
Let ${\textdisp{ A = \bigoplus_{j = 1}^M M_{c_j} }}$
be a finite direct sum of full matrix algebras.
\begin{enumerate}
\item\label{D_4326_BlockDiag_1}
A unital homomorphism $\ph \colon A \to M_d$ is said to
be {\emph{block diagonal}}
if there exist $n \geq 1$
and $r (1), r (2), \ldots, r (n) \in \{1, 2, \ldots, M \}$,
satisfying ${\textdisp{ \sum_{k = 1}^n c_{r (k)} = d }}$, such that
\[
\ph (a_1, a_2, \ldots, a_M)
 = \left(\begin{matrix}
  a_{r (1)} & 0 & 0 & 0 & \cdots & 0 \\
  0  & a_{r (2)} & 0 & 0 & \cdots & 0 \\
 0 & 0 & a_{r (3)} & 0 & \cdots & 0 \\
 \vdots & \vdots & \vdots & \vdots&  & \vdots \\
 0 &  0 & 0 &  0 & \cdots & a_{r (n)}
\end{matrix} \right).
\]
\item\label{D_4326_BlockDiag_2}
A nonunital homomorphism  $\ph \colon A \to M_d$
is {\emph{block diagonal}} if its unitization
${\textdisp{ \bigoplus_{j = 1}^M
    M_{c_j} \oplus \mathbb{C} \to M_d }}$
is block diagonal.
\item\label{D_4326_BlockDiag_3}
Let ${\textdisp{ B = \bigoplus_{j = 1}^N M_{d_j} }}$ be
a finite direct sum of full matrix algebras.
Then a homomorphism $\ph \colon A \to B$ is
{\emph{block diagonal}} if for $k = 1, 2, \ldots, N$
the homomorphism $\ph_k \colon A \to M_{d_k}$,
given by the composition of $\ph$ and the projection map
$B \to M_{d_k}$, is block diagonal.
\end{enumerate}
\end{dfn}

We list some properties of block diagonal homomorphisms.

\begin{lem}\label{L_5208_Summary}
Let $p \in [1, \I) \setminus \{ 2 \}$.
\begin{enumerate}
\item\label{L_5208_Summary_CMult}
Let $A$, $B$, and $C$ be finite direct sums of full matrix algebras,
and let $\ph \colon A \to B$ and $\ps \colon B \to C$
be \hm{s}.
Then $m (\ps \circ \ph) = m (\ps) m (\ph)$.
\item\label{L_5208_Summary_BDImpSp}
Let $A$ and $B$ be
spatial semisimple finite dimensional $L^p$~operator algebras.
Then every block diagonal \hm{} $\ph \colon A \to B$ is spatial.
\item\label{L_5208_Summary_MDMult}
If $\ph$ is as in
Definition \ref{D_4326_BlockDiag}(\ref{D_4326_BlockDiag_1}), then
\[
m_j (\ph)
 = \card \big( \big\{k \in \{1, 2, \ldots, n \} \colon r (k) = j \big\}
  \big).
\]
\item\label{L_5208_Summary_Exist}
Let ${\textdisp{ A = \bigoplus_{j = 1}^M M_{c_j} }}$
and ${\textdisp{ B = \bigoplus_{k = 1}^N M_{d_k} }}$ be
finite direct sums of full matrix algebras,
and let $m$ be an $N \times M$ matrix with entries in~$\Nz$.
Then \tfae:
\begin{enumerate}
\item\label{L_5208_Summary_Exist_BD}
There exists a block diagonal \hm{} $\ph \colon A \to B$
such that $m (\ph) = m$.
\item\label{L_5208_Summary_Exist_Hom}
There exists a \hm{} $\ph \colon A \to B$
such that $m (\ph) = m$.
\item\label{L_5208_Summary_Exist_Ineq}
For $k = 1, 2, \ldots, N$,
we have ${\textdisp{ \sum_{j = 1}^M m_{k, j} c_j \leq d_k }}$.
\end{enumerate}
\item\label{L_5208_Summary_BDComp}
The composition of two block diagonal \hm{s} is block
diagonal.
\item\label{L_5208_Summary_DSum}
Let $A_1, A_2, B_1, B_2$ be finite direct sums of full matrix algebras,
and let $\ph_1 \colon A_1 \to B_1$ and $\ph_2 \colon A_2 \to B_2$
be block diagonal \hm{s}.
Then $\ph_1 \oplus \ph_2 \colon A_1 \oplus A_2 \to B_1 \oplus B_2$
is block diagonal.
\item\label{L_5208_Summary_TensMd}
Let ${\textdisp{ A = \bigoplus_{j = 1}^M M_{c_j} }}$
and ${\textdisp{ B = \bigoplus_{k= 1}^N M_{d_k} }}$ be
finite direct sums of full matrix algebras,
let $\ph \colon A \to B$
be a block diagonal \hm,
and let $r \in \N$.
Make the identifications
${\textdisp{ M_r \otimes A = \bigoplus_{j = 1}^M M_{r c_j} }}$
and
${\textdisp{ M_r \otimes B = \bigoplus_{k= 1}^N M_{r d_k} }}$,
by using on each summand the isomorphism
$\te_{\sm}$ of Definition~\ref{D_5205_MmMnMmn}
with $\sm$ taken to be the standard choice of bijection
as given there.
Then $\id_{M_r} \otimes \ph$ is block diagonal.
\item\label{L_5208_Summary_BDCC}
Let $A$ and $B$ be
spatial semisimple finite dimensional $L^p$~operator
algebras, and let $\ph \colon A \to B$ be
block diagonal.
Then $\ph$ is completely contractive.
\end{enumerate}
\end{lem}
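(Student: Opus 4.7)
The plan is to prove the eight items largely by reduction to the definition of block diagonal \hm{s} (Definition~\ref{D_4326_BlockDiag}) and to the characterization of spatiality in Lemma~\ref{L_4326_CharSpHmFDalg}.

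Parts (\ref{L_5208_Summary_CMult}), (\ref{L_5208_Summary_MDMult}), (\ref{L_5208_Summary_Exist}), (\ref{L_5208_Summary_BDComp}), and (\ref{L_5208_Summary_DSum}) are essentially bookkeeping about partial multiplicities and do not use any $L^p$~specific machinery. For~(\ref{L_5208_Summary_CMult}) I would compute $\rank \big( (\ps \circ \ph)(1_{M_{c_j}}) \big)$ inside each summand of $C$; because composition of \hm{s} between matrix algebras multiplies ranks and the sizes $c_j$ cancel correctly, one reads off $m(\ps \circ \ph) = m(\ps) m(\ph)$. Part~(\ref{L_5208_Summary_MDMult}) is immediate from the block pattern. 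For~(\ref{L_5208_Summary_Exist}), the implication (\ref{L_5208_Summary_Exist_BD})$\Rightarrow$(\ref{L_5208_Summary_Exist_Hom}) is trivial; (\ref{L_5208_Summary_Exist_Hom})$\Rightarrow$(\ref{L_5208_Summary_Exist_Ineq}) follows from comparing ranks of the images $\ph(1_{M_{c_j}})$ in each $M_{d_k}$; and (\ref{L_5208_Summary_Exist_Ineq})$\Rightarrow$(\ref{L_5208_Summary_Exist_BD}) is a direct construction, using the $c_j$--sized blocks in the prescribed multiplicities and filling the remaining $d_k - \sum_j m_{k,j} c_j$ diagonal entries with zeros. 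Parts (\ref{L_5208_Summary_BDComp}) and~(\ref{L_5208_Summary_DSum}) follow directly by applying the definition summandwise.

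For part~(\ref{L_5208_Summary_BDImpSp}), by Definition~\ref{D_4326_SpHmFD} it suffices to check that the restriction of $\ph$ to each simple summand $M_{d_k}^p$ of $A$ is spatial. By Lemma~\ref{L_4326_CharSpHmFDalg}, this amounts to showing that the image of the identity of that summand is a hermitian idempotent in $B$ and that the restriction is contractive. Since $\ph$ is block diagonal, each such restriction lands (after a permutation of basis vectors) as a map $a \mapsto \diag(a, a, \ldots, a, 0)$ into some $M_{d_l}^p$, and both properties then follow from Corollary~\ref{C_5128_SpIsDiag} (the image of~$1$ is a $0/1$ diagonal matrix, hence hermitian) and from a direct inspection of block diagonal operators on $l^p$.

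For part~(\ref{L_5208_Summary_TensMd}), under the identification $\te_\sm$ with the standard bijection, tensoring by $\id_{M_r}$ replaces every diagonal block $a_{r(k)} \in M_{c_{r(k)}}$ inside $M_{d_l}$ by the block $a_{r(k)} \otimes 1_{M_r} \in M_{r c_{r(k)}}$ sitting on the diagonal of $M_{r d_l}$, which is still a block diagonal placement (with respect to the simple summands $M_{r c_j}$ of $M_r \otimes A$). This is a direct verification from Definition~\ref{D_5205_MmMnMmn}; the only delicate point is bookkeeping with the bijection $\sm$, but because $\sm$ is the standard one, block positions are preserved.

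Finally, part~(\ref{L_5208_Summary_BDCC}) is an immediate combination: by~(\ref{L_5208_Summary_TensMd}), $\id_{M_r} \otimes \ph$ is a block diagonal \hm{} between spatial semisimple \fd{} $L^p$~operator algebras (using Lemma~\ref{L_4326_FDIsLp} to equip the matrix amplifications with the correct $L^p$~matrix norms), so~(\ref{L_5208_Summary_BDImpSp}) shows it is spatial, and Lemma~\ref{L_4326_CharSpHmFDalg} then yields contractivity of $\id_{M_r} \otimes \ph$ for every $r \in \N$. I expect the main technical obstacle to be part~(\ref{L_5208_Summary_TensMd}), where the identification of $M_r \otimes M_{r d_k}$-style summands via $\te_\sm$ needs to be matched carefully with the block diagonal decomposition; once that identification is set up correctly, parts (\ref{L_5208_Summary_BDImpSp}) and~(\ref{L_5208_Summary_BDCC}) fall out cleanly from Lemma~\ref{L_4326_CharSpHmFDalg}.
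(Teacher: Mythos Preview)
Your proposal is correct and matches the paper's approach: the paper also treats parts (\ref{L_5208_Summary_CMult}), (\ref{L_5208_Summary_MDMult}), (\ref{L_5208_Summary_Exist}), (\ref{L_5208_Summary_BDComp}), (\ref{L_5208_Summary_DSum}), and (\ref{L_5208_Summary_TensMd}) as routine, proves (\ref{L_5208_Summary_BDImpSp}) via Corollary~\ref{C_5128_SpIsDiag} and Lemma~\ref{L_4326_CharSpHmFDalg}, and derives (\ref{L_5208_Summary_BDCC}) from (\ref{L_5208_Summary_TensMd}) and (\ref{L_5208_Summary_BDImpSp}) exactly as you do. The only cosmetic difference is that for (\ref{L_5208_Summary_BDImpSp}) the paper projects onto the summands of~$B$ (using Lemma~\ref{L_5128_SupIn} and Lemma~\ref{L_5128_SIDSum}) and then applies Lemma~\ref{L_4326_CharSpHmFDalg} to the whole map~$\ph$, whereas you restrict to summands of~$A$ and appeal to Definition~\ref{D_4326_SpHmFD}; note in passing that your phrase ``lands \ldots\ into some $M_{d_l}^p$'' should read ``into $B$'', since a single summand of~$A$ may hit several summands of~$B$.
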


\begin{proof}
We first prove~(\ref{L_5208_Summary_BDImpSp}).
Write ${\textdisp{ B = \bigoplus_{k = 1}^N M_{d_k}^p }}$,
and for
$k = 1, 2, \ldots, N$ let
$\pi_k \colon B \to M_{d_k}^p$ be the projection map.
Block diagonal maps to $M_{d_k}^p$ are clearly contractive,
so $\pi_k \circ \ph$ is contractive.
Thus $\ph$ is contractive by Lemma~\ref{L_5128_SupIn}.
For $k = 1, 2, \ldots, N$,
the matrix
$(\pi_k \circ \ph) (1)$ is diagonal with entries in $\{0, 1\}$.
So $(\pi_k \circ \ph) (1)$ is a hermitian idempotent
by Corollary~\ref{C_5128_SpIsDiag}.
Now $\ph (1)$ is a hermitian idempotent by Lemma~\ref{L_5128_SIDSum}.
Use Lemma~\ref{L_4326_CharSpHmFDalg}.

Part~(\ref{L_5208_Summary_BDCC})
follows from
part~(\ref{L_5208_Summary_TensMd})
and part~(\ref{L_5208_Summary_BDImpSp}).

Everything else is either well known or immediate.
\end{proof}

\begin{lem}\label{L_4326_BlkDiagHm}
Let $p \in [1, \infty) \setminus \{ 2 \}$,
let ${\textdisp{ A = \bigoplus_{j = 1}^L \MP{c_j}{p} }}$
and ${\textdisp{ B = \bigoplus_{k = 1}^N \MP{d_k}{p} }}$
be spatial semisimple finite dimensional $L^p$~operator
algebras, and let $\ph \colon A \to B$ be a \hm.
Then
$\ph$ is spatial \ifo{} $\ph$ is isometrically similar
to a block diagonal \hm.
\end{lem}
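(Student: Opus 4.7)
The easy direction is $(\Leftarrow)$: any block diagonal \hm{} $A \to B$ is spatial by Lemma~\ref{L_5208_Summary}(\ref{L_5208_Summary_BDImpSp}), and spatiality passes through isometric similarity by Corollary~\ref{C_5917_SpIsoSim}. So the real content is $(\Rightarrow)$.

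For $(\Rightarrow)$, my plan is to treat the $N$ target summands independently and then reassemble. For each $k \in \{1, 2, \ldots, N\}$, let $\pi_k \colon B \to \MP{d_k}{p}$ be the projection. Since $\pi_k$ is contractive, unital, and $\pi_k(1)$ is hermitian (identities are always hermitian by Remark~\ref{R_4Y18_01}), Corollary~\ref{C_5208_CompSpatial} shows that $\ph_k = \pi_k \circ \ph$ is a spatial \hm{} on each summand of~$A$. For each $j \in \{1, 2, \ldots, L\}$, let $\io_j \colon \MP{c_j}{p} \to A$ be the inclusion and set $f_{k, j} = \ph_k (\io_j (1))$. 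Since $\ph_k \circ \io_j$ is spatial, $f_{k, j}$ is a hermitian idempotent in $\MP{d_k}{p}$; by Corollary~\ref{C_5128_SpIsDiag} it is multiplication by $\ch_{E_{k,j}}$ for some subset $E_{k,j} \subset \{1, 2, \ldots, d_k\}$. Mutual orthogonality of $\io_1 (1), \io_2 (1), \ldots, \io_L (1)$ forces the sets $E_{k, 1}, E_{k, 2}, \ldots, E_{k, L}$ to be pairwise disjoint.

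Now I handle each $(k, j)$ separately. Identifying $\ch_{E_{k, j}} l^p_{d_k}$ with $l^p_{|E_{k, j}|}$, Corollary~\ref{C_5125_CutAlg} produces a unital contractive \hm{} $\MP{c_j}{p} \to \MP{|E_{k, j}|}{p}$ whose unit is hermitian, hence spatial. Applying the unital case of Lemma~\ref{L_4326_WhenSp}, I obtain an invertible isometry $u_{k, j}$ of $l^p_{|E_{k, j}|}$ (so by Lemma~\ref{L_5917_MnSpIs}, a complex permutation matrix) that conjugates this map to $a \mapsto \diag (a, a, \ldots, a)$ with $m_{k, j} (\ph)$ copies of~$a$. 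Choose a permutation $\sm_k$ of $\{1, 2, \ldots, d_k\}$ that sends $E_{k, 1}, E_{k, 2}, \ldots, E_{k, L}$ to consecutive intervals (placing any leftover indices last), and let $P_k \in \MP{d_k}{p}$ be the associated permutation matrix. Then
\[
s_k = P_k^{-1} \cdot \diag (u_{k, 1}, u_{k, 2}, \ldots, u_{k, L}, 1) \cdot P_k
\]
(with the diagonal interpreted according to the chosen intervals) is an invertible isometry of $l^p_{d_k}$, and $s_k \ph_k (\, \cdot \, ) s_k^{-1}$ is block diagonal in the sense of Definition \ref{D_4326_BlockDiag}(\ref{D_4326_BlockDiag_3}).

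Finally, I set $s = (s_1, s_2, \ldots, s_N) \in B$. Each $s_k$ is a product of complex permutation matrices and a block diagonal matrix whose diagonal blocks are complex permutation matrices, hence is itself a complex permutation matrix, so by Lemma~\ref{L_5917_SSFDSpIs} the element $s$ is an invertible isometry of~$B$. The map $\ps (a) = s \ph (a) s^{-1}$ then satisfies $\pi_k \circ \ps = s_k (\pi_k \circ \ph) s_k^{-1}$, which is block diagonal for every~$k$, so $\ps$ is block diagonal and $\ph$ is isometrically similar to~$\ps$. I expect the main obstacle to be bookkeeping: arranging the permutations $\sm_k$ and the unitaries $u_{k, j}$ consistently so that the resulting matrix is genuinely block diagonal with blocks coming from \emph{distinct} summands of~$A$, not merely block diagonal after some further permutation; concretely, making sure the partial multiplicity count $m_{k, j} (\ph)$ determined block-by-block lines up with an honest block decomposition of $l^p_{d_k}$ of the form required by Definition~\ref{D_4326_BlockDiag}.
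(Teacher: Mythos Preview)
Your argument follows the same route as the paper's: reduce to a single target summand $\MP{d_k}{p}$ via the projections $\pi_k$, use that each $\ph_k(\io_j(1))$ is a hermitian idempotent (hence multiplication by $\ch_{E_{k,j}}$ for disjoint sets $E_{k,j}$), permute the $E_{k,j}$ to consecutive intervals, and then invoke Lemma~\ref{L_4326_WhenSp} on each piece.

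There is, however, a concrete error in your formula for $s_k$. With $s_k = P_k^{-1}\cdot\diag(u_{k,1},\ldots,u_{k,L},1)\cdot P_k$ you get
\[
s_k\,\ph_k(a)\,s_k^{-1}
  = P_k^{-1}\,\big[\diag(u)\,P_k\,\ph_k(a)\,P_k^{-1}\,\diag(u)^{-1}\big]\,P_k,
\]
and the outer conjugation by $P_k^{-1}(\,\cdot\,)P_k$ scatters the blocks back onto the original sets $E_{k,j}$, so the result is \emph{not} block diagonal in the sense of Definition~\ref{D_4326_BlockDiag}. The correct choice is the product $s_k = \diag(u_{k,1},\ldots,u_{k,L},1)\cdot P_k$ (this is exactly the paper's $s = [\diag(s_1,\ldots,s_M,1_{d-d_M})]\cdot s_0$); then $s_k\,\ph_k(a)\,s_k^{-1}$ equals the bracketed expression above, which is genuinely block diagonal. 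Your worry in the last paragraph about blocks from ``distinct summands'' is not actually an issue---Definition~\ref{D_4326_BlockDiag} allows any sequence $a_{r(1)},\ldots,a_{r(n)}$---but the real bookkeeping hazard you anticipated is precisely this conjugation-versus-product slip.
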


\begin{proof}
If $\ph$ is isometrically similar to a block diagonal \hm,
then $\ph$ is spatial by
Lemma \ref{L_5208_Summary}(\ref{L_5208_Summary_BDImpSp})
and Corollary~\ref{C_5917_SpIsoSim}.

Conversely, assume that $\ph$ is spatial. Since the projection
map $\pi_k \colon B \to M_{d_k}^p$ is contractive and
$\pi_k(1)$ is a hermitian idempotent,
Corollary~\ref{C_5208_CompSpatial}
implies that $\pi_k\circ\ph$ is spatial. Therefore, it is enough to
prove the claim when $B = \MP{d}{p}$ for some $d \in \N$.

For $j = 1, 2, \ldots, M$ let $\io_j \colon \MP{c_j}{p} \to A$
be the inclusion map.
Since $\ph \circ \io_j$ is spatial
(by Lemma~\ref{L_4326_CharSpHmFDalg} and
Corollary~\ref{C_5208_CompSpatial}),
it follows from Corollary~\ref{C_4Y17_Orth}
that there are disjoint subsets
\[
E_1, E_2, \ldots, E_M \subset \{ 1, 2, \ldots, d \}
\]
such that $(\ph \circ \io_j) (1_{\MP{c_j}{p}})$
is multiplication by $\ch_{E_j}$
for $j = 1, 2, \ldots, M$.
Let $\rh$ be the representation of $C ( \{ 1, 2, \ldots, d \} )$
on $l_d^{p}$ by multiplication operators.
Set $d_0 = 0$ and
choose a permutation $\sm$ of $\{ 1, 2, \ldots, d \}$
and numbers $d_1, d_2, \ldots, d_M \in \{ 1, 2, \ldots, d \}$
such that for $j = 1, 2, \ldots, M$ we have
$\sm (E_j) = (d_{j - 1}, \, d_j] \cap \Z$.
Let $s_0 \in M_d^p$ be the corresponding permutation
matrix, satisfying
$s_0 \rh ({{\ch}}_{{E_j}} ) s_0^{-1} = \rh ( {{\ch}}_{{\sm (E_j)}} )$
for $j = 1, 2, \ldots, M$.
Corollary~\ref{C_5917_SpIsoSim} implies
that the map $a \mapsto s_0 \ph (a) s_0^{- 1}$ is spatial.

For $1 \leq j \leq M$ make the obvious identification
\[
\rh ( {{\ch}}_{ (d_{j - 1}, \, d_j] \cap \Z} ) \MP{d}{p}
   \rh ( {{\ch}}_{ (d_{j - 1}, \, d_j] \cap \Z} )
  = \MP{d_j - d_{j - 1}}{p}.
\]
Since
\[
s_0 (\ph \circ \io_j) (1) s_0^{-1}
 = \rh ( {{\ch}}_{ (d_{j - 1}, \, d_j] \cap \Z} ),
\]
by Corollary~\ref{C_5125_CutAlg} the formula
$s_0 (\ph \circ \io_j) (\cdot) s_0^{-1}$ defines
a contractive unital \hm{}
$\ps_j \colon \MP{c_j}{p} \to \MP{d_j - d_{j - 1}}{p}$.
It follows from Lemma~\ref{L_4326_WhenSp}
that there is a complex permutation matrix
$s_j \in \MP{d_j - d_{j - 1}}{p}$
such that $a \mapsto s_j \ps_j (a) s_j^{-1}$
is a block diagonal \hm{}
from $\MP{c_j}{p}$ to $\MP{d_j - d_{j - 1}}{p}$ for
$1\leq j\leq M$.

Set $s = [\diag (s_1, s_2, \ldots, s_M, 1_{d - d_M})] \cdot s_0$,
which is a complex permutation matrix in $\MP{d}{p}$.
Since
\[
s_0 \ph (a_1, a_2, \ldots, a_M) s_0^{-1}
 = \diag \big( \ps_1 (a_1), \, \ps_2 (a_2), \, \ldots,
  \, \ps_M (a_M), \, 0_{d - d_M} \big)
\]
for
${\textdisp{ a = (a_1, a_2, \ldots, a_M)
 \in \bigoplus_{j = 1}^L \MP{c_j}{p} }}$,
it follows that
$a \mapsto s \ph (a) s^{- 1}$ is block diagonal.
\end{proof}

\section{Spatial $L^p$~AF algebras}\label{Sec_Init}

We define spatial $L^p$~AF algebras and show that any spatial
$L^p$~AF algebra is a separable nondegenerately representable
$L^p$~operator algebra.

\begin{dfn}\label{D_4327_SpSys}
Let $p \in [1, \infty) \setminus \{ 2 \}$.
A {\emph{spatial $L^p$~AF direct system}}
is a contractive direct system with index set~$\Nz$
(that is, a pair
$\big( (A_m)_{m \in \Nz}, \, ( \ph_{n, m} )_{0 \leq m \leq n} \big)$
as in Definition~\ref{D_4310_DirSys}),
which satisfies the following additional conditions:
\begin{enumerate}
\item \label{D_4327_SpSys_Algs}
For every $m \in \Nz$,
the algebra $A_m$ is a
spatial semisimple finite dimensional $L^p$~operator algebra
(Definition~\ref{D_4326_SFDLp}).
\item \label{D_4327_SpSys_Maps}
For all $m, n \in \Nz$ with $m \leq n$,
the map $\ph_{n, m}$ is a spatial \hm{}
(Definition~\ref{D_4326_SpHmFD}).
\end{enumerate}
We further say that a Banach algebra $A$
is a {\emph{spatial $L^p$~AF algebra}}
if it is isometrically isomorphic to the direct limit
of a spatial $L^p$~AF direct system.
\end{dfn}

\begin{dfn}\label{D_4615_MatNormsLpAF}
Let $p \in [1, \infty) \setminus \{ 2 \}$.
Let
$\big( (A_m)_{m \in \Nz}, \, ( \ph_{n, m} )_{0 \leq m \leq n} \big)$
be a spatial $L^p$~AF direct system
(Definition~\ref{D_4327_SpSys}).
We make ${\textdisp{ A = \dirlim_m (A_m, \ph_{n, m}) }}$
into a matricial $L^p$~operator algebra
via Lemma~\ref{L_4326_FDIsLp}
and Theorem~\ref{T_5125_LpMatDLim}.
\end{dfn}

The matrix norms on~$A$ a priori depend on how $A$ is
realized as a direct limit.
We will show in Theorem
\ref{C_6416_GenUniqMN} that in fact
they are independent of the realization.

\begin{lem}\label{L_6408_MatSys}
Let $p \in [1, \infty) \setminus \{ 2 \}$.
Let $r \in \N$ and let
$\big( (A_m)_{m \in \Nz}, \, ( \ph_{n, m} )_{0 \leq m \leq n} \big)$
be a spatial $L^p$~AF direct system.
Then
\[
\big( (M_r ( A_m) )_{m \in \Nz},
   \, ( \id_{M_r} \otimes \ph_{n, m} )_{0 \leq m \leq n} \big)
\]
is a spatial $L^p$~AF direct system.
\end{lem}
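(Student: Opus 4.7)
The plan is to verify the three requirements of Definition~\ref{D_4327_SpSys} for the system $\big( (M_r(A_m))_{m \in \Nz}, \, (\id_{M_r} \otimes \ph_{n, m})_{0 \leq m \leq n} \big)$: that each $M_r(A_m)$ is a spatial semisimple finite dimensional $L^p$~operator algebra, that each $\id_{M_r} \otimes \ph_{n, m}$ is a spatial \hm, and that the system is contractive (which will follow automatically once spatiality is established, by Lemma~\ref{L_4326_CharSpHmFDalg}).

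For the algebraic condition, I would write ${\textdisp{A_m = \bigoplus_{k = 1}^N \MP{d_k}{p}}}$ as in Remark~\ref{R_4326_SSFD}, giving the canonical algebra isomorphism ${\textdisp{M_r(A_m) \cong \bigoplus_{k = 1}^N \MP{r d_k}{p}}}$ via the standard bijection on each summand (Definition~\ref{D_5205_MmMnMmn}). To see this is isometric, with the target carrying its spatial semisimple finite dimensional $L^p$~operator algebra structure (the $L^p$-operator norm on each summand combined with the max norm of Definition~\ref{D_4326_DSumNorm}), I would appeal to Lemma~\ref{L_5205_MatNLp}: fixing an isometric representation of $A_m$ on $L^p(X, \mu)$ adapted to the direct sum decomposition via Lemma~\ref{L_4Y17_SumIsLp}, the matrix norms on $M_r(A_m)$ from Definition~\ref{D_5205_MatNormMm} coincide with those coming from the natural inclusion $M_r(A_m) \hookrightarrow L(L^p(\{1, \ldots, r\} \times X, \, \nu \times \mu))$, and on each summand the identification $l^p_r \otimes_p l^p_{d_k} \cong l^p_{r d_k}$ makes the isomorphism isometric.

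For spatiality of the tensored connecting maps, I would invoke the block diagonal characterization (Lemma~\ref{L_4326_BlkDiagHm}): $\ph_{n, m}$ is isometrically similar to a block diagonal \hm{} $\ps \colon A_m \to A_n$. By Proposition~\ref{P_5916_PropOfISHm}(\ref{P_5916_PropOfISHm_Mn}), $\id_{M_r} \otimes \ph_{n, m}$ is then isometrically similar to $\id_{M_r} \otimes \ps$, which is block diagonal by Lemma~\ref{L_5208_Summary}(\ref{L_5208_Summary_TensMd}) and hence spatial by Lemma~\ref{L_5208_Summary}(\ref{L_5208_Summary_BDImpSp}). Corollary~\ref{C_5917_SpIsoSim} now yields that $\id_{M_r} \otimes \ph_{n, m}$ is itself spatial.

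The hard part is purely the notational bookkeeping required to reconcile the abstract matrix norms of Definition~\ref{D_5205_MatNormMm} on $M_r(A_m)$ with the intrinsic $L^p$-operator norm on $\bigoplus_k \MP{r d_k}{p}$; once that identification is nailed down, the block diagonal machinery developed in Section~\ref{Sec_ssfd} (especially Lemma~\ref{L_4326_BlkDiagHm} and Lemma~\ref{L_5208_Summary}) together with the isometric-similarity functoriality of Proposition~\ref{P_5916_PropOfISHm} reduces the rest to a routine check.
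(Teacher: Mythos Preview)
Your proposal is correct and follows essentially the same approach as the paper's proof: both establish that $M_r(A_m)$ is a spatial semisimple finite dimensional $L^p$~operator algebra via the identification $M_r(M_d^p) \cong M_{rd}^p$, then use Lemma~\ref{L_4326_BlkDiagHm} to pass to block diagonal maps, Proposition~\ref{P_5916_PropOfISHm}(\ref{P_5916_PropOfISHm_Mn}) and Lemma~\ref{L_5208_Summary}(\ref{L_5208_Summary_TensMd}) to see that $\id_{M_r} \otimes \ph_{n,m}$ is isometrically similar to a block diagonal map, and finally conclude spatiality. The only cosmetic difference is that the paper closes with another appeal to Lemma~\ref{L_4326_BlkDiagHm} (the reverse implication), whereas you invoke Lemma~\ref{L_5208_Summary}(\ref{L_5208_Summary_BDImpSp}) together with Corollary~\ref{C_5917_SpIsoSim}; these are equivalent.
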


\begin{proof}
Using Definition~\ref{D_4Y19_LpMN}
and Definition~\ref{D_5205_MatNormMm},
for any $d \in \N$
we see that $M_r ( M_d^p )$
is isometrically isomorphic to $M_{r d}^p$,
via a map as in Definition~\ref{D_5205_MmMnMmn}.
Therefore $M_r ( A_m)$ is a
spatial semisimple finite dimensional $L^p$~operator algebra
for all $m \in \N$.
Lemma~\ref{L_4326_BlkDiagHm} implies that $\ph_{n, m}$
is isometrically similar to a block diagonal homomorphism.
It follows from
Lemma \ref{L_5208_Summary}(\ref{L_5208_Summary_TensMd})
and
Proposition \ref{P_5916_PropOfISHm}(\ref{P_5916_PropOfISHm_Mn})
that the maps $\id_{M_r} \otimes \ph_{n, m}$
are isometrically similar to block diagonal homomorphisms.
Now use Lemma~\ref{L_4326_BlkDiagHm}.
\end{proof}

\begin{cor}\label{C_6409_Mat}
Let $p \in [1, \infty) \setminus \{ 2 \}$.
Let
$\big( (A_m)_{m \in \Nz}, \, ( \ph_{n, m} )_{0 \leq m \leq n} \big)$
be a spatial $L^p$~AF direct system.
Let ${\textdisp{ A = \dirlim_m (A_m, \ph_{n, m}) }}$
be the direct limit,
equipped
with the matricial $L^p$~operator algebra structure
of Definition~\ref{D_4615_MatNormsLpAF}.
Let $r \in \N$.
Then $M_r (A)$ is a spatial $L^p$~AF algebra.
\end{cor}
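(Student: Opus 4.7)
The plan is to apply Lemma~\ref{L_6408_MatSys} to produce a spatial $L^p$~AF direct system whose limit is isometrically isomorphic to $M_r(A)$, and then match the norms coming from the two a priori different constructions.

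First, by Lemma~\ref{L_6408_MatSys}, the system
\[
\big( (M_r(A_m))_{m \in \Nz}, \, (\id_{M_r} \otimes \ph_{n, m})_{0 \leq m \leq n} \big)
\]
is itself a spatial $L^p$~AF direct system. Let $B = \dirlim_m M_r(A_m)$ denote its direct limit; by Definition~\ref{D_4327_SpSys}, $B$ is a spatial $L^p$~AF algebra.

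The remaining task is to identify $B$ isometrically with $M_r(A)$ equipped with the norm coming from Definition~\ref{D_4615_MatNormsLpAF}. Algebraically, the standard compatibility between $M_r(\cdot)$ and algebraic direct limits provides a topological isomorphism $M_r(A) \cong B$; this is in fact exactly the identification invoked in the completely contractive case of Definition~\ref{D_4310_DirSys} via Lemma~\ref{L_6408_matrixnorms}. Moreover, the matrix norm $\|\cdot\|_r$ on $A$ supplied by Definition~\ref{D_4615_MatNormsLpAF} is, by construction, nothing other than the Banach algebra direct limit norm on $\dirlim_m M_r(A_m)$, which is precisely the $\|\cdot\|_1$ norm of~$B$. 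Hence the canonical map $M_r(A) \to B$ is isometric, and $M_r(A)$ is a spatial $L^p$~AF algebra.

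There is no real obstacle: the substantive work is already packaged in Lemma~\ref{L_6408_MatSys}, which rests on the preservation of block diagonality under $\id_{M_r} \otimes -$ (Lemma~\ref{L_5208_Summary}(\ref{L_5208_Summary_TensMd})) combined with the tensor identification $M_r(M_d^p) \cong M_{rd}^p$ from Definition~\ref{D_5205_MmMnMmn}. All that remains is the bookkeeping verifying that the two ways of normalizing $M_r(A)$—as the level-$r$ piece of the matricial $L^p$~operator algebra structure on $A$, and as the level-$1$ piece of the spatial $L^p$~AF algebra $B$—produce the same norm, which is immediate from how the direct-limit matrix norms were defined.
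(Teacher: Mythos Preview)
Your proof is correct and follows the same approach as the paper, which simply says the result is immediate from Lemma~\ref{L_6408_MatSys}. You have made explicit the one point that the paper leaves implicit: that the norm on $M_r(A)$ coming from the matricial structure of Definition~\ref{D_4615_MatNormsLpAF} (via Definition~\ref{D_4310_DirSys}) is by construction the direct-limit norm on $\dirlim_m M_r(A_m)$, and that this agrees with the norm on $\dirlim_m M_r(A_m)$ viewed as a spatial $L^p$~AF direct system.
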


\begin{proof}
This is immediate from Lemma~\ref{L_6408_MatSys}.
\end{proof}

\begin{lem}\label{L_6408_DSumSys}
Let $p \in [1, \infty) \setminus \{ 2 \}$.
Let $N \in \N$ and
for $k = 1, 2, \ldots, N$
let
$\big( (A_m^{(k)})_{m \in \Nz},
 \, ( \ph_{n, m}^{(k)} )_{0 \leq m \leq n} \big)$
be a spatial $L^p$~AF direct system
(Definition~\ref{D_4327_SpSys}).
Then
\[
\left( \left( \bigoplus_{k = 1}^N A_m^{(k)} \right)_{m \in \Nz},
 \, \left( \bigoplus_{k = 1}^N \ph_{n, m}^{(k)}
          \right)_{0 \leq m \leq n}
  \right)
\]
is a spatial $L^p$~AF direct system.
\end{lem}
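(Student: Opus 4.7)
The plan is to verify the two required structural conditions in Definition~\ref{D_4327_SpSys} separately, and note that the direct system compatibility conditions ($\ph_{m,m}^{(k)} = \id$ and transitivity) pass to direct sums coordinatewise and so are automatic.

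For condition~(\ref{D_4327_SpSys_Algs}), I would use associativity of the direct sum construction. Each $A_m^{(k)}$ is isometrically isomorphic to a finite direct sum ${\textdisp{\bigoplus_{j} \MP{d_{k, j}}{p}}}$ with the norm of Definition~\ref{D_4326_DSumNorm}. Combining these over $k = 1, 2, \ldots, N$ and using Definition~\ref{D_4326_DSumNorm} once more, the direct sum ${\textdisp{\bigoplus_{k = 1}^N A_m^{(k)}}}$ is a finite direct sum of full $L^p$ matrix algebras whose norm is the maximum of the individual $\MP{d_{k, j}}{p}$ norms, which is exactly the norm prescribed by Definition~\ref{D_4326_DSumNorm}. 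Hence it is a spatial semisimple finite dimensional $L^p$~operator algebra (Definition~\ref{D_4326_SFDLp}).

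For condition~(\ref{D_4327_SpSys_Maps}), fix $m \leq n$ and write ${\textdisp{\Phi_{n, m} = \bigoplus_{k = 1}^N \ph_{n, m}^{(k)}}}$. By the previous step, both source and target of $\Phi_{n, m}$ are spatial semisimple finite dimensional $L^p$~operator algebras; in particular the target is unital and $\sm$-finitely representable. By Lemma~\ref{L_4326_CharSpHmFDalg}, it therefore suffices to show that $\Phi_{n, m}$ is contractive and that $\Phi_{n, m}(1)$ is a hermitian idempotent in ${\textdisp{\bigoplus_{k = 1}^N A_n^{(k)}}}$. Contractivity is immediate from Lemma~\ref{L_5128_SupIn}, since each $\ph_{n, m}^{(k)}$ is contractive by Lemma~\ref{L_4326_CharSpHmFDalg} applied to the hypothesis that it is spatial. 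For the identity, $\Phi_{n, m}(1) = \big(\ph_{n, m}^{(1)}(1), \ldots, \ph_{n, m}^{(N)}(1)\big)$, and each coordinate $\ph_{n, m}^{(k)}(1)$ is a hermitian idempotent in $A_n^{(k)}$ by Lemma~\ref{L_4326_CharSpHmFDalg}; then Lemma~\ref{L_5128_SIDSum} (whose hypotheses are met because each $A_n^{(k)}$ has identity of norm one) shows that the resulting tuple is hermitian in the direct sum.

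There is no real obstacle: the content is entirely bookkeeping, and everything reduces to the three lemmas \ref{L_4326_CharSpHmFDalg}, \ref{L_5128_SupIn}, and~\ref{L_5128_SIDSum}, together with the fact that Definition~\ref{D_4326_DSumNorm} is associative under iterated direct sums. The only point to watch is making sure the norm on the iterated direct sum really coincides with the norm of a single spatial semisimple finite dimensional $L^p$~operator algebra, which it does because both are the maximum of the matrix-block norms.
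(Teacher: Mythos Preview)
Your proof is correct, but it proceeds by a different route than the paper's. You verify spatiality of $\Phi_{n,m}$ via the characterization in Lemma~\ref{L_4326_CharSpHmFDalg} (contractive plus hermitian image of identity), invoking Lemma~\ref{L_5128_SupIn} and Lemma~\ref{L_5128_SIDSum} to handle the two conditions coordinatewise. The paper instead passes through the block diagonal characterization: by Lemma~\ref{L_4326_BlkDiagHm}, each $\ph_{n,m}^{(k)}$ is isometrically similar to a block diagonal map, and Lemma~\ref{L_5208_Summary}(\ref{L_5208_Summary_DSum}) shows that the direct sum of block diagonal maps is block diagonal, so $\Phi_{n,m}$ is isometrically similar to a block diagonal map and hence spatial by the other direction of Lemma~\ref{L_4326_BlkDiagHm}. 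Your approach is arguably more direct, since it avoids the detour through block diagonal form; the paper's approach has the mild advantage of highlighting the structural picture (everything is, up to isometric similarity, block diagonal), which is the viewpoint used again in the proof of Lemma~\ref{L_6408_MatSys}. One small remark: your appeal to Lemma~\ref{L_5128_SupIn} is slightly imprecise, since that lemma concerns maps $B \to A_k$ from a common source, whereas here the $k$-th component of $\Phi_{n,m}$ is $\ph_{n,m}^{(k)}$ precomposed with the projection $\bigoplus_j A_m^{(j)} \to A_m^{(k)}$; but the projection is contractive, so the conclusion is unaffected.
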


\begin{proof}
Obviously ${\textdisp{\bigoplus_{k = 1}^N A_m^{(k)} }}$
is a spatial semisimple finite dimensional $L^p$~operator algebra
for all $m \in \N$.
By Lemma~\ref{L_4326_BlkDiagHm},
a direct system
of spatial semisimple finite dimensional $L^p$~operator algebras
is a spatial $L^p$~AF direct system
\ifo{}
its maps are all isometrically similar to block diagonal maps.
It follows from Lemma \ref{L_5208_Summary}(\ref{L_5208_Summary_DSum})
that the direct sum of maps
isometrically similar to block diagonal maps
is again isometrically similar to a block diagonal map.
\end{proof}

\begin{cor}\label{C_6409_DSum}
Let $p \in [1, \infty) \setminus \{ 2 \}$.
Then the direct sum of
finitely many spatial $L^p$~AF algebras
is again a spatial $L^p$~AF algebra.
\end{cor}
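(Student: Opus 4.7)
The plan is to reduce Corollary~\ref{C_6409_DSum} directly to Lemma~\ref{L_6408_DSumSys}. Suppose we are given finitely many spatial $L^p$~AF algebras $A_1, A_2, \ldots, A_N$. By Definition~\ref{D_4327_SpSys}, for each $k \in \{1, 2, \ldots, N\}$ there is a spatial $L^p$~AF direct system
$\big( (A_m^{(k)})_{m \in \Nz}, \, ( \ph_{n, m}^{(k)} )_{0 \leq m \leq n} \big)$
and an isometric isomorphism ${\textdisp{A_k \cong \dirlim_m (A_m^{(k)}, \ph_{n, m}^{(k)})}}$. Lemma~\ref{L_6408_DSumSys} says the termwise direct sum
\[
\left( \left( \bigoplus_{k = 1}^N A_m^{(k)} \right)_{m \in \Nz},
 \, \left( \bigoplus_{k = 1}^N \ph_{n, m}^{(k)}
          \right)_{0 \leq m \leq n}
  \right)
\]
is again a spatial $L^p$~AF direct system, so its direct limit $B$ is by definition a spatial $L^p$~AF algebra.

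The remaining task is to identify $B$ isometrically with ${\textdisp{ \bigoplus_{k = 1}^N A_k }}$ (equipped with the norm of Definition~\ref{D_4326_DSumNorm}). I would proceed as follows. For each $k$, composition of the inclusion ${\textdisp{A_m^{(k)} \hookrightarrow \bigoplus_{j = 1}^N A_m^{(j)}}}$ with the direct limit map gives a contractive homomorphism into $B$ compatible with the system $(A_m^{(k)}, \ph_{n, m}^{(k)})$. By the universal property, these assemble into a contractive homomorphism ${\textdisp{ \Psi \colon \bigoplus_{k = 1}^N A_k \to B }}$ (using Lemma~\ref{L_5128_SupIn}). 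Conversely, for each $m$, the projection ${\textdisp{ \bigoplus_{k = 1}^N A_m^{(k)} \to A_m^{(k)} }}$ followed by the direct limit map into $A_k$ is contractive, so taking the $N$-tuple (and invoking Lemma~\ref{L_5128_SupIn} again) yields a contractive homomorphism ${\textdisp{ \bigoplus_{k = 1}^N A_m^{(k)} \to \bigoplus_{k = 1}^N A_k }}$ compatible with the connecting maps. The universal property then produces a contractive homomorphism ${\textdisp{ \Phi \colon B \to \bigoplus_{k = 1}^N A_k }}$. Checking $\Psi$ and $\Phi$ are mutually inverse on the dense image of each $\bigoplus_{k=1}^N A_m^{(k)}$ and then extending by continuity shows that they are mutually inverse isometric isomorphisms.

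The only real issue is the norm matching, since the norm on a Banach algebra direct limit is defined as the limit of the norms under the connecting maps. Because each $\ph_{n, m}^{(k)}$ is contractive and the direct sum norm from Definition~\ref{D_4326_DSumNorm} is the maximum over the summands, one has
$\big\| \bigoplus_{k = 1}^N \ph_{n, m}^{(k)} (a^{(1)}, \ldots, a^{(N)}) \big\|
  = \max_{1 \leq k \leq N} \| \ph_{n, m}^{(k)} (a^{(k)}) \|$,
and passing to the limit in $n$ (which exists and equals the norm in $A_k$ by the definition of the direct limit norm) gives
$\big\| (a^{(1)}, \ldots, a^{(N)}) \big\|_B
 = \max_{1 \leq k \leq N} \| a^{(k)} \|_{A_k}$.
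Thus $\Phi$ and $\Psi$ are isometric, completing the identification.

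I do not expect a real obstacle here; the content of the corollary is already packaged inside Lemma~\ref{L_6408_DSumSys}, and the only bookkeeping is the standard commutation of finite direct sums with Banach algebra direct limits, which works cleanly because Definition~\ref{D_4326_DSumNorm} uses the maximum norm and is therefore compatible with taking termwise limits.
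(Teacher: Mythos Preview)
Your proposal is correct and follows the same approach as the paper: both reduce the corollary to Lemma~\ref{L_6408_DSumSys}. The paper's proof is simply ``This is immediate from Lemma~\ref{L_6408_DSumSys},'' treating the commutation of finite direct sums with Banach algebra direct limits (under the max norm of Definition~\ref{D_4326_DSumNorm}) as obvious, whereas you have spelled out this identification carefully; your verification is sound and fills in a detail the paper leaves to the reader.
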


\begin{proof}
This is immediate from Lemma~\ref{L_6408_DSumSys}.
\end{proof}

\begin{dfn}\label{D_5204_CcN}
Let $A$ be a Banach algebra,
and let $e = (e_n)_{n \in \N}$ be a sequence
of idempotents in~$A$ which is nondecreasing,
that is, for $n \in \N$ we have $e_n \leq e_{n + 1}$
in the sense of Definition~\ref{D_4Y17_Idemp}.
Set $e_0 = 0$
(by convention),
and let $\te_e \colon C_{\mathrm{c}} (\N) \to A$
be the unique \hm{}
such that $\te_e ({{\ch}}_{ \{ n \} } ) = e_n - e_{n - 1}$
for all $n \in \N$.
We equip $C_{\mathrm{c}} (\N)$ with the norm
$\| \cdot \|_{\infty}$, and when we refer to $\| \te_e \|$,
or demand that $\te_e$ be contractive or bounded,
we use this norm.
\end{dfn}

\begin{prp}\label{P_5204_SpCcN}
Let $p \in [1, \infty) \setminus \{ 2 \}$.
Let $A$ be a separable $L^p$~operator algebra,
and let $e = (e_n)_{n \in \N}$ and $\te_e$ be as
in Definition~\ref{D_5204_CcN}.
Assume that this
sequence is an approximate identity for~$A$,
and that $\te_e$ is contractive.
Then there are a \sft{} \msp{} $\YCN$,
with $L^p (Y, \nu)$ separable,
and an isometric nondegenerate \rpn{}
$\pi \colon A \to L (L^p (Y, \nu))$,
such that $\pi (e_n)$ is a hermitian idempotent
in $L (L^p (Y, \nu))$
for all $n \in \N$.
\end{prp}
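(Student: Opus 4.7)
The plan is to start from an arbitrary isometric separable representation of~$A$ and then cut it down to the invariant subspace produced by Lemma~\ref{L_6808_IncIdem} applied to the idempotents $\pi_0 (e_n)$. First I would use Proposition~\ref{P_4Y19_SepImpSepRep} together with Lemma~\ref{L_6416_SepSFT} to pick an isometric representation $\pi_0 \c A \to \LLp$ with $L^p (X, \mu)$ separable and $\mu$ \sft. Set $f_n = \pi_0 (e_n)$ for $n \in \N$ and $f_0 = 0$. The goal is then to verify the hypotheses of Lemma~\ref{L_6808_IncIdem} for the sequence $(f_n)$: that $\| f_n \| \leq 1$ for all $n$ and that $f_{n - 1}$ is a hermitian idempotent in $f_n \LLp f_n$. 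The norm bound is immediate since $\pi_0 \circ \te_e$ is contractive, $\| \ch_{\{ 1, 2, \ldots, n \}} \|_{\I} = 1$, and $(\pi_0 \circ \te_e) (\ch_{\{ 1, 2, \ldots, n \}}) = f_n$.

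For the hermitian condition, observe that $f_n$ is the unit of $f_n \LLp f_n$ and that this subalgebra inherits the operator norm from~$\LLp$, so its unit has norm~$1$. By Lemma~\ref{L_6612_HI} it then suffices to show that the linear map $(\ld_1, \ld_2) \mapsto \ld_1 f_{n - 1} + \ld_2 (f_n - f_{n - 1})$ from $\C \oplus \C$ (with max norm) into $f_n \LLp f_n$ is contractive. But this map factors as $(\pi_0 \circ \te_e) \circ \gm$, where $\gm \c \C \oplus \C \to C_{\mathrm{c}} (\N)$ sends $(\ld_1, \ld_2)$ to $\ld_1 \ch_{\{ 1, 2, \ldots, n - 1 \}} + \ld_2 \ch_{\{ n \}}$. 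Since the sup norm of this image is exactly $\max ( |\ld_1|, |\ld_2|)$ and $\pi_0 \circ \te_e$ is contractive, the composition is contractive as required.

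Lemma~\ref{L_6808_IncIdem} now produces an idempotent $e \in \LLp$ with $\| e \| \leq 1$, a \sfm{} $\YCN$ with $\Lpy$ separable, and an isometric linear map $s \c \Lpy \to L^p (X, \mu)$ with range $e L^p (X, \mu)$. Because $(e_n)$ is an approximate identity, for every $a \in A$ the norm convergences $\pi_0 (a e_n) \to \pi_0 (a)$ and $\pi_0 (e_n a) \to \pi_0 (a)$, combined with the pointwise convergence $f_n \xi \to e \xi$ built into the construction of~$e$ in the proof of Lemma~\ref{L_6808_IncIdem}, yield $\pi_0 (a) e = \pi_0 (a) = e \pi_0 (a)$ for every $a \in A$. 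Thus $\pi_0 (a)$ restricts to an operator on $e L^p (X, \mu)$, and Lemma~\ref{L_5125_NormCut}, applied with $\| e \| = 1$ (which holds since $e$ is a nonzero idempotent with $\| e \| \leq 1$), shows the restriction has the same norm as $\pi_0 (a)$. I would define $\pi (a) = s^{-1} \pi_0 (a)|_{e L^p (X, \mu)} \, s$, which gives an isometric representation $\pi \c A \to \LLpy$. Nondegeneracy follows from
\[
\ov{\pi_0 (A) L^p (X, \mu)}
 = \ov{\bigcup_{n \in \N} f_n L^p (X, \mu)}
 = e L^p (X, \mu),
\]
where the first equality uses both directions of the approximate identity property (giving $f_n \in \pi_0 (A)$ for one inclusion and $\pi_0 (a) \xi = \lim_n f_n \pi_0 (a) \xi$ for the other) and the second is Lemma~\ref{L_6808_IncIdem}\,(\ref{L_6808_IncIdem_Union}). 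Finally, part~(\ref{L_6808_IncIdem_Herm}) of that lemma says $f_n$ is hermitian in $e \LLp e$; both the identification $e \LLp e \cong L (e L^p (X, \mu))$ and conjugation by~$s$ are isometric isomorphisms of unital Banach algebras with unit of norm~$1$, so they preserve hermitian idempotents, yielding that $\pi (e_n)$ is a hermitian idempotent in~$\LLpy$.

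The main technical point is obtaining $\pi_0 (a) e = \pi_0 (a) = e \pi_0 (a)$ for all $a \in A$ in the third step: this requires the pointwise convergence $f_n \xi \to e \xi$ in the proof of Lemma~\ref{L_6808_IncIdem} to hold for \emph{all} $\xi \in L^p (X, \mu)$ (rather than only for $\xi \in e L^p (X, \mu)$), used together with the two-sided approximate identity property to force the image of $\pi_0$ to live in $e \LLp e$. Apart from this and the application of Lemma~\ref{L_6612_HI} for the hermitian condition in the second step, the proof is a mechanical translation of Lemma~\ref{L_6808_IncIdem} through the isometry~$s$.
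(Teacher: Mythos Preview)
Your proof is correct and follows essentially the same route as the paper's: start from an isometric separable $\sigma$-finite representation, feed the images of the $e_n$ into Lemma~\ref{L_6808_IncIdem}, and conjugate by~$s$ to land on $L^p(Y,\nu)$. You are more explicit than the paper in two places---the verification that $f_{n-1}$ is hermitian in $f_n\LLp f_n$ via the factorization through $C_{\mathrm c}(\N)$, and the argument that $\pi_0(a)e=\pi_0(a)=e\pi_0(a)$ (which the paper needs but does not spell out)---and for the final hermitian conclusion the paper instead observes concretely that $\pi(e_n)$ is multiplication by $\chi_{\bigcup_{k\le n}Y_k}$, whereas you transfer Lemma~\ref{L_6808_IncIdem}(\ref{L_6808_IncIdem_Herm}) through the isometric identifications; both arguments are fine.
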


\begin{proof}
Use Proposition~\ref{P_4Y19_SepImpSepRep}
to find a \sfm{} $\XBM$
such that $L^p (X, \mu)$
is separable and an isometric representation
$\rh$ of $A$ on $L^p (X, \mu)$.
It is clear from contractivity of $\te_e$ and Lemma~\ref{L_6612_HI}
that if $n \in \N$
then $\| e_n \| = 1$ and
(taking $e_0 = 0$)
that $e_{n - 1}$ is a hermitian idempotent in $e_n A e_n$.

Apply Lemma~\ref{L_6808_IncIdem} to the idempotents
$\rh (e_n)$ for $n \in \N$.
In the rest of the proof,
we use the notation of Lemma~\ref{L_6808_IncIdem}.
Set $E = e L^p (X, \mu)$.
Then $s$ is an invertible isometry
from $L^p (Y, \nu)$ to~$E$.
Moreover, the map $a \mapsto \rh (a) |_E$
defines a \hm{} from $A$ to $L (E)$,
with $\| \rh (a) |_E \| = \| \rh (a) \|$
by Lemma~\ref{L_5125_NormCut}.
Now the \rpn{} $\pi \colon A \to L (L^p (Y, \nu) )$,
defined by $\pi (a) = s^{-1} [ \rh (a) |_E ] s$,
is nondegenerate and isometric.
Moreover, for $n \in \N$,
the operator $\pi (e_n)$ is multiplication by the characteristic
function of ${\textdisp{ \bigcup_{k = 1}^n Y_n }}$
and is hence a hermitian idempotent in $L (L^p (Y, \nu) )$.
\end{proof}

Under the hypotheses of Proposition~\ref{P_5204_SpCcN},
an $L^p$~operator algebra has a canonical norm on
its unitization.

\begin{prp}\label{P_5209_NormOnUnit}
Let $p \in [1, \infty) \setminus \{ 2 \}$.
Let $A$ be a separable nonunital $L^p$~operator
algebra, and let $e = (e_n)_{n \in \N}$ be as in
Definition~\ref{D_5204_CcN}.
Assume that this sequence is
an approximate identity for~$A$, and that the
\hm{} $\te_e$ of Definition~\ref{D_5204_CcN} is contractive.
Then there is a unique norm $\| \cdot \|$
on the unitization $A^{+}$ of~$A$
satisfying the following conditions:
\begin{enumerate}
\item\label{P_5209_NormOnUnit_Agree}
$\| \cdot \|$ agrees with the given norm on $A \subset A^{+}$.
\item\label{P_5209_NormOnUnit_Eq}
$\| \cdot \|$ is equivalent to the usual norm on
the unitization.
\item\label{P_5209_NormOnUnit_Lp}
$A^{+}$ is an $L^p$~operator algebra.
\item\label{P_5209_NormOnUnit_Spatial}
Identify $C ( \N \cup \{ \infty \})$
with $C_0 (\N)^{+}$,
and give it the usual supremum norm on
$C ( \N \cup \{ \infty \})$.
Let $\te_e^{+} \colon C ( \N \cup \{ \infty \}) \to A^{+}$
be the unitization of~$\te_e$.
Then $\te_e^{+}$ is contractive.
\end{enumerate}
\end{prp}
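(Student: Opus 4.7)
The plan is to construct a candidate norm using an isometric representation produced by~\Prp{P_5204_SpCcN}, verify the four conditions directly, and then prove uniqueness by showing that any norm meeting the four conditions must coincide with an intrinsic formula in terms of $A$ and the sequence~$(e_n)$.

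For existence, apply~\Prp{P_5204_SpCcN} to obtain a \sft{} \msp{} $\YCN$ with $L^p (Y, \nu)$ separable and an isometric nondegenerate \rpn{} $\pi \c A \to \LLpy$ such that each $\pi (e_n)$ is a hermitian idempotent; by~\Lem{L_4326_CharSpI} we have $\pi (e_n) = M_{{\ch}_{F_n}}$ for an increasing sequence of \mb{} sets $F_n \S Y$ whose union is~$Y$ (up to a null set, by nondegeneracy). Define $\pi^+ \c A^+ \to \LLpy$ by $\pi^+ (b + \af 1) = \pi (b) + \af I$ and set $\| x \| = \| \pi^+ (x) \|$ on~$A^+$. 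Conditions~(\ref{P_5209_NormOnUnit_Agree}) and~(\ref{P_5209_NormOnUnit_Lp}) are immediate from the construction. For~(\ref{P_5209_NormOnUnit_Spatial}), the operator $\pi^+ (\te_e^+ (f))$ is multiplication by $\sum_n f (n) {\ch}_{F_n \SM F_{n - 1}}$, whose essential supremum is at most~$\| f \|_\I$. For~(\ref{P_5209_NormOnUnit_Eq}), the triangle inequality gives $\| b + \af 1 \| \leq \| b \| + | \af |$, and the reverse reduces to a bound $| \af | \leq C \| b + \af 1 \|$ for some~$C$; failing this, after rescaling, one would obtain $\pi (a_n) \to - I$ in norm, forcing $- I \in \pi (A)$ (which is closed because $\pi$ is isometric and $A$ complete), producing a unit of~$A$, a contradiction.

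For uniqueness, let $\| \cdot \|'$ be another norm on $A^+$ satisfying (\ref{P_5209_NormOnUnit_Agree})--(\ref{P_5209_NormOnUnit_Spatial}). Since ${\ch}_{\{ 1, \ldots, n \}}$ is a hermitian idempotent in $C (\N \cup \{ \I \})$, condition~(\ref{P_5209_NormOnUnit_Spatial}) together with~\Lem{L_5128_UnitalHm} forces $e_n$ to be a hermitian idempotent in~$(A^+, \| \cdot \|')$, so $\| e_n \|' = 1$. Multiplying by~$e_n$ on the right and using~(\ref{P_5209_NormOnUnit_Agree}) gives $\| b e_n + \af e_n \|_A = \| (b + \af 1) e_n \|' \leq \| b + \af 1 \|'$. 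The identity $(b + \af 1) e_n = (b + \af 1) e_m e_n$ (valid for $n \leq m$ because $e_m e_n = e_n$) combined with $\| e_n \|' = 1$ shows this sequence is nondecreasing in~$n$, so
\[
\ell (b, \af) := \lim_n \| b e_n + \af e_n \|_A \leq \| b + \af 1 \|'.
\]

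For the reverse inequality, \Prp{P_4Y19_Unital} and condition~(\ref{P_5209_NormOnUnit_Lp}) supply an isometric unital \rpn{} $\rh^+ \c (A^+, \| \cdot \|') \to L (L^p (Z, \ld))$. As before $\rh (e_n) = M_{{\ch}_{G_n}}$ with $G_n$ increasing and union~$G$, and $\rh^+ (b + \af 1)$ decomposes along $L^p (Z, \ld) = L^p (G, \ld) \oplus_p L^p (Z \SM G, \ld)$ as $\rh_G (b) + \af I$ on the first summand and $\af I$ on the second. Using $(\rh (b) + \af I) {\ch}_{G_n} = \rh (b e_n + \af e_n)$ and the fact that ${\ch}_{G_n} \xi \to \xi$ in $L^p$~norm for every $\xi \in L^p (G, \ld)$, a standard limit argument yields $\| \rh_G (b) + \af I \| = \lim_n \| b e_n + \af e_n \|_A = \ell (b, \af)$. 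For the $| \af |$ contribution on~$L^p (Z \SM G, \ld)$, I note that $G_k \subsetneq G$ for every~$k$ (otherwise $e_k$ would be a unit of~$A$), and for $\xi \in L^p (G \SM G_k, \ld)$ the identity $\rh (b) \xi = \rh (b - b e_k) \xi$ gives $\| \rh (b) \xi \| \leq \| b - b e_k \|_A \| \xi \|$, hence $\| \rh_G (b) + \af I \| \geq | \af | - \| b - b e_k \|_A$; sending $k \to \I$ yields $| \af | \leq \ell (b, \af)$. Combining all pieces, $\| b + \af 1 \|' = \| \rh^+ (b + \af 1) \| = \ell (b, \af)$, a quantity depending only on $A$ and the sequence~$(e_n)$, which matches the norm constructed above. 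The main obstacle I expect is correctly separating the degenerate and nondegenerate parts of an arbitrary isometric representation of~$A^+$ and verifying the two limit arguments in $L^p$~geometry; the multiplication-operator characterization of hermitian idempotents from~\Lem{L_4326_CharSpI} is the essential tool making this go through.
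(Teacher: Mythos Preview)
Your existence argument matches the paper's essentially verbatim: both invoke \Prp{P_5204_SpCcN}, unitize the resulting isometric nondegenerate representation, and read off conditions (\ref{P_5209_NormOnUnit_Agree})--(\ref{P_5209_NormOnUnit_Spatial}) directly.

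For uniqueness the two arguments diverge. The paper works on the dense subalgebra $\bigcup_n e_n A e_n$: once $e_n$ is known to be hermitian in $(A^+,\|\cdot\|')$, the element $a+\ld\cdot 1$ (with $a\in e_nAe_n$) commutes with $e_n$, and \Lem{L_5209_CommuteSp} gives the closed formula $\|a+\ld\cdot 1\|'=\max(\|a+\ld e_n\|,|\ld|)$, which depends only on the norm of~$A$. Your route instead produces the limit formula $\|b+\af\cdot 1\|'=\lim_n\|be_n+\af e_n\|$ valid for \emph{every} $b\in A$, by taking any isometric unital $L^p$~representation $\rh^+$ of $(A^+,\|\cdot\|')$, splitting $L^p(Z)$ along $G=\bigcup_n G_n$, and running the two limit estimates you describe. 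Both arguments are correct and rely on the same key inputs (condition~(\ref{P_5209_NormOnUnit_Spatial}) to force $e_n$ hermitian, then \Lem{L_4326_CharSpI} to reduce to multiplication operators). The paper's route is shorter and more algebraic, since \Lem{L_5209_CommuteSp} packages the $L^p$~geometry once and for all; your route is more hands-on but has the minor advantage of yielding an intrinsic formula on all of~$A^+$ without a density step.
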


\begin{proof}
We first prove existence.
Let $\pi \colon A \to L (L^p (Y, \nu))$
be as in Proposition~\ref{P_5204_SpCcN}.
Extend this \hm{} to a
\hm{} $\pi^{+} \colon A^{+} \to L (L^p (Y, \nu))$.
Then $\pi^{+}$ is injective because $A$ is not unital.
Define $\| a \| = \| \pi^{+} (a) \|$ for $a \in A^{+}$.
Conditions (\ref{P_5209_NormOnUnit_Agree}),
(\ref{P_5209_NormOnUnit_Eq}),
and~(\ref{P_5209_NormOnUnit_Lp}) are immediate.
It remains to prove condition~(\ref{P_5209_NormOnUnit_Spatial}).

By density,
it suffices to prove that for $n \in \N$,
if we set
\[
K = \{ n + 1, \, n + 2, \, \ldots, \, \infty \},
\]
and take
any function $f \in C ( \N \cup \{ \infty \})$ vanishing on~$K$
and any $\ld \in \C$,
then
\begin{equation}\label{Eq_5208_MaxN}
\| (\pi^{+} \circ \te_e^{+}) (f + \ld {{\ch}}_{{K}} )\|
 \leq \max ( \| f \|, \, | \ld | ).
\end{equation}
Since Proposition~\ref{P_5204_SpCcN} implies that $\pi (e_n)$
is a hermitian idempotent, by Lemma~\ref{L_4326_CharSpI} there
is a \mb{} subset $E \subset Y$ such that $\pi (e_n)$ is multiplication
by~$\ch_E$ on $L^p (Y, \nu))$.
Then $(\pi^{+} \circ \te_e^{+}) (f)$
acts on $L^p (E, \nu)$ and is zero on $L^p (Y \setminus E, \, \nu)$,
while $(\pi^{+} \circ \te_e^{+}) (\ld{{\ch}}_K)$
is multiplication by $\ld$ on $L^p (Y \setminus E, \, \nu)$
and zero on $L^p (E, \nu)$.
So (\ref{Eq_5208_MaxN}) holds.

Now we prove uniqueness.
Let $\| \cdot \|$ be a norm as in the statement.
For $n \in \N$, $a \in e_n A e_n$, and $\ld \in \C$
we prove that
\begin{equation}\label{Eq_5209_Known}
\| a + \ld \cdot 1 \|
 = \max ( \| a + \ld e_n \|, \, | \ld | ).
\end{equation}
Since the right hand side of~(\ref{Eq_5209_Known})
depends only on the norm on~$A$,
and since ${\textdisp{ \bigcup_{n = 1}^{\infty} e_n A e_n }}$ is
dense in~$A$, uniqueness will follow.

It follows from~(\ref{P_5209_NormOnUnit_Spatial})
that $e_n$ is a hermitian idempotent in~$A^{+}$.
Also, $e_n$ commutes with $a + \ld \cdot 1$
and $\| 1 - e_n \| = \| \te_e^+ ({{\ch}}_{{K}}) \| \leq 1$.
So Lemma~\ref{L_5209_CommuteSp}
implies that
\[
\| a + \ld \cdot 1 \|
 = \max \big( \| e_n (a + \ld \cdot 1) e_n \|,
   \, \| (1 - e_n) (a + \ld \cdot 1) (1 - e_n) \| \big)
 = \max ( \| a + \ld e_n \|, \, | \ld | ),
\]
which is~(\ref{Eq_5209_Known}).
\end{proof}

\begin{prp}\label{P_4327_IsLpAlg}
Let $p \in [1, \infty) \setminus \{ 2 \}$, and let
${\textdisp{ A = \dirlim_m (A_m, \ph_{n, m}) }}$ be a spatial
$L^p$~AF algebra,
expressed as a direct limit as in Definition~\ref{D_4327_SpSys},
and with canonical maps $\ph_n \colon A_n \to A$ for $n \in \N$.
Then $A$ is a separable
nondegenerately representable
$L^p$~operator algebra.
Moreover, $e = ( \ph_n (1_{A_n} ))_{n \in \N}$
is a nondecreasing
approximate identity of idempotents
such that
the corresponding \hm{}
$\te_e$ of Definition~\ref{D_5204_CcN} is contractive.
\end{prp}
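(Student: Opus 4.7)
The plan is to verify the four assertions---$L^p$~operator algebra, separability, approximate identity of idempotents, contractivity of $\te_e$---and then derive nondegenerate representability from what has already been built. The first two are almost immediate: $A$ is an $L^p$~operator algebra by Theorem~\ref{T_4310_LpDLim} applied directly to the given contractive direct system, and separability follows because ${\textdisp{ \bigcup_{m \in \Nz} \ph_m (A_m) }}$ is a countable union of finite dimensional subspaces that is dense in~$A$.

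For the sequence $e_n = \ph_n(1_{A_n})$, I would first note that $e_n$ is an idempotent and that, for $m \leq n$,
\[
e_m = \ph_n \bigl( \ph_{n, m} (1_{A_m}) \bigr)
\andeqn
\ph_{n, m} (1_{A_m}) \cdot 1_{A_n} = 1_{A_n} \cdot \ph_{n, m} (1_{A_m}) = \ph_{n, m} (1_{A_m}),
\]
giving $e_m e_n = e_n e_m = e_m$, that is $e_m \leq e_n$ in the sense of Definition~\ref{D_4Y17_Idemp}. The approximate identity property is then automatic on the dense subalgebra ${\textdisp{ \bigcup_{m} \ph_m (A_m) }}$: if $a = \ph_m (x)$ and $n \geq m$, then $e_n a = \ph_n ( 1_{A_n} \ph_{n, m} (x)) = \ph_m (x) = a$, and similarly on the other side.

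The main technical step is the contractivity of $\te_e$, and this is where the spatial hypothesis is used. Fix $N \in \N$, set $e_0 = 0$, and define
\[
f_n = \ph_{N, n} (1_{A_n}) - \ph_{N, n - 1} (1_{A_{n - 1}}) \in A_N
\]
for $n = 1, 2, \ldots, N$, so that $\ph_N (f_n) = e_n - e_{n - 1}$. The identity $1_{A_n}$ is a hermitian idempotent in~$A_n$, because on each summand of ${\textdisp{ A_n = \bigoplus_k \MP{d_k}{p} }}$ the identity matrix is diagonal with entries in $\{0, 1\}$ (Corollary~\ref{C_5128_SpIsDiag}), and the direct sum is hermitian by Lemma~\ref{L_5128_SIDSum}. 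Since $\ph_{n, n - 1}$ is spatial, Lemma~\ref{L_4326_CharSpHmFDalg} shows that $\ph_{n, n - 1} (1_{A_{n - 1}})$ is hermitian in~$A_n$, so Remark~\ref{R_4Y18_01} makes $1_{A_n} - \ph_{n, n - 1} (1_{A_{n - 1}})$ a hermitian idempotent in~$A_n$; then Lemma~\ref{L_4Y18_Funct} (applied to the spatial, hence contractive, map $\ph_{N, n}$, whose image of~$1$ is hermitian) shows that $f_n = \ph_{N, n} \bigl( 1_{A_n} - \ph_{n, n - 1} (1_{A_{n - 1}}) \bigr)$ is a hermitian idempotent in~$A_N$. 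A straightforward calculation, using $\ph_{N, m} (1_{A_m}) \cdot \ph_{N, n - 1} (1_{A_{n - 1}}) = \ph_{N, m} (1_{A_m})$ whenever $m \leq n - 1$, gives pairwise orthogonality of the $f_n$, and they sum to~$1_{A_N}$.

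With $f_1, f_2, \ldots, f_N$ thus identified as orthogonal hermitian idempotents in the $L^p$~operator algebra~$A_N$ (Lemma~\ref{L_4326_FDIsLp}), I would choose an isometric representation of $A_N$ on an $L^p$~space and apply Corollary~\ref{C_4Y17_Orth}(\ref{C_4Y17_Orth_Norm}) to conclude that the map $(\ld_1, \ld_2, \ldots, \ld_N) \mapsto {\textdisp{ \sum_{n = 1}^N \ld_n f_n }}$ from $\C^N$ into~$A_N$ is contractive. Since $\ph_N$ is contractive, this yields $\| \te_e (f) \| \leq \| f \|_{\I}$ for every $f \in C_{\mathrm{c}} (\N)$ supported in $\{ 1, 2, \ldots, N \}$, and letting $N$ vary shows $\te_e$ is contractive. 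The nondegenerate isometric representability now follows directly from Proposition~\ref{P_5204_SpCcN}, completing the proof. The one place where I expect delicacy is confirming that $f_n$ is hermitian in the \emph{whole} algebra $A_N$ rather than only in the cut-down $\ph_{N, n} (1_{A_n}) A_N \ph_{N, n} (1_{A_n})$; this is exactly the content of Lemma~\ref{L_4Y18_Funct} and is precisely why the spatiality condition was chosen to demand hermitian (not merely contractive) images of units.
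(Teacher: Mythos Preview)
Your proof is correct and follows essentially the same route as the paper's: both establish that the differences $\ph_{N,n}(1_{A_n}) - \ph_{N,n-1}(1_{A_{n-1}})$ are orthogonal hermitian idempotents in~$A_N$ via Remark~\ref{R_4Y18_01} and Lemma~\ref{L_4Y18_Funct}, then invoke Corollary~\ref{C_4Y17_Orth}(\ref{C_4Y17_Orth_Norm}) for contractivity of~$\te_e$ and Proposition~\ref{P_5204_SpCcN} for nondegenerate representability. The only organizational difference is that the paper phrases the hermitian-idempotent verification as an induction on~$n$, whereas you argue directly using the spatiality of the composite map $\ph_{N,n}$; since the definition of a spatial $L^p$~AF direct system already requires all $\ph_{n,m}$ (not just consecutive ones) to be spatial, your shortcut is legitimate.
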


\begin{proof}
Theorem~\ref{T_4310_LpDLim} and Lemma~\ref{L_4326_FDIsLp}
imply that $A$ is an $L^p$~operator algebra.
Separability is obvious.
We prove the statement about the approximate identity.
By Proposition~\ref{P_5204_SpCcN}, this will imply that
${\textdisp{ A = \dirlim_m (A_m, \ph_{n, m}) }}$
is nondegenerately representable.

For
$n \in \N$, write $f_n$
for the identity of~$A_n$,
and set $e_n = \ph_n (f_n)$.
It is clear that $\| e_n \| \leq 1$
(with equality unless $A_n = 0$),
and that $e_n \leq e_{n + 1}$.

Set $e = (e_n)_{n \in \N}$,
as in the statement of the theorem.
Then we have
\[
\limi{n} e_n \ph_m (a)
    = \limi{n} \ph_m (a) e_n = \ph_m (a)
\]
for every $m \in \N$ and $a \in A_m$.
Since ${\textdisp{ \bigcup_{m \in \N}\ph_m (A_m) }}$
is dense in~$A$ and $\| e_n \| \leq 1$ for all $n \in \N$,
a standard $\frac{\ep}{3}$-argument shows that
$e$ is an approximate identity for~$A$.

It remains to prove that $\te_e$ is contractive.
We prove by induction on $n$ that,
with $\ph_{n, 0} (f_0)$ taken to be zero,
the idempotents $\ph_{n, j} (f_j) - \ph_{n, j - 1} (f_{j - 1})$
are hermitian for $j = 1, 2, \ldots, n$.
For $n = 1$, this is just the assertion that the
identity is a hermitian idempotent in~$A_1$.
If the statement is known for~$n$,
then for $j = 1, 2, \ldots, n$
we have
\[
\ph_{n + 1, \, j} (f_j) - \ph_{n + 1, \, j - 1} (f_{j - 1})
 = \ph_{n + 1, \, n}
  \big( \ph_{n, j} (f_j) - \ph_{n, j - 1} (f_{j - 1}) \big),
\]
which is a hermitian idempotent by
Lemma~\ref{L_4Y18_Funct}.
Also,
\[
\ph_{n + 1, \, n + 1} (f_{n + 1}) - \ph_{n + 1, \, n} (f_n)
 = 1_{A_{n + 1}} - \ph_{n + 1, \, n} (f_n)
\]
is hermitian because $\ph_{n + 1, \, n} (f_n)$ is hermitian.
This completes the induction.

Corollary \ref{C_4Y17_Orth}(\ref{C_4Y17_Orth_Norm})
now implies that $\te_e |_{ C ( \{ 1, 2, \ldots, n \} ) }$
is contractive for all $n \in \N$.
It follows that $\te_e$ is contractive.
\end{proof}

\begin{prp}\label{injectivedirectsystem}
Let $p \in [1, \infty) \setminus \{ 2 \}$,
and let $A$ be a spatial $L^p$~AF algebra.
Then $A$ is isometrically isomorphic to the direct
limit of a spatial $L^p$~AF direct system in which all
the connecting maps are injective.
\end{prp}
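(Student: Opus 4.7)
The approach is the standard ``kill the kernels'' construction, adapted to the spatial $L^p$ setting. Write the given system as $\bigl( (A_m)_{m \in \Nz}, \, (\ph_{n, m})_{0 \leq m \leq n} \bigr)$, with canonical \hm{s} $\ph_m \colon A_m \to A$. Set $J_m = \Ker (\ph_m)$. Since $A_m$ is finite dimensional, $J_m$ is a closed ideal; writing ${\textdisp{ A_m = \bigoplus_k \MP{d_k}{p} }}$, the ideal $J_m$ is the direct sum of some of the matrix summands. By Lemma~\ref{L_7629_Quot}, the quotient $A_m / J_m$ (with the quotient norm) is again a spatial semisimple finite dimensional $L^p$~operator algebra. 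Identifying $A_m / J_m$ with the complementary direct summand of~$A_m$ gives a canonical isometric section $\io_m \colon A_m / J_m \to A_m$.

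Because $\ph_m = \ph_n \circ \ph_{n, m}$ for $m \leq n$, we have $\ph_{n, m}(J_m) \S J_n$, so $\ph_{n, m}$ descends to a \hm{} ${\tilde{\ph}}_{n, m} \colon A_m / J_m \to A_n / J_n$. This descent is injective: if ${\tilde{\ph}}_{n, m}(x + J_m) = 0$ then $\ph_{n, m}(x) \in J_n$, so $\ph_m(x) = \ph_n(\ph_{n, m}(x)) = 0$ and hence $x \in J_m$. To see that ${\tilde{\ph}}_{n, m}$ is spatial, factor it as $\pi_n \circ \ph_{n, m} \circ \io_m$, where $\pi_n \colon A_n \to A_n / J_n$ is the quotient map; all three factors are contractive, so ${\tilde{\ph}}_{n, m}$ is contractive. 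The element $\io_m (1_{A_m / J_m})$ is the sum of the identities of a subcollection of matrix summands of $A_m$, hence a hermitian idempotent in $A_m$ by Lemma~\ref{L_5128_SIDSum}. Two applications of Lemma~\ref{L_4Y18_Funct}---first to the spatial \hm{} $\ph_{n, m}$, then to the contractive unital quotient $\pi_n$---show that ${\tilde{\ph}}_{n, m}(1_{A_m / J_m})$ is a hermitian idempotent in $A_n / J_n$. Lemma~\ref{L_4326_CharSpHmFDalg} then yields that ${\tilde{\ph}}_{n, m}$ is spatial.

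Thus ${\textdisp{ \bigl( (A_m / J_m)_{m \in \Nz}, \, ({\tilde{\ph}}_{n, m})_{0 \leq m \leq n} \bigr) }}$ is a spatial $L^p$~AF direct system with injective connecting maps; let $\tilde A$ denote its direct limit. The maps ${\tilde{\ph}}_m \colon A_m / J_m \to A$ given by ${\tilde{\ph}}_m (x + J_m) = \ph_m (x)$ are well defined, contractive, and compatible with the descended system, so the universal property yields a contractive \hm{} $\Phi \colon \tilde A \to A$. Conversely, the compositions $\ps_m = {\tilde{\ph}}_{\infty, m} \circ \pi_m \colon A_m \to \tilde A$ (with ${\tilde{\ph}}_{\infty, m} \colon A_m / J_m \to \tilde A$ the canonical map of the new system) are contractive and compatible with the original system, giving a contractive \hm{} $\Psi \colon A \to \tilde A$. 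A direct computation on the dense subalgebras ${\textdisp{ \bigcup_m \ph_m (A_m) \S A }}$ and ${\textdisp{ \bigcup_m {\tilde{\ph}}_{\infty, m} (A_m / J_m) \S \tilde A }}$ gives $\Phi \circ \Psi = \id_A$ and $\Psi \circ \Phi = \id_{\tilde A}$; since both $\Phi$ and $\Psi$ are contractive, they must in fact be isometric.

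The only substantive step is verifying that each descended map ${\tilde{\ph}}_{n, m}$ is spatial; once that is in hand, the identification of direct limits is routine and uses no ingredients beyond the hermitian idempotent machinery of Section~\ref{Sec_HIdemp} and the universal property of the Banach algebra direct limit.
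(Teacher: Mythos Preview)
Your proof is correct and follows essentially the same route as the paper's. The only cosmetic differences are that the paper defines $J_m$ as ${\textdisp{\bigcup_{n > m} \Ker(\ph_{n,m})}}$ (equal to your $\Ker(\ph_m)$ since the $A_m$ are finite dimensional), invokes Corollary~\ref{C_5208_CompSpatial} to get spatiality of $A_m \to A_n/J_n$ in one step rather than your two applications of Lemma~\ref{L_4Y18_Funct}, and omits the explicit $\Phi$/$\Psi$ verification of the direct limit identification that you spell out.
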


\begin{proof}
Let
$\big( (A_m)_{m \in \Nz}, \, ( \ph_{n, m} )_{m \leq n} \big)$
be a spatial $L^p$~AF direct system such that
$A$ is isometrically isomorphic to ${\textdisp{ \dirlim_m  A_m }}$.
Then for $m \in \N$ we can write
${\textdisp{ A_m = \bigoplus_{j = 1}^{N (m)} M_{d (m, j)}^p }}$
with
\[
N (m) \in \Nz
\andeqn
d (m, 1), \, d (m, 2), \, \ldots, \, d (m, N (m)) \in \N.
\]
Set
${\textdisp{
 J_m = \bigcup_{n = m + 1}^{\I} {\operatorname{Ker}} (\ph_{n, m}) }}$.
Then $J_m$ is a closed ideal in $A_m$,
and $A_m / J_m$ is a spatial semisimple finite
dimensional $L^p$~operator algebra by Lemma~\ref{L_7629_Quot}.

Let $m \in \Nz$.
For $n \in \Nz$ with $n \geq m$,
Corollary~\ref{C_5208_CompSpatial}
shows that the induced homomorphism $A_m \to A_n / J_n$ is spatial.
Lemma~\ref{L_4326_CharSpHmFDalg}
can then be used to show that the induced homomorphism
${\overline{\ph}}_{n, m} \colon A_m / J_m \to A_n / J_n$
is spatial.
Clearly ${\overline{\ph}}_{n, m}$ is injective.
Now
$\big( (A_m / J_m)_{m \in \Nz}, \,
   (\overline{\ph}_{n, m})_{0 \leq m \leq n} \big)$
is a spatial $L^p$~AF direct system whose direct limit is
isometrically isomorphic to~$A$.
\end{proof}

\begin{cor}\label{C_6416_GenUniqMN}
Let $p \in [1, \infty) \setminus \{ 2 \}$,
and let $A$ be a spatial $L^p$~AF algebra.
Then $A$ has unique $L^p$~operator matrix norms.
\end{cor}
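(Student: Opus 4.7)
The plan is to reduce the corollary to Proposition~\ref{P_5125_LimHasUniq}, which gives unique $L^p$~operator matrix norms for direct limits provided the connecting maps are \emph{isometric}. By Proposition~\ref{injectivedirectsystem}, we can realize $A$ as
${\textdisp{ \dirlim_m (A_m, \ph_{n, m}) }}$
where each $A_m$ is a spatial semisimple finite dimensional $L^p$~operator algebra and every $\ph_{n, m}$ is an injective spatial \hm. Each $A_m$ is separable (in fact \fd) and has unique $L^p$~operator matrix norms by Lemma~\ref{L_4326_FDIsLp}. Thus everything will follow once we show that an injective spatial \hm{} between spatial semisimple \fd{} $L^p$~operator algebras is automatically isometric.

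To prove this, let $\ph \colon A \to B$ be an injective spatial \hm, writing
${\textdisp{ A = \bigoplus_{j = 1}^L \MP{c_j}{p} }}$
and
${\textdisp{ B = \bigoplus_{k = 1}^N \MP{d_k}{p} }}$.
By Lemma~\ref{L_4326_BlkDiagHm}, $\ph$ is isometrically similar to a block diagonal \hm{} $\ps$, and by Proposition~\ref{P_5916_PropOfISHm}(\ref{P_5916_PropOfISHm_Iso}) it suffices to check that $\ps$ is isometric. Composing with the projections $\pi_k \colon B \to \MP{d_k}{p}$, on the $k$-th summand the map $\pi_k \circ \ps$ has the explicit block diagonal form
${\textstyle{ (\pi_k \circ \ps)(a_1, \ldots, a_L)
  = \diag\bigl( a_{r_k(1)}, \, a_{r_k(2)}, \, \ldots, \, a_{r_k(n_k)}, \, 0 \bigr) }}$
for some indices $r_k(i) \in \{1, \ldots, L\}$. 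Since block diagonal operators on $l^p$ have norm equal to the maximum of the norms of the blocks, we obtain
\[
\| \ps(a_1, \ldots, a_L) \|
 = \max_k \max_{1 \leq i \leq n_k} \| a_{r_k(i)} \|_p.
\]
Injectivity of $\ph$ (hence of $\ps$) forces every index $j \in \{1, \ldots, L\}$ to appear among the $r_k(i)$ for some $k$ and~$i$; otherwise the $j$-th summand would lie in the kernel. Combining this with the definition of the norm on~$A$ from Definition~\ref{D_4326_DSumNorm} gives
${\textstyle{ \| \ps(a_1, \ldots, a_L) \| = \max_{1 \leq j \leq L} \| a_j \|_p = \| (a_1, \ldots, a_L) \| }}$,
so $\ps$, and therefore $\ph$, is isometric.

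With this in hand, the direct system
$\big( (A_m)_{m \in \Nz}, \, ( \ph_{n, m} )_{0 \leq m \leq n} \big)$
provided by Proposition~\ref{injectivedirectsystem} consists of separable matricial $L^p$~operator algebras with unique $L^p$~operator matrix norms and isometric connecting maps. Proposition~\ref{P_5125_LimHasUniq} then yields that ${\textdisp{ \dirlim_m A_m \cong A }}$ has unique $L^p$~operator matrix norms, completing the proof. The only nontrivial ingredient is the bookkeeping in the middle paragraph showing that injectivity upgrades contractivity to isometry in the block diagonal picture; once that is clear, the argument is a direct application of the machinery already developed.
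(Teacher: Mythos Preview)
Your proof is correct and follows essentially the same route as the paper: both use Proposition~\ref{injectivedirectsystem} to pass to injective connecting maps, invoke Lemma~\ref{L_4326_BlkDiagHm} to reduce to block diagonal \hm{s}, observe that injective block diagonal \hm{s} are isometric, and then apply Lemma~\ref{L_4326_FDIsLp} together with Proposition~\ref{P_5125_LimHasUniq}. The only difference is that you spell out the verification that injective block diagonal \hm{s} are isometric, whereas the paper simply asserts it.
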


\begin{proof}
By Proposition~\ref{injectivedirectsystem},
there is a spatial $L^p$~AF direct system
$\big( (A_m)_{m\in\Nz}, \, ( \ph_{n, m} )_{0 \leq m \leq n} \big)$
with injective maps such that $A$ is isometrically
isomorphic to ${\textdisp{ \dirlim_m A_m }}$.
For $m, n \in \Nz$ with $m \leq n$,
Lemma~\ref{L_4326_BlkDiagHm}
implies that $\ph_{n, m}$ is
isometrically similar to a block diagonal homomorphism.
An injective block diagonal homomorphism is isometric,
so $\ph_{n, m}$ is isometric.
The result now
follows from Lemma~\ref{L_4326_FDIsLp} and
Proposition~\ref{P_5125_LimHasUniq}.
\end{proof}

\section{Classification of spatial $L^p$~AF algebras}\label{Sec_ClassN}

\indent
In this section we prove our main result,
the classification of spatial $L^p$~AF algebras based on their
scaled preordered $K_0$ groups.
Moreover, as for AF~algebras, we
show that every countable scaled Riesz group
can be realized as the scaled preordered $K_0$ group
of a spatial $L^p$~AF algebra.
Given the theory already developed,
the proofs are now essentially the same as in the C*~algebra case.

The original C*~theory of AF~algebras
is mainly due to Bratteli~\cite{Bra},
Elliott~\cite{Ell},
and Effros, Handelman, and Shen~\cite{EHS}.
As references for the entire theory,
we rely on Chapter~3 of~\cite{Bl3}
and on~\cite{EE}.
For a more detailed discussion of Riesz groups than is needed here,
see~\cite{Gd0}.

We only state the classification theorem in terms of
scaled preordered K-theory.
We don't discuss the connection with Bratteli diagrams,
since the relation between Riesz groups and Bratteli diagrams
is well known and the generalization of the AF~algebra classification
to spatial $L^p$~AF algebras introduces nothing new here.

We begin by describing the relevant K-theoretic background.

\begin{dfn}\label{D_6324_PreOrd}
A {\emph{preordered abelian group}} is a pair $(G, G_{+})$ in which
$G$ is an abelian group and $G_{+}$ is a subset of $G$ such that
$0 \in G_{+}$ and $G_{+} + G_{+} \subset G_{+}$.
For $\et, \mu \in G$
we write $\et \leq \mu$ to mean that $\mu - \et \in G_{+}$.

A {\emph{scaled preordered abelian group}} is a triple $(G, G_{+}, \Sm)$
such that $(G, G_{+})$ is a preordered abelian group,
and $\Sm$ (the {\emph{scale}})
is a subset of~$G_{+}$
such that $0 \in \Sm$.

If $(H, H_{+})$ is another preordered abelian group,
and $f \colon G \to H$ is a \hm,
then $f$ is {\emph{positive}} if $f (G_{+}) \subset H_{+}$.
If $\Sm \subset G_{+}$ and $\Gm \subset H_{+}$ are scales,
we say that $f$ is {\emph{contractive}} if $f (\Sm) \subset \Gm$.
\end{dfn}

The definitions are weak
because they are supposed to accommodate $K_0 (A)$
for any Banach algebra~$A$.
For example
(using the notation of Definition~\ref{D_6324_K0} below),
for the algebras
$C_0 (\R^2)$,
${\mathcal{O}}_{\infty}$,
and
${\mathcal{O}}_{n}$
we get the following results
for $\big( K_0 (A), \, K_0 (A)_{+}, \, \Sm (A) \big)$:
\[
(\Z, \, \{ 0 \}, \, \{ 0 \}),
\, \, \, \, \, \,
(\Z, \Z, \Z),
\andeqn
(\Z / (n - 1) \Z, \, \Z / (n - 1) \Z, \, \Z / (n - 1) \Z).
\]
The scale need not be hereditary if $A$ does not have cancellation.

We will take the $K_0$-group of a Banach algebra to be as in
Section~5 of~\cite{Bl3}.
(We will make very little use of the $K_1$-group,
and we don't recall its definition.)
To start,
we recall one of the standard equivalence relations on idempotents.
It is called algebraic equivalence in Definition 4.2.1 of~\cite{Bl3}.

\begin{dfn}\label{D_5128_EqPj}
Let $A$ be a Banach algebra.
Let $e$ and $f$ be
idempotents in $A$.
We say that
$e$ is {\emph{algebraically Murray-von Neumann equivalent}}
to $f$, denoted
by $e \sim f$, if there exist $x, y \in A$ such that $x y = e$
and $y x = f$.
\end{dfn}

\begin{dfn}\label{D_6324_K0}
Let $A$ be a ring.
We define $M_{\infty} (A)$ to be
the (algebraic) direct limit of the matrix rings $M_n (A)$
under the embeddings $a \mapsto \diag (a, 0)$.
(See Definition 5.1.1 of~\cite{Bl3}.)
We define $V (A)$
to be the abelian semigroup
of algebraic Murray-von Neumann equivalence
classes of idempotents in $M_{\infty} (A)$.
(See Definition 5.1.2 of~\cite{Bl3} and the discussion afterwards.)

When $A$ is a Banach algebra,
we define
$\big( K_0 (A), \, K_0 (A)_{+}, \, \Sm (A) \big)$
as follows.
We take $K_0 (A)$ to be the usual $K_0$-group of~$A$,
as in,
for example, Definition 5.5.1 of~\cite{Bl3}.
(There is trouble
if one uses the definition there for more general rings.)
For $n \in \N$ and an idempotent $e \in M_n (A)$,
we write $[e]$ for its class in $K_0 (A)$.
We take
$K_0 (A)_{+}$ to be the image of $V (A)$ in $K_0 (A)$
under the map coming from 5.5.2 and Definition 5.3.1 of~\cite{Bl3}.
We take $\Sm (A)$ to be the image
under this map of the subset of $V (A)$
consisting of the classes of idempotents in $A \subset M_{\infty} (A)$.
\end{dfn}

We warn that $[e]$ is sometimes used for the class of $e$ in $V (A)$.
Since $V (A) \to K_0 (A)$ need not be injective,
this is not the same as the class of $e$ in $K_0 (A)$.

\begin{rmk}\label{R_6402_StdDfn}
We can rewrite the definitions of $K_0 (A)_{+}$ and $\Sm (A)$
as
\[
K_0 (A)_{+}
 = \big\{ [e] \colon
  {\mbox{$e$ is an idempotent in $M_{\infty} (A)$}} \big\}.
\]
and
\[
\Sm (A)
 = \big\{ [e] \colon {\mbox{$e$ is an idempotent in $A$}} \big\}.
\]
\end{rmk}

\begin{prp}\label{P_6324_KIsOrd}
Let $A$ be a Banach algebra.
Then $\big( K_0 (A), \, K_0 (A)_{+}, \, \Sm (A) \big)$
is a scaled preordered abelian group
in the sense of Definition~\ref{D_6324_PreOrd}.
\end{prp}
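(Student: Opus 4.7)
The plan is to verify the three conditions in Definition~\ref{D_6324_PreOrd}, using the convenient reformulations from Remark~\ref{R_6402_StdDfn}. That $K_0(A)$ is an abelian group is already built into the definition cited from~\cite{Bl3}, so it only remains to check the conditions on $K_0(A)_+$ and $\Sm(A)$.

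First I would observe that $0 \in A$ is an idempotent, so $[0] \in \Sm(A)$, and since $[0]$ is the neutral element of $K_0(A)$, this gives $0 \in \Sm(A)$. Via the inclusion $A \subset M_\I(A)$ (as the upper-left corner), every idempotent in $A$ is an idempotent in $M_\I(A)$, so the description in Remark~\ref{R_6402_StdDfn} makes it immediate that $\Sm(A) \subset K_0(A)_+$; in particular $0 \in K_0(A)_+$.

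For closure of $K_0(A)_+$ under addition, suppose $\et, \mu \in K_0(A)_+$. By Remark~\ref{R_6402_StdDfn} there exist idempotents $e, f \in M_\I(A)$ with $\et = [e]$ and $\mu = [f]$. Choose $n \in \N$ so that $e, f \in M_n(A)$, and consider $\diag(e, f) \in M_{2n}(A) \subset M_\I(A)$. This is an idempotent, and the standard computation of addition in $K_0$ (which follows from the description of the semigroup $V(A)$ on page referenced in Definition~\ref{D_6324_K0}, since $e$ and $\diag(0,f)$ are orthogonal idempotents in $M_{2n}(A)$) gives $[\diag(e, f)] = [e] + [f] = \et + \mu$. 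Hence $\et + \mu \in K_0(A)_+$ by Remark~\ref{R_6402_StdDfn}.

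Finally the condition $0 \in \Sm(A) \subset K_0(A)_+$ is already shown above, completing the verification. There is no real obstacle here: the entire proposition reduces to unwinding the definitions and using the block-diagonal sum of idempotents to witness addition in~$K_0(A)_+$.
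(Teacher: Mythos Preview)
Your proof is correct and takes the same approach as the paper, which simply says ``This is immediate.'' You have merely spelled out the routine verification that the paper leaves implicit.
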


\begin{proof}
This is immediate.
\end{proof}

Direct limits of direct systems of scaled preordered abelian
groups are constructed in the obvious way.

\begin{lem}\label{directlimitofpreorderedabegroups}
Let $I$ be a directed set.
For every $i \in I$ let
$(G_i, (G_i)_{+}, \Sm_i)_{i \in I}$ be a scaled preordered
abelian group, and for $i, j \in I$ with $i \leq j$ let
$g_{j, i} \colon G_i \to G_j$ be a positive contractive
homomorphism.
Let $G$ be the direct limit ${\textdisp{ \dirlim_i G_i }}$
as abelian groups
and for $i \in I$ let $g_i \colon G_i \to G$ be the canonical map.
Set
\[
G_{+} = \bigcup_{i \in I} g_i ((G_i)_{+})
\andeqn
\Sm = \bigcup_{i \in I} g_i (\Sm_i).
\]
Then $(G, G_{+}, \Sm)$ is a scaled preordered abelian group
and $(G, G_{+}, \Sm)$ is the direct limit of
$(G_i, (G_i)_{+}, \Sm_i)_{i \in I}$
in the category of scaled preordered abelian groups and
positive contractive homomorphisms.
\end{lem}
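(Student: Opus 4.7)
The plan is to verify the two assertions in the lemma: first, that $(G, G_{+}, \Sm)$ is actually a scaled preordered abelian group; second, that it satisfies the universal property defining a direct limit in the relevant category.

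For the first assertion, I would check each clause of \Def{D_6324_PreOrd} directly. Fixing any $i_0 \in I$, the element $0 \in G$ lies in $g_{i_0}((G_{i_0})_{+}) \subset G_{+}$ and in $g_{i_0}(\Sm_{i_0}) \subset \Sm$. For closure of $G_{+}$ under addition, given $\eta, \mu \in G_{+}$, write $\eta = g_i(\eta_i)$ and $\mu = g_j(\mu_j)$ with $\eta_i \in (G_i)_{+}$ and $\mu_j \in (G_j)_{+}$; choose $k \in I$ with $k \geq i$ and $k \geq j$ and use positivity of $g_{k,i}$ and $g_{k,j}$ to see that $g_{k,i}(\eta_i) + g_{k,j}(\mu_j) \in (G_k)_{+}$, whence its image under $g_k$ is $\eta + \mu$, and this image lies in $G_{+}$.

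For the universal property, suppose $(H, H_{+}, \Gm)$ is a scaled preordered abelian group and, for each $i \in I$, $f_i \colon G_i \to H$ is a positive contractive homomorphism compatible with the connecting maps (i.e.\ $f_j \circ g_{j,i} = f_i$ for $i \leq j$). The universal property of ${\textdisp{\dirlim_i G_i}}$ in the category of abelian groups produces a unique group homomorphism $f \colon G \to H$ with $f \circ g_i = f_i$ for all $i$. It then remains to check that $f$ is positive and contractive, and this follows from the descriptions of $G_{+}$ and $\Sm$ as unions of images: any $x \in G_{+}$ has the form $g_i(x_i)$ with $x_i \in (G_i)_{+}$, so $f(x) = f_i(x_i) \in H_{+}$, and the same argument with $\Sm_i$ and $\Gm$ gives contractivity.

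There is essentially no obstacle here; the lemma is formal bookkeeping that exploits the weakness of the definition in \Def{D_6324_PreOrd} (no hereditariness or cancellation is required of $G_{+}$ or $\Sm$), so the obvious ``union of images'' construction works without further modification. The only care needed is to notice that the direct system of groups is taken over a directed set, so any finite collection of elements of~$G$ can be pulled back to a common index~$k$, which is what makes both closure of $G_{+}$ and the verification of positivity/contractivity of~$f$ go through.
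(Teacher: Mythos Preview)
Your proposal is correct and is essentially the standard direct verification that the paper implicitly relies on. The paper's own proof merely cites Proposition~1.15 of~\cite{Gd0} for the unscaled case and declares the additional scale-related work ``easy, and omitted''; your argument spells out exactly those omitted details. One minor point you might make explicit for completeness is that the canonical maps $g_i$ are themselves positive and contractive (immediate from the definitions of $G_{+}$ and $\Sm$ as unions of images), and that $\Sm \subset G_{+}$ (since each $\Sm_i \subset (G_i)_{+}$), but these are trivial and do not affect the correctness of your approach.
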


\begin{proof}
Without the scales, see Proposition 1.15 in~\cite{Gd0}.
The additional work for scaled preordered abelian groups
is easy, and is omitted.
\end{proof}

\begin{thm}\label{T_6324_LimK0}
The assignment
$A \mapsto \big( K_0 (A), \, K_0 (A)_{+}, \, \Sm (A) \big)$
is a functor from Banach algebras and \hm{s}
to scaled preordered abelian groups and contractive positive \hm{s}
which commutes with direct limits in which
the maps are contractive.
\end{thm}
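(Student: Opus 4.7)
The plan is to establish functoriality first and then address compatibility with direct limits. Functoriality is the easier half: any continuous \hm{} $\ph \colon A \to B$ extends entrywise to $M_n (\ph) \colon M_n (A) \to M_n (B)$ and hence to $M_{\infty} (\ph)$. It sends idempotents to idempotents and preserves the relations $xy = e$, $yx = f$ defining algebraic \mvnc, yielding a semigroup map $V (\ph) \colon V (A) \to V (B)$ and, by Grothendieck's construction, the group \hm{} $K_0 (\ph)$. The descriptions recalled in Remark~\ref{R_6402_StdDfn} make preservation of $K_0 ( \cdot )_{+}$ and $\Sm ( \cdot )$ immediate, because images of idempotents under $\ph$ or $M_n (\ph)$ remain idempotents in the appropriate ambient algebras. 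The functor axioms are routine.

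For compatibility with direct limits, let $\big( (A_i)_{i \in I}, \, ( \ph_{j, i} )_{i \leq j} \big)$ be a contractive direct system of Banach algebras, with direct limit $A$ and canonical maps $\ph_i \colon A_i \to A$. Applying $K_0$ yields a directed system of scaled preordered groups whose limit, call it $(G, G_{+}, \Sm)$, is described explicitly by Lemma~\ref{directlimitofpreorderedabegroups}. Functoriality produces a canonical contractive positive \hm{} $\te \colon (G, G_{+}, \Sm) \to (K_0 (A), \, K_0 (A)_{+}, \, \Sm (A))$, and the goal is to show that $\te$ is a bijection intertwining the positive cones and the scales.

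The technical engine is the standard holomorphic functional calculus principle for Banach algebras: any element $a$ with $\| a^2 - a \|$ small enough generates a nearby idempotent (via $\frac{1}{2 \pi i} \oint_{\Gm} (\ld - a)^{-1} \, d\ld$ for a contour $\Gm$ separating $0$ from~$1$), and any two sufficiently close idempotents in a unital Banach algebra are similar, hence algebraically \mvnt. Combined with the density of ${\textdisp{\bigcup_{i \in I} \ph_i (A_i) }}$ in~$A$ and the parallel fact that $M_n (A) = {\textdisp{\dirlim_i}} \, M_n (A_i)$ with contractive bonding maps, this lets me prove: (i) every idempotent in $M_n (A)$ is similar to $\ph_i (f)$ for some $i$ and some idempotent $f \in M_n (A_i)$, giving surjectivity of~$\te$ and $\te (G_{+}) = K_0 (A)_{+}$; (ii) an algebraic \mvnc{} $x y = \ph_i (e)$, $y x = \ph_i (f)$ in $M_N (A)$ can be pulled back to a finite stage $A_j$ with $j \geq i$ by approximating $x, y$ by images from $M_N (A_j)$ and perturbing so the equations hold exactly, giving injectivity.

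The part requiring real care is preservation of the scale, because $\Sm (A)$ is defined by idempotents in $A$ itself and not in $M_{\infty} (A)$. Given an idempotent $e \in A$, density produces $a \in A_i$ with $\ph_i (a)$ close to~$e$; holomorphic functional calculus applied inside $A_i$ (or its unitization, when $A_i$ is nonunital) yields an honest idempotent $f \in A_i$, not merely in some $M_n (A_i)$. Then $\ph_i (f)$ is close to $e$ in~$A$, hence similar to~$e$ there, so $[ \ph_i (f) ] = [ e ]$ in $K_0 (A)$, giving $\Sm (A) \S \te (\Sm)$. The reverse inclusion is automatic from the definition of $K_0 (\ph_i)$ and the fact that $\ph_i$ carries idempotents of $A_i$ to idempotents of~$A$.
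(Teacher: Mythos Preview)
Your approach is correct and essentially unpacks what the paper's proof merely cites: the paper invokes Theorem~6.4 of~\cite{PhLp3} for $K_0$, 5.2.4 of~\cite{Bl3} for $V$ and $K_0 (\cdot)_{+}$, and Propositions 4.5.1--4.5.2 of~\cite{Bl3} for $\Sm$, whereas you sketch the perturbation-and-density arguments that prove those results. So there is no genuine difference in strategy.

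There is, however, one gap. Your item~(i) establishes only that $\te (G_{+}) = K_0 (A)_{+}$, and you label this ``surjectivity of~$\te$''. For a general Banach algebra $A$ one need not have $K_0 (A) = K_0 (A)_{+} - K_0 (A)_{+}$ (think of $A = C_0 (\R^2)$, where $K_0 (A)_{+} = \{ 0 \}$ but $K_0 (A) \cong \Z$), so hitting the positive cone does not give surjectivity of $\te$ as a group map. The fix is routine: $K_0$ of a nonunital Banach algebra is defined through idempotents in $M_n (A^{+})$, and since $A^{+} = {\textdisp{\dirlim_i}}\, A_i^{+}$ with contractive unital maps, your functional-calculus and similarity arguments transplant verbatim to $M_n (A^{+})$ to lift any idempotent there to some $M_n (A_i^{+})$. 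This yields honest surjectivity (and, with the same adjustment in your item~(ii), honest injectivity) at the group level. Your treatment of the scale already handles the delicate point correctly, since there you work in $A$ and $A_i$ themselves; the observation that the spectral idempotent $\chi (a)$ lands in $A_i$ rather than just $A_i^{+}$ follows from $\chi (0) = 0$, so that $\chi (a) - a = (a^2 - a) k (a) \in A_i$.
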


\begin{proof}
Functoriality of $K_0 (A)$ is
stated after Definition 5.5.1 of~\cite{Bl3}.
Functoriality of the other two parts is clear.
(Also see 5.2.1 of~\cite{Bl3}.)

The fact that $K_0 (A)$ commutes with direct limits
is Theorem~6.4 of~\cite{PhLp3}.
The statement for $K_0 (A)_{+}$
follows from that for $V (A)$,
which is 5.2.4 of~\cite{Bl3}.
The statement for $\Sm (A)$
follows by the same proof,
which is Propositions 4.5.1 and 4.5.2 of~\cite{Bl3}.
\end{proof}

For the part about the scale in the following definition, we
refer the reader to the beginning of Chapter~7 of~\cite{EE}.
Riesz groups are sometimes called dimension groups,
for example in the definition at the beginning
of Chapter~3 of~\cite{Gd0}.

\begin{dfn}\label{D_6324_Riesz}
Let $(G, G_{+})$ be a preordered abelian group.
We say that $G$ is an {\emph{unperforated ordered group}}
if:
\begin{enumerate}
\item\label{D_6324_Riesz_Gen}
$G_{+} - G_{+} = G$.
\item\label{D_6324_Riesz_IntZ}
$G_{+} \cap (- G_{+}) = \{ 0 \}$.
\item\label{D_6324_Riesz_Unperf}
Whenever $\et \in G$ and $n \in \N$
satisfy $n \et \in G_{+}$,
then $\et \in G_{+}$.
\setcounter{TmpEnumi}{\value{enumi}}
\end{enumerate}
We say that $(G, G_{+})$ is a {\emph{Riesz group}}
if, in addition:
\begin{enumerate}
\setcounter{enumi}{\value{TmpEnumi}}
\item\label{D_6324_Riesz_Interp}
Whenever $\et_1, \et_2, \mu_1, \mu_2 \in G$
satisfy $\et_j \leq \mu_k$ for $j, k \in \{ 1, 2 \}$,
then there exists $\ld \in G$ such that
$\et_j \leq \ld \leq \mu_k$ for $j, k \in \{ 1, 2 \}$.
\setcounter{TmpEnumi}{\value{enumi}}
\end{enumerate}
Let $(G, G_{+}, \Sm)$ be a scaled preordered abelian group.
We say that $G$ is a {\emph{scaled Riesz group}}
if $(G, G_{+})$ is a Riesz group,
and in addition:
\begin{enumerate}
\setcounter{enumi}{\value{TmpEnumi}}
\item\label{D_6324_Riesz_ScGen}
For every $\et \in G_{+}$ there are $n \in \N$
and $\mu_1, \mu_2, \ldots, \mu_n \in \Sm$
such that $\et = \mu_1 + \mu_2 + \cdots + \mu_n$.
\item\label{D_6324_PreOrd_Hered}
Whenever $\et, \mu \in G$
satisfy $0 \leq \et \leq \mu$ and $\mu \in \Sm$,
then $\et \in \Sm$.
\item\label{D_6324_Riesz_Up}
For all $\et, \mu \in \Sm$
there is $\ld \in \Sm$
such that $\et \leq \ld$ and $\mu \leq \ld$.
\end{enumerate}
\end{dfn}

We recall for reference some standard definitions and facts.
A few are restated for the $L^p$~case.

\begin{dfn}\label{D_6324_ZmOrder}
For $N \in \N$ we make $\Z^N$ a Riesz group by taking
\[
(\Z^N)_{+} = \big\{ (\et_1,\et_2,\ldots,\et_N) \in \Z^N \colon
  {\mbox{$\et_k \geq 0$ for $k = 1, 2, \ldots, N$}} \big\}.
\]
\end{dfn}

\begin{rmk}\label{R_6402_Scales}
The possible scales on $\Z^N$ are
exactly the following sets.
Take $d = (d_1, d_2, \ldots, d_N) \in \Z^N$
with $d_k > 0$ for $k = 1, 2, \ldots, N$,
and define
\[
[0, d] = \big\{ (\mu_1, \mu_2,\ldots, \mu_N) \in (\Z^N)_{+} \colon
   {\mbox{$\mu_k \leq d_k$ for $k = 1, 2, \ldots, N$}} \big\}.
\]
See page~43 of~\cite{EE}.
\end{rmk}

\begin{rmk}\label{D_6403_KThDSum}
Let $p \in [1, \infty)$.
Let ${\textdisp{ A = \bigoplus_{j = 1}^M \MP{c_j}{p} }}$
be a spatial semisimple finite dimensional $L^p$~operator algebra.
Set $c = (c_1, c_2, \ldots, c_M)$.
As in the C*~algebra case
(see pages 55--56 of~\cite{EE}),
using the notation from Definition~\ref{D_6324_ZmOrder}
and Remark~\ref{R_6402_Scales},
we have
\[
\big( K_0 (A), \, K_0 (A)_{+}, \, \Sm (A) \big)
 \cong \big( \Z^M, \, (\Z^M)_{+}, \, [0, c] \big).
\]
The map $K_0 (A) \to \Z^M$
sends the class of an idempotent
${\textdisp{(e_1, e_2, \ldots, e_M)
  \in \bigoplus_{j = 1}^M M_n (\MP{c_j}{p}) }}$
to
\[
\big(
 \rank (e_{1}), \, \rank (e_{2}), \, \ldots, \, \rank (e_{N}) \big).
\]
If ${\textdisp{ B = \bigoplus_{j = 1}^N M_{d_j}^p }}$
is another spatial semisimple finite dimensional $L^p$~operator algebra,
and $\ph \colon A \to B$ is a \hm,
then $\ph_{*} \colon \Z^M \to \Z^N$
is given by the partial multiplicity matrix $m (\ph)$
of Definition \ref{D_5208_Mult}(\ref{D_5208_Mult_Gen}).
\end{rmk}

\begin{lem}\label{findim}
Let ${\textdisp{ A = \bigoplus_{j = 1}^M M_{c_j}^p }}$
and ${\textdisp{ B = \bigoplus_{k = 1}^N M_{d_k}^p }}$
be spatial semisimple finite dimensional $L^p$~operator algebras.
Let $f$ a positive contractive
homomorphism from $\big( K_0 (A), \, K_0 (A)_{+}, \, \Sigma (A) \big)$
to $\big( K_0 (B), \, K_0 (B)_{+}, \, \Sigma (B) \big)$.
Then there
exists a spatial homomorphism $\ph \colon A \to B$
such that $\ph_* = f$.
Moreover, $\ph$ is unique
up to isometric similarity, and it can be chosen to be block diagonal.
\end{lem}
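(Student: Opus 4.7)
The plan is to translate the statement into elementary matrix combinatorics via Remark~\ref{D_6403_KThDSum} and then invoke the machinery already built up for block diagonal homomorphisms.

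First I would identify $K_0(A)$ with $\Z^M$ with scale $[0,c]$ (where $c=(c_1,\ldots,c_M)$) and $K_0(B)$ with $\Z^N$ with scale $[0,d]$, as in Remark~\ref{D_6403_KThDSum}. A group homomorphism $f\colon \Z^M\to\Z^N$ corresponds to an $N\times M$ integer matrix $m=(m_{k,j})$, and positivity of $f$ forces the entries to lie in $\Nz$. The assumption that $f$ is contractive, i.e.\ $f([0,c])\subset [0,d]$, applied to the generators $(0,\ldots,c_j,\ldots,0)\in[0,c]$, yields $\sum_{j=1}^M m_{k,j}c_j\le d_k$ for $k=1,2,\ldots,N$. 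This is precisely condition~(\ref{L_5208_Summary_Exist_Ineq}) of Lemma~\ref{L_5208_Summary}(\ref{L_5208_Summary_Exist}).

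For existence, Lemma~\ref{L_5208_Summary}(\ref{L_5208_Summary_Exist}) produces a block diagonal homomorphism $\ph\colon A\to B$ with $m(\ph)=m$. By Lemma~\ref{L_5208_Summary}(\ref{L_5208_Summary_BDImpSp}), such a $\ph$ is spatial. The formula for $\ph_*$ described at the end of Remark~\ref{D_6403_KThDSum} shows that $\ph_*=f$, so this $\ph$ has all the desired properties and is moreover block diagonal.

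For uniqueness up to isometric similarity, suppose $\ph,\ps\colon A\to B$ are two spatial homomorphisms with $\ph_*=\ps_*=f$. By Lemma~\ref{L_4326_BlkDiagHm}, each of $\ph$ and $\ps$ is isometrically similar to a block diagonal homomorphism; composing with the projections onto the summands $M_{d_k}^p$ of~$B$ reduces to the case $B=M_d^p$. Two block diagonal unital (or nonunital, after separating off the zero block) homomorphisms $\ph',\ps'\colon A\to M_d^p$ with the same partial multiplicity vector $(m_1,\ldots,m_M)$ differ only by a permutation of the diagonal blocks: there is a permutation of $\{1,2,\ldots,d\}$ carrying the block decomposition used by $\ph'$ to that used by $\ps'$. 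The corresponding permutation matrix is an invertible isometry in $M_d^p$ by Lemma~\ref{L_5917_MnSpIs}, and conjugation by it transforms $\ph'$ into $\ps'$. Reassembling over $k=1,2,\ldots,N$ via Lemma~\ref{L_5917_SSFDSpIs} gives an invertible isometry of $B$ implementing an isometric similarity between $\ph$ and $\ps$.

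The main obstacle is the uniqueness part, specifically the bookkeeping needed to show that two block diagonal homomorphisms with equal partial multiplicity matrices are conjugate by a complex permutation matrix; once this combinatorial observation is in hand, all other steps are direct appeals to Lemma~\ref{L_5208_Summary}, Lemma~\ref{L_4326_BlkDiagHm}, and Lemma~\ref{L_5917_SSFDSpIs}.
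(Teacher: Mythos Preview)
Your approach is essentially the same as the paper's: translate to matrices via Remark~\ref{D_6403_KThDSum}, use Lemma~\ref{L_5208_Summary}(\ref{L_5208_Summary_Exist}) and~(\ref{L_5208_Summary_BDImpSp}) for existence, and reduce uniqueness via Lemma~\ref{L_4326_BlkDiagHm} to the combinatorial fact that two block diagonal maps into $M_d^p$ with the same partial multiplicities differ by a permutation matrix. One small slip: applying contractivity to the individual elements $(0,\ldots,c_j,\ldots,0)$ only gives $m_{k,j}c_j\le d_k$ for each~$j$ separately; to get $\sum_{j=1}^M m_{k,j}c_j\le d_k$ you should apply it to the top element $c=(c_1,\ldots,c_M)\in[0,c]$ itself.
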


\begin{proof}
The homomorphism $f$ from
\[
\big( K_0 (A), \, K_0 (A)_{+}, \, \Sigma (A) \big)
 \cong \big( \mathbb{Z}^M, \, \mathbb{Z}^M_{+}, \, [0, [1_{A}]] \big)
\]
to
\[
\big( K_0 (B), \, K_0 (B)_{+}, \, \Sigma (B) \big)
 \cong \big( \mathbb{Z}^N, \, \mathbb{Z}^N_{+}, \, [0, [1_{B}]] \big)
\]
is given by an $N \times M$ matrix
$m = (m_{k, j})_{1 \leq k \leq N, \, 1 \leq j \leq M}$
with entries in~$\Z$.
(See Remark~\ref{D_6403_KThDSum}).
One checks that positivity implies that the entries are in~$\Nz$
and that contractivity implies that
${\textdisp{\sum_{j = 1}^M m_{k, j} c_j \leq d_k }}$ for
$k = 1, 2, \ldots, N$.
Lemma \ref{L_5208_Summary}(\ref{L_5208_Summary_Exist})
implies that there is a block diagonal \hm{}
$\ph \colon A \to B$
such that $m (\ph) = m$, and it is clear
that $\ph_* = f$ (Remark~\ref{D_6403_KThDSum}).
It follows from Lemma \ref{L_5208_Summary}(\ref{L_5208_Summary_BDImpSp})
that $\ph$ is spatial.

Now suppose
$\ps \colon A \to B$ is another spatial
homomorphism such that $\ps_* = f$.
Lemma~\ref{L_4326_BlkDiagHm} implies that
$\ps$ is isometrically similar
to a block diagonal \hm.
Therefore we may assume that both
$\ph$ and $\ps$ are block diagonal \hm{s}.
It is easy to check that
if $n \in \N$ then
two block diagonal \hm{s} from $A$ to $M_n$
with the same partial multiplicities
are similar via a permutation matrix,
and are thus isometrically similar.
It is now immediate that $m (\ph) = m (\ps)$
implies that $\ph$ and $\ps$ are isometrically similar.
\end{proof}

\begin{thm}\label{T_6324_Exist}
Let $(G, G_{+}, \Sm)$ be a countable scaled preordered
abelian group.
Then $(G, G_{+}, \Sm)$ is a scaled Riesz
group \ifo{} for $n \in \Nz$
there are scaled Riesz groups $(G_n, (G_n)_{+}, \Sm_n)$,
each isomorphic to a group as in Remark~\ref{R_6402_Scales},
and positive contractive \hm{s}
$f_{n + 1, \, n} \colon G_n \to G_{n + 1}$,
such that,
if for $m, n \in \Nz$ with $m \leq n$ we set
\[
f_{n, m} = f_{n, \, n - 1}  \circ f_{n - 1, \, n - 2}
     \circ \cdots \circ f_{m + 1, \, m},
\]
then
\[
(G, G_{+}, \Sm)
 \cong \dirlim \big( (G_n, (G_n)_{+}, \Sm_n)_{n \in \Nz},
   \,  (f_{n, m})_{0 \leq m \leq n} \big).
\]
\end{thm}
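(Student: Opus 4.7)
The plan is to prove both implications, with the forward direction being the substantial one.

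\emph{Backward direction.} Each building block $(\Z^N, (\Z^N)_{+}, [0, d])$ from Remark~\ref{R_6402_Scales} is a scaled Riesz group: the group is unperforated and satisfies $G_{+} \cap (-G_{+}) = \{ 0 \}$ by coordinate-wise inspection in~$\Z$; the Riesz interpolation property reduces to $\max(\et_1, \et_2) \leq \min(\mu_1, \mu_2)$ coordinate-wise; the scale $[0, d]$ generates $(\Z^N)_{+}$ since any positive element is a finite sum of scale elements, is hereditary by definition of the componentwise order, and is upward directed with maximum~$d$. Using Lemma~\ref{directlimitofpreorderedabegroups}, one checks that each of the seven axioms of Definition~\ref{D_6324_Riesz} is preserved under a direct limit with positive contractive bonding maps. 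The key point is that any finite list of elements and relations in the direct limit can be simultaneously lifted to some $G_n$, where the relevant axiom holds; the witness is then transported back down.

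\emph{Forward direction.} Enumerate the countable scale $\Sm = \{ \sm_1, \sm_2, \ldots \}$ and construct, inductively in $n \in \Nz$, groups $G_n \cong \Z^{N_n}$ with scales $[0, d_n]$, positive contractive \hm{s} $h_n \colon G_n \to G$, and bonding maps $f_{n+1, n} \colon G_n \to G_{n+1}$ satisfying $h_n = h_{n+1} \circ f_{n+1, n}$ and $\sm_1, \sm_2, \ldots, \sm_n \in h_n ([0, d_n])$. At the inductive step, let $e_1, e_2, \ldots, e_{N_n}$ be the standard basis of $G_n$, let $\ta_j = h_n (e_j) \in \Sm$, use upward directedness of $\Sm$ (Definition~\ref{D_6324_Riesz}(\ref{D_6324_Riesz_Up})) to pick $\rh \in \Sm$ dominating $d_{n, 1} \ta_1 + \cdots + d_{n, N_n} \ta_{N_n} + \sm_{n+1}$, and then invoke the Riesz decomposition property (a standard consequence of Riesz interpolation) to obtain a common refinement $\ld_1, \ld_2, \ldots, \ld_{N_{n+1}} \in \Sm$ writing each of $\ta_1, \ldots, \ta_{N_n}$ and $\sm_{n+1}$ as a nonnegative integer combination of the~$\ld_i$, with $\sum_i \ld_i \leq \rh$. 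The multiplicities then define $G_{n+1} = \Z^{N_{n+1}}$, a scale vector $d_{n+1}$ given by the decomposition of $\rh$, and both $h_{n+1}$ and $f_{n+1, n}$.

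\emph{Main obstacle.} The delicate part is simultaneously ensuring contractivity of $f_{n+1, n}$ (that $f_{n+1, n} ([0, d_n]) \S [0, d_{n+1}]$), compatibility $h_{n+1} \circ f_{n+1, n} = h_n$, and that the induced map $\dirlim G_n \to G$ is an isomorphism of scaled preordered abelian groups. Contractivity is arranged by taking $d_{n+1}$ to be the multiplicity vector of $\rh$ in the $\ld_i$ expansion, since $d_{n,j} \ta_j \leq \rh$; compatibility is built into the refinement. Surjectivity of the induced map onto $\Sm$ is immediate from $\sm_n \in h_n ([0, d_n])$, extends to $G_{+}$ since $\Sm$ generates $G_{+}$ by Definition~\ref{D_6324_Riesz}(\ref{D_6324_Riesz_ScGen}), and then to all of $G$ since $G_{+} - G_{+} = G$. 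Injectivity on the groups and the preorders is the most subtle point and follows from a further application of Riesz decomposition: any relation among the $h_n$-images (or a representative of an element of $K_0$ that is sent to zero) can be refined into a decomposition already living in some~$G_m$ with $m \geq n$. Finally, Lemma~\ref{directlimitofpreorderedabegroups} identifies the limit, giving $(G, G_{+}, \Sm) \cong \dirlim (G_n, (G_n)_{+}, \Sm_n)$.
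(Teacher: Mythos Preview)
Your proposal sketches a direct proof, whereas the paper simply cites the Effros--Handelman--Shen theorem (Theorem~2.2 of~\cite{EHS}) for the unscaled statement and then invokes Lemma~7.1 of~\cite{EE} to upgrade the factorizations in Lemma~2.1 of~\cite{EHS} so that the connecting maps respect the scales. So you are not taking a different route; you are attempting to reproduce the EHS argument itself. The backward direction and the surjectivity part of the forward direction are handled adequately, but there is a genuine gap in the forward direction.

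The gap is injectivity of the induced map $\dirlim G_n \to G$. Your inductive step only refines the generators $\ta_j$ and adjoins the next scale element $\sm_{n+1}$; it never kills an element of $\ker(h_n)$. The sentence ``any relation among the $h_n$-images \ldots\ can be refined into a decomposition already living in some $G_m$'' is exactly the content of Shen's criterion (Lemma~2.1 of~\cite{EHS}): given a positive \hm{} $h \colon \Z^N \to G$ into a Riesz group and $x \in \ker(h)$, one can factor $h$ through a positive map $g \colon \Z^N \to \Z^M$ with $g(x) = 0$. Establishing this factorization is the nontrivial heart of the EHS theorem, and your construction must interleave such relation-killing steps with the generator-adjoining steps; as written, nothing in the induction forces the limit map to be injective, and calling this ``a further application of Riesz decomposition'' hides the real work. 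A secondary imprecision: upward directedness of~$\Sm$ (axiom~(\ref{D_6324_Riesz_Up})) only yields $\rh \in \Sm$ dominating $h_n (d_n) \in \Sm$ and $\sm_{n+1}$ separately, not their sum; dominating the sum is neither obtainable from that axiom nor needed for the construction.
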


\begin{proof}
The statement for a countable preordered abelian group $(G, G_+)$
is Theorem~2.2 in~\cite{EHS}.
Using Lemma~7.1 of~\cite{EE} one shows that
if $(G, G_{+}, \Sm)$ is a scaled Riesz group then the
homomorphisms in the commutative diagram in Lemma 2.1 of~\cite{EHS}
can be chosen to be positive and contractive.
With this choice of homomorphisms
in Theorem 2.2 of~\cite{EHS},
one obtains the result for a scaled Riesz group.
\end{proof}

\begin{cor}\label{C_6811_AFIsRiesz}
Let $p \in [1, \infty) \setminus \{ 2 \}$,
and let $A$ be a spatial $L^p$~AF algebra.
Then $\big( K_0 (A), \, K_0 (A)_{+}, \, \Sm (A) \big)$
is a scaled Riesz group.
\end{cor}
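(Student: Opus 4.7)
The plan is straightforward: realize $A$ as a direct limit of spatial semisimple finite dimensional pieces and transport the entire scaled preordered group structure through $K_0$, reducing the claim to the already-stated direct limit characterization of scaled Riesz groups. By Definition~\ref{D_4327_SpSys}, I may write $A$ isometrically as ${\textdisp{\dirlim_m A_m}}$ where $\big((A_m)_{m\in\Nz}, \, (\ph_{n,m})_{0 \leq m \leq n}\big)$ is a spatial $L^p$~AF direct system; in particular each $A_m$ is a spatial semisimple finite dimensional $L^p$~operator algebra and each $\ph_{n,m}$ is a spatial homomorphism, hence contractive by Definition~\ref{D_4326_SpHmFD} together with Lemma~\ref{L_4326_CharSpHmFDalg}.

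I then apply Theorem~\ref{T_6324_LimK0}: since the connecting maps are contractive, the functor $B \mapsto \big(K_0(B), \, K_0(B)_{+}, \, \Sm(B)\big)$ commutes with this direct limit, yielding an isomorphism
\[
\big(K_0(A), \, K_0(A)_{+}, \, \Sm(A)\big) \cong \dirlim_m \big(K_0(A_m), \, K_0(A_m)_{+}, \, \Sm(A_m)\big)
\]
of scaled preordered abelian groups, with positive contractive transition maps at every stage. By Remark~\ref{D_6403_KThDSum}, for each $m$ the group $\big(K_0(A_m), \, K_0(A_m)_{+}, \, \Sm(A_m)\big)$ is isomorphic to one of the form $\big(\Z^{M_m}, \, (\Z^{M_m})_{+}, \, [0, c_m]\big)$ described in Remark~\ref{R_6402_Scales}, and is therefore a scaled Riesz group of the type considered there.

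Because the index set $\Nz$ is countable and each $K_0(A_m)$ is finitely generated, $K_0(A)$ is a countable abelian group. The ``if'' direction of Theorem~\ref{T_6324_Exist} then immediately implies that $\big(K_0(A), \, K_0(A)_{+}, \, \Sm(A)\big)$ is a scaled Riesz group, completing the proof. There is essentially no obstacle here: the corollary is a bookkeeping statement combining functoriality of $K_0$ under contractive direct limits, the explicit computation of $K_0$ for the finite dimensional building blocks, and the direct limit characterization of countable scaled Riesz groups, all of which have already been established.
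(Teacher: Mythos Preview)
Your proof is correct and follows essentially the same approach as the paper's own proof, which simply cites Theorem~\ref{T_6324_LimK0}, Remark~\ref{D_6403_KThDSum}, and Theorem~\ref{T_6324_Exist}; you have merely expanded these three references into an explicit argument.
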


\begin{proof}
Use Theorem~\ref{T_6324_LimK0}, Remark~\ref{D_6403_KThDSum},
and Theorem~\ref{T_6324_Exist}.
\end{proof}

For completeness,
we also state the result for~$K_1$.

\begin{prp}\label{P_6811_K1AF}
Let $p \in [1, \infty) \setminus \{ 2 \}$,
and let $A$ be a spatial $L^p$~AF algebra.
Then $K_1 (A) = 0$.
\end{prp}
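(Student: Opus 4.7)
The plan is to parallel the standard C*-algebraic argument: show that $K_1$ vanishes on each building block and passes through the direct limit. By Definition~\ref{D_4327_SpSys}, I may write $A = \dirlim_m A_m$ for a spatial $L^p$~AF direct system whose algebras $A_m$ are spatial semisimple finite dimensional $L^p$~operator algebras and whose connecting maps $\ph_{n,m}$ are contractive.

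First I would verify that $K_1(A_m) = 0$ for every $m$. Writing $A_m = \bigoplus_{j=1}^{N(m)} \MP{d(m,j)}{p}$ and using additivity of $K_1$ on finite direct sums, it suffices to check $K_1(\MP{d}{p}) = 0$ for every $d \in \N$. As an abstract complex algebra, $\MP{d}{p}$ is just $M_d(\C)$, and since it is finite dimensional it carries a unique Hausdorff topological vector space topology. Therefore $GL_n(\MP{d}{p}) = GL_n(M_d(\C)) = GL_{nd}(\C)$ as topological groups under the standard block inclusions used to form $M_\I$, and since each $GL_{nd}(\C)$ is path connected, $K_1(\MP{d}{p}) = K_1(\C) = 0$.

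Second, I would invoke continuity of $K_1$ for contractive direct limits of Banach algebras. The analogous statement for $K_0$ was cited in the proof of Theorem~\ref{T_6324_LimK0} via Theorem~6.4 of~\cite{PhLp3}, and the parallel statement for $K_1$ is a standard fact in Banach $K$-theory (see the treatment in Chapter~5 of~\cite{Bl3}); the proof is formally identical, using that any class in the limit is represented by an invertible at some finite stage and that two invertibles become homotopic in the direct limit if and only if they already become homotopic at some finite stage. Combining these two inputs gives
\[
K_1(A) = \dirlim_m K_1(A_m) = 0.
\]

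I do not expect a substantive obstacle here; the only mildly technical point is ensuring that the continuity theorem for $K_1$ applies in the contractive Banach algebra setting of this paper, and this follows by the same argument as for $K_0$.
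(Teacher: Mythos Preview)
Your proof is correct and follows essentially the same route as the paper: vanishing of $K_1$ on each finite dimensional summand (the paper cites Example 8.1.2(a) of~\cite{Bl3} where you give the path-connectedness argument directly) plus continuity of $K_1$ under Banach algebra direct limits (the paper cites Remark 8.1.5 of~\cite{Bl3}; note that $K_1$ lives in Chapter~8 of~\cite{Bl3}, not Chapter~5 as you wrote).
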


\begin{proof}
By Definition~\ref{D_4327_SpSys},
there is a spatial $L^p$~AF direct system
$\big( (A_m)_{m \in \Nz}, \, ( \ph_{n, m} )_{0 \leq m \leq n} \big)$
such that ${\textdisp{ A \cong \dirlim_{m} A_m }}$.
For $m \in \Nz$, write
${\textdisp{ A_m = \bigoplus_{j = 1}^{N (m)} M_{c_{m,j}}^p }}$.
Since
${\textdisp{ K_1 (A_m) = \bigoplus_{j = 1}^{N (m)} K_1 (M_{c_j}^p) }}$,
and since $K_1 (M_n^p) = 0$ for every $n \in \N$
by Example 8.1.2(a) in \cite{Bl3},
we obtain $K_1 (A_m) = 0$.
Since $K_1$ commutes with Banach algebra direct limits
(Remark 8.1.5 in \cite{Bl3}),
the conclusion follows.
\end{proof}

\begin{lem}[Proposition 5.5.5 of~\cite{Bl3}]\label{L_5917_AppIdIdem}
Let $A$ be a Banach algebra which has an approximate identity
consisting of idempotents.
Then $K_0 (A)$ is naturally isomorphic to
the Grothendieck group of $V (A)$.
\end{lem}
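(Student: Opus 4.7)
The plan is to realize $A$ as a contractive Banach algebra direct limit of its unital corner algebras and to transport the (straightforward) unital statement to $A$ by continuity of $K_0$ and of the Grothendieck construction. By passing to a cofinal subsequence of the approximate identity if necessary, I would arrange an increasing sequence $e_1 \le e_2 \le \cdots$ of idempotents in $A$ (in the sense of \Def{D_4Y17_Idemp}) that is still an approximate identity; then the corner algebras $B_n = e_n A e_n$ are unital Banach algebras with isometric (hence contractive) inclusions $B_n \hookrightarrow B_{n+1}$, and $\bigcup_n B_n$ is dense in $A$ because $\| e_n a e_n - a \| \to 0$ for every $a \in A$. This identifies $A$ with the direct limit $\dirlim_n B_n$ in the category of Banach algebras and contractive \hm{s}, and Theorem \ref{T_6324_LimK0} yields $K_0(A) \cong \dirlim_n K_0(B_n)$.

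Next I would show $V(A) \cong \dirlim_n V(B_n)$ as abelian semigroups. For surjectivity, given an idempotent $p \in M_r(A)$, the element $(1_r \otimes e_n)\, p\, (1_r \otimes e_n)$ lies in $M_r(B_n)$ and converges to $p$ in norm; for $n$ sufficiently large its spectrum, computed in the unital algebra $M_r(B_n)$, is clustered near $\{0,1\}$, and holomorphic functional calculus there produces a genuine idempotent $q \in M_r(B_n)$ with $\| q - p \|$ arbitrarily small. Since close idempotents in a Banach algebra are algebraically Murray--von Neumann equivalent, $[q] = [p]$ in $V(A)$. For injectivity of the direct-limit map, a parallel perturbation argument applied to witnesses $x, y \in M_r(A)$ of an equivalence $p \sim q$ (with $p, q \in M_r(B_n)$) produces $x', y' \in M_r(B_m)$ for some $m \ge n$ implementing the same equivalence at a finite stage.

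Each $B_n$ is unital, so $K_0(B_n) \cong \mathrm{Groth}(V(B_n))$ follows directly from the definition of $K_0$ for unital Banach algebras. Because the Grothendieck construction commutes with direct limits of abelian semigroups (being a left adjoint), the chain
\[
K_0(A) \cong \dirlim_n K_0(B_n) \cong \dirlim_n \mathrm{Groth}(V(B_n))
 \cong \mathrm{Groth}\bigl(\dirlim_n V(B_n)\bigr) \cong \mathrm{Groth}(V(A))
\]
yields the required isomorphism, and unravelling confirms that it is the canonical one induced by $V(A) \to K_0(A)$. The hard part is the perturbation bookkeeping in the second step: although the relevant facts about idempotents and algebraic equivalences in Banach algebras are standard, care is required to ensure that the various auxiliary elements all land simultaneously inside a single corner $M_r(B_n)$, and to arrange the initial passage to a well-behaved increasing approximate identity from an arbitrary one.
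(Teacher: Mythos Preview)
The paper does not give its own proof of this lemma; it is simply quoted from Blackadar.  So there is nothing to compare against except the standard reference.  Your direct-limit-of-corners strategy is a legitimate route to the result and is organized differently from Blackadar's argument, which works directly with the unitization $A^{+}$ and the identification $K_0(A)=\ker\big(K_0(A^{+})\to K_0(\C)\big)$ rather than writing $A$ as a limit of unital subalgebras.

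There is, however, one genuine gap.  You write that ``by passing to a cofinal subsequence of the approximate identity if necessary'' you obtain an increasing sequence $e_1\le e_2\le\cdots$.  This step cannot be done by selection alone: nothing in the hypothesis forces any two idempotents in the given approximate identity to be comparable in the sense of \Def{D_4Y17_Idemp}, so no subsequence need be increasing.  What one must do instead is \emph{construct} new idempotents.  Given $f_n$ already built and an idempotent $e$ from the approximate identity with $\|e f_n - f_n\|$ and $\|f_n e - f_n\|$ small, the element $(1-f_n)e(1-f_n)$ (computed in $A^{+}$) is an approximate idempotent in the corner $(1-f_n)A(1-f_n)$; holomorphic functional calculus then yields a genuine idempotent $h$ in that corner, and $f_{n+1}=f_n+h$ dominates $f_n$.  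Iterating gives the desired increasing sequence, but this is a construction, not a selection, and it is exactly here that the existence of idempotent approximate units is used in an essential way.  You flag this passage as ``the hard part'' at the end, which is right, but the phrase ``cofinal subsequence'' undersells what is actually required.

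For the application in this paper the issue evaporates: Proposition~\ref{P_4327_IsLpAlg} already supplies an increasing approximate identity of idempotents for any spatial $L^p$~AF algebra, so your argument goes through without modification in the only case the paper needs.
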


We can now give the main classification results.

\begin{thm}\label{T_6324_Classification_Exist}
Let $p \in [1, \infty)$.
Let $(G, G_{+}, \Sm)$ be a countable scaled Riesz group.
Then there exists a spatial $L^p$~AF algebra $A$
such that
\[
\big( K_0 (A), \, K_0 (A)_{+}, \, \Sm (A) \big)
 \cong (G, G_{+}, \Sm).
\]
\end{thm}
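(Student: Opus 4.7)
The plan is to follow the standard Elliott--Effros--Handelman--Shen strategy, leveraging the $L^p$~machinery already developed. First, by Theorem~\ref{T_6324_Exist} applied to the countable scaled Riesz group $(G, G_{+}, \Sm)$, write
\[
(G, G_{+}, \Sm)
 \cong \dirlim \big( (G_n, (G_n)_{+}, \Sm_n)_{n \in \Nz},
   \, (f_{n, m})_{0 \leq m \leq n} \big),
\]
where each $(G_n, (G_n)_{+}, \Sm_n)$ is isomorphic to a scaled Riesz group of the form $\big( \Z^{N_n}, \, (\Z^{N_n})_{+}, \, [0, d_n] \big)$ for some $N_n \in \Nz$ and $d_n = (d_{n, 1}, d_{n, 2}, \ldots, d_{n, N_n}) \in \N^{N_n}$, and the $f_{n + 1, \, n} \colon G_n \to G_{n + 1}$ are positive and contractive.

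Next, realize each $G_n$ at the algebra level. Set $A_n = \bigoplus_{j = 1}^{N_n} \MP{d_{n, j}}{p}$, which is a spatial semisimple finite dimensional $L^p$~operator algebra by Definition~\ref{D_4326_SFDLp}. By Remark~\ref{D_6403_KThDSum} there is a canonical identification
\[
\big( K_0 (A_n), \, K_0 (A_n)_{+}, \, \Sm (A_n) \big)
 \cong (G_n, (G_n)_{+}, \Sm_n).
\]
Using these identifications, apply Lemma~\ref{findim} to each $f_{n + 1, \, n}$ to obtain a spatial homomorphism $\ph_{n + 1, \, n} \colon A_n \to A_{n + 1}$ with $(\ph_{n + 1, \, n})_{*} = f_{n + 1, \, n}$; the lemma further allows us to take $\ph_{n + 1, \, n}$ to be block diagonal, hence contractive by Lemma~\ref{L_5208_Summary}(\ref{L_5208_Summary_BDImpSp}).

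Setting $\ph_{n, m} = \ph_{n, \, n - 1} \circ \cdots \circ \ph_{m + 1, \, m}$ for $m \leq n$, the pair $\big( (A_n)_{n \in \Nz}, \, (\ph_{n, m})_{0 \leq m \leq n} \big)$ is a spatial $L^p$~AF direct system in the sense of Definition~\ref{D_4327_SpSys}. Let $A = \dirlim_n A_n$ be its Banach algebra direct limit; by definition $A$ is a spatial $L^p$~AF algebra, and by Proposition~\ref{P_4327_IsLpAlg} it is a separable $L^p$~operator algebra. By Theorem~\ref{T_6324_LimK0}, the assignment $B \mapsto \big( K_0 (B), \, K_0 (B)_{+}, \, \Sm (B) \big)$ is a functor commuting with contractive direct limits, so
\[
\big( K_0 (A), \, K_0 (A)_{+}, \, \Sm (A) \big)
 \cong \dirlim_n (G_n, (G_n)_{+}, \Sm_n)
 \cong (G, G_{+}, \Sm),
\]
by Lemma~\ref{directlimitofpreorderedabegroups} and the choice of direct system.

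The substantive work is entirely frontloaded into the preparatory results: the existence and uniqueness (up to isometric similarity) of block diagonal lifts of positive contractive $K_0$-maps between finite-dimensional building blocks (Lemma~\ref{findim}, which rests on Lemma~\ref{L_4326_WhenSp} and Lemma~\ref{L_4326_BlkDiagHm}); the fact that a contractive direct limit of $L^p$~operator algebras is again an $L^p$~operator algebra (Theorem~\ref{T_4310_LpDLim}); and the continuity of the scaled preordered $K_0$-functor along such direct limits (Theorem~\ref{T_6324_LimK0}). Once these are granted, there is no new obstacle beyond the C*-algebra argument, and the construction carries over verbatim.
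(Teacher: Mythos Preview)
Your proof is correct and follows essentially the same route as the paper's: realize the scaled Riesz group as a direct limit of simplicial groups via Theorem~\ref{T_6324_Exist}, lift each step to a block diagonal (hence spatial) homomorphism between spatial semisimple finite dimensional $L^p$~operator algebras via Lemma~\ref{findim}, and invoke Theorem~\ref{T_6324_LimK0} for the direct limit. The only point you leave implicit is that the compositions $\ph_{n,m}$ are themselves spatial, which the paper notes follows from Corollary~\ref{C_5208_CompSpatial} (or from Lemma~\ref{L_5208_Summary}(\ref{L_5208_Summary_BDComp}) since compositions of block diagonal maps are block diagonal).
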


\begin{proof}
Choose a direct system as in Theorem~\ref{T_6324_Exist}.
For $n \in \Nz$,
Remark~\ref{D_6403_KThDSum}
shows that there is
a spatial semisimple finite dimensional $L^p$~operator algebra $A_n$
such that
\[
\big( K_0 (A_n), \, K_0 (A_n)_{+}, \, \Sm (A_n) \big)
 \cong (G_n, (G_n)_{+}, \Sm_n).
\]
Lemma~\ref{findim} provides a block diagonal \hm{}
$\ph_{n + 1, \, n} \colon A_n \to A_{n + 1}$
such that $(\ph_{n + 1, \, n})_* = f_{n + 1, \, n}$.
For $m, n \in \Nz$ with $m \leq n$ set
\[
\ph_{n, m} = \ph_{n, \, n - 1}  \circ \ph_{n - 1, \, n - 2}
     \circ \cdots \circ \ph_{m + 1, \, m} \colon
    A_m \to A_n.
\]
Then $\ph_{n, m}$ is spatial by Corollary~\ref{C_5208_CompSpatial} and
Lemma \ref{L_5208_Summary}(\ref{L_5208_Summary_BDImpSp}).

Define
${\textdisp{ A = \dirlim \big( (A_n)_{n \in \Nz},
   \,  (\ph_{n, m})_{0 \leq m \leq n} \big) }}$.
Then $A$ is a spatial $L^p$~AF algebra,
and
\[
\big( K_0 (A), \, K_0 (A)_{+}, \, \Sm (A) \big)
 \cong (G, G_{+}, \Sm)
\]
by Theorem~\ref{T_6324_LimK0}.
\end{proof}

The proof of Elliott's Theorem has two steps,
the first of which is entirely about the category of
scaled Riesz groups and positive contractive homomorphisms
(and which is exactly the same in every category of algebras),
and the second of which transfers the result to algebras
in the appropriate category.
The first step is the following lemma.
Even though it is a key step in the proof,
and the proof appears in a number of books,
we haven't found an explicit statement of this result in
the literature.

\begin{lem}\label{L_6418_MakeK0}
For every $m \in \Nz$ let $(G_m, (G_m)_{+}, \Sm_m)$
and $(H_m, (H_m)_{+}, T_m)$
be scaled Riesz groups,
each isomorphic to a group as in Remark~\ref{R_6402_Scales},
and for $m, n \in \Nz$
with $m \leq n$
let $g_{n, m} \colon G_m \to G_n$
and $h_{n, m} \colon H_m \to H_n$
be positive contractive \hm{s}
satisfying $g_{n, m} \circ g_{m, k} = g_{n, k}$
and $h_{n, m} \circ h_{m, k} = h_{n, k}$
whenever $0 \leq k \leq m \leq n$.
Set
\[
(G, G_{+}, \Sm) = \dirlim_n (G_n, (G_n)_{+}, \Sm_n)
\andeqn
(H, H_{+}, T) = \dirlim_n (H_n, (H_n)_{+}, T_n),
\]
and let $f \colon G \to H$ be an isomorphism of scaled ordered groups.
Then there exist $m_0, m_1,  \ldots, n_0, n_1,  \ldots \in \Nz$
such that $m_0 < m_1 < \cdots$ and $n_0 < n_1  < \cdots$,
and positive contractive \hm{s} $r_k \colon G_{m_k} \to H_{n_k}$
and $s_k \colon H_{n_k} \to G_{m_{k + 1}}$ for $k \in \Nz$,
such that the following diagram commutes:
\begin{displaymath}
\xymatrix@R2.8pc@C2.8pc{
G_{m_0} \ar[r]^{g_{m_1, m_0}} \ar[d]_{r_0} &
G_{m_1} \ar[r]^{g_{m_2, m_1}} \ar[d]_{r_1} &
G_{m_2} \ar[r]^{g_{m_3, m_2}} \ar[d]_{r_2} &
G_{m_3} \ar[d]_{r_3} \ar[r]^{g_{m_4, m_3}} &
\cdots \cdots \ar[r] & G \ar[d]^f \\
H_{n_0} \ar[r]_{h_{n_1, n_0}} \ar[ur]^{s_0} &
H_{n_1} \ar[r]_{h_{n_2, n_1}} \ar[ur]^{s_1} &
H_{n_2} \ar[r]_{h_{n_3, n_2}} \ar[ur]^{s_2} &
H_{n_3} \ar[r]_{h_{n_4, n_3}} \ar[ur]^{s_3} &
 \cdots \cdots \ar[r] & H,}
\end{displaymath}
and such that $f$ is the direct limit of the maps $r_k$ for $k \in \Nz$
and $f^{-1}$ is the direct limit of the maps $s_k$ for $k \in \Nz$.
\end{lem}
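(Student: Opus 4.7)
The plan is to carry out Elliott's approximate intertwining argument in this purely algebraic setting. I will construct the indices $m_k$, $n_k$ and the maps $r_k, s_k$ simultaneously by induction on~$k$, arranging that the following hold for every~$k$:
\begin{enumerate}
\item[(i)] $h_{n_k} \circ r_k = f \circ g_{m_k}$;
\item[(ii)] $g_{m_{k+1}} \circ s_k = f^{-1} \circ h_{n_k}$;
\item[(iii)] $s_k \circ r_k = g_{m_{k+1}, m_k}$;
\item[(iv)] $r_{k+1} \circ s_k = h_{n_{k+1}, n_k}$.
\end{enumerate}
Conditions (iii) and (iv) will give commutativity of the displayed diagram, while (i) and (ii) will force the induced maps between the direct limits to be $f$ and $f^{-1}$.

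The main tool will be the following lifting lemma: if $G'$ is any one of the $G_m$ or $H_n$ and $\rho \colon G' \to H$ is a positive contractive homomorphism, then there exist $n \in \Nz$ and a positive contractive $\tilde\rho \colon G' \to H_n$ with $h_n \circ \tilde\rho = \rho$ (and a symmetric statement for maps into~$G$). To prove it, I would first use that $G' \cong \Z^N$ is free abelian to lift $\rho$ to a group homomorphism $\tilde\rho_0 \colon G' \to H_{n_0}$ by choosing $h_{n_0}$-preimages of the $\rho(e_i)$ for a basis $e_1, \ldots, e_N$ of $G'$, taking $n_0$ large enough to accommodate all (finitely many) generators. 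To arrange positivity, I exploit that $\rho(e_i) \in H_{+} = \bigcup_n h_n((H_n)_{+})$ together with the fact that $\ker h_n$ is the directed union of the $\ker h_{n', n}$, so after enlarging $n_0$ and composing $\tilde\rho_0$ with a connecting map I may assume $\tilde\rho_0(e_i) \in (H_{n_0})_{+}$; additivity of the cone then propagates positivity to all of $(G')_{+}$. For contractivity, the crucial point is that by Remark~\ref{R_6402_Scales} the scale $\Sm'$ of~$G'$ is \emph{finite}: for each $\mu \in \Sm'$ pick $t_\mu \in T_{k_\mu}$ with $h_{k_\mu}(t_\mu) = \rho(\mu)$, and enlarge $n$ once more so that $h_{n, n_0}(\tilde\rho_0(\mu)) = h_{n, k_\mu}(t_\mu)$ in $H_n$ for each of the finitely many such~$\mu$; contractivity of $h_{n, k_\mu}$ then places $\tilde\rho(\mu)$ in $T_n$.

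With the lifting lemma available, the induction becomes routine. Set $m_0 = 0$ and apply the lemma to $f \circ g_{m_0}$ to obtain $r_0$ and $n_0$, giving (i) for $k=0$. Given $r_k$ and~$n_k$, apply the lemma to $f^{-1} \circ h_{n_k}$ to obtain a positive contractive $s_k \colon H_{n_k} \to G_{m'}$ satisfying (ii). Condition (iii) is then arranged by enlarging $m'$ to some~$m_{k+1}$: using (i) and (ii), both $s_k \circ r_k$ and $g_{m', m_k}$ agree after composition with $g_{m'}$ into~$G$, so since $G_{m_k}$ is finitely generated and $\ker g_{m'}$ is the directed union of the $\ker g_{m'', m'}$, there exists $m_{k+1} \geq m'$ forcing actual equality. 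Composing $s_k$ with the intermediate $g$-connecting map preserves (ii), positivity, and contractivity. Construct $r_{k+1}$ analogously by applying the lemma to $f \circ g_{m_{k+1}}$ and enlarging~$n_{k+1}$ to secure~(iv).

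The hardest step will be the contractivity portion of the lifting lemma; this is where the hypothesis that each $G_m$ and $H_n$ has the form of Remark~\ref{R_6402_Scales} is indispensable, since it guarantees that the scales are finite sets and so only finitely many elements need to be tracked. Positivity is comparatively easy thanks to finite generation of the positive cone, and the diagram bookkeeping in (iii) and (iv) is standard once kernels of connecting maps can be controlled by passing further along the direct systems. Once the induction is complete, (i) and (ii), passed to the direct limits, show that the induced maps $G \to H$ and $H \to G$ built from the $r_k$ and $s_k$ are exactly $f$ and $f^{-1}$.
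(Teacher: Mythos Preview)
Your proposal is correct and follows the standard Elliott intertwining argument; the lifting lemma (using finite generation of $\Z^N$, the monoid generation of $(\Z^N)_+$ by the basis vectors, and finiteness of the scale $[0,d]$) and the subsequent inductive bookkeeping are exactly what is needed. The paper itself does not give a self-contained proof but simply refers to the proof of Theorem~7.3.2 in Blackadar's K-theory book, which is precisely the argument you have reconstructed.
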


\begin{proof}
This result is contained,
in slightly different language,
in the proof of Theorem 7.3.2 of~\cite{Bl3}
(starting with the second diagram there).
\end{proof}

We also want the one sided version of Lemma~\ref{L_6418_MakeK0}.

\begin{lem}\label{L_6418_OneSided}
Let $(G, G_{+}, \Sm)$ and $(H, H_{+}, T)$
be as in Lemma~\ref{L_6418_MakeK0},
and let $f \colon G \to H$ be a positive contractive \hm.
Then there exist $m_0, m_1, \ldots$, $n_0, n_1, \ldots \in \Nz$
such that $m_0 < m_1 < \cdots$ and $n_0 < n_1 < \cdots$,
and positive contractive \hm{s}
$r_k \colon G_{m_k} \to H_{n_k}$ for $k \in \Nz$,
such that the following diagram commutes:
\begin{displaymath}
\xymatrix@R2.8pc@C2.8pc{
G_{m_0} \ar[r]^{g_{m_1, m_0}} \ar[d]_{r_0} &
G_{m_1} \ar[r]^{g_{m_2, m_1}} \ar[d]_{r_1} &
G_{m_2} \ar[r]^{g_{m_3, m_2}} \ar[d]_{r_2} &
G_{m_3} \ar[d]_{r_3} \ar[r]^{g_{m_4, m_3}} &
\cdots \cdots \ar[r] & G \ar[d]^f \\
H_{n_0} \ar[r]_{h_{n_1, n_0}} &
H_{n_1} \ar[r]_{h_{n_2, n_1}} &
H_{n_2} \ar[r]_{h_{n_3, n_2}} &
H_{n_3} \ar[r]_{h_{n_4, n_3}} &
 \cdots \cdots \ar[r] & H,}
\end{displaymath}
and such that $f$ is the direct limit of the maps $r_k$ for $k \in \Nz$.
\end{lem}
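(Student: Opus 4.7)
The plan is to carry out a one-sided intertwining argument analogous to the one in Lemma~\ref{L_6418_MakeK0}, inductively constructing the sequences $(m_k)_{k \in \Nz}$ and $(n_k)_{k \in \Nz}$ together with the maps $r_k$ so that the downward squares commute and $h_{n_k} \circ r_k = f \circ g_{m_k}$ for every~$k$. Because only $f$ needs to be recovered, not an inverse, no back-and-forth is needed and one simply propagates forward.

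The key input is the following lifting property: if $\ph \colon G_m \to H$ is any positive contractive \hm, then there exist $n \in \Nz$ and a positive contractive \hm{} $\tilde\ph \colon G_m \to H_n$ such that $h_n \circ \tilde\ph = \ph$. To prove this, recall that $G_m \cong \Z^N$ is generated by the standard basis $e_1, e_2, \ldots, e_N$, and its scale $\Sm_m$ is a \emph{finite} subset of $(G_m)_{+}$, being of the form $[0, d]$. For each $e_j$, the element $\ph(e_j)$ lies in ${\textdisp{H_{+} = \bigcup_n h_n ((H_n)_{+})}}$, so choosing $n$ large enough one can lift each $\ph (e_j)$ to an element $\eta_j \in (H_n)_{+}$ and define $\tilde\ph$ by linear extension. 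By enlarging $n$ still further, and using that any two lifts of the same element of $H$ become equal in some later $H_{n'}$, one may also require that for each of the finitely many $\mu \in \Sm_m$ the element $\tilde\ph (\mu)$ lies in $T_n$; contractivity then follows.

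With this in hand, I would construct the data by induction on~$k$. Start with $m_0 = 0$ and apply the lifting property to $f \circ g_{m_0}$ to obtain $n_0$ and a positive contractive $r_0 \colon G_{m_0} \to H_{n_0}$ with $h_{n_0} \circ r_0 = f \circ g_{m_0}$. Inductively, given $r_k$, choose any $m_{k+1} > m_k$ and apply the lifting property to $f \circ g_{m_{k+1}}$ to obtain some $n$ and a positive contractive $\tilde r \colon G_{m_{k+1}} \to H_n$ with $h_n \circ \tilde r = f \circ g_{m_{k+1}}$. The two \hm{s} $\tilde r \circ g_{m_{k+1}, m_k}$ and $h_{n, n_k} \circ r_k$ from $G_{m_k}$ to $H_n$ agree once composed with $h_n$, hence, since $G_{m_k}$ is finitely generated, they become equal in $H_{n_{k+1}}$ for some $n_{k+1} > \max (n, n_k)$. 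Set $r_{k+1} = h_{n_{k+1}, n} \circ \tilde r$: it is positive and contractive as a composition of positive contractive maps, and satisfies both $r_{k+1} \circ g_{m_{k+1}, m_k} = h_{n_{k+1}, n_k} \circ r_k$ and $h_{n_{k+1}} \circ r_{k+1} = f \circ g_{m_{k+1}}$. Since the strictly increasing sequences $(m_k)$ and $(n_k)$ are cofinal in $\Nz$, the induced map on the direct limits is well defined and, by the compatibility $h_{n_k} \circ r_k = f \circ g_{m_k}$, equals~$f$.

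The only mild obstacle is simultaneously arranging positivity and contractivity in the lifting step, but this is handled essentially for free by the fact that $G_m$ has finitely many positive generators and its scale $[0, d]$ is a finite set, so that a single sufficiently large level~$n$ suffices to satisfy all the required conditions at once.
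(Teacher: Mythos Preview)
Your proposal is correct and follows exactly the standard one-sided intertwining argument that the paper has in mind; the paper's own proof merely refers back to Lemma~\ref{L_6418_MakeK0} (and thence to Theorem~7.3.2 of~\cite{Bl3}) and says the one-sided version is similar but simpler, which is precisely what you carry out. One small cosmetic point: when you write $h_{n, n_k} \circ r_k$ in the induction step you are implicitly assuming $n \geq n_k$, so you should say explicitly that after lifting you replace $n$ by $\max(n, n_k + 1)$ before comparing the two maps.
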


\begin{proof}
The proof is very similar to,
but slightly simpler than,
that of Lemma~\ref{L_6418_MakeK0}.
\end{proof}

\begin{thm}\label{T_6324_Classification_Uniq}
Let $p \in [1, \infty)$.
Let $A$ and $B$ be spatial $L^p$~AF algebras,
and let $f \colon K_0 (A) \to K_0 (B)$
define an isomorphism from
$\big( K_0 (A), \, K_0 (A)_{+}, \, \Sm (A) \big)$
to $\big( K_0 (B), \, K_0 (B)_{+}, \, \Sm (B) \big)$.
Then there is a completely isometric isomorphism
$\ph \colon A \to B$ such that $\ph_* = f$.
\end{thm}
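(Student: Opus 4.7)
The plan is to run the Elliott intertwining argument, adapted to the $L^p$~setting using the tools developed throughout the paper. First, using \Prp{injectivedirectsystem} and \Def{D_4327_SpSys}, I write $A \cong \dirlim_m (A_m, \af_{n, m})$ and $B \cong \dirlim_n (B_n, \bt_{n, m})$ as direct limits of spatial $L^p$~AF direct systems whose building blocks are spatial semisimple finite dimensional $L^p$~operator algebras. By \Thm{T_6324_LimK0} and Remark~\ref{D_6403_KThDSum}, the scaled preordered $K_0$-groups of $A$ and $B$ are realized as direct limits of scaled Riesz groups of the form $(\Z^N, (\Z^N)_{+}, [0, d])$ appearing in Remark~\ref{R_6402_Scales}. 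I then apply \Lem{L_6418_MakeK0} to the isomorphism $f$ (and implicitly to $f^{-1}$) to produce, after passing to subsequences, positive contractive homomorphisms $r_k \colon K_0 (A_{m_k}) \to K_0 (B_{n_k})$ and $s_k \colon K_0 (B_{n_k}) \to K_0 (A_{m_{k + 1}})$ that intertwine the connecting maps and have direct limits $f$ and $f^{-1}$.

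Next, I lift each $r_k$ and $s_k$ to a block diagonal (hence spatial) homomorphism $\rh_k \colon A_{m_k} \to B_{n_k}$ and $\sm_k \colon B_{n_k} \to A_{m_{k + 1}}$ using \Lem{findim}. The $K_0$-level diagram commutes, but at the algebra level the two compositions $\sm_k \circ \rh_k$ and $\af_{m_{k + 1}, m_k}$ (respectively $\rh_{k + 1} \circ \sm_k$ and $\bt_{n_{k + 1}, n_k}$) induce the same $K_0$-maps, so by the uniqueness clause of \Lem{findim} they agree only up to isometric similarity.

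The main technical step, where all the bookkeeping lives, is the standard Elliott adjustment: I inductively conjugate the $\rh_k$ and $\sm_k$ by invertible isometries in the appropriate building blocks (these exist and have the explicit form given by \Lem{L_5917_SSFDSpIs}) in order to force strict commutativity of the intertwining. At each stage only one new isometric similarity needs to be absorbed, so the adjustments can be carried out compatibly; \Lem{C_5917_SpIsoSim} guarantees the adjusted maps remain spatial, and Corollary~\ref{C_5208_CompSpatial} ensures that the compositions used along the way are spatial as well. Taking direct limits of the adjusted diagrams yields homomorphisms $\ph \colon A \to B$ and $\ps \colon B \to A$ satisfying $\ps \circ \ph = \id_A$, $\ph \circ \ps = \id_B$, and $\ph_{*} = f$ on $K_0$.

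It remains to check that $\ph$ is completely isometric. Each $\rh_k$ is block diagonal and therefore completely contractive by \Lem{L_5208_Summary}(\ref{L_5208_Summary_BDCC}); after conjugation by an invertible isometry it remains completely contractive by \Prp{P_5916_PropOfISHm}(\ref{P_5916_PropOfISHm_Cmp}). Since the matrix norms on $A$ and $B$ (which are unique by Corollary~\ref{C_6416_GenUniqMN}, so there is no ambiguity in what ``completely isometric'' means) are exactly those produced from the direct limit construction of Definition~\ref{D_4615_MatNormsLpAF}, and since direct limits of completely contractive maps of matricial $L^p$~operator algebras are completely contractive, the map $\ph$ is completely contractive. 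By symmetry $\ps = \ph^{-1}$ is completely contractive as well, so both are forced to be completely isometric isomorphisms, which is the desired conclusion.
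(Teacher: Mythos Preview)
Your proposal is correct and follows essentially the same Elliott intertwining argument as the paper's own proof: apply \Lem{L_6418_MakeK0} at the $K_0$-level, lift the resulting maps via \Lem{findim}, inductively conjugate by invertible isometries to force strict commutativity, and conclude complete isometry from mutual complete contractivity. The only cosmetic differences are that the paper does not bother invoking \Prp{injectivedirectsystem} (injectivity of the connecting maps is not needed for the argument) and does not explicitly cite Corollary~\ref{C_6416_GenUniqMN}, but these are harmless additions on your part.
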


\begin{proof}
By definition,
we can write $A$ and $B$
as direct limits of spatial $L^p$~AF direct systems,
\[
A = \dirlim_m
 \big( (A_m)_{m \in \Nz}, ( \af_{n, m})_{0 \leq m \leq n} \big)
\andeqn
B = \dirlim_m
 \big( (B_m)_{m \in \Nz}, ( \bt_{n, m})_{0 \leq m \leq n} \big).
\]
For $m \in \Nz$
let $\af_{m} \colon A_m \to A$
and $\bt_{m} \colon B_m \to B$
be the canonical maps.
Apply Lemma~\ref{L_6418_MakeK0}
with
\[
(G_m, (G_m)_{+}, \Sm_m)
 = \big( K_0 (A_m), \, K_0 (A_m)_{+}, \, \Sm (A_m) \big)
\]
and
\[
(H_m, (H_m)_{+}, T_m)
 = \big( K_0 (B_m), \, K_0 (B_m)_{+}, \, \Sm (B_m) \big)
\]
for $m \in \Nz$
(see Remark~\ref{D_6403_KThDSum}),
with $g_{m, n} = (\af_{n, m})_*$
and $h_{m, n} = (\bt_{n, m})_*$
whenever $m, n \in \Nz$ with $n \geq m$,
and with
\[
(G, G_{+}, \Sm)
 = \big( K_0 (A), \, K_0 (A)_{+}, \, \Sm (A) \big)
\]
and
\[
(H, H_{+}, T)
 = \big( K_0 (B), \, K_0 (B)_{+}, \, \Sm (B) \big)
\]
(justified by Theorem~\ref{T_6324_LimK0}).
Let
\[
m_0 < m_1 < \cdots,
\, \, \, \, \, \,
n_0 < n_1 < \cdots,
\, \, \, \, \, \,
r_k \colon G_{m_k} \to H_{n_k},
\andeqn
s_k \colon H_{n_k} \to G_{m_{k + 1}}
\]
be as in Lemma~\ref{L_6418_MakeK0},
making the diagram there commute.

We construct by induction on $k$
spatial \hm{s}
\[
\ph_k \colon A_{m_k} \to B_{n_k}
\andeqn
\ps_k \colon B_{n_k} \to A_{m_{k + 1}}
\]
such that $(\ph_k)_* = r_k$
and $(\ps_k)_* = s_k$
for $k \in \Nz$,
and such that the diagram
\begin{equation}\label{Eq_4620_Diag}
\xymatrix@R2.8pc@C2.8pc{
A_{m_0} \ar[r]^{\af_{m_1, m_0}} \ar[d]_{\ph_0} &
A_{m_1} \ar[r]^{\af_{m_2, m_1}} \ar[d]_{\ph_1} &
A_{m_2} \ar[r]^{\af_{m_3, m_2}} \ar[d]_{\ph_2} &
A_{m_3} \ar[d]_{\ph_3} \ar[r]^{\af_{m_4, m_3}} &
\cdots \cdots \ar[r] & A \\
B_{n_0} \ar[r]^{\bt_{n_1, n_0}} \ar[ur]^{\ps_0} &
B_{n_1} \ar[r]^{\bt_{n_2, n_1}} \ar[ur]^{\ps_1} &
B_{n_2} \ar[r]^{\bt_{n_3, n_2}} \ar[ur]^{\ps_2} &
B_{n_3} \ar[r]^{\bt_{n_4, n_3}} \ar[ur]^{\ps_3} &
 \cdots \cdots \ar[r] & B}
\end{equation}
commutes.

For the initial step,
use the existence statement in Lemma~\ref{findim}
to choose a spatial homomorphism
$\ph_0 \colon A_{m_0} \to B_{n_0}$ such that
$(\ph_0)_* = r_0$.
Use the existence statement in Lemma~\ref{findim}
to choose a spatial homomorphism
$\ps_0^{(0)} \colon B_{n_0} \to A_{m_{1}}$
such that $\big( \ps_0^{(0)} \big)_* = s_0$, and
use Corollary \ref{C_5208_CompSpatial}  and the uniqueness
statement in Lemma~\ref{findim}
to choose an invertible isometry $t \in A_{m_1}$
such that
\[
t \big( \ps_0^{(0)} \circ \ph_0 \big) (a) t^{-1} = \af_{m_1, m_0} (a)
\]
for all $a \in A_{m_0}$.
Define $\ps_0$ by
$\ps_0 (b) = t \ps_0^{(0)} (b) t^{-1}$ for $b \in B_{n_0}$.
For the induction step,
suppose we have $\ph_k$ and~$\ps_k$.
Use the existence statement in Lemma~\ref{findim}
to choose a spatial homomorphism
$\ph_{k + 1}^{(0)} \colon A_{m_{k + 1}} \to B_{n_{k+ 1}}$
such that $\big( \ph_{k + 1}^{(0)} \big)_* = r_{k + 1}$.
Use the uniqueness statement in Lemma~\ref{findim}
to choose an invertible isometry $v \in B_{n_{k+1}}$
such that
\[
v \big( \ph_{k + 1}^{(0)} \circ \ps_{k} \big) (b) v^{-1}
  = \bt_{n_{k + 1}, n_{k}} (b)
\]
for all $b \in B_{n_{k}}$,
and define $\ph_{k + 1}$ by
$\ph_{k + 1} (a) = v \ph_{k + 1}^{(0)} (a) v^{-1}$
for $a \in A_{m_{k + 1}}$.
The construction of $\ps_{k + 1}$ is now the same as
the construction of $\ps_0$ in the initial step.

For $k \in \Nz$,
the map $\ph_k$ is completely contractive
by Lemma~\ref{L_4326_BlkDiagHm},
Lemma \ref{L_5208_Summary}(\ref{L_5208_Summary_BDCC}),
and Proposition~\ref{P_5916_PropOfISHm}(\ref{P_5916_PropOfISHm_Cmp}).
Commutativity of the diagram~(\ref{Eq_4620_Diag})
therefore implies the existence
of a contractive \hm{}
$\ph \colon A \to B$
such that $\ph \circ \af_{m_k} = \bt_{n_k} \circ \ph_k$
for all $k \in \Nz$,
and $\ph$ must in fact be completely contractive.
Similarly,
we get a completely contractive \hm{}
$\ps \colon B \to A$
such that $\ps \circ \bt_{n_k} = \af_{m_{k + 1}} \circ \ps_k$
for all $k \in \Nz$.
Using the universal property of direct limits,
we find that $\ph \circ \ps = \id_B$
and $\ps \circ \ph = \id_A$.
Therefore $\ph$ and $\ps$ are completely isometric.
It is clear that $\ph_* = f$.
\end{proof}

\begin{thm}\label{T_6420_ExistHom}
Let $p \in [1, \infty)$.
Let $A$ and $B$ be spatial $L^p$~AF algebras,
and let $f \colon K_0 (A) \to K_0 (B)$
define a positive contractive homomorphism from
$\big( K_0 (A), \, K_0 (A)_{+}, \, \Sm (A) \big)$
to $\big( K_0 (B), \, K_0 (B)_{+}, \, \Sm (B) \big)$.
Then there is a completely contractive homomorphism
$\ph \colon A \to B$ such that $\ph_* = f$.
\end{thm}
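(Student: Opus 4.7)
The plan is to mimic the proof of Theorem~\ref{T_6324_Classification_Uniq}, but using the one-sided intertwining Lemma~\ref{L_6418_OneSided} in place of Lemma~\ref{L_6418_MakeK0}. Write
\[
A = \dirlim_m \big( (A_m)_{m \in \Nz}, \, (\af_{n, m})_{0 \leq m \leq n} \big)
\andeqn
B = \dirlim_m \big( (B_m)_{m \in \Nz}, \, (\bt_{n, m})_{0 \leq m \leq n} \big)
\]
as spatial $L^p$~AF direct systems with canonical maps $\af_m$ and~$\bt_m$. Apply Lemma~\ref{L_6418_OneSided} with $(G_m, (G_m)_+, \Sm_m) = \big( K_0 (A_m), K_0 (A_m)_+, \Sm (A_m) \big)$, with $(H_m, (H_m)_+, T_m)$ the analogous groups for~$B$, and with $g_{n, m} = (\af_{n, m})_*$ and $h_{n, m} = (\bt_{n, m})_*$; this is justified by Remark~\ref{D_6403_KThDSum} and Theorem~\ref{T_6324_LimK0}. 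We obtain strictly increasing sequences $(m_k)_{k \in \Nz}$ and $(n_k)_{k \in \Nz}$ and positive contractive \hm{s} $r_k \colon K_0 (A_{m_k}) \to K_0 (B_{n_k})$ realizing~$f$ in the limit and making the one-sided intertwining diagram commute.

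Next I would inductively construct spatial \hm{s} $\ph_k \colon A_{m_k} \to B_{n_k}$ with $(\ph_k)_* = r_k$ such that
\[
\ph_{k + 1} \circ \af_{m_{k + 1}, m_k} = \bt_{n_{k + 1}, n_k} \circ \ph_k
\]
holds on the nose for every $k \in \Nz$. For $k = 0$, invoke the existence statement in Lemma~\ref{findim} to pick any spatial $\ph_0$ with $(\ph_0)_* = r_0$. Given $\ph_k$, use Lemma~\ref{findim} again to obtain a spatial \hm{} $\ph_{k + 1}^{(0)} \colon A_{m_{k + 1}} \to B_{n_{k + 1}}$ with $\big( \ph_{k + 1}^{(0)} \big)_* = r_{k + 1}$. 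Both
\[
\bt_{n_{k + 1}, n_k} \circ \ph_k \andeqn
 \ph_{k + 1}^{(0)} \circ \af_{m_{k + 1}, m_k}
\]
are spatial \hm{s} $A_{m_k} \to B_{n_{k + 1}}$ by Corollary~\ref{C_5208_CompSpatial}, and the commutativity of the diagram furnished by Lemma~\ref{L_6418_OneSided} shows that they induce the same map on~$K_0$. The uniqueness assertion in Lemma~\ref{findim} therefore provides an invertible isometry $v \in B_{n_{k + 1}}$ with
\[
v \big( \ph_{k + 1}^{(0)} \circ \af_{m_{k + 1}, m_k} \big) (a) v^{-1}
 = \bt_{n_{k + 1}, n_k} \big( \ph_k (a) \big)
\]
for all $a \in A_{m_k}$. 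Set $\ph_{k + 1} (x) = v \ph_{k + 1}^{(0)} (x) v^{-1}$; this is spatial by Corollary~\ref{C_5917_SpIsoSim}, has the correct $K_0$-class, and closes the square.

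Finally, each $\ph_k$ is isometrically similar to a block diagonal \hm{} by Lemma~\ref{L_4326_BlkDiagHm}, and block diagonal \hm{s} between spatial semisimple finite dimensional $L^p$~operator algebras are completely contractive by Lemma~\ref{L_5208_Summary}(\ref{L_5208_Summary_BDCC}). Complete contractivity is preserved by isometric similarity via Proposition~\ref{P_5916_PropOfISHm}(\ref{P_5916_PropOfISHm_Cmp}), so every $\ph_k$ is completely contractive. The strictly commuting ladder then induces a completely contractive \hm{} $\ph \colon A \to B$ with $\ph \circ \af_{m_k} = \bt_{n_k} \circ \ph_k$ for every $k \in \Nz$, and functoriality of $K_0$ together with the fact that $f = \dirlim_k r_k$ gives $\ph_* = f$.

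The only genuinely delicate point is the inductive compatibility in the second paragraph: one must propagate an isometric adjustment only on the codomain side to enforce strict commutativity with the previously fixed~$\ph_k$. This is where the hypothesis that intertwiners in Lemma~\ref{findim} are unique up to isometric similarity, and not merely up to some weaker equivalence, is essential; the rest of the argument is a straightforward transcription of the one-sided Elliott intertwining.
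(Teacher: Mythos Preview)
Your proposal is correct and is precisely the one-sided version of the proof of Theorem~\ref{T_6324_Classification_Uniq} that the paper invokes, using Lemma~\ref{L_6418_OneSided} in place of Lemma~\ref{L_6418_MakeK0}. The inductive correction by an invertible isometry via the uniqueness clause of Lemma~\ref{findim}, together with the complete contractivity argument through Lemma~\ref{L_4326_BlkDiagHm}, Lemma~\ref{L_5208_Summary}(\ref{L_5208_Summary_BDCC}), and Proposition~\ref{P_5916_PropOfISHm}(\ref{P_5916_PropOfISHm_Cmp}), matches exactly what the paper intends.
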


\begin{proof}
The proof is a one sided version of the proof of
Theorem~\ref{T_6324_Classification_Uniq},
using Lemma~\ref{L_6418_OneSided}
in place of Lemma~\ref{L_6418_MakeK0}.
\end{proof}

\begin{thm}\label{T_6417_FullClass}
Let $p \in [1, \infty)$.
Let $A$ and $B$ be spatial $L^p$~AF algebras.
Then \tfae:
\begin{enumerate}
\item\label{T_6417_FullClass_KTh}
$\big( K_0 (A), \, K_0 (A)_{+}, \, \Sm (A) \big)
 \cong \big( K_0 (B), \, K_0 (B)_{+}, \, \Sm (B) \big)$.
\item\label{T_6417_FullClass_Ring}
$A \cong B$ as rings.
\item\label{T_6417_FullClass_BIso}
$A$ is isomorphic to $B$ (not necessarily isometrically)
as Banach algebras.
\item\label{T_6417_FullClass_Isometric}
$A$ is isometrically isomorphic to $B$
as Banach algebras.
\item\label{T_6417_FullClass_CI}
$A$ is completely isometrically isomorphic to $B$
as matrix normed Banach algebras.
\end{enumerate}
\end{thm}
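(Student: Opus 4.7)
The plan is to prove the cycle by using the already-established Theorem~\ref{T_6324_Classification_Uniq} to dispatch the hardest implication, namely (\ref{T_6417_FullClass_KTh}) $\Rightarrow$ (\ref{T_6417_FullClass_CI}), and then to close the cycle at its one nontrivial remaining point, the implication (\ref{T_6417_FullClass_Ring}) $\Rightarrow$ (\ref{T_6417_FullClass_KTh}), using the fact that for spatial $L^p$~AF algebras the scaled preordered $K_0$-group is a purely ring-theoretic invariant. The implications (\ref{T_6417_FullClass_CI}) $\Rightarrow$ (\ref{T_6417_FullClass_Isometric}) $\Rightarrow$ (\ref{T_6417_FullClass_BIso}) $\Rightarrow$ (\ref{T_6417_FullClass_Ring}) are immediate weakenings, since every completely isometric isomorphism of matrix normed Banach algebras is an isometric Banach algebra isomorphism, every isometric isomorphism is a (topological) Banach algebra isomorphism, and every Banach algebra isomorphism is in particular a ring isomorphism.

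For (\ref{T_6417_FullClass_KTh}) $\Rightarrow$ (\ref{T_6417_FullClass_CI}), I would simply invoke Theorem~\ref{T_6324_Classification_Uniq}, which already produces a completely isometric isomorphism inducing the prescribed map on $K_0$.

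The substantive step is (\ref{T_6417_FullClass_Ring}) $\Rightarrow$ (\ref{T_6417_FullClass_KTh}). First I would note that by Proposition~\ref{P_4327_IsLpAlg} both $A$ and $B$ admit approximate identities consisting of idempotents, so Lemma~\ref{L_5917_AppIdIdem} identifies $K_0(A)$ and $K_0(B)$ with the Grothendieck groups of the semigroups $V(A)$ and $V(B)$ from Definition~\ref{D_6324_K0}. The construction of $V(A)$ from $A$ is entirely ring-theoretic: it uses the algebraic direct limit $M_\infty(A)$, the set of idempotents therein, and the algebraic Murray--von Neumann equivalence of Definition~\ref{D_5128_EqPj}, all of which are preserved by any ring isomorphism. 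Thus a ring isomorphism $\ph \colon A \to B$ extends canonically to a ring isomorphism $M_\infty(A) \to M_\infty(B)$, carries idempotents to idempotents, respects the relation $\sim$, and so induces a semigroup isomorphism $V(A) \to V(B)$. Passing to Grothendieck groups yields an abelian group isomorphism $\ph_* \colon K_0(A) \to K_0(B)$.

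It remains to check that $\ph_*$ carries $K_0(A)_+$ onto $K_0(B)_+$ and $\Sm(A)$ onto $\Sm(B)$. By Remark~\ref{R_6402_StdDfn}, $K_0(A)_+$ is exactly the image of $V(A)$ in $K_0(A)$, so positivity is preserved because the induced map on $V$ is bijective. Similarly, $\Sm(A)$ is the image in $K_0(A)$ of the classes of idempotents lying in the distinguished copy $A \subset M_\infty(A)$, and the extension of $\ph$ to $M_\infty$ carries $A$ precisely onto $B \subset M_\infty(B)$, so $\ph_*(\Sm(A)) = \Sm(B)$. Hence $\ph_*$ is an isomorphism of scaled preordered abelian groups, which is condition~(\ref{T_6417_FullClass_KTh}). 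The main (and really only) delicate point is the bookkeeping that $V(A)$, and in particular the privileged subset used to define the scale, depend only on the ring structure of~$A$; once this is in hand the rest is formal.
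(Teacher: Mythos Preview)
Your proposal is correct and follows essentially the same route as the paper's proof: the trivial chain (\ref{T_6417_FullClass_CI}) $\Rightarrow$ (\ref{T_6417_FullClass_Isometric}) $\Rightarrow$ (\ref{T_6417_FullClass_BIso}) $\Rightarrow$ (\ref{T_6417_FullClass_Ring}), then (\ref{T_6417_FullClass_Ring}) $\Rightarrow$ (\ref{T_6417_FullClass_KTh}) via Proposition~\ref{P_4327_IsLpAlg} and Lemma~\ref{L_5917_AppIdIdem} together with the observation that $V(A)$, $K_0(A)_+$, and $\Sm(A)$ are ring-theoretic, and finally (\ref{T_6417_FullClass_KTh}) $\Rightarrow$ (\ref{T_6417_FullClass_CI}) by Theorem~\ref{T_6324_Classification_Uniq}. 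Your write-up is somewhat more explicit about the bookkeeping for the scale, but the argument is the same.
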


\begin{proof}
It is trivial that
(\ref{T_6417_FullClass_CI}) implies~(\ref{T_6417_FullClass_Isometric}),
that
(\ref{T_6417_FullClass_Isometric})
implies~(\ref{T_6417_FullClass_BIso}),
and that
(\ref{T_6417_FullClass_BIso}) implies~(\ref{T_6417_FullClass_Ring}).
Since $V (A)$ depends only on the ring structure of~$A$,
Lemma~\ref{L_5917_AppIdIdem}
and Proposition~\ref{P_4327_IsLpAlg}
show that $K_0 (A)$ depends only on the ring structure of~$A$.
It now follows directly from the definitions
that $K_0 (A)_{+}$ and $\Sm (A)$
depend only on the ring structure of~$A$.
Thus (\ref{T_6417_FullClass_Ring})
implies~(\ref{T_6417_FullClass_KTh}).
The implication from~(\ref{T_6417_FullClass_KTh})
to~(\ref{T_6417_FullClass_CI})
is Theorem~\ref{T_6324_Classification_Uniq}.
\end{proof}

\end{document}